\documentclass[a4paper,11pt,leqno]{article}

\usepackage[english]{babel}
\usepackage{amsmath,amsthm,amssymb,thmtools,enumitem,mathrsfs,mathtools,url,doi,titlesec,bm}
\usepackage[left=2.5cm,right=2.5cm,top=2.5cm,bottom=2.5cm,bindingoffset=0cm]{geometry}

\setlist[enumerate]{topsep=0pt,label=\textup{(\arabic*)},leftmargin=\parindent,labelsep=.5em}
\setlist{noitemsep}

\setlength{\parindent}{22pt}
\setlength{\topsep}{12pt}

\usepackage{quoting}
\quotingsetup{vskip=0pt}

\usepackage[T1]{fontenc}

\usepackage{tikz}
\usetikzlibrary{arrows}

\declaretheoremstyle[
  spaceabove=\topsep, spacebelow=6pt,
  headfont=\normalfont\bfseries,
  notefont=\mdseries, notebraces={(}{)},
  bodyfont=\normalfont\itshape,
  postheadspace=1em,
  qed=\qedsymbol
]{mystyle}

\declaretheoremstyle[
  spaceabove=\topsep, spacebelow=6pt,
  headfont=\normalfont\bfseries,
  notefont=\mdseries, notebraces={(}{)},
bodyfont=\normalfont,
  postheadspace=1em,
  qed=\qedsymbol
]{mydefstyle}

\swapnumbers
\theoremstyle{mystyle}
\declaretheorem[numberlike=subsection]{proposition}
\declaretheorem[numberlike=subsection]{theorem}
\declaretheorem[numberlike=subsection]{corollary}
\declaretheorem[numberlike=subsection]{lemma}

\declaretheorem[numbered=no,name=Main Theorem]{MainThm}

\theoremstyle{mydefstyle}
\declaretheorem[numberlike=subsection]{definition}
\declaretheorem[numberlike=subsection]{example}
\declaretheorem[numberlike=subsection]{remark}
\declaretheorem[numberlike=subsection]{remarks}

\numberwithin{equation}{subsection}

\titleformat{\section}[block]
  {\filcenter\normalfont\large\bfseries}{\thesection.}{1em}{}
\titleformat{\subsection}[runin]
  {\normalfont\bfseries}{\thesubsection}{.5em}{}

\titlespacing*{\section} {0pt}{6ex plus 1ex minus .2ex}{3 ex plus .2ex}
\titlespacing*{\subsection} {0pt}{\topsep}{1em}

\setcounter{tocdepth}{1}

\input{BMmacsLatex.sty}


\begin{document}

\begin{center}
\textbf{\Large ON THE TATE AND MUMFORD-TATE CONJECTURES}
\medskip

\textbf{\Large IN CODIMENSION ONE FOR VARIETIES WITH $h^{2,0} =1$}
\bigskip

\textit{by}
\bigskip

{\Large Ben Moonen}
\end{center}
\vspace{8mm}

{\small 

\noindent
\begin{quoting}
\textbf{Abstract.} We prove the Tate conjecture for divisor classes and the Mumford-Tate conjecture for the cohomology in degree~$2$ for varieties with $h^{2,0}=1$ over a finitely generated field of characteristic~$0$, under a mild assumption on their moduli. As an application of this general result, we prove the Tate and Mumford-Tate conjectures for several classes of algebraic surfaces with $p_g=1$.
\medskip

\noindent
\textit{AMS 2010 Mathematics Subject Classification:\/} 14C, 14D, 14J20
\end{quoting}

} 
\vspace{8mm}


\section*{Introduction}

\subsection{}
In this paper we study the Tate conjecture for divisor classes on varieties over a finitely generated field of characteristic zero, henceforth simply referred to as ``the Tate conjecture''. Whereas, from a modern perspective, the Hodge-theoretic analogue---the Lefschetz theorem on divisor classes---is quite easy to prove, it is an uncomfortable fact that the Tate conjecture is known only for some rather special classes of varieties. For abelian varieties, Faltings proved it in 1983, alongside with the Mordell conjecture and the Shafarevich conjecture. For K3 surfaces, the Tate conjecture was proven independently by Andr\'e and Tankeev. For Hilbert modular surfaces the Tate conjecture is known by work of Harder, Langlands and Rapoport, completed by results of, independently, Klingenberg and Kumar Murty and Ramakrishnan. In general, however, the Tate conjecture remains widely open.

In view of the Lefschetz theorem on divisor classes, the Tate conjecture is implied by the Mumford-Tate conjecture for cohomology in degree~$2$. This conjecture is not even known for abelian varieties, though it is known for K3 surfaces, again by Andr\'e and Tankeev.

Our main contribution in this paper is a proof of the Mumford-Tate conjecture for the cohomology in degree~$2$, and hence the Tate conjecture for divisor classes, for varieties with $h^{2,0}=1$, under a mild assumption on their moduli:

\begin{MainThm}
\label{MainThmIntro}
Let $X$ be a non-singular complete variety over~$\mC$ with $h^{2,0}(X)=1$. Assume there exists a smooth projective family $f \colon \cX \to S$ over a non-singular irreducible base variety~$S$ such that $X \cong \cX_\xi$ for some $\xi \in S(\mC)$, and such that the variation of Hodge structure $R^2f_* \mQ_\cX$ is not isotrivial. Then the Tate Conjecture for divisor classes on~$X$ is true and the Mumford-Tate conjecture for the cohomology in degree~$2$ is true.
\end{MainThm}

\noindent
We in fact prove a more general result, that applies to submotives of an~$H^2$; for this we refer to Theorem~\ref{MainThm}.

\subsection{}
In the last section we apply our main theorem to algebraic surfaces with $p_g=1$. Given some irreducible component of the moduli space of such surfaces, the challenge is to show that the Hodge structure on the~$H^2$ is not constant over it. If this holds, the Mumford-Tate conjecture is true for all surfaces in this moduli component. In some cases, the fact that the $H^2$ is non-constant is contained in the literature, but we also treat some cases where additional geometric arguments are needed. Theorem~\ref{MainThmSurf} gives the list of cases we have worked out thus far; with minor exceptions this includes all moduli components for which a good description is available in the literature. We hope that the experts in this area can supply such a description in many more cases, and we expect that our main theorem can serve as a standard tool to then deduce the Mumford-Tate conjecture.

\subsection{}
Let us now sketch some of the main ideas involved in the proof of the Main Theorem.

Assume a situation as in the statement of the theorem. Possibly after replacing~$S$ with a finite cover we have a decomposition $R^2f_* \mQ_\cX(1) = \mQ_S^\rho \oplus \mV$, where $\rho$ is the generic Picard number in the family and $\mV$ is a variation of Hodge structure over~$S$ such that on a very general fibre~$\mV_s$ there are no non-zero Hodge classes. On~$\mV$ we have a symmetric polarization form~$\phi$.

A central role in the paper is taken by the endomorphism algebra 
$E = \End_{\QVHS_S}(\mV)$. By result of Zarhin, $E$ is a field that is either totally real or a CM-field. Even for a concretely given family, it is usually very hard to determine~$E$. Rather than attempting to do so, we leave $E$ as an unknown, and we see how far we can get. The proof of the main theorem then splits up in three cases that each require a different approach:
\begin{enumerate}
\item $E$ totally real, $\rank_E(\mV) \neq 4$;
\item $E$ a CM-field;
\item $E$ totally real, $\rank_E(\mV) = 4$.
\end{enumerate}

\subsection{}
The proof in case~(1) may be viewed as an extension of the arguments due to Deligne~\cite{DelK3}, later refined by Andr\'e in~\cite{YAK3}, who used this to prove the Tate conjecture for  hyperk\"ahler varieties. Our main new contributions in this case are twofold:
\begin{enumerate}[label=\textup{(\alph*)}]
\item We develop a variant of the Kuga-Satake construction so as to take into account a non-trivial totally real field of endomorphisms.
\item We introduce the systematic use of norm functors (or: corestrictions); this is a technique of independent interest.
\end{enumerate}

In order to explain this in more detail, let us briefly go back to the arguments of Andr\'e in the case of K3 surfaces. We take for $\cX \to S$ a universal family of (polarized) K3's over the moduli space. Consider the even Clifford algebra $C^+(\mV,\phi)$, which is an algebra in the category~$\QVHS_S$ of variations of Hodge structure over~$S$. The Kuga-Satake construction produces an abelian scheme $\pi\colon A\to S$ equipped with an action by (an order in) a semisimple algebra~$D$ such that we have an isomorphism of algebras in $\QVHS_S$,
$$
u\colon C^+(\mV,\phi) \isomarrow \ul\End_D(R^1\pi_*\mQ_A)\, .
$$
In particular, if we write $V=\mV_\xi$ and $H= H^1(A_\xi,\mQ)$, we have an isomorphism $u_\xi\colon C^+(V,\phi) \isomarrow \ul\End_D(H)$. However, the whole point of doing the Kuga-Satake construction in a family is that on the source and target of~$u_\xi$ we now have an action of $\pi_1(S,\xi)$.

In the case of K3 surfaces, it is known that the monodromy action on $H^2(X,\mQ)\bigl(1\bigr)$ is ``big'', which means that the algebraic monodromy group is the full group $\SO(V,\phi)$. (For $\cX/S$ a universal family of polarized K3's we have $\rho=1$ and $V$ is a subspace of $H^2(X,\mQ)\bigl(1\bigr)$ of codimension~$1$.) A rather spectacular consequence of this is that the above isomorphism~$u_\xi$ is the Hodge realization of an isomorphism of motives (in the sense of Andr\'e~\cite{YAPour}) $\motu_\xi\colon C^+(\motV_\xi,\phi) \isomarrow \ul\End_D\bigl(\motH^1(A_\xi)\bigr)$. In a somewhat implicit form this idea is already present in Deligne's paper~\cite{DelK3}; it relies on the characterization of~$u_\xi$ as the {\it unique\/} $\pi_1$-equivariant algebra isomorphism. It is made explicit by Andr\'e in \cite{YAK3}, Section~6.2, and is essentially a consequence of the Theorem of the fixed part.

Once we have the motivic isomorphism~$\motu_\xi$, we can take $\ell$-adic realizations and apply Faltings's result for abelian varieties. To conclude the Tate conjecture for $X = \cX_\xi$ we still have to ``extract''~$\motV_\xi$ from the even Clifford algebra $C^+(\motV_\xi,\phi)$. If the dimension of~$\motV_\xi$ is odd, this is relatively easy; if $\dim(\motV_\xi)$ is even, a further trick is needed. Andr\'e's proof of the Mumford-Tate conjecture then still requires further ideas, which we shall not review here.

\subsection{} 
Once we leave the realm of hyperk\"ahler varieties, we no longer dispose of a ``big monodromy'' result of the sort used in the above argument. This is where the field $E$ comes in.

Consider a situation as in the Main Theorem, and assume we are in case~(1). The generic Mumford-Tate group of the variation~$\mV$ is then the group $\SO_E(V,\phi)$ of $E$-linear isometries with determinant~$1$. We prove that the monodromy of~$\mV$ is ``maximal'', in the sense that the algebraic monodromy group $G_\mono(\mV)$ equals the generic Mumford-Tate group. A natural idea, then, is to imitate the above argument, using a Kuga-Satake construction ``relative to the field~$E$''.

In order to make this work, we need some new techniques. The point is that the Kuga-Satake construction is highly non-linear. (Step one: form the even Clifford algebra.) To overcome this, we make systematic use of norm functors. In brief: whenever we are in a Tannakian category~$\cC$ (Hodge structures, Galois representations, motives,...) there is a norm functor from the category~$\cC_{(E)}$ of $E$-modules in~$\cC$ to $\cC$ itself. This is an extremely natural and useful construction that appears to be not so widely known. In Section~\ref{Norms} we explain this, building upon the work of Ferrand~\cite{Ferrand}.

Once we have norms at our disposal, the correct replacement for the even Clifford algebra $C^+(\motV_\xi,\phi)$ is not (still with $E$ totally real) simply the even Clifford algebra of $\motV_\xi$ over~$E$ (which is an $E$-algebra in the category of motives) but rather its norm $\FNorm_{E/\mQ} C^+_E(\motV_\xi,\phi)$. Once we have this working, we are back on the trail paved for us by Andr\'e. The new, ``relative'', version of the Kuga-Satake construction produces an abelian scheme $\pi\colon A\to S$ with an action by a semisimple algebra~$D$ and an isomorphism of algebras in $\QVHS_S$,
$$
u\colon \FNorm_{E/\mQ} C^+_E(\mV,\phi) \isomarrow \ul\End_D\bigl(\mH^1(A)\bigr)\, ,
$$
where we write $\mH^1(A)=R^1\pi_*\mQ_A$. The maximality of the monodromy allows us to lift this to a motivic level, pass to $\ell$-adic realizations, and use Faltings's result. Though some technical details get more involved than in the case $E=\mQ$, from this point on everything works as expected.

\subsection{}
In case~(2), when $E$ is a CM-field, we use a different approach. This should not come as a surprise: the Kuga-Satake construction is based on the idea that we can lift a Hodge structure from a special orthogonal group to a spin group, whereas in the CM case we are dealing with unitary groups. Instead, we establish a direct relation between the motive $\motH^2(X)\bigl(1\bigr)$ and a motive of the form $\ul\Hom_E\bigl(\motH^1(A),\motH^1(B)\bigr)$, where $A$ and~$B$ are abelian varieties with $E$-action. In fact, $A$ is a fixed abelian variety of CM-type, depending only on~$E$ and the choice of a CM-type, and if we vary~$X$, only $B$ varies.

On the level of Hodge realizations, we show that we can find $A$ and~$B$ such that $H^2(X)\bigl(1\bigr) \cong \ul\Hom_E\bigl(H^1(A),H^1(B)\bigr)$. This gives a new interpretation of what van Geemen~\cite{vGeemen} calls a ``half-twist''. The construction can be done in families, and using a monodromy argument we prove that $\motH^2(X)\bigl(1\bigr)$ is isomorphic to $\ul\Hom_E\bigl(\motH^1(A),\motH^1(B)\bigr)$, up to twisting by a $1$-dimensional motive~$\motU$. We expect $\motU$ to be trivial, but we are unable to prove this. (This is one of the main reasons why, in our main theorem, we are not yet able to prove the ``motivic Mumford-Tate conjecture'', which is the additional statement that the Mumford-Tate group equals the motivic Galois group.) We can, however, show that $\motU$ has trivial Hodge and $\ell$-adic realizations; this allows to apply the results of Faltings and to deduce the main theorem.

\subsection{}
What remains is case~(3), when $E$ is totally real and the local system~$\mV$ has rank~$4$ over~$E$. In this case, the algebraic monodromy group~$G_\mono$ may be strictly smaller than the generic Mumford-Tate group $\SO_E(V,\phi)$. As the proof in case~(1) crucially relies on the maximality of the monodromy, this argument breaks down in an essential way. (If $G_\mono = \SO_E(V,\phi)$, the argument of case~(1) works, so case~(3) is really about the situation where the monodromy is non-maximal.)

We are able to deal with the case of non-maximal monodromy by combining ideas from the proofs of cases (1) and~(2). The algebraic monodromy group is the group of norm~$1$ elements in a quaternion algebra~$\Delta$ over~$E$. With $D = \FNorm_{E/\mQ}(\Delta)$, we construct a complex abelian variety~$A$ and an abelian scheme $B \to S$, both with an action of~$D^\opp$, such that we have an isomorphism $u \colon \ul\Hom_D\bigl(\mH^1(A_S),\mH^1(B)\bigr) \isomarrow \FNorm_{E/\mQ}(\mV)$. After a rather minute analysis of all groups involved, and using the information that we still have about the monodromy, we are able to show that the fibre~$u_\xi$ of~$u$ at the point~$\xi$ is the Hodge realization of a motivic isomorphism $\motu_\xi \colon \ul\Hom_D\bigl(\motH^1(A),\motH^1(B_\xi)\bigr) \isomarrow \FNorm_{E/\mQ}(\motV_\xi)$. This is of course the crucial step, as now we may again invoke the results of Faltings. We reduce the proof of the Mumford-Tate conjecture for $X = \cX_\xi$ to the MTC for the abelian variety $A\times B_\xi$, and while this is not a case already contained in the literature, there is enough information available to prove this by fairly direct arguments.

\subsection{}
Let us now give a brief overview of the individual sections. In Section~\ref{Conjectures} we review the Tate and Mumford-Tate conjectures. Working systematically over finitely generated fields (rather than only number fields) has the advantage that these conjectures can be stated for any variety over~$\mC$, but apart from choices in the presentation we do not claim any originality here.

In Section~\ref{Zarhlad} we review the results of Zarhin on Mumford-Tate groups of Hodge structures of K3 type, which are crucial for everything that follows, and we prove an $\ell$-adic analogue of this, using a result of Pink. This fills what seems to be a gap in Andr\'e's paper~\cite{YAK3}; see Remark~\ref{YAGap}.

As already mentioned, in Section~\ref{Norms} we discuss norm functors. This can be read independently of the rest of the paper and is of interest in a much more general setting.

Sections \ref{KugaSat} and~\ref{RealMult} contain the constructions needed to deal with case~(1). These two sections are closest to the work of Deligne and Andr\'e, the main point being that we develop a variant of the Kuga-Satake construction relative to a totally real endomorphism field. In Section~\ref{Mono=>MTC} we then state the main result in its general form, and we prove some preliminary results. At the end of this section we complete the proof in case~(1). In Section~\ref{Mono=>MTC2} and the somewhat long Section~\ref{Mono=>MTC3} we deal with cases (2) and~(3), respectively.

In Section~\ref{AlgSurf}, finally, we prove the MTC for several classes of algebraic surfaces with $p_g=1$.

\subsection{}
\textit{Acknowledgements.} I am much indebted to Y.~Andr\'e and P.~Deligne, who have greatly influenced my understanding of the notions that play a central role in this paper. Further, I thank J.~Commelin, W.~Goldring, C.~Peters, R.~Pignatelli and Q.~Yin for inspiring discussions and helpful comments.

\subsection{}
\label{NotConvIntro}
\textit{Notation and conventions.} (a) By a Hodge structure of K3 type we mean a polarizable $\mQ$-Hodge structure of type $(-1,1) + (0,0) + (1,-1)$ with Hodge numbers $1,n,1$ for some~$n$. By a VHS of K3 type over some base variety~$S$ we mean a polarizable variation of Hodge structure whose fibers are of K3 type.

(b) In the first eight sections, we always view abelian schemes over a base scheme~$S$ as objects of the category $\QAV_S$ of abelian schemes up to isogeny.

(c) Let $k$ be a field of characteristic~$0$ and $E$ a finite \'etale $k$-algebra. If $V$ is an $E$-module of finite rank equipped with a nondegenerate $E$-bilinear form $\tilde\phi \colon V \times V \to E$ then $\phi = \trace_{E/k} \circ \tilde\phi$ is a nondegenerate $k$-bilinear form on~$V$ with the property that $\phi(ev,w) = \phi(v,ew)$ for all $v,w \in V$ and $e \in E$. (Terminology: $\phi$ is the transfer of~$\tilde\phi$.) Conversely, given a nondegenerate $k$-bilinear form $\phi \colon V \times V \to k$ with $\phi(ev,w) = \phi(v,ew)$, there is a unique $E$-bilinear form $\tilde\phi$ on~$V$ with $\phi = \trace_{E/k} \circ \tilde\phi$, and $\tilde\phi$ is again nondegenerate. We refer to~$\tilde\phi$ as the $E$-bilinear lift of~$\phi$. The uniqueness implies that if $\phi$ is symmetric or alternating, so is~$\tilde\phi$.

More generally, if $E$ comes equipped with an involution $e \mapsto \bar{e}$ and $\phi$ satisfies $\phi(ev,w) = \phi(v,\bar{e}w)$ for all $v,w \in V$ and $e \in E$ then there is a unique hermitian form $\tilde\phi \colon V \times V \to E$ such that $\phi = \trace_{E/k} \circ \tilde\phi$. In this setting we refer to~$\tilde\phi$ as the $E$-valued hermitian lift of~$\phi$.

(d) We shall often consider algebraic groups that are obtained via a restriction of scalars, and it will be convenient to simplify the notation for such groups. As a typical example, in the situation described in~(c) we have an orthogonal group $\OO(V,\tilde\phi)$ over~$E$ and we denote by $\OO_{E/k}(V,\phi)$ the algebraic group over~$k$ obtained from it by restriction of scalars. Similarly, if $E$ comes equipped with an involution and $\psi$ is a hermitian form with respect to this involution, we denote by $\UU_{E/k}(V,\psi)$ the corresponding unitary group, viewed as an algebraic group over~$k$ through restriction of scalars. (Note that in this case the restriction of scalars goes from the fixed algebra $E_0 \subset E$ of the involution to~$k$.) 

(e) Let $k$ be a field and $E$ a finite \'etale $k$-algebra. We denote the torus $\Res_{E/k} \mG_{\mult,E}$ by~$T_E$. In particular, $T_k = \mG_{\mult,k}$. The norm map defines a homomorphism $T_E \to T_k$, whose kernel we denote by~$T_E^1$.


\section{Review of some cycle conjectures}
\label{Conjectures}

\subsection{}
\label{MotivesYA}
Let $K \subset \mC$ be a subfield that is finitely generated over~$\mQ$. We denote by $\Mot_K$ the category of motives over~$K$ as defined by Y.~Andr\'e in~\cite{YAPour}. (As ``base pieces'' we take all projective smooth $K$-schemes.) This is a semisimple Tannakian category whose identity object is denoted by~$\unitmot$.

We use bold letters ($\motV$, $\motW$, ...) for motives. Their Hodge realizations and $\ell$-adic realizations are denoted by the corresponding characters of regular weight (in the sense of typography!) with a subscript~``$\Betti$'' or~``$\ell$'' ($V_\Betti$, $W_\Betti$, ..., respectively $V_\ell$, $W_\ell$,~...). If $\motW$ is a motive with Hodge realization~$W_\Betti$, we usually simply write~$W$ for the underlying $\mQ$-vector space and we write 
\[
G_\Betti(\motW) \subset G_\mot(\motW) \subset \GL(W)
\]
for the Mumford-Tate group and the motivic Galois group. We identify the $\Ql$-vector space underlying the $\ell$-adic realization with $W_\ell = W \otimes \Ql$ via the comparison isomorphism between Betti and \'etale cohomology. The $\ell$-adic realization is then a Galois representation $\rho_{\motW,\ell} \colon \Gal(\Kbar/K) \to \GL(W_\ell)$, and we denote by $G_\ell(\motW)$ the Zariski closure of the image of~$\rho_{\motW,\ell}$. We have
\[
G_\ell^0(\motW) \subset G_\mot(\motW) \otimes \Ql \subset \GL(W) \otimes \Ql = \GL(W_\ell)\, .
\]

\subsection{}
\label{MTConjecture}
Let $\motV$ be a motive over~$K$. The Mumford-Tate conjecture for $\motV$ is the assertion
\[
\hbox{We have $G_\ell^0(\motV) =  G_\Betti(\motV_\mC) \otimes \Ql$ as subgroups of $\GL(V_\ell)$.} \leqno{\hbox{MTC}(\motV):}
\]

Note that, a priori, this conjecture depends on the chosen embedding $K \hookrightarrow \mC$. It also depends on the choice of the prime number~$\ell$. In the rest of the paper, we fix~$\ell$ and whenever we refer to the Mumford-Tate conjecture it is with reference to this prime number. Our results are valid for all~$\ell$.

\begin{proposition}
\label{FieldExtProp}
Let $K \subset L$ be subfields of~$\mC$ that are finitely generated over~$\mQ$. Let $\motV$ be a motive over~$K$, let $\motV_L$ be the motive over~$L$ obtained by extension of scalars, and write $V_{L,\ell}$ for its $\ell$-adic realization.

\textup{(\romannumeral1)} The isomorphism $\GL(V_\ell\bigr) \isomarrow \GL(V_{L,\ell})$ induced by the canonical isomorphism $V_\ell \isomarrow V_{L,\ell}$ restricts to an isomorphism $G_\ell^0(\motV) \isomarrow G_\ell^0(\motV_L)$.

\textup{(\romannumeral2)} With respect to the chosen embeddings $K \hookrightarrow L \hookrightarrow \mC$ we have $\hbox{\rm MTC}(\motV_L) \Leftrightarrow \hbox{\rm MTC}(\motV)$.
\end{proposition}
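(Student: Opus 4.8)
The plan is to derive part~(\romannumeral2) from part~(\romannumeral1), and to prove~(\romannumeral1) by combining a restriction argument for Galois groups with the elementary observation that the Betti realization does not change under base change from $K$ to $L$.

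First I would record that, since the embeddings $K \hookrightarrow L \hookrightarrow \mC$ are compatible, on the level of varieties one has $X_\mC = (X_L)_\mC$, so the singular cohomology of the associated complex manifold is unaltered; hence the Betti realizations of $\motV$ and of $\motV_L$ are canonically identified, and in particular $G_\Betti(\motV_\mC) = G_\Betti(\motV_{L,\mC})$ as subgroups of $\GL(V) = \GL(V_L)$. Granting~(\romannumeral1), the canonical isomorphism $V_\ell \isomarrow V_{L,\ell}$ identifies $G_\ell^0(\motV)$ with $G_\ell^0(\motV_L)$ and $G_\Betti(\motV_\mC)\otimes\Ql$ with $G_\Betti(\motV_{L,\mC})\otimes\Ql$; thus $\mathrm{MTC}(\motV)$ and $\mathrm{MTC}(\motV_L)$ assert the equality of the same pair of subgroups of $\GL(V_\ell)$, which gives~(\romannumeral2).

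For~(\romannumeral1), fix algebraic closures $\Kbar \subset \Lbar$ inside~$\mC$, giving a continuous homomorphism $r \colon \Gal(\Lbar/L) \to \Gal(\Kbar/K)$. Because $\ell$-adic \'etale cohomology of a smooth proper $K$-scheme~$X$ is computed on $X_{\Kbar}$ and is invariant under the extension $\Kbar \subset \Lbar$ of algebraically closed fields, with the $\Gal(\Lbar/L)$-action on $H^*(X_{\Lbar},\Ql)$ being the $\Gal(\Kbar/K)$-action pulled back along~$r$, the $\ell$-adic realization of $\motV_L$ is $\rho_{\motV_L,\ell} = \rho_{\motV,\ell}\circ r$, a representation on the same space $V_\ell = V_{L,\ell}$. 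The image of~$r$ is the open subgroup $\Gal(\Kbar/\Kbar\cap L)$, which has finite index $[\Kbar\cap L:K]$ in $\Gal(\Kbar/K)$, since $\Kbar\cap L$ is algebraic over~$K$ and contained in the finitely generated extension $L/K$, hence finite over~$K$. Therefore the image $H_0$ of $\rho_{\motV_L,\ell}$ is a finite-index subgroup of the image~$H$ of $\rho_{\motV,\ell}$, and it remains to see that a finite-index subgroup $H_0 \subseteq H$ of a subgroup $H \subseteq \GL(V_\ell)$ satisfies $\overline{H_0}^{\,0} = \overline{H}^{\,0}$ for the Zariski closures. Writing $H = \bigcup_{i=1}^m h_i H_0$ as a finite union of cosets and using that left translations are homeomorphisms for the Zariski topology gives $\overline{H} = \bigcup_{i=1}^m h_i\,\overline{H_0}$, so $\overline{H_0}$ is a closed subgroup of $\overline{H}$ of finite index and hence contains $\overline{H}^{\,0}$; combined with $\overline{H_0} \subseteq \overline{H}$ this yields $\overline{H_0}^{\,0} = \overline{H}^{\,0}$. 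Under the identification $V_\ell = V_{L,\ell}$ this is exactly the assertion $G_\ell^0(\motV_L) = G_\ell^0(\motV)$.

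I do not expect a genuine obstacle here. The two points that need a little care are the description of the image of~$r$ when $L$ is \emph{not} algebraic over~$K$ (which is where the finite-generation hypothesis enters, guaranteeing $[\Kbar\cap L:K]<\infty$) and the compatibility of the $\ell$-adic realization with extension of the base field, which is part of the standard formalism of Andr\'e's motives together with the invariance of \'etale cohomology under extensions of algebraically closed fields.
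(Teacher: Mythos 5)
Your proof is correct and follows essentially the same route as the paper: both identify the restriction map $r\colon\Gal(\Lbar/L)\to\Gal(\Kbar/K)$, observe that its image is $\Gal(\Kbar/K')$ with $K'=\Kbar\cap L$ finite over~$K$, and conclude that the two $\ell$-adic images differ only by finite index, hence have the same connected Zariski closure. The paper leaves the finite-index-implies-same-identity-component step and the Betti-side identification implicit, while you spell them out; the content is the same.
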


\begin{proof}
Let $K^\prime$ be the algebraic closure of~$K$ in~$L$. Then $K \subset K^\prime$ is a finite extension and the natural homomorphism $r \colon \Gal(\Lbar/L) \to \Gal(\Kbar/K)$ has as its image the subgroup $\Gal(\Kbar/K^\prime) \subset \Gal(\Kbar/K)$. Further, the diagram
\[
\begin{matrix}
\Gal(\Lbar/L) & \mapright{\rho_{\ell,\motV_L}} & \GL(V_{L,\ell}) \cr
\mapdownl{r} && \mapdownr{\wr} \cr
\Gal(\Kbar/K) & \sizedmapright{\rho_{\ell,\motV_L}}{\rho_{\ell,\motV}} & \GL(V_\ell)
\end{matrix}
\]
is commutative. This gives (\romannumeral1), and (\romannumeral2) is an immediate consequence.
\end{proof}

\subsection{}
\label{TateHodge}
Let $X$ be a complete non-singular variety over~$K$. For some integer $i \geq 0$, consider the motive $\motH = \motH^{2i}(X)\bigl(i\bigr) = (X,\pi_{2i},i)$, with $\pi_{2i}$ the K\"unneth projector in degree~$2i$.

An element $\xi \in H_\ell = H^{2i}\bigl(X_\Kbar,\Ql(i)\bigr)$ is called a Tate class if it is invariant under some open subgroup of $\Gal(\Kbar/K)$; this is equivalent to the requirement that $\xi$ is invariant under $G_\ell^0(\motH)$. Let $\cT^i(X) \subset H_\ell$ be the subspace of Tate classes. We have a cycle class map
\begin{equation}
\label{eq:CycClassMap}
\class\colon \CH^i(X_\Kbar) \otimes \Ql \to \cT^i(X)\, . 
\end{equation}
The Tate conjecture for cycles of codimension~$i$ on~$X$ is the assertion
\[
\hbox{The group $G_\ell^0(\motH)$ is reductive and the map \eqref{eq:CycClassMap} is surjective.} \leqno{\hbox{TC}^i(X):}
\]
The reductivity of $G_\ell^0(\motH)$ is equivalent to the condition that the representation~$\rho_{\motH,\ell}$ is completely reducible. 

Similarly, an element $\xi \in H_\Betti = H^{2i}\bigl(X_\mC,\mQ(i)\bigr)$ is called a Hodge class if $\xi$ is purely of type $(0,0)$ in the Hodge decomposition, which is equivalent to the condition that $\xi$ is invariant under $G_\Betti(\motH)$. Writing $\cB^i(X) \subset H_\Betti$ for the subspace of Hodge classes we have a cycle class map $\class\colon \CH^i(X_\mC)  \to \cB^i(X)$ and the Hodge conjecture asserts that this map is surjective.

\begin{proposition}
\label{FieldExtProp2}
Let $K \subset L$ be a finitely generated field extension. Let $X$ be a complete non-singular variety over~$K$. Then $\hbox{\rm TC}^i(X_L) \Leftrightarrow \hbox{\rm TC}^i(X)$.
\end{proposition}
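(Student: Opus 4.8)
The plan is to break $\mathrm{TC}^i(X)$ into its two ingredients — reductivity of $G_\ell^0(\motH)$, where $\motH = \motH^{2i}(X)\bigl(i\bigr)$, and surjectivity of the cycle class map~\eqref{eq:CycClassMap} — and to check that neither ingredient changes when we pass from $K$ to $L$. The reductivity part, together with the comparison of the relevant spaces of Tate classes, will follow formally from Proposition~\ref{FieldExtProp}; the surjectivity part will be formal in the direction $\mathrm{TC}^i(X) \Rightarrow \mathrm{TC}^i(X_L)$ and will require a spreading-out argument in the other direction.

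First I would apply Proposition~\ref{FieldExtProp}\,(\romannumeral1) to the motive $\motH$, whose extension of scalars to~$L$ is $\motH^{2i}(X_L)\bigl(i\bigr)$: the canonical isomorphism $H_\ell \isomarrow H_{L,\ell}$ induces an isomorphism $G_\ell^0(\motH) \isomarrow G_\ell^0(\motH_L)$. Consequently $G_\ell^0(\motH)$ is reductive if and only if $G_\ell^0(\motH_L)$ is, and — since a class is a Tate class exactly when it is fixed by $G_\ell^0$ — the same canonical isomorphism carries $\cT^i(X)$ onto $\cT^i(X_L)$. It therefore remains only to compare, inside this common space, the images of the cycle class maps of $X_{\Kbar}$ and of $X_{\Lbar}$.

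Fix compatible algebraic closures, so that $K \subseteq L$ sit inside $\Kbar \subseteq \Lbar$ and $X_{\Lbar} = X_{\Kbar} \times_{\Kbar} \Lbar$. Flat pullback gives $\CH^i(X_{\Kbar}) \to \CH^i(X_{\Lbar})$, and it is compatible with cycle classes, so the composite $\CH^i(X_{\Kbar}) \otimes \Ql \to \CH^i(X_{\Lbar}) \otimes \Ql \xrightarrow{\class} H^{2i}(X_{\Lbar},\Ql(i))$ is, under the canonical isomorphism, the map~\eqref{eq:CycClassMap} for~$X$. Hence surjectivity of~\eqref{eq:CycClassMap} for~$X$ forces it for $X_L$; together with the reductivity statement this proves $\mathrm{TC}^i(X) \Rightarrow \mathrm{TC}^i(X_L)$.

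For the converse I must show that each cycle class on $X_{\Lbar}$ lies, under the canonical isomorphism, in the image of $\CH^i(X_{\Kbar}) \otimes \Ql$. Let $Z \subseteq X_{\Lbar}$ be an integral closed subvariety of codimension~$i$. Being cut out by finitely many equations, $Z$ is defined over a finite extension $L'$ of~$L$, and hence descends to a closed subvariety $Z_F \subseteq X_F$ over the subfield $F = \Kbar \cdot L' \subseteq \Lbar$; note that $F$ is finitely generated over~$\Kbar$ and that $\Lbar$ is an algebraic closure of~$F$. Write $F = \mathrm{Frac}(A)$ with $A$ a finitely generated $\Kbar$-subalgebra of $\Lbar$ and set $S = \mathrm{Spec}(A)$, so that $X_F$ is the generic fibre of $\pr \colon X_{\Kbar} \times_{\Kbar} S \to S$. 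Inverting one element of $A$, we may assume $S$ is smooth and connected over $\Kbar$ and that the closure $\mathcal{Z}$ of $Z_F$ in $X_{\Kbar} \times_{\Kbar} S$ is flat over~$S$; then $\mathcal{Z} \times_S \mathrm{Spec}(\Lbar) = Z$. By smooth proper base change the sheaf $\cL = R^{2i}\pr_*\Ql(i)$ on $S$ is lisse, and the cycle class of $\mathcal{Z}$ in $H^{2i}(X_{\Kbar} \times_{\Kbar} S, \Ql(i))$ determines a global section of $\cL$ whose value at the geometric generic point $\mathrm{Spec}(\Lbar)$ is $[Z]$ and whose value at any point $s \in S(\Kbar)$ — such a point exists because $\Kbar$ is algebraically closed — is the class $[\mathcal{Z}_s]$ of a codimension-$i$ cycle on $X_{\Kbar}$. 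Since the canonical isomorphism $H^{2i}(X_{\Kbar},\Ql(i)) \isomarrow H^{2i}(X_{\Lbar},\Ql(i))$ is the cospecialization isomorphism of $\cL$ from $s$ to the generic point, under which corresponding global sections have equal values, it carries $[\mathcal{Z}_s]$ to $[Z]$; so $[Z]$ lies in the image of the cycle class map of $X_{\Kbar}$. Together with the reductivity statement this gives $\mathrm{TC}^i(X_L) \Rightarrow \mathrm{TC}^i(X)$. I expect the main obstacle to be exactly this last step: spreading $Z$ out to a flat family over a smooth $\Kbar$-base, and identifying the abstract comparison isomorphism $H_\ell \cong H_{L,\ell}$ with cospecialization in the monodromy local system $\cL$ — which is what allows a single global section of $\cL$ to record the cycle class on $X_{\Lbar}$ and a genuine cycle class on $X_{\Kbar}$ at the same time. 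Everything else is formal manipulation of Proposition~\ref{FieldExtProp}.
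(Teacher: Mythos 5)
Your argument is correct and is, at its core, the same as the paper's: the nontrivial direction $\mathrm{TC}^i(X_L) \Rightarrow \mathrm{TC}^i(X)$ is reduced to the compatibility of the cycle class map with specialization (SGA~$4\tfrac{1}{2}$, Cycle, Th\'eor\`eme~2.3.8), after the reductivity and the identification of the Tate-class spaces have been dispatched by Proposition~\ref{FieldExtProp}(\romannumeral1). The only organizational difference is that the paper first reduces to a simple extension and specializes along a $K$-curve with a $K$-rational point, whereas you spread the cycle out directly over a smooth $\Kbar$-variety $S$ of arbitrary dimension and specialize at a $\Kbar$-point of $S$; the two devices are interchangeable.
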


\begin{proof}
For the complete reducibility of the Galois representation this is immediate from Proposition~\ref{FieldExtProp}(\romannumeral1), as this assertion only depends on $G_\ell^0(\motH)$ and $G_\ell^0(\motH_L)$. For the surjectivity of the cycle class map, the implication~``$\Leftarrow$'' is clear. For the converse we may assume $K \subset L$ is a principal field extension. If $K \subset L$ is a finite extension, it is clear that $\hbox{TC}^i(X_L)$ implies $\hbox{TC}^i(X)$. Hence it suffices to prove the assertion in the situation that $L$ is the function field of a curve~$C$ over~$K$ with a $K$-rational point $t \in C(K)$. The assertion now follows from the compatibility of the cycle class map with specialization. (Use \cite{SGA45}, Cycle, Th\'eor\`eme~2.3.8.) 
\end{proof}

\subsection{}
The above results allow us to formulate the Mumford-Tate conjecture and the Tate conjecture for motives over~$\mC$. 

If $\motV$ is a motive over~$\mC$, choose a subfield $K \subset \mC$ that is finitely generated over~$\mQ$ and a motive~$\motW$ over~$K$ with $\motW_\mC \cong \motV$. Define $G_\ell^0(\motV)$ to be $G_\ell^0(\motW)$, viewed as an algebraic subgroup of $\GL(V_\ell)$ via the comparison isomorphism $W_\ell \isomarrow V_\ell$. By Proposition~\ref{FieldExtProp}, this is independent of the choice of~$K$ and~$\motW$. The Mumford-Tate conjecture for~$\motV$ is then the assertion that $G_\Betti(\motV) \otimes \Ql = G_\ell^0(\motV)$ as subgroups of $\GL(V_\ell)$. For any choice of $K$ and~$\motW$ as above this is equivalent to the Mumford-Tate conjecture for~$\motW$.

Next let $X$ be a complete non-singular variety over~$\mC$. For $i\geq 0$, let $\motH = \motH^{2i}(X)\bigl(i\bigr)$, and let $\cT^i(X) \subset H_\ell$ be the subspace of $G_\ell^0(\motH)$-invariants. Then $\hbox{TC}^i(X)$ is the assertion that $G_\ell^0(\motH)$ is reductive and that $\class\colon \CH^i(X) \otimes \Ql \to \cT^i(X)$ is surjective. For any form~$X_K$ of~$X$ over a finitely generated field~$K$, this is again equivalent to the Tate conjecture on cycles of codimension~$i$ for~$X_K$.

\subsection{}
\label{TCMotives}
As long as we do not know that motivated cycles are algebraic, it does not make sense to formulate the Tate conjecture for arbitrary motives. In this paper we shall make one exception to this. Namely, suppose we are given a submotive $\motV \subset \motH^2(X)\bigl(1\bigr)$ for some complete non-singular complex variety~$X$. As before, we define the space of Tate classes $\cT(\motV) \subset V_\ell$ to be the space of $G_\ell^0(\motV)$-invariants in~$V_\ell$. Then by the Tate conjecture for~$\motV$ we mean the assertion that $G_\ell^0(\motV)$ is reductive and that the composition 
$$
\pr \circ \class \colon \CH^i(X) \otimes \Ql \to \cT^1(X) \twoheadrightarrow \cT(\motV)
$$
is surjective. By the Lefschetz theorem on divisor classes, surjectivity of this map is equivalent to the assertion that $\cB(\motV) \otimes \Ql = \cT(\motV)$ as subspaces of~$V_\ell$, where $\cB(\motV) = V^{G_\Betti(\motV)}$ is the space of Hodge classes. 

If $\motV$ is given as a submotive of~$\motH^2(X)$ then by the Tate conjecture for~$\motV$ we mean the Tate conjecture for~$\motV(1)$.

\begin{remarks}
\label{MTCforV(n)}
(\romannumeral1) If the Mumford-Tate conjecture is true for some motive~$\motV$, it is also true for any submotive $\motV^\prime \subset \motV$. Indeed, we can decompose $\motV = \motV^\prime \oplus \motV^\pprime$; then $G_\Betti(\motV) \subset G_\Betti(\motV^\prime) \times G_\Betti(\motV^\pprime)$ and $G_\ell^0(\motV) \subset G_\ell^0(\motV^\prime) \times G_\ell^0(\motV^\pprime)$, and in both cases the projection to the first factor is surjective.

(\romannumeral2) If the Mumford-Tate conjecture is true for~$\motV$, it is also true for any Tate twist $\motV(n)$. To see this, we first note that the Mumford-Tate conjecture can also be phrased as the conjectural equality $\mathfrak{g}_\Betti(\motV) \otimes \Ql = \mathfrak{g}_\ell(\motV)$ of Lie subalgebras of $\End(V_\ell)$. 

View $G_\Betti\bigl(\motV \oplus \unitmot(1)\bigr)$ as an algebraic subgroup of $G_\Betti(\motV) \times \mG_\mult$. If $\motV$ has weight zero then $G_\Betti\bigl(\motV \oplus \unitmot(1)\bigr) = G_\Betti(\motV) \times \mG_\mult$; if the weight is not zero, the first projection $G_\Betti\bigl(\motV \oplus \unitmot(1)\bigr) \to G_\Betti(\motV)$ is an isogeny and hence it gives an isomorphism on Lie algebras. For the groups~$G_\ell^0$ the analogous assertions are true. Further,
$$
G_\Betti\bigl(\motV \oplus \unitmot(1)\bigr) \isomarrow G_\Betti\bigl(\motV(1) \oplus \unitmot(1)\bigr)
\quad\hbox{and}\quad
G_\ell^0\bigl(\motV \oplus \unitmot(1)\bigr) \isomarrow G_\ell^0\bigl(\motV(1) \oplus \unitmot(1)\bigr)
$$
where in both cases the isomorphism is the one induced from the isomorphism $\GL(V) \times \mG_\mult \isomarrow \GL\bigl(V(1)\bigr) \times \mG_\mult$ given by $(A,c) \mapsto (cA,c)$. Combining these remarks we see that $\hbox{\rm MTC}(\motV)$ is equivalent to $\hbox{MTC}\bigl(\motV(n)\bigr)$.
\end{remarks}

\subsection{}
\label{StrictSubmot}
Let $X$ be a non-singular projective variety of dimension~$d$ over a field~$K$ of characteristic~$0$. Let $\pi_j$ denote the K\"unneth projector that cuts out the motive $\motH^j(X)$. For $i \geq 0$ and $n \in \mZ$ we have maps $\CH^d(X\times X) \otimes \mQ \to \End_{\Mot_L}\bigl(\motH^{2i}(X)(n)\bigr)$ given by $\gamma \mapsto \pi_{2i} \circ \class(\gamma) \circ \pi_{2i}$. We say that a submotive $\motV \subset \motH^{2i}(X)\bigl(n\bigr)$ is cut out by an an algebraic cycle if it is given by a projector in $\End_{\Mot_L}\bigl(\motH^{2i}(X)(n)\bigr)$ that lies in the image of this map.

The property of being cut out by an algebraic cycle is invariant under extension of scalars: if $K \subset L$ is a field extension then $\motV \subset \motH^{2i}(X)\bigl(n\bigr)$ is cut out by an algebraic cycle if and only if the same is true for $\motV_L \subset \motH^{2i}(X_L)\bigl(n\bigr)$. If $K \subset L$ is algebraic, this follows from an easy Galois argument; the general case then follows from the fact that for an extension $K \subset L$ of algebraically closed fields, the map $\CH(X)_\mQ/{\sim_{\hom}} \to \CH(X_L)_\mQ/{\sim_{\hom}}$ is bijective.

\subsection{}
\label{AlgTrans}
Let $\motV$ be a motive over a field $K \subset \mC$. As $\Mot_K$ is a semisimple category, $\motV$ canonically decomposes as a direct sum of isotypic components. The motivic Galois group $G_\mot(\motV)$ sits in a short exact sequence $1 \tto G_\mot(\motV_\mC) \tto G_\mot(\motV) \tto \Gal(L/K) \tto 1$ for some finite Galois extension $L/K$. (Cf.\ \cite{YAPour}, the end of Section~4, and note that $G_\mot(\motV_\Kbar) \cong G_\mot(\motV_\mC)$, as follows from the Scolie in ibid., 2.5.) Hence there is a unique isotypic component~$\motV^\alg$ of~$\motV$ (possibly zero) such that $(\motV^\alg)_\mC \cong \unitmot^{\oplus n}$ for some~$n$. We define $\motV^\trans$ to be the direct sum of all other isotypic components of~$\motV$; this gives us a canonical decomposition $\motV = \motV^\alg \oplus \motV^\trans$.

If $\motV \subset \motH^2(X)\bigl(1\bigr)$ for some smooth projective variety~$X$, it follows from the Lefschetz theorem on divisor classes that there are no non-zero Hodge classes in the Betti realization of~$\motV^\trans$; hence on Hodge realizations, $V_\Betti = V_\Betti^\alg \oplus V_\Betti^\trans$ is just the decomposition of~$V_\Betti$ into its algebraic and transcendental parts. Further, in this situation $\motV \subset \motH^2(X)\bigl(1\bigr)$ is cut out by an algebraic cycle if and only if the same is true for~$\motV^\trans$.


\section{An \texorpdfstring{$\ell$-adic}{l-adic} analogue of a result of Zarhin}
\label{Zarhlad}

\subsection{}
\label{ZarhResult}
We start by reviewing some results of Zarhin in~\cite{ZarhK3}. Let $(H,\phi)$ be a polarized Hodge structure of K3-type. We assume $H$ has trivial algebraic part, by which we mean that $H \cap H_\mC^{0,0} = (0)$. By \cite{ZarhK3}, Theorem~1.5.1, the endomorphism algebra $E = \End_{\QHS}(H)$ is a field which is either totally real or a CM-field. 

If $E$ is totally real, let $\tilde\phi \colon H \times H \to E$  be the $E$-bilinear lift of~$\phi$. (See \ref{NotConvIntro}(c).) In this case, \cite{ZarhK3}, Theorem~2.2.1 gives that the Mumford-Tate group of~$H$ is the group $\SO_{E/\mQ}(H,\tilde\phi)$, with notation as in~\ref{NotConvIntro}(d).

If $E$ is a CM-field, let $E_0 \subset E$ be the totally real subfield and $e \mapsto \bar{e}$ the complex conjugation on~$E$. Let $\tilde\phi \colon H \times H \to E$ be the $E$-valued hermitian lift of~$\phi$. (Again see \ref{NotConvIntro}(c).) In this case, Zarhin's result, \cite{ZarhK3}, Theorem~2.3.1, is that the Mumford-Tate group of~$H$ is the unitary group $\UU_{E/\mQ}(H,\tilde\phi)$.

\subsection{}
\label{ladicSetup}
The goal of this section is to prove an $\ell$-adic analogue of Zarhin's results. This is based on Pink's results in~\cite{Pinkladic}. For later purposes it will be convenient to first generalize these results to the case of a finitely generated ground field. 

Let $K\subset \mC$ be as in~\ref{MotivesYA} and consider a projective non-singular variety~$Y$ over~$K$. Let $\motH$ be a submotive of $\motH^{2i}(Y)\bigl(i\bigr)$ for some $i \geq 0$ that is cut out by an algebraic cycle (see~\ref{StrictSubmot}). We denote by~$G_\ell^\red(\motH)$ the reductive quotient of~$G_\ell^0(\motH)$ and let $V_\ell$ be the semi-simplification of~$H_\ell$ as a representation of~$G_\ell^0(\motH)$. We have $G_\ell^\red(\motH) \subset \GL(V_\ell)$. A cocharacter $\mu \colon \mG_{\mult,\Qlbar} \to G^\red_\ell(\motH)_{\Qlbar}$ gives rise to a grading 
\[
V_{\ell,\Qlbar} = \bigoplus_{j \in \mZ}\, V_{\ell,\Qlbar}^j
\]
where, by convention, the summand $V_{\ell,\Qlbar}^j$ is the subspace of~$V_{\ell,\Qlbar}$ on which $\mG_{\mult,\Qlbar}$ acts through the character $z \mapsto z^{-j}$.

The following notion was introduced in \cite{Pinkladic}, Def.~3.17.

\begin{definition}
We call a cocharacter~$\mu$ of~$G^\red_\ell(\motH)$ over~$\Qlbar$ a weak Hodge cocharacter if $\dim(V_{\ell,\Qlbar}^j) = \dim_\mC(H_\Betti^{j,-j})$ for all $j \in \mZ$.
\end{definition}

The following result is a slight generalization of a theorem of Pink in~\cite{Pinkladic}. The result is stated in loc.\ cit.\ only over number fields, and only for motives of the form $\motH^j(Y)$; however, the arguments also apply to submotives $\motH \subset \motH^{2i}(Y)\bigl(i\bigr)$ that are cut out by an algebraic cycle. (This condition is needed to ensure that $\motH$ gives rise to a strictly compatible system of $\ell$-adic Galois representations; cf.\ the comments following \cite{Pinkladic}, Definition~3.1.) As we shall now explain, the result also extends to finitely generated ground fields.

\begin{theorem}
\label{GenbyWHC}
With notation and assumptions as in\/~{\rm \ref{ladicSetup}}, $G^\red_\ell(\motH)_{\Qlbar}$ is generated by the images of the weak Hodge cocharacters.
\end{theorem}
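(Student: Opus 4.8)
The plan is to reduce the statement over a finitely generated field~$K$ to the case of a number field, where it is Pink's theorem (\cite{Pinkladic}, combined with the observation, already made in~\ref{ladicSetup}, that the arguments apply to submotives cut out by algebraic cycles). First I would spread out: choose a smooth affine scheme~$S$ of finite type over~$\mQ$ with function field~$K$, and a smooth projective morphism $\cY \to S$ extending $Y \to \Spec(K)$, together with an extension of the algebraic cycle cutting out~$\motH$, so that we obtain a lisse $\Qlbar$-sheaf on~$S$ whose fibre at the generic point realizes~$H_\ell$ with its $\Gal(\Kbar/K)$-action. Shrinking~$S$ if necessary, the comparison isomorphisms and the formation of $G_\ell^0$ behave well under specialization.

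Next, for a suitable closed point $s \in S$ with residue field a number field~$k(s)$, I would invoke the specialization results for $\ell$-adic monodromy (as in Serre's work on compatible systems, or the discussion in \cite{Pinkladic} following Def.~3.1): the decomposition group at~$s$ maps into $\Gal(\Kbar/K)$, and for~$s$ in a suitable dense open subset the induced map on $\ell$-adic algebraic monodromy groups $G_\ell^0$ of the fibre at~$s$ into $G_\ell^0(\motH)$ is an isomorphism. This uses that $\motH$ gives a strictly compatible system, which is exactly why the hypothesis that $\motH$ is cut out by an algebraic cycle was imposed. Passing to reductive quotients, we get $G_\ell^\red$ of the fibre isomorphic to $G_\ell^\red(\motH)$, compatibly with the inclusions into $\GL(V_\ell)$; and since the Hodge numbers $h^{j,-j}_\Betti$ are constant in the family (the family being smooth projective and $\motH$ of fixed type), the notion of weak Hodge cocharacter is transported unchanged under this isomorphism.

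Then I would apply Pink's theorem over the number field~$k(s)$ to the submotive $\motH_s \subset \motH^{2i}(\cY_s)(i)$, concluding that $G_\ell^\red(\motH_s)_{\Qlbar}$ is generated by its weak Hodge cocharacters. Transporting along the isomorphism $G_\ell^\red(\motH_s) \isomarrow G_\ell^\red(\motH)$ gives the statement over~$K$. Since the original field in~\ref{ladicSetup} is a subfield of~$\mC$ finitely generated over~$\mQ$, this covers the desired generality.

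The main obstacle is the spreading-out and specialization step: one has to arrange a model over a finitely generated subring of~$\mQ$ (equivalently, over a $\mQ$-variety~$S$) on which everything — the smooth projective family, the algebraic cycle, the resulting compatible system, and the identification of Betti/\'etale realizations — is defined and behaves compatibly, and then choose the closed point~$s$ in the complement of the proper closed subset where the monodromy group drops, while also ensuring $k(s)$ is a number field. This is standard but requires care; the key input is that the system of $\ell$-adic representations attached to~$\motH$ is strictly compatible, so that the monodromy groups of generic and special fibres coincide for~$s$ generic. Once this geometric input is in place, the rest is a direct transfer of Pink's result, with the equality of Hodge numbers guaranteeing that "weak Hodge cocharacter" is preserved under the comparison isomorphism.
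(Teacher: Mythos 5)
Your proposal follows essentially the same route as the paper's proof: spread out $\motH$ (together with the algebraic cycle cutting it out) over a variety with function field $K$, use Serre's specialization result to find a closed point $s$ with number-field residue field where $G_{s,\ell} = G_{\eta,\ell}$, observe that Hodge numbers are locally constant in the family so ``weak Hodge cocharacter'' transports, and then invoke Pink's theorem at $s$. The only small imprecision is the claim that the good points form a ``dense open subset'' --- Serre's result gives that there are infinitely many such closed points (equivalently, the bad set is thin), which is all one needs here --- and the parenthetical identification of ``a finitely generated subring of $\mQ$'' with ``a $\mQ$-variety $S$'' is not quite right, but the intended meaning (a $\mQ$-variety, or a $k$-variety with $k$ the algebraic closure of $\mQ$ in $K$ as the paper does) is clear and works.
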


\begin{proof} 
This follows from Pink's results by a specialization argument. If $k$ is the algebraic closure of~$\mQ$ in~$K$, there exist a smooth proper morphism of $k$-varieties $f \colon X \to S$, and an algebraic cycle $Z \subset X$, flat over~$S$, such that:

\begin{enumerate}[label=---]
\item $k(S) \cong K$ and the generic fiber $X_\eta$ of~$f$ is isomorphic to~$Y$;
\item for $s \in S$, the motivated cycle $\pi_{2i} \circ [Z_s] \circ \pi_{2i}$ on $X_s \times X_s$ cuts out a submotive $\motM_s \subset \motH^{2i}(X_s)\bigl(i\bigr)$ such that $\motM_\eta = \motH$. 
\end{enumerate}

Base change via $k \hookrightarrow \mC$ gives a smooth projective family $f_\mC \colon X_\mC \to S_\mC$, and the Hodge realizations of the submotives~$\motM_s$ for $s \in S(\mC)$ are the fibres of a sub-VHS of $R^{2i}f_{\mC,*}\mQ(i)$. In particular, the Hodge numbers of the motives~$\motM_s$ equal the Hodge numbers of the Betti realization of~$\motH$. The $\ell$-adic realizations of the~$\motM_s$ are the fibres of a smooth $\Ql$-subsheaf of $R^{2i}f_* \Ql(i)$. If $t$ is a point of~$S$, write $G_{t,\ell} = G_\ell(\motM_t)$, and let $V_{t,\ell}$ be the semi-simplification of~$M_{t,\ell}$ as a representation of~$G_{t,\ell}^0$.

Choosing geometric points $\bar{s}$ above~$s$ and $\bar\eta$ above~$\eta$, we get a specialization isomorphism $M_{\eta,\ell} \isomarrow M_{s,\ell}$. Taking this as an identification, $G_{s,\ell} \subseteq G_{\eta,\ell}$. By a result of Serre (see \cite{SerreRibet}, Section~1 and \cite{SerreMW}, Section~10.6), there are infinitely many closed points $s \in S$ such that $G_{s,\ell} = G_{\eta,\ell}$. For such points~$s$ we then have an isomorphism $V_{s,\ell} \isomarrow V_{\eta,\ell}$ and $G^\red_{s,\ell} = G^\red_{\eta,\ell}$. The theorem now follows by applying \cite{Pinkladic}, Theorem~3.18 (which is valid for submotives cut out by an algebraic cycle) to~$\motM_s$. 
\end{proof}

\subsection{}
Let $V$ be a finite dimensional vector space over a field~$k$ of characteristic~$0$, equipped with a non-degenerate symmetric bilinear form $\phi \colon V \times V \to k$.

If $\mu \colon \mG_{\mult,\kbar} \to \OO(V_\kbar,\phi)$ is a cocharacter and $V_\kbar = \oplus V^j_\kbar$ is the corresponding grading of~$V_\kbar$ then $V_\kbar^m$ and $V_\kbar^n$ are orthogonal whenever $m+n \neq 0$, and $\phi$ gives rise to non-degenerate pairings $V_\kbar^{-n} \times V_\kbar^n \to \kbar$. We shall say that~$\mu$ is {\it of K3 type\/} if $\dim(V_\kbar^{-1}) = \dim(V_\kbar^1) = 1$ and $V_\kbar^n = (0)$ whenever $|n|>1$.

\begin{theorem}
\label{ZarhK3ell}
\textup{(\romannumeral1)} Let $G \subset \SO(V,\phi)$ be a connected reductive subgroup such that $G_\kbar$ is generated by the images of cocharacters of K3 type. Then $V$, as a representation of~$G$, has a decomposition
\[
V = V_0 \oplus V_1 \oplus \cdots \oplus V_t \oplus V_{t+1} \oplus \cdots \oplus V_{t+u}
\]
such that
\begin{enumerate}
\item $V_0 = V^G$,
\item $V_i$ and~$V_j$ are non-isomorphic if $i \neq j$,
\item the form $\phi_i \colon V_i \times V_i \to k$ obtained by restriction of~$\phi$ is non-degenerate,
\item for $i \in \{1,\ldots,t\}$ the endomorphism algebra $E_i = \End_G(V_i)$ is a field, and $\phi_i(ev,w) = \phi(v,ew)$ for all $e \in E_i$ and $v, w \in V_i$,
\item for $i \in \{t+1,\ldots,t+u\}$ the endomorphism algebra $E_i = \End_G(V_i)$ is an \'etale quadratic extension of a field~$E_{i,0}$, and if $e \mapsto \bar{e}$ is the unique non-trivial automorphism of $E_i$ over~$E_{i,0}$, we have $\phi_i(ev,w) = \phi(v,\bar{e}w)$ for all $e \in E_i$ and $v, w \in V_i$.
\end{enumerate}

\noindent
Up to permutation of the summands $V_1\ldots,V_t$ and $V_{t+1},\ldots,V_{t+u}$, the decomposition is unique.

\textup{(\romannumeral2)} For $i>0$, let $G_i$ be the image of~$G$ in $\OO(V_i,\phi_i)$. Then each $G_{i,\kbar}$ is again generated by cocharacters of K3 type and $G = \prod_{i=1}^{t+u}\, G_i$ as subgroups of $\prod_{i=1}^{t+u}\, \OO(V_i,\phi_i)$.

\textup{(\romannumeral3)} For $i \in \{1,\ldots,t\}$ we have $G_i = \SO_{E_i/k}(V_i,\tilde\phi_i)$, where $\tilde\phi_i \colon V_i \times V_i \to E_i$ is the $E_i$-bilinear lift of~$\phi_i$. For $i \in \{t+1,\ldots,t+u\}$ we have $G_i = \UU_{E_i/k}(V_i,\tilde\phi_i)$, where $\tilde\phi_i \colon V_i \times V_i \to E_i$ is the $E_i$-valued hermitian lift of~$\phi_i$.
\end{theorem}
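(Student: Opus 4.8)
The plan is to reduce the statement to a purely group-theoretic assertion over~$\kbar$ and then descend, essentially redoing in greater generality the argument Zarhin used to establish the Hodge-theoretic statements recalled in~\ref{ZarhResult}; the hypothesis that $G_\kbar$ is generated by images of K3-type cocharacters plays here the role of Zarhin's hypothesis that the Mumford--Tate group is generated by the Hodge cocharacter. The first step is routine: as $G$ is connected reductive, decompose $V$ as a $G$-representation into its isotypic components, one of which is the trivial component $V^G$; set $V_0 = V^G$.

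The hypothesis enters through two observations, valid for any K3-type cocharacter $\mu$ factoring through $G_\kbar$. First, because $\phi$ is \emph{symmetric}, the weight-$2$ part of $\so(V_\kbar,\phi)$ for~$\mu$ vanishes (an endomorphism $V_\kbar^{-1}\to V_\kbar^1$ extended by zero is never $\phi$-skew), so the grading that $\mu$ induces on $\mathrm{Lie}(G_\kbar)$ has length three. An $\gsl_2$-triple argument then shows that both lines $V_\kbar^1$ and $V_\kbar^{-1}$ lie in the $G_\kbar$-submodule generated by $V_\kbar^1$; since that submodule equals the isotypic component containing $V_\kbar^1$, the two lines lie in one and the same isotypic component---except when $\mu$ is central in $G$, in which case an elementary argument puts them in a mutually dual pair of one-dimensional isotypic components. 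Second, since the extreme weight spaces of a K3-type cocharacter are one-dimensional, every non-trivial isotypic component is irreducible over~$k$ (the image of such a component under a K3-type cocharacter that touches it has a one-dimensional, hence simple, top weight space, forcing multiplicity one over~$\kbar$; multiplicity one over~$k$ and commutativity of the endomorphism algebra then follow by Galois descent). Hence $\End_G$ of any non-trivial component is a field.

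Now assemble the decomposition. The radical of $\phi$ restricted to a non-trivial isotypic component $W$ is a $G$-submodule, so $\phi|_W$ is either zero or non-degenerate; it is non-degenerate precisely when $W\cong W^\vee$, and otherwise $\phi$ pairs $W$ perfectly with the distinct component $W^\vee$. When $\phi|_W$ is non-degenerate it is symmetric (restriction of a symmetric form), and there is a unique involution $\theta$ of $E=\End_G(W)$ with $\phi(ev,w)=\phi(v,\theta(e)w)$. Let $V_1,\dots,V_t$ be those self-dual components $W$ with $\theta=\mathrm{id}$, and let $V_{t+1},\dots,V_{t+u}$ be the remaining self-dual components (for which $E/E^\theta$ is a quadratic field extension) together with one piece $W\oplus W^\vee$ for each mutually dual pair of distinct components (for which $\End_G(W\oplus W^\vee)=E\times E$ with the swap involution, using $\phi$ to identify $\End_G(W^\vee)\cong E$). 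One checks directly that each $\phi_i=\phi|_{V_i}$ is non-degenerate and symmetric with the stated bilinear, resp.\ hermitian, compatibility, so $V=V_0\oplus V_1\oplus\cdots\oplus V_{t+u}$ has properties (1)--(5); uniqueness up to permutation within the two blocks is clear, since the isotypic decomposition and the duality pairing on its components are canonical, as is the dichotomy $\theta_i=\mathrm{id}$ versus $\theta_i\ne\mathrm{id}$. By the observations above, any K3-type cocharacter touching $V_i$ induces a genuine K3-type grading on $V_{i,\kbar}$ and touches no $V_j$ with $j\ne i$, $j\ge 1$; since $G_i$ (the image of $G$ in $\OO(V_i,\phi_i)$) is connected, hence lies in $\SO(V_i,\phi_i)$, and is generated by the images of the K3-type cocharacters touching $V_i$, this gives that $G_{i,\kbar}$ is generated by K3-type cocharacters and---because the generators of $G$ are then each supported on a single factor---that $G=\prod_{i\ge 1}G_i$, which is~(ii).

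It remains to identify $G_i$, and I expect this to be the main point requiring care. Extend scalars to~$\kbar$. Since $G_i$ commutes with~$E_i$, the group $G_{i,\kbar}$ respects the decomposition of $V_{i,\kbar}$ along the factors of $E_i\otimes_k\kbar$ (resp.\ of $E_{i,0}\otimes_k\kbar$), under which $\SO_{E_i/k}(V_i,\tilde\phi_i)_\kbar$, resp.\ $\UU_{E_i/k}(V_i,\tilde\phi_i)_\kbar$, becomes a product of copies of $\SO_n$, resp.\ $\GL_n$. A K3-type cocharacter commuting with~$E_i$ has its one-dimensional extreme weight spaces inside a single such factor, hence is supported there; the Galois group permutes the factors transitively (because $E_i$, resp.\ $E_{i,0}$, is a field---for a piece $W\oplus W^\vee$ one uses transitivity on the embeddings of~$E$), and at least one generating cocharacter acts non-trivially on $V_i$. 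So $G_i$ contains a K3-type cocharacter in every factor, and one concludes by the classical facts that $\SO_n$ is generated by the K3-type cocharacters of its standard representation and that $\GL_n$ is generated by the cocharacters $z\mapsto\mathrm{diag}(z,1,\dots,1)$ and their conjugates. This yields $G_i=\SO_{E_i/k}(V_i,\tilde\phi_i)$ for $i\le t$ and $G_i=\UU_{E_i/k}(V_i,\tilde\phi_i)$ for $i>t$, i.e.\ part~(iii). The genuine work throughout is the bookkeeping of the étale algebras $E_i$, their involutions, and the split-versus-field distinction---this is where the substance of Zarhin's Theorems~2.2.1 and~2.3.1 gets reproduced in the form needed here.
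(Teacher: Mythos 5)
Your proposal follows the same overall route as the paper's proof: decompose $V$ into isotypic components, show these are irreducible with (commutative) field endomorphism algebras, sort by the type of involution $\phi$ induces, and then identify the $\kbar$-factors of $G_i$. There are, however, two problems, one minor and one serious.

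\textbf{The first observation is wrong.} You claim that for a non-central K3-type cocharacter $\mu$, the lines $V_\kbar^1$ and $V_\kbar^{-1}$ lie in the same isotypic component, and that the dual-pair case can only arise when $\mu$ is central (and then with one-dimensional components). This is false: take $G = \GL_n$ ($n \geq 2$) acting on $V = \St \oplus \St^\vee$ via $g \mapsto (g, g^{-\transpose})$; the symmetric pairing of $\St$ with $\St^\vee$ makes this a subgroup of $\SO_{2n}$. The cocharacter $z \mapsto \mathrm{diag}(z,1,\ldots,1)$ is non-central and of K3 type, yet $V^1 \subset \St$ and $V^{-1} \subset \St^\vee$, two distinct $n$-dimensional isotypic components. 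The alleged $\gsl_2$-triple argument also does not apply, since you have just shown the degree-$2$ piece of the grading on $\mathrm{Lie}(G_\kbar)$ vanishes, so there is no standard triple with $h = d\mu(1)$. Fortunately the fact you actually need---that a K3-type cocharacter touches only one of the final summands $V_1,\ldots,V_{t+u}$---is true for a simpler reason: $\phi$ pairs $V^1$ and $V^{-1}$ perfectly, and the $V_i$ are mutually $\phi$-orthogonal, so both lines must lie in a single $V_i$ (possibly of the form $W \oplus W^\vee$).

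\textbf{The serious gap is in part (iii).} After extending scalars to $\kbar$ and decomposing $G_{i,\kbar} = \prod_\sigma H_\sigma$ with each $H_\sigma \subset \SO(V_{i,\sigma}, \tilde\phi_{i,\sigma})$ (or $\GL(W)$ in the hermitian case) acting irreducibly and generated by K3-type cocharacters, you must show $H_\sigma$ equals the full group. Your justification---``one concludes by the classical facts that $\SO_n$ is generated by the K3-type cocharacters of its standard representation,'' and similarly for $\GL_n$---does not do this. Those facts concern the ambient group; they say nothing about a connected reductive \emph{subgroup} $H_\sigma$ that happens to contain only some K3-type cocharacters (namely, those coming from $G$). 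A priori a proper subgroup could be generated by K3-type cocharacters while acting irreducibly. What the paper uses, and what genuinely is needed, is Kostant's characterization theorem (\cite{Kostant}, Corollary on p.~107): a connected reductive subgroup of $\SO(V)$ (resp.\ $\SL(V)$, $\mathrm{Sp}(V)$) that acts irreducibly and is generated by cocharacters with one-dimensional extremal eigenspaces must be the whole group. The irreducibility of $H_\sigma$ on $V_{i,\sigma}$, which you have established, has to be fed into this theorem together with the cocharacter condition; neither alone suffices. Without invoking Kostant (or reproducing the content of Zarhin's Theorems~2.2.1 and 2.3.1, which rely on it), the identification of $G_i$ remains unproved.
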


\begin{proof} (\romannumeral1) Let $W \subset V$ be an isotypic component on which $G$ acts non-trivially, so that $\End_G(W)$ is a simple $k$-algebra. If $U$ is an isotypic component of~$W_\kbar$ as a representation of~$G_\kbar$, there is a cocharacter $\mu \colon \mG_{\mult,\kbar} \to G_\kbar$ of K3 type that gives a non-trivial grading of~$U$. But $\dim(V_\kbar^{-1}) = \dim(V_\kbar^1) = 1$ for the grading of~$V_\kbar$ given by~$\mu$; so $U$ is necessarily an irreducible representation of~$G_\kbar$. Hence $\End_G(W)$ is a field, and we conclude that the non-trivial isotypic components of~$V$ are all irreducible.

If $\phi|_W$ is degenerate, it is the zero form. In this case there is a unique irreducible summand $W^\prime$ such that $\phi$ restricts to a non-degenerate form on $W \oplus W^\prime$. With $E = \End_G(W)$ we have $\End_G(W\oplus W^\prime) \cong E \times E$, such that the involution induced by~$\phi$ is given by $(e_1,e_2) \mapsto (e_2,e_1)$, which is of the second kind. If $\phi|_W$ is non-degenerate, let $e \mapsto \bar{e}$ be the involution of $E = \End_G(W)$ induced by~$\phi$. 

Let $V_1,\ldots,V_t$ be the irreducible summands of~$V$ on which $\phi$ is non-zero and such that the involution on $\End_G(V_i)$ is trivial (i.e., of the first kind). Let $V_{t+1},\ldots,V_{t+u}$ be the remaining irreducible summands on which $\phi$ is non-zero (with involution of the second kind on $\End_G(V_i)$), together with all $W \oplus W^\prime$ as above for which $\phi|_W = \phi|_{W^\prime} = 0$. Together with $V_0 = V^G$ this gives the stated decomposition.

(\romannumeral2) Choose a decomposition as in~(\romannumeral1). We view $G$ as an algebraic subgroup of $\prod_{i=1}^{t+u}\, \OO(V_i,\phi_i)$; in particular this gives us homomorphisms $r_i \colon G \to \OO(V_i,\phi_i)$. 

Let $\cS$ be the set of cocharacters of~$G_\kbar$ that are of K3 type. If $\mu \in \cS$, there is a unique index $i \in \{1,\ldots,t+u\}$ such that the induced action of $\mG_{\mult,\kbar}$ on $V_{i,\kbar}$ is non-trivial. This gives a decomposition $\cS = \coprod_{i=1}^{t+u}\, \cS_i$. Clearly the subsets~$\cS_i$ are stable under the action of $G(\kbar)$ on~$\cS$ by conjugation. Also they are stable under the natural action of $\Gal(\kbar/k)$. Hence we have normal subgroups~$H_i$ of~$G$ (over~$k$) such that $H_{i,\kbar}$ is generated by the cocharacters in~$\cS_i$.

If $\mu \in \cS_i$ then the induced action of $\mG_{\mult,\kbar}$ on $V_{j,\kbar}$ is trivial for all $j \neq i$. It follows that $H_i \subset \OO(V_i,\phi_i)$ (viewed as algebraic subgroup of~$\OO(V,\phi)$), and because $G$ is generated by all images of cocharacters of K3 type, $G = \prod_{i=1}^{t+u}\, H_i$. A fortiori, $H_i$ is the image of~$G$ in $\OO(V_i,\phi_i)$, i.e., $H_i=G_i$. 

(\romannumeral3) As in Zarhin's paper~\cite{ZarhK3}, the argument is based on Kostant's results in~\cite{Kostant}. Kostant states the Corollary to his main theorem (loc.\ cit., p.~107) over~$\mC$; this implies the same result over an arbitrary algebraically closed field of characteristic~$0$, as all objects involved are defined over a subfield that admits an embedding into~$\mC$.

For $i \in \{1,\ldots,t\}$ we have $G_{i,\kbar} \subset \prod_{\sigma \colon E_i \to \kbar}\, \SO(V_{i,\sigma},\tilde\phi_{i,\sigma})$, where $V_{i,\sigma} = V_i \otimes_{E_i,\sigma} \kbar$ and $\tilde\phi_{i,\sigma}$ denotes the bilinear extension of~$\tilde\phi_i$ to a form on~$V_{i,\sigma}$. By the same arguments as in the proof of~(\romannumeral2) we have connected subgroups $H_\sigma \subset \SO(V_{i,\sigma},\tilde\phi_{i,\sigma})$ such that $G_{i,\kbar} = \prod_\sigma\, H_\sigma$. Each~$H_\sigma$ is generated by the images of cocharacters of K3 type, and its representation on~$V_{i,\sigma}$ is irreducible. By Kostant's result, $H_\sigma = \SO(V_{i,\sigma},\tilde\phi_{i,\sigma})$; hence $G_i = \SO_{E_i/k}(V_i,\tilde\phi_i)$.

For $i \in \{t+1,\ldots,t+u\}$, let $\Sigma_0$ be the set of embeddings $E_{i,0} \to \kbar$. For $\tau \in \Sigma_0$ we have a unique $G_\kbar$-stable decomposition of $V_{i,\tau} = V_i \otimes_{(E_{i,0}),\tau} \kbar$ as $V_{i,\tau} = W \oplus W^\prime$ and the bilinear extension of the hermitian form~$\tilde\phi_i$ to~$V_{i,\tau}$ induces a duality $W^\prime \cong W^\vee$. Then $\UU(V_{i,\tau},\tilde\phi_{i,\tau}) \cong \GL(W)$, with $\GL(W)$ acting on~$W$ and~$W^\prime$ through the tautological representation and its contragredient, respectively. As before we have connected subgroups $H_\tau \subset \UU(V_{i,\tau},\tilde\phi_{i,\tau})$ such that $G_{i,\kbar} = \prod_{\tau\in \Sigma_0}\, H_\tau$ and each~$H_\tau$ is generated by  cocharacters of K3 type. If $\mu$ is a cocharacter of K3 type of~$G_{i,\tau}$ that has a non-trivial projection to~$H_\tau$, this induces a non-trivial grading of~$W$ with $W^j = (0)$ if $|j|>1$ and either $\dim(W^{-1}) = 1$ and $\dim(W^1)=0$ or $\dim(W^{-1}) = 0$ and $\dim(W^1)=1$. By Kostant's result it follows that $H_\tau = \GL(W)$; hence $G_i = \UU_{E_i/k}(V_i,\tilde\phi_i)$.
\end{proof}

\begin{corollary}
\label{HowtogetG=G'}
Let $G \subset G^\prime \subset \SO(V,\phi)$ be connected reductive subgroups such that $G_\kbar$ and~$G^\prime_\kbar$ are both generated by images of cocharacters of K3 type. If $\End_{G^\prime}(V) \isomarrow \End_G(V)$ then $G=G^\prime$.
\end{corollary}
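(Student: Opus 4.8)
The plan is to reduce everything to Theorem~\ref{ZarhK3ell} by showing that the canonical decompositions of $V$ attached to $G$ and to $G'$ are literally the same subspaces, after which the problem becomes one about the individual orthogonal factors, which are pinned down by their endomorphism algebras.

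First I would observe that the inclusion $G \hookrightarrow G'$ induces an inclusion of $k$-subalgebras $\End_{G'}(V) \subseteq \End_G(V)$ of $\End(V)$, which the hypothesis turns into an equality. Since $\charact(k)=0$ and both groups are reductive, $V$ is semisimple over each of them, so the decomposition of $V$ into isotypic components (for $G$, resp.\ for $G'$) is cut out by the central primitive idempotents of $\End_G(V)$, resp.\ $\End_{G'}(V)$; as these algebras coincide, $V$ has the same isotypic decomposition for $G$ and for $G'$. Now apply Theorem~\ref{ZarhK3ell}(\romannumeral1) to $G$ to get a decomposition $V = V_0 \oplus V_1 \oplus \cdots \oplus V_{t+u}$, and I claim this very decomposition also satisfies the defining properties (1)--(5) relative to $G'$, so by the uniqueness clause it agrees (up to permutation) with the one produced by Theorem~\ref{ZarhK3ell}(\romannumeral1) for $G'$. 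Indeed, property~(3) and the identities $\phi_i(ev,w)=\phi(v,\bar e w)$ in (4)--(5) only involve $\phi$ and the subspaces~$V_i$, so they transfer verbatim; property~(2) transfers because $V_i \cong V_j$ as $G'$-modules would force $V_i \cong V_j$ as $G$-modules; and each projector $e_i\colon V \to V_i$ is a sum of central primitive idempotents of $\End_G(V)=\End_{G'}(V)$, hence central, so $\End_{G'}(V_i)=e_i\End_{G'}(V)e_i = e_i\End_G(V)e_i = \End_G(V_i)=E_i$ with the same $\phi$-induced involution, so the split into the ranges $\{1,\dots,t\}$ and $\{t+1,\dots,t+u\}$ is the same.

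The one delicate point is property~(1), that $V_0 = V^{G'}$, where a priori we only know $V^{G'}\subseteq V^G = V_0$. Here $V^G$ is cut out by a central idempotent of $\End_{G'}(V)$, so it is a single $G'$-isotypic component, say $\cong W^{\oplus m}$ with $m=\dim V^G$ and $\End_{G'}(W)=k$. If $m\ge 2$ then $W$ has multiplicity $\ge 2$ in $V$, which by the multiplicity-one statement established inside the proof of Theorem~\ref{ZarhK3ell}(\romannumeral1) (non-trivial isotypic components are irreducible) forces $W$ to be the trivial representation; if $m=1$ then $\phi$ is non-degenerate on the line $V^G$ (its radical would lie in the radical of $\phi$ on all of $V$), so $G'$ acts on it through a character with values in $\{\pm1\}$, which is trivial because $G'$ is connected. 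Either way $V^G\subseteq V^{G'}$, so $V_0=V^{G'}$. I expect this identification $V^G=V^{G'}$ to be the main obstacle, as it is the only place where one genuinely uses that $G'$ is connected and generated by cocharacters of K3 type — so that $1$-dimensional orthogonal summands are automatically fixed — rather than merely reductive.

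With the decompositions matched, the endgame is immediate: for each $i\in\{1,\dots,t+u\}$ we have $E_i=\End_G(V_i)=\End_{G'}(V_i)$ with the same type of induced involution, and $\tilde\phi_i$ is in both cases the same $E_i$-(bi)linear, resp.\ hermitian, lift of $\phi_i$; hence Theorem~\ref{ZarhK3ell}(\romannumeral3) gives $G_i = \SO_{E_i/k}(V_i,\tilde\phi_i)=G_i'$ in the totally real case and $G_i=\UU_{E_i/k}(V_i,\tilde\phi_i)=G_i'$ in the hermitian case. Finally Theorem~\ref{ZarhK3ell}(\romannumeral2) yields $G=\prod_{i=1}^{t+u}G_i = \prod_{i=1}^{t+u}G_i' = G'$ inside $\prod_i \OO(V_i,\phi_i)$, which is the desired conclusion.
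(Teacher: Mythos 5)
Your proof is correct and follows essentially the same route as the paper's one-line argument, which observes that Theorem~\ref{ZarhK3ell} exhibits $G$ as the identity component of the commutant of $\End_G(V)$ inside $\SO(V,\phi)$, and likewise for $G'$. That compact phrasing in particular sidesteps your case analysis for the piece $V_0$: the trivial isotypic component contributes only $\{\pm 1\}$ to the commutant, hence nothing to its identity component.
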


\begin{proof}
The proposition implies that $G$ is the identity component of the commutant of $\End_G(V)$ inside $\SO(V,\phi)$; likewise for~$G^\prime$. 
\end{proof}

\begin{remark} 
Let $k \subset L$ be an extension of fields of characteristic~$0$. If $G \subset \SO(V,\phi)$ is a connected reductive subgroup such that $G_\kbar$ is generated by images of cocharacters of K3 type then the same is true for $G_\Lbar \subset \SO(V_\Lbar,\phi)$: use that $G_\kbar$ is Zariski dense in~$G_\Lbar$.
\end{remark}

\begin{remark}
\label{YAGap}
Theorem~\ref{ZarhK3ell} and its Corollary~\ref{HowtogetG=G'} fill what appears to be a gap in Andr\'e's paper~\cite{YAK3}. Specifically, the last sentence of loc.\ cit., Section~7.4, is correct but only refers to the setting of Hodge-Tate modules, i.e., local Galois representations. The above results provide the needed analogue for {\it global\/} Galois representations. 
\end{remark}


\section{Norm functors (a.k.a.\ corestrictions)}
\label{Norms}

\subsection{}
\label{NormBasics}
We shall need some basic results about the norm, or ``corestriction'', of algebraic structures. A basic reference for this is Ferrand's paper~\cite{Ferrand}; see also~\cite{Riehm}. We only need these notions for an extension $k \to E$ where $k$ is a field of characteristic~$0$ and $E$ is a finite \'etale $k$-algebra, i.e., a finite product of finite field extensions of~$k$, and for our purposes in this paper it suffices to consider only neutral Tannakian categories. 

Let
\begin{equation}
\label{eq:NormFunctMod}
\FNorm_{E/k} \colon \Mod_E \to \Mod_k 
\end{equation}
be the norm functor defined in \cite{Ferrand}. If $M$ is an $E$-module, $\FNorm_{E/k}(M)$ is a $k$-form of $\otimes_\sigma\, M_\sigma$, where the tensor product is taken over the set of $k$-homomorphisms $\sigma\colon E \to \kbar$ and $M_\sigma = M \otimes_{E,\sigma} \kbar$. By definition of the norm functor we have a polynomial map $\nu_M \colon M \to \FNorm_{E/k}(M)$ such that $\nu_M(em) = \Norm_{E/k}(e) \cdot \nu_M(m)$ for all $e\in E$ and $m\in M$.

The norm functor is a $\otimes$-functor (non-additive, unless $E=k$). It has the property that $\FNorm_{E/k}\bigl(\Hom_E(M_1,M_2)\bigr) = \Hom_k\bigl(\FNorm_{E/k}(M_1),\FNorm_{E/k}(M_2)\bigr)$.

As shown in \cite{Ferrand}, if $A$ is an $E$-algebra, $\FNorm_{E/k}(A)$ has a natural structure of a $k$-algebra; this gives a functor 
\[
\FNorm_{E/k} \colon \Alg_E \to \Alg_k
\]
that on the underlying modules is the norm functor~\eqref{eq:NormFunctMod}. The polynomial map $\nu_A \colon A \to \FNorm_{E/k}(A)$ is multiplicative; see \cite{Ferrand}, Prop.~3.2.5. If $A$ is a central simple $E$-algebra, $\FNorm_{E/k}(A)$ is what is classically called the corestriction of~$A$ to~$k$, which is a central simple $k$-algebra.

Let $G$ be an affine $E$-group scheme with affine algebra $A = \Gamma(G,\cO_G)$. Then $\FNorm_{E/k}(A)$ has a natural structure of a commutative Hopf algebra over~$k$ and this is the affine algebra of the $k$-group scheme $\Res_{E/k} G$. (See \cite{Ferrand}, Prop.~6.2.2.)

Let $V$ be an $E$-module of finite type, and write $N(V) = \FNorm_{E/k}(V)$. We have a natural homomorphism $\eta\colon \Res_{E/k} \GL(V) \to \GL\bigl(N(V)\bigr)$. On $k$-valued points it is the homomorphism that sends an $E$-linear automorphism $f \in \GL(V)$ to $\FNorm_{E/k}(f)$. If $V$ is a faithful $E$-module, $\Ker(\eta) = T_E^1 \subset \Res_{E/k} \GL(V)$, with notation as in~\ref{NotConvIntro}(e). If $V$ is not a faithful $E$-module, $N(V) = 0$ and $\eta$ is trivial.

\subsection{}
\label{ForgetFun}
Let $\cG$ be an affine group scheme over~$k$ and consider the neutral Tannakian category $\calC = \Rep_k(\cG)$. Let $\calC_{(E)}$ be the category of $E$-modules in~$\calC$. Then $\calC_{(E)}$ is $\Rep_E(\cG_E)$. 

Let $V$ be an $E$-module of finite type. Denote the underlying $k$-vector space by~$V_{(k)}$. We assume given a representation $\tilde\rho \colon \cG_E \to \GL(V)$, making $V$ into an object of~$\calC_{(E)}$. Note that to give~$\tilde\rho$ is equivalent to giving a homomorphism $\cG \to \Res_{E/k} \GL(V)$. This, in turn, is equivalent to giving $\rho \colon \cG \to \GL(V_{(k)})$ such that the $E$-action on~$V$ commutes with the $\cG$-action. The homomorphism~$\rho$ makes $V_{(k)}$ into an object of~$\calC$, and $V \mapsto V_{(k)}$ (or better: $\tilde\rho \mapsto \rho$) is the forgetful functor $\calC_{(E)} \to \calC$.

\subsection{}
\label{GEGEbar}
Keeping the notation of~\ref{ForgetFun}, the Tannakian subcategory $\langle V_{(k)}\rangle^\otimes \subset \calC$ generated by~$V_{(k)}$ is equivalent to $\Rep_k(G)$, where $G := \Image(\rho) \subset \Res_{E/k} \GL(V) \subset \GL(V_{(k)})$. The Tannakian subcategory $\langle V\rangle^\otimes \subset \calC_{(E)}$ generated by~$V$ is equivalent to $\Rep_k(\ol{G}_E)$, where $\ol{G}_E$ is the image of the natural homomorphism $G_E \to \GL(V)$. Note that the quotient map $G_E \twoheadrightarrow \ol{G}_E$ is not injective, in general; this corresponds to the fact that not every $E$-module in $\langle V_{(k)}\rangle^\otimes$ comes from an object of $\langle V\rangle^\otimes$. As an example, suppose $G = \Res_{E/k} H$ for some algebraic subgroup $H \subset \GL(V)$. In this case $\ol{G}_E = H$ and $G_E \twoheadrightarrow \ol{G}_E$ is the canonical quotient map $G_E \to H$.

\subsection{}
\label{N(V)Tannak}
With $V$ as in~\ref{ForgetFun} we can also form the $k$-vector space $N(V) = \FNorm_{E/k}(V)$ and consider the representation
\[
G \hookrightarrow \Res_{E/k} \GL(V) \mapright{\eta} \GL\bigl(N(V)\bigr)\, ,  
\]
where $\eta$ is the homomorphism defined in~\ref{NormBasics}. This makes $N(V)$ into an object of $\langle V_{(k)}\rangle^\otimes \subset \calC$. If $V$ is a faithful $E$-module, the Tannakian subcategory $\langle N(V)\rangle^\otimes \subset \calC$ is equivalent to $\Rep(G_1)$, where $G_1 \cong G/(G\cap T_E^1)$ is the image of~$G$ in $\GL\bigl(N(V)\bigr)$.

Associating $N(V) = \FNorm_{E/k}(V)$ to~$V$ defines a $\otimes$-functor $N\colon \calC_{(E)} \to \calC$, which is non-additive, unless $k=E$. We again call this~$N$ a norm functor.

For $V$ as above we have a natural map $\End_{\calC_{(E)}}(V) \to \End_{\calC}\bigl(N(V)\bigr)$. This is a ``normic law'' in the sense of~\cite{Ferrand}; by definition of the norm functor it therefore factors as
\begin{equation}
\label{eq:Homalpha}
\End_{\calC_{(E)}}(V) \mapright{\nu} \FNorm_{E/k}\bigl(\End_{\calC_{(E)}}(V)\bigr) \mapright{\alpha} \End_{\calC}\bigl(N(V)\bigr)  
\end{equation}
with $\alpha$ a homomorphism of $k$-algebras.

\begin{lemma}
\label{ResHInvarLem}
With notation as above, suppose $G = \Res_{E/k} H$ for an algebraic subgroup $H \subset \GL(V)$. 

\textup{(\romannumeral1)} The homomorphism~$\alpha$ in \textup{\eqref{eq:Homalpha}} is an isomorphism.

\textup{(\romannumeral2)} The natural map $N(V^H) \to N(V)^G$ is an isomorphism. 
\end{lemma}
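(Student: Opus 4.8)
The plan is to reduce both assertions to the behaviour of the norm functor under the comparison with a tensor product over $\kbar$, exploiting the hypothesis $G=\Res_{E/k}H$. First I would recall, from~\ref{NormBasics} and the defining property $\FNorm_{E/k}\bigl(\Hom_E(M_1,M_2)\bigr)=\Hom_k\bigl(\FNorm_{E/k}(M_1),\FNorm_{E/k}(M_2)\bigr)$, that applying $\FNorm_{E/k}$ to the $E$-algebra $\End_E(V)$ produces the $k$-algebra $\End_k\bigl(N(V)\bigr)$ on the nose, and that the natural map $\alpha$ of \eqref{eq:Homalpha} is nothing but the restriction of this isomorphism to the subalgebras cut out by the group actions. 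So part~(\romannumeral1) amounts to identifying $\FNorm_{E/k}\bigl(\End_{\calC_{(E)}}(V)\bigr)$ with $\End_{\calC}\bigl(N(V)\bigr)$ inside $\End_k\bigl(N(V)\bigr)$. Here the key point is that, because $G=\Res_{E/k}H$, we have $\ol{G}_E=H$ by the example in~\ref{GEGEbar}, so $\End_{\calC_{(E)}}(V)=\End_{\Rep_E(\ol G_E)}(V)=\End_H(V)$; and dually $\End_{\calC}\bigl(N(V)\bigr)=\End_{G_1}\bigl(N(V)\bigr)$ with $G_1$ the image of $G$ in $\GL\bigl(N(V)\bigr)$ from~\ref{N(V)Tannak}.

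The cleanest route is to base change to $\kbar$, where everything becomes a tensor product over the set $\Sigma$ of $k$-embeddings $\sigma\colon E\to\kbar$. Over $\kbar$ one has $E\otimes_k\kbar\cong\prod_{\sigma}\kbar$, hence $V\otimes_k\kbar\cong\bigoplus_\sigma V_\sigma$, and $N(V)\otimes_k\kbar\cong\bigotimes_\sigma V_\sigma$ canonically (this is exactly the $k$-form property of the norm). Under this identification $H_{\kbar}$ decomposes as $\prod_\sigma H_\sigma$ with $H_\sigma\subset\GL(V_\sigma)$, and the map $\eta$ sends $(h_\sigma)_\sigma$ to the tensor product automorphism $\bigotimes_\sigma h_\sigma$ of $\bigotimes_\sigma V_\sigma$; thus $G_{1,\kbar}$ is the image of $\prod_\sigma H_\sigma$ in $\GL\bigl(\bigotimes_\sigma V_\sigma\bigr)$. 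Now I would use the standard fact that for a tensor product of representations of a product of groups acting through the individual factors, the commutant of $\bigotimes_\sigma H_\sigma$ in $\End\bigl(\bigotimes_\sigma V_\sigma\bigr)$ is $\bigotimes_\sigma\End_{H_\sigma}(V_\sigma)$ — this holds because each $\End_{H_\sigma}(V_\sigma)$ is the commutant of a group acting on $V_\sigma$, and commutants of subalgebras of $\End(V_\sigma)$ are multiplicative under $\otimes$ over an algebraically closed field (double-centralizer / the fact that $\End_{H\times H'}(W\otimes W')=\End_H(W)\otimes\End_{H'}(W')$). The right-hand side is visibly $\FNorm_{E/k}\bigl(\End_E(V)^H\bigr)\otimes_k\kbar=\FNorm_{E/k}\bigl(\End_{\calC_{(E)}}(V)\bigr)\otimes_k\kbar$, so $\alpha_{\kbar}$ is an isomorphism, and since $\alpha$ is a $k$-algebra homomorphism between finite-dimensional $k$-algebras it is an isomorphism already over~$k$. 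For part~(\romannumeral2), the same base change gives $V^H\otimes_k\kbar=\bigoplus_\sigma V_\sigma^{H_\sigma}$ and $N(V^H)\otimes_k\kbar=\bigotimes_\sigma V_\sigma^{H_\sigma}$, while $N(V)^G\otimes_k\kbar=\bigl(\bigotimes_\sigma V_\sigma\bigr)^{\prod_\sigma H_\sigma}=\bigotimes_\sigma V_\sigma^{H_\sigma}$ (invariants of a tensor product of representations of a product group are the tensor product of the invariants, again because the ground field is algebraically closed and we are dealing with linearly reductive actions, or more elementarily because it is a statement about fixed vectors in a tensor product). Since the natural map $N(V^H)\to N(V)^G$ is a $k$-linear map inducing this isomorphism after $\otimes_k\kbar$, it is an isomorphism over~$k$.

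The main obstacle I anticipate is bookkeeping rather than anything deep: making the identification $N(V)\otimes_k\kbar\cong\bigotimes_{\sigma}V_\sigma$ compatible with the group actions, so that $G_1$ really acts as the image of $\prod_\sigma H_\sigma$ via the tensor-product homomorphism, and checking that under this identification the map $\alpha$ (defined intrinsically via the normic law $\nu$) corresponds to the obvious inclusion $\bigotimes_\sigma\End_{H_\sigma}(V_\sigma)\hookrightarrow\End\bigl(\bigotimes_\sigma V_\sigma\bigr)$. This is exactly where one must invoke the precise construction of the norm functor in~\cite{Ferrand} — in particular the multiplicativity and $\otimes$-compatibility of $\FNorm_{E/k}$ and the formula $\FNorm_{E/k}(\Hom_E(-,-))=\Hom_k(\FNorm_{E/k}(-),\FNorm_{E/k}(-))$ recalled in~\ref{NormBasics} — rather than trying to recompute it by hand. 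One should also note that since $G=\Res_{E/k}H$, the module $V$ is automatically faithful over $E$ (otherwise $H$ would fail to be embedded in $\GL(V)$ in the relevant factor, or one restricts to the faithful part), so the description of $\langle N(V)\rangle^\otimes$ from~\ref{N(V)Tannak} applies; if $V$ is not faithful the statement is vacuous or trivial as noted there. Apart from that, once the $\kbar$-picture is set up correctly, both (\romannumeral1) and (\romannumeral2) follow formally from the two elementary tensor-product lemmas (commutant of a tensor product, invariants of a tensor product) together with faithfully flat descent from $\kbar$ to~$k$.
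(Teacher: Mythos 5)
Your proof is correct and follows essentially the same route as the paper: base change to $\kbar$, identify $E\otimes_k\kbar\cong\prod_\sigma\kbar$ so that $V$ becomes $\bigoplus_\sigma V_\sigma$, $H$ becomes $\prod_\sigma H_\sigma$, and $N(V)$ becomes $\bigotimes_\sigma V_\sigma$, and then observe that both assertions reduce to the elementary facts that the commutant of $\prod_\sigma H_\sigma$ in $\End\bigl(\bigotimes_\sigma V_\sigma\bigr)$ is $\bigotimes_\sigma\End_{H_\sigma}(V_\sigma)$ and that the invariants of $\bigotimes_\sigma V_\sigma$ under $\prod_\sigma H_\sigma$ are $\bigotimes_\sigma V_\sigma^{H_\sigma}$. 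The paper records exactly this reduction and declares the two displayed identities ``clear''; you flesh out the bookkeeping but add no new idea, which is fine. One small remark: you do not need linear reductivity or the algebraic closedness of the field for either tensor-product identity — both hold for arbitrary unital subalgebras respectively arbitrary group actions over any field, by a direct basis argument — so your parenthetical hedge about ``linearly reductive actions'' is superfluous, as you yourself note.
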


\begin{proof}
It suffices to prove the assertions after extension of scalars to~$\kbar$. Let $\Sigma(E)$ be the set of $k$-homomorphisms $E \to \kbar$, and for $\sigma \in \Sigma(E)$ let a subscript~``$\sigma$'' denote the extension of scalars from~$E$ to~$\kbar$ via~$\sigma$. Then the assertions just say that
\[
\bigotimes_{\sigma \in \Sigma(E)} \, \End(V_\sigma)^{H_\sigma} \isomarrow \End\Bigl(\bigotimes_{\sigma \in \Sigma(E)}\, V_\sigma\Bigr)^{\prod_\sigma H_\sigma}
\quad\hbox{and}\quad
\bigotimes_{\sigma \in \Sigma(E)} \, (V_\sigma)^{H_\sigma} \isomarrow \Bigl(\bigotimes_{\sigma \in \Sigma(E)}\, V_\sigma\Bigr)^{\prod_\sigma H_\sigma}\, ,
\]
which are clear. 
\end{proof}

\subsection{}
\label{IndepOfFib}
{}From the description of the norm functor given in~\ref{N(V)Tannak}, it is not clear whether this functor depends on the choice of a fibre functor on~$\calC$. An alternative description, from which it is clear that it does not depend on such a choice, and that at the same time allows to extend the construction to more general tensor categories (including non-neutral Tannakian categories), is the following. It is important here that $\charact(k) = 0$, so that the symmetric algebra of a $k$-module~$W$ is the same as the divided power algebra~$\Gamma_k(W)$. We have not yet attempted to extend the construction to coefficient fields of positive characteristic.

Let $d = [E:K]$. As explained in \cite{Ferrand}, Section~2.4, $\Sym^d_k(E)$ has the structure of a $k$-algebra. The norm map $\Norm_{E/k} E \to k$, which is a homogeneous polynomial map of degree~$d$, induces an augmentation $\Sym^d_k(E) \to k$ (i.e., a homomorphism of $k$-algebras that is a section of the structural homomorphism $k \to \Sym^d_k(E)$). The norm functor $N\colon \calC_{(E)} \to \calC$ associates to an object~$M$ of~$\calC_{(E)}$ the object
\[
\Sym^d_k\bigl(M_{(k)}\bigr) \otimes_{\Sym^d_k(E)} k\, .
\]
(Cf.\ \cite{Ferrand}, the construction in the proof of Th\'eor\`eme~3.2.3.) For $f \colon M \to M^\prime$ a morphism in~$\calC_{(E)}$, the induced $N(f) \colon N(M) \to N(M^\prime)$ is given by $\Sym^d(f) \otimes \id$.

\begin{example}
\label{NormQHS}
For $E$ a number field, consider the norm functor $N \colon \QHS_{(E)} \to \QHS$. Let $\Sigma(E)$ be the set of complex embeddings of~$E$. If $V$ is a $\mQ$-Hodge structure of weight~$n$ on which $E$ acts, we have a decomposition $V_\mC = \oplus_{\sigma \in \Sigma(E)}\, V_\mC(\sigma)$ and for each~$\sigma$ a decomposition $V_\mC(\sigma) = \oplus_{p+q=n}\, V_\mC(\sigma)^{p,q}$, such that complex conjugation on~$V_\mC$ restricts to bijections $V_\mC(\sigma)^{p,q} \isomarrow V_\mC(\bar\sigma)^{q,p}$. If $\tilde{E}$ is the normal closure of~$E$ in~$\mC$ we have a decomposition $V_{\tilde{E}} = \oplus_{\sigma \in \Sigma(E)}\, V_{\tilde{E}}(\sigma)$, and the $\mQ$-vector space $N(V) = \FNorm_{E/\mQ}(V)$ is the space of $\Gal(\tilde{E}/\mQ)$-invariants in $\otimes_{\sigma \in \Sigma(E)}\, V_{\tilde{E}}(\sigma)$. It has $\mQ$-dimension $\dim_E(V)^{[E:\mQ]}$. The Hodge decomposition of $N(V)_\mC = \otimes_{\sigma \in \Sigma(E)}\, V_\mC(\sigma)$ is given by
\[
N(V)_\mC^{i,j} = \sum_{(p,q)}\; \bigotimes_{\sigma\in \Sigma(E)}\, V_\mC(\sigma)^{p(\sigma),q(\sigma)}
\]
where the sum is taken over all functions $(p,q)\colon \Sigma(E) \to \mZ^2$ with $\sum_\sigma\, p(\sigma) = i$ and $\sum_\sigma\, q(\sigma) = j$. In particular, $N(V)$ is a pure Hodge structure of weight $n \cdot [E:\mQ]$.
\end{example}

\begin{remark}
\label{NmNotAdjoint}
To avoid confusion, let us note that the norm functor $N \colon \calC_{(E)} \to \calC$ is not additive, in general; in particular, it is not adjoint to the extension of scalars functor $\calC \to \calC_{(E)}$. Also note that in the previous example $V$ and~$N(V)$ in general have different weight (and can even have weights of different parity); this shows that in general there are no non-zero morphisms between $V_{(k)}$ and~$N(V)$.
\end{remark}

\subsection{}
\label{MTCNormsIntro}
Let $E$ be a finite \'etale $\mQ$-algebra, i.e., a finite product of number fields. Let $\motV$ be a motive over~$\mC$ with a faithful action of~$E$ by endomorphisms, and write $N(\motV) = \FNorm_{E/\mQ}(\motV)$. The following result gives some relations between the Tate and Mumford-Tate conjectures for~$\motV$ and those for~$N(\motV)$. As always, $\ell$ is some fixed prime number.

\begin{proposition}
\label{MTCNorms}
\textup{(\romannumeral1)} The group $G_\ell^0(\motV)$ is reductive if and only if $G_\ell^0\bigl(N(\motV)\bigr)$ is reductive.

\textup{(\romannumeral2)} If the Mumford-Tate conjecture for~$\motV$ is true then also the Mumford-Tate conjecture for~$N(\motV)$ is true.

\textup{(\romannumeral3)} Suppose there exists a non-degenerate symmetric $E$-bilinear form $\tilde\phi \colon V \times V \to E$ such that $G_\mot(\motV) \subseteq \OO_{E/\mQ}(V,\tilde\phi)$. Then the Mumford-Tate conjecture for~$N(\motV)$ implies the Mumford-Tate conjecture for~$\motV$. 

\textup{(\romannumeral4)} If $\motW_1$ and~$\motW_2$ are motives, the Mumford-Tate conjecture for $\motW_1 \oplus \motW_2$ implies the Mumford-Tate conjecture for $\motW_1 \otimes \motW_2$. 
\end{proposition}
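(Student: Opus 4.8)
The plan is to treat the four parts in sequence, using the description of the norm functor from \ref{N(V)Tannak} in terms of the homomorphism $\eta\colon \Res_{E/\mQ}\GL(V) \to \GL(N(V))$ with kernel $T_E^1$, applied uniformly to the Betti realization, the $\ell$-adic realization, and (for the motivic group) the whole Tannakian category of motives. The key observation is that all of $G_\Betti(\motV)$, $G_\ell^0(\motV)$ and $G_\mot(\motV)$ live inside $\Res_{E/\mQ}\GL(V)$ (since $E$ acts by endomorphisms, commuting with the relevant group), and that the corresponding groups for $N(\motV)$ are exactly their images under~$\eta$. So for any of the three flavours, the group attached to $N(\motV)$ is a quotient of the group attached to $\motV$ by (a subgroup of) $T_E^1$.

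First, for (\romannumeral1): a linear algebraic group in characteristic~$0$ is reductive if and only if any quotient of it by a normal subgroup is — no, one direction fails, so instead I would argue as follows. $G_\ell^0(N(\motV))$ is the image of $G_\ell^0(\motV)$ under~$\eta$, hence a quotient, so if $G_\ell^0(\motV)$ is reductive so is its quotient. Conversely, the kernel of $\eta$ restricted to $G_\ell^0(\motV)$ is contained in $T_E^1$, which is a torus, hence reductive; an extension of a reductive group by a reductive normal subgroup is reductive, giving the reverse implication. (One must check the kernel is normal and that the extension argument applies; this is standard since $\mathrm{char}=0$ means ``reductive'' $=$ ``linearly reductive''.) For (\romannumeral2): if $\mathrm{MTC}(\motV)$ holds, i.e. $G_\Betti(\motV)\otimes\Ql = G_\ell^0(\motV)$ inside $\Res_{E/\mQ}\GL(V)\otimes\Ql$, apply~$\eta$ (defined over~$\mQ$) to both sides; since $\eta$ takes $G_\Betti(\motV)$ onto $G_\Betti(N(\motV))$ and $G_\ell^0(\motV)$ onto $G_\ell^0(N(\motV))$, and image commutes with $\otimes\Ql$, we get $G_\Betti(N(\motV))\otimes\Ql = G_\ell^0(N(\motV))$.

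For (\romannumeral3): this is the delicate direction, where we recover $\mathrm{MTC}(\motV)$ from $\mathrm{MTC}(N(\motV))$. Here the hypothesis $G_\mot(\motV)\subseteq \OO_{E/\mQ}(V,\tilde\phi)$ is essential. The point is that in general $\eta$ kills $T_E^1$, so $G_\ell^0(N(\motV)) = G_\ell^0(\motV)/(G_\ell^0(\motV)\cap T_E^1)$ and similarly for $G_\Betti$, and knowing the quotients agree is not enough. But inside an orthogonal group $\OO(V,\tilde\phi)$ the intersection with $T_E^1$ — the $E$-scalars of norm~$1$ acting by isometries — is constrained: a scalar $\lambda\in E^\times$ is an isometry of $\tilde\phi$ iff $\lambda\bar\lambda = 1$ in the appropriate sense, and for a symmetric form this forces $\lambda^2=1$, so the scalar part is a finite group scheme $\mu_2$-type, in particular $0$-dimensional. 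Hence $\eta$ restricted to $\OO_{E/\mQ}(V,\tilde\phi)$ is an isogeny onto its image, so it induces isomorphisms on Lie algebras and on identity components modulo finite groups. Rephrasing $\mathrm{MTC}$ at the level of Lie algebras (as in Remarks~\ref{MTCforV(n)}(\romannumeral2)), from $\mathfrak{g}_\Betti(N(\motV))\otimes\Ql = \mathfrak{g}_\ell(N(\motV))$ we pull back along the Lie-algebra isomorphism induced by~$\eta$ to get $\mathfrak{g}_\Betti(\motV)\otimes\Ql = \mathfrak{g}_\ell(\motV)$, which is $\mathrm{MTC}(\motV)$. The hard part here is to verify carefully that $\mathrm{Lie}(\eta)$ is injective on $\mathfrak{so}_{E/\mQ}(V,\tilde\phi)$ (equivalently that $T_E^1 \cap \SO_{E/\mQ}(V,\tilde\phi)$ is finite), and to make the descent from group to Lie algebra rigorous — one uses that $G_\ell^0 \subseteq G_\mot\otimes\Ql$ and both the Betti and $\ell$-adic groups for $N(\motV)$ are images under~$\eta$ of the corresponding groups for~$\motV$.

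For (\romannumeral4): this is independent of the norm-functor machinery. Given $\mathrm{MTC}(\motW_1\oplus\motW_2)$, note $G_\Betti(\motW_1\oplus\motW_2)\subseteq G_\Betti(\motW_1)\times G_\Betti(\motW_2)$ surjects onto each factor, and similarly for $G_\ell^0$, and these groups act on $W_1\otimes W_2$ through the tensor-product representation $G_\Betti(\motW_1\oplus\motW_2)\to\GL(W_1\otimes W_2)$; the same holds $\ell$-adically, with the Galois group of $\motW_1\otimes\motW_2$ landing inside (the image of) $G_\ell^0(\motW_1\oplus\motW_2)$. Since $G_\Betti(\motW_1\otimes\motW_2)$ and $G_\ell^0(\motW_1\otimes\motW_2)$ are precisely the images of $G_\Betti(\motW_1\oplus\motW_2)$ and $G_\ell^0(\motW_1\oplus\motW_2)$ under this tensor representation (a fixed $\mQ$-morphism), the equality $G_\Betti(\motW_1\oplus\motW_2)\otimes\Ql = G_\ell^0(\motW_1\oplus\motW_2)$ maps forward to $G_\Betti(\motW_1\otimes\motW_2)\otimes\Ql = G_\ell^0(\motW_1\otimes\motW_2)$, using once more that forming the image commutes with $\otimes\Ql$ and with taking identity components up to finite index. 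I expect part~(\romannumeral3) to be the main obstacle, precisely because it is the only place where a quotient has to be ``undone'', and this is exactly what the orthogonality hypothesis is there to make possible.
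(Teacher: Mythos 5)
Your proposal is correct and follows essentially the same route as the paper: in each part the key fact is that $G_\bullet\bigl(N(\motV)\bigr)$ is the image of $G_\bullet(\motV)$ under the morphism $\eta$, whose kernel $T_E^1$ is a torus; (\romannumeral1) then comes from the triviality of the unipotent radical inside that kernel, (\romannumeral2) from applying $\eta$ to the MTC equality, and (\romannumeral3) from the observation that $T_E^1\cap\OO_{E/\mQ}(V,\tilde\phi)$ is finite so that the quotient maps are isogenies on the connected groups in question. The one point of divergence is (\romannumeral4): you prove it by a direct tensor-representation argument, whereas the paper simply observes that it is the special case of (\romannumeral2) with $E=\mQ\times\mQ$, under which the norm functor on $\Mod_E$ is $(\motW_1,\motW_2)\mapsto \motW_1\otimes\motW_2$ — a slicker shortcut, but logically equivalent to what you wrote.
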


\begin{proof}
As a special case of what was discussed in~\ref{N(V)Tannak}, we have isomorphisms
\begin{equation}
\label{eq:G/Z=G}
G_\Betti(\motV)/Z \isomarrow G_\Betti\bigl(N(\motV)\bigr)
\qquad\hbox{and}\qquad
G_\ell^0(\motV)/Z_\ell \isomarrow G_\ell^0\bigl(N(\motV)\bigr)\, , 
\end{equation}
where $Z = G_\Betti(\motV) \cap T_E^1$ and $Z_\ell =  G_\ell^0(\motV) \cap (T_E^1 \otimes \Ql)$. Part~(\romannumeral2) follows, and (\romannumeral4) is the special case of~(\romannumeral2) where $E = \mQ \times \mQ$.

By \cite{BorelLAG}, Corollary~14.11, if we abbreviate $G_\ell^0(\motV)$ to~$G_\ell$ and if $\pi \colon G_\ell \to \ol{G}_\ell =  G_\ell/Z_\ell$ is the quotient map, $\pi\bigl(R_{\rm u}(G_\ell)\bigr) = R_{\rm u}(\ol{G}_\ell)$. On the other hand, as $T_E^1$ is a torus, $R_{\rm u}(G_\ell) \cap (T_E^1 \otimes \Ql) = \{1\}$. Hence $R_{\rm u}(G_\ell) \isomarrow R_{\rm u}(\ol{G}_\ell)$, which proves~(\romannumeral1).

(\romannumeral3) Because $G_\mot(\motV) \subseteq \OO_{E/\mQ}(V,\tilde\phi)$, the above group schemes $Z$ and~$Z_\ell$ are finite. As all groups in question are connected, it follows from~\eqref{eq:G/Z=G} that $\hbox{MTC}\bigl(N(\motV)\bigr)$ implies $\hbox{MTC}(\motV)$.
\end{proof}


\section{The Kuga-Satake construction in the presence of nontrivial endomorphisms}
\label{KugaSat}

\subsection{}
\label{NotConvNorm}
\textit{Notation and conventions.\/} Throughout this section, $k$ is a field of characteristic~$0$ and $k \subset E$ is a finite \'etale extension. Choose an algebraic closure $k \subset \kbar$. We denote by $\Sigma(E)$ the $\Gal(\kbar/k)$-set of $k$-algebra homomorphisms $E \to \kbar$.

It will be convenient to denote restrictions of scalars and norms (=corestrictions) by a subscript~``$E/k$''. (Cf.\ \ref{NotConvIntro}.) For instance, if $V$ is an $E$-module equipped with a symmetric bilinear form $\tilde\phi$ and $C^+(V,\tilde\phi)$ is the even Clifford algebra, we write $\SO_{E/k}(V,\tilde\phi)$ for $\Res_{E/k} \SO(V,\tilde\phi)$ and $C^+_{E/k}(V,\tilde\phi)$ for the $k$-algebra $\FNorm_{E/k} C^+(V,\tilde\phi)$.

A subscript~``$(k)$'' indicates that we forget the $E$-structure on an object. For instance, with $V$ as above, $V_{(k)}$ denotes the underlying $k$-vector space.

For $\sigma \in \Sigma(E)$, a subscript~``$\sigma$'' denotes an extension of scalars via~$\sigma$. Thus, for instance, $V_\sigma = V \otimes_{E,\sigma} \kbar$ and $\tilde\phi_\sigma$ denotes the extension of~$\tilde\phi$ to a $\kbar$-bilinear form on~$V_\sigma$.

\subsection{}
\label{CSpins}
Let $V$ be a nonzero free $E$-module of finite rank, equipped with a non-degenerate symmetric $E$-bilinear form $\tilde\phi \colon V \times V \to E$. We denote by $\phi = \trace_{E/k} \circ \tilde\phi \colon V_{(k)} \times V_{(k)} \to k$ its transfer. We then have an isomorphism of Clifford algebras $C_{E/k}(V,\tilde\phi) \isomarrow C(V_{(k)},\phi)$, inducing an injective homomorphism
\begin{equation}
\label{eq:C+E/kinC+}
C^+_{E/k}(V,\tilde\phi) \hookrightarrow C^+(V_{(k)},\phi)\, . 
\end{equation}

Left multiplication in the even Clifford algebra gives rise to a representation
\[
\rho_\spin \colon \CSpin_{E/k}(V,\tilde\phi) \to \GL\bigl(C^+_{E/k}(V,\tilde\phi)\bigr)\, .
\]
The kernel of this representation is the subtorus $T_E^1 \subset \CSpin_{E/k}(V,\tilde\phi)$, with notation as in~\ref{NotConvIntro}(e). We denote the quotient group $\CSpin_{E/k}(V,\tilde\phi)/T_E^1$ by $\CSpinbar_{E/k}(V,\tilde\phi)$.
 
We also have a representation
\[
\rho_\ad \colon \CSpin_{E/k}(V,\tilde\phi) \twoheadrightarrow \SO_{E/k}(V,\tilde\phi) \to \GL\bigl(C^+_{E/k}(V,\tilde\phi)\bigr)\, ,
\]
obtained from the action of $\SO(V,\tilde\phi)$ on $C^+(V,\tilde\phi)$ by transport of structure.

Let
\[
D = C^+_{E/k}(V,\tilde\phi)\, ,
\]
which is a semisimple $k$-algebra. We shall use the notation~$D$ when it appears in its role as algebra, and write $C^+_{E/k}(V,\tilde\phi)$ for the underlying $k$-vector space, which enters the discussion in a different role. The right multiplication of~$D$ on $C^+_{E/k}(V,\tilde\phi)$ commutes with the action of $\CSpin_{E/k}(V,\tilde\phi)$ via~$\rho_\spin$. We have an isomorphism of $\CSpin_{E/k}(V,\tilde\phi)$-representations
\begin{equation}
\label{eq:rhoad=EndD}
C^+_{E/k}(V,\tilde\phi)_\ad \cong \ul\End_D\bigl(C^+_{E/k}(V,\tilde\phi)_\spin\bigr)\, , 
\end{equation}
where the subscripts~``$\ad$'' and~``$\spin$'' indicate through which representation $\CSpin_{E/k}(V,\tilde\phi)$ acts.

\subsection{}
\label{CSpinbar}
Through the homomorphism \eqref{eq:C+E/kinC+}, we have a homomorphism
\[
\CSpin_{E/k}(V,\tilde\phi) \twoheadrightarrow \CSpinbar_{E/k}(V,\tilde\phi) \hookrightarrow \CSpin(V_{(k)},\phi)\, .
\]
The algebraic subgroup $\CSpinbar_{E/k}(V,\tilde\phi) \subset \CSpin(V_{(k)},\phi)$ is the inverse image of $\SO_{E/k}(V,\tilde\phi) \subset \SO(V_{(k)},\phi)$ under the natural homomorphism $\CSpin(V_{(k)},\phi)\to \SO(V_{(k)},\phi)$; so we have a diagram
\[
\begin{matrix}
\CSpin_{E/k}(V,\tilde\phi) && \\[2pt]
\mapdownl{} && \\
\CSpinbar_{E/k}(V,\tilde\phi) & \subset & \CSpin(V_{(k)},\phi) \\[2pt]
\mapdownl{} && \mapdownr{} \\
\SO_{E/k}(V,\tilde\phi) & \subset & \SO(V_{(k)},\phi) 
\end{matrix}
\]
in which the square is cartesian. 

Consider the representation 
\[
R_\spin \colon \CSpin_{E/k}(V,\tilde\phi)\to \CSpin(V_{(k)},\phi) \mapright{r_\spin} \GL\bigl(C^+(V_{(k)},\phi)\bigr)\, ,
\] 
where $r_\spin$ is the spin representation of $\CSpin(V_{(k)},\phi)$. By construction, $\rho_\spin$ is a direct summand of $R_\spin$.

\begin{lemma}
\label{AnalogDel3.5}
Let $\alpha$ be an algebra automorphism of $D = C^+_{E/k}(V,\tilde\phi)$ that commutes with the action of $\Spin_{E/k}(V,\tilde\phi)$ via~$\rho_\ad$. Then $\alpha = \id_D$.
\end{lemma}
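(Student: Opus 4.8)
The plan is to reduce the statement to a computation over~$\kbar$ and then exploit the fact that, after extension of scalars, $\Spin_{E/k}(V,\tilde\phi)$ becomes a product of honest spin groups $\prod_{\sigma\in\Sigma(E)}\Spin(V_\sigma,\tilde\phi_\sigma)$, while $D\otimes_k\kbar$ becomes the tensor product $\bigotimes_\sigma C^+(V_\sigma,\tilde\phi_\sigma)$, and $\rho_\ad$ becomes the tensor product of the corresponding adjoint-type actions. The point of passing to~$\kbar$ is that an algebra automorphism of~$D$ that commutes with the $\Spin$-action is in particular one after base change, and it suffices to show the base-changed automorphism is the identity. So the real content is the classical statement, essentially Deligne's Lemma~3.5 in~\cite{DelK3}: an algebra automorphism of the even Clifford algebra $C^+(W,\psi)$ over an algebraically closed field that commutes with the action of $\Spin(W,\psi)$ induced by transport of structure (via $\Spin\twoheadrightarrow\SO$ acting on~$W$) must be the identity. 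I would first isolate and prove this single-factor statement, then bootstrap to the tensor product.

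First I would recall the structure of $C^+(W,\psi)$ as a $\Spin(W,\psi)$-representation via~$\rho_\ad$: it is the action coming from the $\SO(W,\psi)$-action on~$W$, extended to the tensor algebra and descended to the Clifford algebra. Decompose $C^+(W,\psi)$ into $\SO(W,\psi)$-isotypic components; because $\psi$ is nondegenerate over~$\kbar$, these components are governed by the standard representation $W$ and its exterior powers, and in particular the multiplicity-one components (the trivial summand~$k$, spanned by the unit, and the copy of $W\wedge W=\so(W,\psi)$ sitting in degree~$2$) are canonical. An algebra automorphism~$\alpha$ commuting with $\rho_\ad$ must preserve each isotypic component and act on each multiplicity-one component by a scalar; since it is an algebra map it fixes the unit, so it is the identity on the degree-$0$ part. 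The key leverage is that $C^+(W,\psi)$ is generated as a $k$-algebra by its degree-$2$ part (products $e_ie_j$), so once I show $\alpha$ is the identity on the $\so(W,\psi)$-summand I am done. On that summand~$\alpha$ acts by a scalar $\lambda$ commuting with the irreducible $\SO$-action; but $\so(W,\psi)$ is a Lie subalgebra of $C^+(W,\psi)$ closed under the commutator bracket, so $\alpha([x,y])=[\alpha(x),\alpha(y)]$ forces $\lambda\cdot[x,y]=\lambda^2\cdot[x,y]$, hence $\lambda=1$ (the bracket being nonzero). That settles the single-factor case.

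For the tensor-product case over~$\kbar$: write $D_{\kbar}=\bigotimes_\sigma A_\sigma$ with $A_\sigma=C^+(V_\sigma,\tilde\phi_\sigma)$ and $\prod_\sigma\Spin(V_\sigma,\tilde\phi_\sigma)$ acting factorwise. An algebra automorphism $\alpha$ of $\bigotimes_\sigma A_\sigma$ commuting with the whole product group commutes in particular with each factor $\Spin(V_\sigma,\tilde\phi_\sigma)$; decomposing under $\prod_\sigma\SO(V_\sigma,\tilde\phi_\sigma)$ and using that each $A_\sigma$ is a multiplicity-finite module whose $\SO(V_\sigma)$-socle structure is understood, one sees $\alpha$ preserves the subalgebra $\mathbf 1\otimes\cdots\otimes\so(V_\sigma,\tilde\phi_\sigma)\otimes\cdots\otimes\mathbf 1$ for each~$\sigma$ and acts there by a scalar $\lambda_\sigma$; the same bracket argument inside $A_\sigma$ gives $\lambda_\sigma=1$, and since the various $\so(V_\sigma,\tilde\phi_\sigma)$ together generate $\bigotimes_\sigma A_\sigma$ as an algebra, $\alpha=\id$. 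The main obstacle I anticipate is bookkeeping rather than conceptual: making the identification of $C^+(V,\tilde\phi)_\ad$ with its $\SO$-isotypic decomposition precise enough to justify that a commuting algebra automorphism is scalar on each relevant piece, and handling the (harmless but notationally fussy) interplay between the $E$-linear structure, the restriction of scalars, and the Galois descent back from~$\kbar$ to~$k$. I would keep the argument at the level of the two canonical multiplicity-one pieces (the unit and $\so$) plus the generation statement, which makes both the single-factor lemma and its tensor-product extension essentially the same two-line bracket computation.
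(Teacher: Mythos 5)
Your overall plan (extend scalars to $\kbar$, treat the even Clifford algebras factor by factor) matches the paper's, but the single-factor argument you sketch has a genuine gap, and it is a gap that the paper actually needs to avoid. You assert that the copy of $\so(W,\psi)\cong\wedge^2 W$ inside $C^+(W,\psi)$ is a multiplicity-one constituent of the $\SO(W,\psi)$-module $C^+(W,\psi)$, so that a commuting map acts on it by a scalar, and likewise for the trivial summand spanned by the unit. This is correct when $\dim W$ is odd: then $C^+(W)\cong\bigoplus_{j\ge 0}\wedge^{2j}W$ with pairwise non-isomorphic irreducible summands. But when $\dim W=n=2m$ the nondegenerate form gives $\SO$-isomorphisms $\wedge^{j}W\cong\wedge^{n-j}W$, so the trivial representation occurs twice (in $\wedge^0$ and $\wedge^{n}$) and, as soon as $m\ge 3$, $\wedge^2 W$ also occurs twice (in $\wedge^2$ and $\wedge^{n-2}$). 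The $\so$-isotypic piece then has a two-dimensional multiplicity space, a commuting $\alpha$ acts there by an arbitrary $2\times 2$ matrix rather than a scalar, and both the ``scalar on a multiplicity-one piece'' step and the bracket computation collapse. This is not a marginal case: the lemma is invoked in \ref{motu_sDef} before any parity reduction, and even $\dim_E V$ occurs in the applications, so the even-dimensional case must be covered.

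The paper sidesteps this by not re-proving Deligne's result: it first factors a $\prod_\sigma\Spin(V_\sigma,\tilde\phi_\sigma)$-equivariant automorphism as $\alpha=\otimes_\sigma\alpha_\sigma$, normalizes $\alpha_\sigma(1)=1$, and then quotes \cite{DelK3}, Proposition~3.5 for each factor, which handles even and odd dimensions alike. If you want a self-contained argument that avoids the multiplicity problem entirely, use that $\rho_\ad$ is conjugation by the subgroup $\Spin_{E/k}(V,\tilde\phi)\subset D^*$: equivariance of the algebra automorphism $\alpha$ gives $\alpha(g)\alpha(x)\alpha(g)^{-1}=g\alpha(x)g^{-1}$ for all $g$ and $x$, hence $g^{-1}\alpha(g)\in Z(D\otimes\kbar)^*$; the resulting morphism $g\mapsto g^{-1}\alpha(g)$ is a character of the connected semisimple group $\Spin_{E/k}(V,\tilde\phi)_\kbar$ into an abelian group, hence trivial. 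Thus $\alpha$ fixes $\Spin_{E/k}(V,\tilde\phi)(\kbar)$ pointwise, and since this group spans $D\otimes\kbar=\bigotimes_\sigma C^+(V_\sigma,\tilde\phi_\sigma)$ as a $\kbar$-vector space (its Lie algebra $\prod_\sigma\so(V_\sigma,\tilde\phi_\sigma)$ generates the algebra), $\alpha=\id$ uniformly in all ranks.
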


\begin{proof}
We have
\[
C^+_{E/k}(V,\tilde\phi) \otimes_k \Qbar = \bigotimes_{\sigma \in \Sigma(E)}\, C^+(V_\sigma,\tilde\phi_\sigma)
\]
and
\[
\Spin_{E/k}(V,\tilde\phi) \otimes_k \Qbar = \prod_{\sigma \in \Sigma(E)}\, \Spin(V_\sigma,\tilde\phi_\sigma)\, .
\]
In this description the representation~$\rho_\ad$ is the exterior tensor product of the representations~$\rho_\ad$ of the factors.

Consider a $\kbar$-linear automorphism of $\otimes\, C^+(V_\sigma,\tilde\phi_\sigma)$ that commute with the adjoint action of the group $\prod\, \Spin(V_\sigma,\tilde\phi_\sigma)$. Any such automorphism is of the form $\alpha = \otimes \alpha_\sigma$, where $\alpha_\sigma$ is a linear automorphism of $C^+(V_\sigma,\tilde\phi_\sigma)$ that commutes with the adjoint action of $\Spin(V_\sigma,\tilde\phi_\sigma)$. If additionally $\alpha$ is an algebra automorphism, $\alpha(1) = 1$ implies that we may rescale the~$\alpha_\sigma$ such that $\alpha_\sigma(1) = 1$ for all~$\sigma$. Then all~$\alpha_\sigma$ are algebra automorphisms and the assertion now follows from \cite{DelK3}, Proposition~3.5.
\end{proof}

\subsection{}
\label{KS/pt}
In the rest of this section we specialize the above to the case that $k=\mQ$ and $E$ is a totally real number field. 

Let $(V,\phi)$ be a polarized Hodge structure of K3-type. (See \ref{NotConvIntro}.) Suppose $E$ acts on~$V$ such that $\phi(ev,w) = \phi(v,ew)$, and let $\tilde\phi \colon V \times V \to E$ be the $E$-bilinear lift of~$\phi$. The Hodge structure is then described by a homomorphism $h \colon \mS \to \SO_{E/\mQ}(V,\tilde\phi)_\mR \subset \SO(V_{(\mQ)},\phi)$. As in \cite{DelK3}, 4.2, there is a unique lifting of~$h$ to a homomorphism $\tilde{h} \colon \mS \to \CSpin(V_{(\mQ)},\phi)_\mR$ of weight~$1$, and by what was discussed in~\ref{CSpinbar}, $\tilde{h}$~factors through $\CSpinbar_{E/\mQ}(V,\tilde\phi)_\mR$.

View $\rho_\spin$ and $R_\spin$ as representations of $\CSpinbar_{E/\mQ}(V,\tilde\phi)$. The representation $\rho_\spin \circ \tilde{h}$ defines a $\mQ$-Hodge structure on $C^+_{E/\mQ}(V,\tilde\phi)$. Similarly, $R_\spin \circ \tilde{h}$ defines a $\mQ$-Hodge structure on $C^+(V_{(\mQ)},\phi)$, which by \cite{DelK3}, Section~4, is polarizable and of type $(1,0) + (0,1)$. As $\rho_\spin$ is a direct summand of~$R_\spin$, it follows that the Hodge structure $C^+_{E/\mQ}(V,\tilde\phi)$ is polarizable and of type $(1,0) + (0,1)$, too. Moreover, as the algebra $D = C^+_{E/\mQ}(V,\tilde\phi)$ acts on $C^+_{E/\mQ}(V,\tilde\phi)$ (from the right) by Hodge-endomorphisms, this defines a complex abelian variety~$A$ (up to isogeny; see our conventions in~\ref{NotConvIntro}) with multiplication by~$D$. We refer to~$A$ with its $D$-action as the Kuga-Satake variety associated with~$(V,\tilde\phi)$. It follows from the construction together with~\eqref{eq:rhoad=EndD} that we have an isomorphism of $\mQ$-Hodge structures
\[
u_{(V,\tilde\phi)}\colon C^+_{E/\mQ}(V,\tilde\phi) \cong \ul\End_D\bigl(H^1(A,\mQ)\bigr)\, . 
\]

\subsection{}
\label{KS/S}
The construction of the Kuga-Satake abelian variety also works in families. Let $S$ be a non-singular complex algebraic variety and $(\mV,\phi)$ a polarized VHS of K3 type over~$S$ with multiplication by a totally real field~$E$. (This means we are given a homomorphism $E \to \End_{\QVHS_S}(\mV)$.) Fix a base point $t \in S$ and write $V = \mV(t)$ for the fiber at~$t$. We assume that the VHS comes from a $\mZ$-VHS over~$S$; this just means that there exists a lattice in~$V$ that is stable under the action of $\pi_1(S,t)$. In this situation there exist a finite \'etale cover $\pi\colon S^\prime \to S$, an abelian scheme $g \colon A \to S^\prime$ (up to isogeny) with multiplication by the algebra $D = C^+_{E/k}(V,\tilde\phi)$, and an isomorphism 
\[
u_{(\mV,\tilde\phi)}\colon \pi^* C^+_{E/\mQ}(\mV,\tilde\phi) \isomarrow \ul\End_D(R^1 g_*\mQ_A)\, . 
\]
of algebras in the category $\QVHS_{S^\prime}$. It is of course understood here that the fiber of $A \to S^\prime$ over a point $s^\prime \in S^\prime$ is the Kuga-Satake abelian variety associated with the fiber of~$\mV$ over $\pi(s^\prime) \in S$. The construction of $A/S^\prime$ is an easy variation on what is explained in \cite{DelK3}, \S~5 and in \cite{YAK3},~\S~5.


\section{Motives with real multiplication}
\label{RealMult}

\subsection{}
\label{WpsiPrep}
Let $X$ be a non-singular complex projective variety. Let $\motM \subset \motH^2(X)$ be a submotive that is cut out by an algebraic cycle (see~\ref{StrictSubmot}), with $\dim_\mC(M_{\Betti,\mC}^{2,0})=1$. Suppose we have a decomposition of the motive $\motM(1)$ as $\motM(1) = \motV \oplus \motT$, where $\motT$ is spanned by divisor classes on~$X$. (So $\motT \cong \unitmot^{\oplus r}$ for some $r\geq 0$.) Suppose further that the motive~$\motV$ has multiplication by a totally real field~$E$.

Let $V$ be the $E$-vector space underlying the Hodge realization of~$\motV$ and denote by $V_{(\mQ)}$ the underlying $\mQ$-vector space. Through the choice of an ample bundle on~$X$, we get a non-degenerate symmetric pairing  $\tilde\phi \colon \motV \otimes_E \motV \to \unitmot_E$. (The ample bundle gives a bilinear form $\phi\colon V_{(\mQ)} \otimes_\mQ V_{(\mQ)} \to \mQ$ and $\tilde\phi$ corresponds to the $E$-bilinear lift of~$\phi$ as in~\ref{NotConvIntro}(c).)

The goal of this section is to prove the following result, which in the next section will be used to prove some cases of our main theorem.

\begin{proposition}
\label{MTCDimOdd}
Notation and assumptions as in\/~{\rm \ref{WpsiPrep}}. Assume that $\dim_E(V) = 2m+1$ is odd. Further assume there exists a complex abelian variety~$A$ with multiplication by the even Clifford algebra $D = C^+_{E/\mQ}(V,\tilde\phi)$ and an isomorphism 
\begin{equation}
\label{eq:motuassumpt}
\motu\colon C^+_{E/\mQ}(\motV,\tilde\phi) \isomarrow \ul\End_D\bigl(\motH^1(A)\bigr) 
\end{equation}
of algebras in the category~$\Mot_\mC$. Then the Tate Conjecture and the Mumford-Tate conjecture for the submotive $\motM \subset \motH^2(X)$ are true.
\end{proposition}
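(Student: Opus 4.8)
I would organize the argument as a reduction to the Mumford--Tate conjecture for $\motV$, followed by an extraction of $\motV$ from the even Clifford algebra and an application of Faltings's theorem and the $\ell$-adic Zarhin theorem of Section~\ref{Zarhlad}.

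\emph{Reductions.} Since Mumford--Tate groups of polarizable Hodge structures are reductive, $\mathrm{MTC}(\motV)$ already implies that $G_\ell^0(\motV)=G_\Betti(\motV)\otimes\Ql$ is reductive and that the Tate classes in $V_\ell$ coincide with $\cB(\motV)\otimes\Ql$, which by the Lefschetz theorem on divisor classes is exactly $\mathrm{TC}(\motV)$ in the sense of~\ref{TCMotives}. As $\motM(1)=\motV\oplus\motT$ with $\motT\cong\unitmot^{\oplus r}$, one has $G_\Betti(\motM(1))=G_\Betti(\motV)$ and $G_\ell^0(\motM(1))=G_\ell^0(\motV)$, so $\mathrm{MTC}(\motV)\Leftrightarrow\mathrm{MTC}(\motM(1))\Leftrightarrow\mathrm{MTC}(\motM)$ (Remark~\ref{MTCforV(n)}(\romannumeral2)); similarly $\mathrm{TC}(\motV)$, together with the algebraicity of the classes spanning~$\motT$, gives $\mathrm{TC}(\motM)$. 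I record two facts for later: the projector cutting out $\motV$ in $\motH^2(X)(1)$ is algebraic (it is $\pi_\motM\circ(\id-\pi_\motT)$, and $\pi_\motT$ is a polynomial in divisor classes), so $\motV\subseteq\motH^2(X)(1)$ is cut out by an algebraic cycle; and, since $E$ acts on~$\motV$ and $\tilde\phi\colon\motV\otimes_E\motV\to\unitmot_E$ is a morphism of motives, the $\ell$-adic Galois action commutes with $E\otimes\Ql$ and preserves~$\tilde\phi$, so that $G_\ell^0(\motV)\subseteq\SO_{E/\mQ}(V,\tilde\phi)_{\Ql}$.

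\emph{Extracting $\motV$.} Here the hypothesis $\dim_E(V)=2m+1$ enters. In characteristic zero the symbol map splits the Clifford filtration canonically, so $C^+_E(\motV,\tilde\phi)=\bigoplus_{j\text{ even}}\wedge^j_E\motV$ as $E$-motives, with top summand $\wedge^{2m}_E\motV$. The contraction pairing together with the non-degeneracy of~$\tilde\phi$ identifies $\wedge^{2m}_E\motV$ with $\motV\otimes_E\wedge^{2m+1}_E\motV$, where $\wedge^{2m+1}_E\motV$ is a rank-one $E$-motive of weight~$0$, i.e.\ an Artin motive. Applying the norm functor $\FNorm_{E/\mQ}$ of Section~\ref{Norms} --- which is a $\otimes$-functor and hence carries retracts to retracts and tensor products to tensor products --- we get that $\FNorm_{E/\mQ}(\motV)\otimes_\mQ\FNorm_{E/\mQ}(\wedge^{2m+1}_E\motV)\cong\FNorm_{E/\mQ}(\wedge^{2m}_E\motV)$ is a direct summand of $\FNorm_{E/\mQ}\bigl(C^+_E(\motV,\tilde\phi)\bigr)=C^+_{E/\mQ}(\motV,\tilde\phi)$, which by~\eqref{eq:motuassumpt} is isomorphic to $\ul\End_D(\motH^1(A))$. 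Since $\FNorm_{E/\mQ}(\wedge^{2m+1}_E\motV)$ is an Artin motive, it follows that $\FNorm_{E/\mQ}(\motV)$ is a direct summand of an abelian motive; in particular its $\ell$-adic realization is semisimple (Faltings), so $G_\ell^0\bigl(\FNorm_{E/\mQ}(\motV)\bigr)$ is reductive and hence, by Proposition~\ref{MTCNorms}(\romannumeral1), so is $G_\ell^0(\motV)$. Moreover $\motV\subseteq\motH^2(X)(1)$ is cut out by an algebraic cycle and is of K3 type, so by Theorem~\ref{GenbyWHC} the group $G_\ell^0(\motV)_{\Qlbar}$ is generated by images of weak Hodge cocharacters, which for a K3-type motive are cocharacters of K3 type.

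\emph{Comparison and the main obstacle.} By Zarhin's theorem (\ref{ZarhResult}), $G_\Betti(\motV)$ is one of the groups $\SO_{E_0/\mQ}$, $\UU_{E_0/\mQ}$ described there, with $E_0=\End_{\QHS}(V)$; in particular it is generated by cocharacters of K3 type and its commutant in $\End(V_{(\mQ)})$ is $E_0$. Thus both $G_\ell^0(\motV)$ and $G_\Betti(\motV)\otimes\Ql$ are connected reductive subgroups of $\SO(V_{(\mQ)},\phi)\otimes\Ql$ generated by images of cocharacters of K3 type, and by Theorem~\ref{ZarhK3ell} each equals the identity component of the commutant, inside that orthogonal group, of its own endomorphism algebra (this is the mechanism behind Corollary~\ref{HowtogetG=G'}). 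Hence $\mathrm{MTC}(\motV)$, and with it the proposition, reduces to the single equality $\End_{G_\ell^0(\motV)}(V_\ell)=E_0\otimes\Ql$: that $\motV$ acquires no $\ell$-adic endomorphisms beyond those visible on its Hodge realization. Establishing this is the main obstacle. The available input is Faltings's theorem for the abelian variety~$A$, which gives that $H^1(A)_\ell$ is a semisimple Galois module with $\End_{G_\ell^0(\motH^1(A))}(H^1(A)_\ell)=\End_{\QHS}(H^1(A))\otimes\Ql$; the task is to transfer this statement from $\ul\End_D(\motH^1(A))$, i.e.\ from $\FNorm_{E/\mQ}(\motV)$ up to an Artin twist, back to~$\motV$ itself. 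This transfer is precisely the kind of passage across the norm functor for which Section~\ref{Norms} --- together with Proposition~\ref{MTCNorms} and Lemma~\ref{ResHInvarLem} --- is designed, and it is here that the oddness of $\dim_E(V)$ is essential: it makes $\motV$ an irreducible representation of each orthogonal factor of its generic Mumford--Tate group, so that $\FNorm_{E/\mQ}(\motV)$ is absolutely irreducible and the Clifford-theoretic bookkeeping closes up (in the even-rank case an additional device is needed, and that case is treated separately). Once $\End_{G_\ell^0(\motV)}(V_\ell)=E_0\otimes\Ql$ is in hand, Theorem~\ref{ZarhK3ell} and Corollary~\ref{HowtogetG=G'} give $G_\ell^0(\motV)=G_\Betti(\motV)\otimes\Ql$, which completes the proof.
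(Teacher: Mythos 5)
Your outline gets the architecture right: descent to a finitely generated field, extraction of $\FNorm_{E/\mQ}(\motV)$ up to an Artin twist from $C^+_{E/\mQ}(\motV,\tilde\phi)\cong\ul\End_D(\motH^1(A))$, reductivity of $G_\ell^0(\motV)$ via Proposition~\ref{MTCNorms}(\romannumeral1), and the final reduction of $\mathrm{MTC}(\motV)$ to an equality of endomorphism algebras via Theorem~\ref{ZarhK3ell} and Corollary~\ref{HowtogetG=G'}. Your symbol-map splitting $C^+_E(\motV,\tilde\phi)=\bigoplus_{j\text{ even}}\wedge^j_E\motV$ is a legitimate variant of the poset-indexed filtration $\motF_\gdot$ the paper builds in~\ref{PosetQ} for the same extraction.

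But you stop precisely where the real work begins. You name the needed equality (essentially Corollary~\ref{EndE=EndE}: $\End_{E_\ell[\Gamma_K]}(V_\ell)=\End_{\QHS_{(E)}}(V_\Betti)\otimes_E E_\ell$), call it ``the main obstacle,'' and then assert that ``the Clifford-theoretic bookkeeping closes up.'' That is not an argument. The transfer across the norm functor is non-trivial because Faltings gives the Tate conjecture for $A$, not the Mumford--Tate conjecture; so one does not get $\ol{G}_\ell=\ol{G}_\Betti\otimes\Ql$ for the norm, only control over Galois-equivariant endomorphisms of $\ul\End_D\bigl(\motH^1(A)\bigr)$, a highly non-linear image of $\motV$, and Zarhin-type classification does not apply directly to the group acting on the norm since that representation is not of K3 type. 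The paper's Proposition~\ref{IsometryProp}, Lemma~\ref{D*Lemma} and Corollary~\ref{EndE=EndE} carry out this transfer: given a Galois-equivariant $\ol{E}_\ell$-isometry $\beta$ of $\ol{W}_\ell$, one forms $\otimes_\sigma C^+(\beta_\sigma)$, applies Skolem--Noether --- and this is exactly where oddness of $\dim_E V$ is used, because it makes each $C^+(W_\sigma,\tilde\psi_\sigma)$ a matrix algebra over $\Qlbar$, not because $\FNorm_{E/\mQ}(\motV)$ is irreducible as you suggest --- finds the ambiguity to be a character $\chi$ of $\Gamma_K$, kills $\chi$ using connectedness of $G_\ell\bigl(\motH^1(B)\bigr)$ (this is why the $3$-torsion of $B$ is arranged to be $K$-rational in~\ref{descending}), applies Faltings, and then descends along the top graded piece $\motF_{q_1}/\motF_{q_2}\cong\FNorm_{E/\mQ}(\motW)\otimes\det(\motW_{(\mQ)})$ to recover statements about $V$ itself; Lemma~\ref{D*Lemma} is then needed to pass from isometries to the full commutant. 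None of this appears in your proposal, so the central step is a genuine gap. A secondary point: the paper proves the Tate conjecture for $\motV$ first (\ref{TateClassesOK}) and then uses the absence of Tate classes in $V^{\trans}_\ell$ as input to the Mumford--Tate argument in~\ref{PfMTCDimOdd}; you propose to obtain TC as a corollary of MTC, which is fine as a logical implication but does not by itself supply that intermediate input.
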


To prove the Tate and Mumford-Tate conjectures for~$\motM$, it suffices to prove these conjectures for~$\motV$.

\subsection{}
\label{descending}
With notation and assumptions as above, there exist
\begin{enumerate}[label=---]
\item a variety $Y$ over a finitely generated field $K \subset \mC$ and an isomorphism $\alpha \colon Y_\mC \isomarrow X$;

\item a submotive $\motW \subset \motH^2(Y)\bigl(1\bigr)$ that is cut out by an algebraic cycle, an action of~$E$ on~$\motW$, and a form $\tilde\psi \colon \motW \otimes_E \motW \to \unitmot_E$ in $\Mot_{K,(E)}$ such that the isomorphism $\alpha^*\colon \motH^2(X)\bigl(1\bigr) \isomarrow \motH^2(Y_\mC)\bigl(1\bigr)$ restricts to an $E$-equivariant isomorphism $\motV \isomarrow \motW_\mC$ via which $\tilde\phi$ corresponds with~$\tilde\psi_\mC$;

\item an abelian variety $B/K$ with multiplication by~$D$, and a $D$-equivariant isogeny $B_\mC \isomarrow A$;

\item an isomorphism $C^+_{E/\mQ}(\motW,\tilde\psi) \cong \ul\End_D\bigl(\motH^1(B)\bigr)$ of algebras in the category~$\Mot_K$ that after extension of scalars to~$\mC$ gives back the isomorphism~\eqref{eq:motuassumpt}.
\end{enumerate}

\noindent
In what follows we fix these data. The assertion we want to prove is that the Tate conjecture and Mumford-Tate conjecture for~$\motW$ are true. As explained in Section~\ref{Conjectures}, we may replace the ground field~$K$ by a finite extension. In particular, we may, and will, further assume that
\begin{enumerate}[label=---]
\item the group $G_\ell(\motW)$ is connected;
\item the $3$-torsion of~$B$ is $K$-rational;
\item the algebraic part~$\motW^\alg$ of~$\motW$ (see~\ref{AlgTrans}) is isomorphic to~$\unitmot^{\oplus h}$ for some~$h$.
\end{enumerate}

\subsection{}
\label{PosetQ}
We retain the notation and the conventions introduced in~\ref{NotConvNorm}, taking $k=\mQ$. Let $\gS = \gS\bigl(\Sigma(E)\bigr)$ be the symmetric group on the set~$\Sigma(E)$. Let $\cQ$ be the set of $\gS$-orbits in $\mN^{\Sigma(E)}$. We give $\cQ$ a poset structure by the rule that for orbits $q$ and~$q^\prime$ we have $q \leq q^\prime$ if there exist representatives $\epsilon$, $\epsilon^\prime \in \mN^{\Sigma(E)}$ such that $\epsilon(\sigma) \leq \epsilon^\prime(\sigma)$ for all $\sigma \in \Sigma(E)$. (Less canonically, the elements of~$\cQ$ correspond to Young diagrams with $[E:\mQ]$ rows, and $q \leq q^\prime$ if and only if the Young diagram of~$q^\prime$ contains the diagram of~$q$.)

Consider the motive $C^+_{E/\mQ}(\motW,\tilde\psi)$, which is an object of the category~$\Mot_K$. We have an ascending filtration $\motF_\gdot$ indexed by~$\cQ$ on  this motive that can be described as follows. We think of $G_\mot(\motW)$ as an algebraic subgroup of $\OO_{E/\mQ}(W,\tilde\psi)$. The motive $C^+_{E/\mQ}(\motW,\tilde\psi)$ then corresponds to the $G_\mot(\motW)$-representation $C^+_{E/\mQ}(W,\tilde\psi)$.

Recall that $\dim_E(W) = 2m+1$. We have
\[
C^+_{E/\mQ}(W,\tilde\psi) \otimes_\mQ \Qbar = \bigotimes_{\sigma \in \Sigma(E)} \, C^+(W_\sigma,\tilde\psi_\sigma)\, .
\]
For $i \in \mN$ and $\sigma \in \Sigma(E)$, let $F_{\sigma,i} \subset C^+(W_\sigma,\tilde\psi_\sigma)$ be the image of $\oplus_{j \leq 2i}\, W_\sigma^{\otimes j}$ in $C^+(W_\sigma,\tilde\psi_\sigma)$. Note that $F_{\sigma,m} = C^+(W_\sigma,\tilde\psi_\sigma)$ and that $F_{\sigma,m}/F_{\sigma,m-1} \cong \wedge^{2m} W_\sigma \cong W_\sigma^\vee \otimes \det(W_\sigma)$.

For $\epsilon \in \mN^{\Sigma(E)}$, let $\tilde{F}_\epsilon = \otimes_\sigma\, F_{\sigma,\epsilon(\sigma)}$. For $q \in \cQ$ we then define $\tilde{F}_q \subset \otimes_\sigma \, C^+(W_\sigma,\tilde\psi_\sigma)$ to be the linear span of all $\tilde{F}_\epsilon$ for $\epsilon \in q$. The $\Gal(\Qbar/\mQ)$-action on $\otimes_\sigma \, C^+(W_\sigma,\tilde\psi_\sigma)$ preserves the subspaces~$\tilde{F}_q$. The space $F_q = (\tilde{F}_q)^{\Gal(\Qbar/\mQ)} \subset C^+_{E/\mQ}(W,\tilde\psi)$ of Galois-invariants in~$\tilde{F}_q$ is stable under the action of $\OO_{E/\mQ}(W,\tilde\psi)$, and we define $\motF_q \subset C^+_{E/\mQ}(\motW,\tilde\psi)$ to be the corresponding submotive.

Let $q_2 \in \cQ$ be the orbit of $(m-1,m,\ldots,m)$ and let $q_1 = \bigl\{(m,\ldots,m)\bigr\} \in \cQ$. Then $q_2 \leq q_1$ and
\begin{align*}
\motF_{q_1}/\motF_{q_2} &\cong \FNorm_{E/\mQ}\bigl(\wedge_E^{2m} \motW\bigr)\\
&\cong \FNorm_{E/\mQ}\bigl(\motW^\vee \otimes_E \det\nolimits_E(\motW)\bigr)\\
&\cong \FNorm_{E/\mQ}\bigl(\motW \otimes_E \det\nolimits_E(\motW)\bigr)\\
&\cong \FNorm_{E/\mQ}(\motW) \otimes_\mQ \det(\motW_{(\mQ)})\, ,
\end{align*}
where we use the isomorphism $\motW^\vee \cong \motW$ given by the form~$\tilde\psi$. Because the category $\Mot_K$ is semisimple, it follows that $\FNorm_{E/\mQ}(\motW) \otimes_\mQ \det(\motW_{(\mQ)})$ is (non-canonically) isomorphic to a submotive of $\ul\End_D\bigl(\motH^1(B)\bigr)$.

\subsection{}
\label{GellinGB}
Let $E_\ell = E \otimes \Ql$. By our assumptions, $G_\ell(\motW) \subset \OO_{E_\ell/\Ql}(W_\ell,\tilde\psi_\ell)$ is connected; hence $G_\ell(\motW)$ is in fact a subgroup of $\SO_{E_\ell/\Ql}(W_\ell,\tilde\psi_\ell)$. By what was discussed in~\ref{N(V)Tannak}, the image of the $\ell$-adic representation associated with the motive $\FNorm_{E/\mQ}(\motW)$ is isomorphic to $G_\ell(\motW)/Z_\ell$, where $Z_\ell = G_\ell(\motW) \cap T_{E_\ell}^1$, which is a finite central subgroup scheme of~$G_\ell(\motW)$. As the $\ell$-adic realization of $\det(\motW_{(\mQ)})$ is trivial, the conclusion of~\ref{PosetQ} together with Faltings's results in \cite{Faltings} imply that $G_\ell(\motW)/Z_\ell$, and hence also~$G_\ell(\motW)$, is reductive. This implies that the $\ell$-adic representation $\rho_{\motW,\ell}$ is completely reducible.

For the Mumford-Tate groups we have a similar situation. Let $Z = G_\Betti(\motW) \cap T_E^1$, which is a finite central subgroup scheme of~$G_\Betti(\motW)$; then
\[
G_\Betti(\motW)/Z \isomarrow G_\Betti\bigl(\FNorm_{E/\mQ}(\motW)\bigr) = G_\Betti\bigl(\FNorm_{E/\mQ}(\motW) \otimes \det(\motW_{(\mQ)})\bigr)\, .
\]
As $\FNorm_{E/\mQ}(\motW) \otimes \det(\motW_{(\mQ)})$ lies in the category of abelian motives, it follows from Deligne's results in~\cite{DelAbsHodge} that
\[
G_\ell(\motW)/Z \subseteq \bigl(G_\Betti(\motW)/Z_\ell\bigr) \otimes \Ql
\]
as algebraic subgroups of $\GL\bigl(\FNorm_{E/\mQ}(W) \otimes \det(W_{(\mQ)}\bigr) \otimes \Ql$. As $G_\ell(\motW)$ and $G_\Betti(\motW)$ are connected groups, this implies that 
\begin{equation}
\label{eq:GlinGB}
G_\ell(\motW) \subseteq G_\Betti(\motW) \otimes \Ql 
\end{equation}
as algebraic subgroups of $\GL(W_{(\mQ)}) \otimes \Ql$.

\subsection{}
\label{TateClassesOK}
By Zarhin's result \cite{ZarhK3}, Theorem~2.2.1, and its $\ell$-adic analogue Theorem~\ref{ZarhK3ell}, there exist algebraic subgroups $H_\Betti \subset \GL(W)$ (over~$E$) and $H_\ell \subset \GL(W_\ell)$ (over~$E_\ell$) such that $G_\Betti(\motW) = \Res_{E/\mQ}(H_\Betti)$ and $G_\ell(\motW) = \Res_{E_\ell/\Ql}(H_\ell)$.

Because $\FNorm_{E/\mQ}(\motW) \otimes_\mQ \det(\motW_{(\mQ)})$ is isomorphic to a submotive of $\ul\End_D\bigl(\motH^1(B)\bigr)$, Faltings's results imply that
\[
\bigl[\FNorm_{E/\mQ}(W_\Betti) \otimes_\mQ \det(W_{\Betti,(\mQ)})\bigr]^{G_\Betti(\motW)} \otimes \Ql \isomarrow  \bigl[\FNorm_{E_\ell/\Ql}(W_\ell) \otimes_\mQ \det(W_{\ell,(\Ql)})\bigr]^{G_\ell(\motW)}
\] 
under the comparison isomorphism between Betti and $\ell$-adic realizations. Because $\det(W_{\Betti,(\mQ)})$  and $\det(W_{\ell,(\Ql)})$ are trivial as representations of~$G_\Betti(\motW)$ and~$G_\ell(\motW)$, respectively, Lemma~\ref{ResHInvarLem}(\romannumeral2) gives that
\[
\FNorm_{E_\ell/\Ql}\bigl(W_\Betti^{H_\Betti} \otimes_E E_\ell\bigr) = \FNorm_{E/\mQ}\bigl(W_\Betti^{H_\Betti}\bigr) \otimes_\mQ \Ql \isomarrow  \FNorm_{E_\ell/\Ql}\bigl(W_\ell^{H_\ell}\bigr)\, .
\]
On the other hand, $W_\Betti^{H_\Betti} = W_{\Betti,(\mQ)}^{G_\Betti(\motW)}$ and $W_\ell^{H_\ell} = W_{\ell,(\Ql)}^{G_\ell(\motW)}$, so it follows from~\eqref{eq:GlinGB} that $W_\Betti^{H_\Betti} \otimes_E E_\ell \hookrightarrow W_\ell^{H_\ell}$ under the comparison isomorphism $W_\Betti \otimes_E E_\ell \isomarrow W_\ell$. Combining these remarks we find that
\[
\bigl(W_{\Betti,(\mQ)}\bigr)^{G_\Betti(\motW)} \otimes_\mQ \Ql \isomarrow  \bigl(W_{\ell,(\Ql)}\bigr)^{G_\ell(\motW)}\, .
\]
Because $\motW$ is a submotive of $\motH^2(Y)\bigl(1\bigr)$, all classes in $(W_{\Betti,(\mQ)})^{G_\Betti(\motW)}$ are algebraic by the Lefschetz theorem on divisor classes. Hence all classes in $(W_{\ell,(\Ql)})^{G_\ell(\motW)}$ are algebraic as well, and together with the conclusions obtained in~\ref{GellinGB} this proves the Tate conjecture for divisor classes on~$\motW$ (and hence on $\motM \subset \motH^2(X)$).

\subsection{}
\label{NotatVaria}
\textit{Notation.\/} We fix an algebraic closure $\Ql \subset \Qlbar$ containing~$\Qbar$. This gives an identification of $\Sigma(E)$ with the set of $\Ql$-algebra homomorphisms $E_\ell \to \Qlbar$. We shall use the notation $\Sigma(E)$ in both meanings. A subscript~``$\sigma$'' will denote an extension of scalars from $E$ to~$\Qbar$ or from~$E_\ell$ to~$\Qlbar$, as will be clear from the context. Let
\[
\ol{E}_\ell = E_\ell \otimes_{\Ql} \Qlbar = \prod_{\sigma \in \Sigma(E)}\, \Qlbar\, .
\]
Similarly, we let
\[
\ol{W}_\ell = W_\ell \otimes_{\Ql} \Qlbar = \bigoplus_{\sigma \in \Sigma(E)}\, W_{\ell,\sigma}
\qquad\hbox{and}\qquad
\ol{H}^1_\ell(B) = H^1_\ell(B) \otimes_{\Ql} \Qlbar
\]
and, with $D_\ell = D \otimes_\mQ \Ql$,
\[
\ol{D}_\ell = D_\ell \otimes_{\Ql} \Qlbar \cong \bigotimes_{\sigma \in \Sigma(E)}\,  C^+(W_{\ell,\sigma},\tilde\psi_\sigma)\, .
\]

For the proof of the Mumford-Tate conjecture, we shall need to compare the endomorphisms of~$W_\Betti$ as a Hodge structure and of $W_\ell$ as a Galois representation. A key step is the following result.

\begin{proposition}
\label{IsometryProp}
Let $\beta$ be a $\Gal(\Kbar/K)$-equivariant $\ol{E}_\ell$-linear isometry of~$\ol{W}_\ell$. Then $\beta$ is in the image of the map
\[
\End_{\QHS_{(E)}}(W_\Betti) \otimes_E \ol{E}_\ell \longhookrightarrow \End_{\ol{E}_\ell}(\ol{W}_\ell)\, .
\]
\end{proposition}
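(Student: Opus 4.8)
The strategy is to reduce the statement to a claim about algebraic groups, using the $\ell$-adic Zarhin-type results (Theorem~\ref{ZarhK3ell} and Corollary~\ref{HowtogetG=G'}) established in Section~\ref{Zarhlad}. First I would observe that an $\ol{E}_\ell$-linear isometry $\beta$ of $\ol{W}_\ell$ that is $\Gal(\Kbar/K)$-equivariant is precisely an element of the commutant of the $\ell$-adic monodromy group $G_\ell(\motW)$ inside $\SO_{\ol{E}_\ell/\Qlbar}(\ol{W}_\ell, \tilde\psi_\ell)$ (over $\Qlbar$). By the structure result Theorem~\ref{ZarhK3ell}, applied to $G_\ell(\motW) \subset \SO(W_{\ell,(\Ql)},\psi_\ell)$ --- which is legitimate since, by Theorem~\ref{GenbyWHC}, $G^{\red}_\ell(\motW)_{\Qlbar}$ is generated by weak Hodge cocharacters, and these are of K3 type because $\motW$ is a submotive of an $\motH^2$ with $h^{2,0}=1$ --- the representation $\ol{W}_\ell$ decomposes into isotypic pieces, and the endomorphism algebra $\End_{G_\ell(\motW)}(W_{\ell,(\Ql)})$ is controlled: it is built out of the fields $E_{i,\ell}$. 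In particular $\beta$ lies in (the $\Qlbar$-points of) this endomorphism algebra, and being an isometry constrains it further.

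Next I would bring in the Betti side and the comparison~\eqref{eq:GlinGB}. Zarhin's theorem gives $G_\Betti(\motW) = \Res_{E/\mQ}(H_\Betti)$, and the $\ell$-adic analogue gives $G_\ell(\motW) = \Res_{E_\ell/\Ql}(H_\ell)$, as recorded in~\ref{TateClassesOK}. Since by~\eqref{eq:GlinGB} we have $G_\ell(\motW) \subseteq G_\Betti(\motW)\otimes\Ql$, and both groups are connected reductive subgroups of $\SO$ generated by cocharacters of K3 type, Corollary~\ref{HowtogetG=G'} tells us that equality $G_\ell(\motW) = G_\Betti(\motW)\otimes\Ql$ would follow once we know the endomorphism algebras agree --- but here we want the converse direction, so instead I would argue directly at the level of commutants. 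The key point: the commutant of $G_\ell(\motW)$ in $\End_{\ol{E}_\ell}(\ol{W}_\ell)$ is, by the product structure of Theorem~\ref{ZarhK3ell}(\romannumeral2)--(\romannumeral3), computed factor by factor over $\Sigma(E)$, and each factor is $\End$ of an irreducible $H_{i,\sigma}$-representation, hence a field extension of $\Qlbar$, i.e. just $\Qlbar$ on each irreducible piece. The same computation on the Betti side, using $G_\Betti(\motW) = \Res_{E/\mQ} H_\Betti$, shows $\End_{\QHS_{(E)}}(W_\Betti) = \End_{H_\Betti}(W) \otimes_E$-structure, and after $\otimes_E \ol{E}_\ell$ this maps onto the same commutant. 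So the map in the statement is surjective onto the space of equivariant isometries, which is what is claimed.

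The cleanest way to organize this is: (i) identify equivariant $\ol{E}_\ell$-isometries with the centralizer $Z$ of $G_\ell(\motW)$ inside $\SO_{\ol{E}_\ell/\Qlbar}(\ol{W}_\ell,\tilde\psi_\ell)(\Qlbar)$; (ii) use Theorem~\ref{ZarhK3ell}(\romannumeral1) to decompose $\ol{W}_\ell = \bigoplus \ol{W}_{\ell,i}$ into $G_\ell(\motW)$-isotypic summands with $\End_{G_\ell}(\ol{W}_{\ell,i})$ a product of copies of $\Qlbar$ indexed appropriately, so that $Z$ is a product of orthogonal/unitary-type centralizers that are in fact scalar on each piece; (iii) match this with the analogous Betti decomposition of $W_\Betti$ under $G_\Betti(\motW)$, using that $G_\ell \subseteq G_\Betti\otimes\Ql$ forces the $\ell$-adic isotypic decomposition to be a refinement that is actually no finer, because the dimension count coming from the weak-Hodge-cocharacter description pins down the number of summands; (iv) conclude that every element of $Z$ is already defined over $\ol{E}_\ell$ via a Betti endomorphism, i.e. lies in the image of $\End_{\QHS_{(E)}}(W_\Betti)\otimes_E\ol{E}_\ell$.

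\textbf{Main obstacle.} The delicate step is (iii): a priori the $\ell$-adic monodromy group could be strictly smaller than the generic Mumford-Tate group, and then its commutant could be \emph{larger} than the Betti endomorphism algebra, which would make the proposition false. So the heart of the matter is showing that no such extra $\ell$-adic endomorphisms (beyond those coming from $E$) exist. This must come from the fact that $\motW$ is a \emph{submotive of an $\motH^2$ with $h^{2,0}=1$}: the weak Hodge cocharacters are of K3 type, so Theorem~\ref{ZarhK3ell} applies and forces $G_\ell(\motW)$ to have the form $\Res_{E_\ell/\Ql}(H_\ell)$ with $H_\ell$ an $\SO$ or $\UU$ of the appropriate form over the \emph{same} splitting field, whose centralizer is exactly $E_\ell$. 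Here one must be slightly careful about the CM case (a factor $V_i$ with endomorphism field a quadratic extension) --- but in the present setup $\motW$ carries a \emph{totally real} multiplication $E$ (we are in case~(1) of the introduction), so the relevant $G_\ell$ is an orthogonal-type group and its commutant over $\ol{E}_\ell$ is just $\ol{E}_\ell$ itself, matching the Betti side. That identification of commutants, feeding off Corollary~\ref{HowtogetG=G'} and the monodromy-maximality input of case~(1), is where the real content lies; the rest is bookkeeping about restriction of scalars and the norm functor.
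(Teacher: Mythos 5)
Your proposal has a genuine gap, and you have actually put your finger on it yourself without resolving it. In step~(iii) you need to rule out the possibility that the $\ell$-adic monodromy group is strictly smaller than $G_\Betti(\motW)\otimes\Ql$, so that its commutant would be strictly larger than the Betti endomorphism algebra. Your proposed resolution --- ``the dimension count coming from the weak-Hodge-cocharacter description pins down the number of summands'' and ``the monodromy-maximality input of case~(1)'' --- does not work. Pink's theorem (Theorem~\ref{GenbyWHC}) only tells you $G^\red_\ell(\motW)_{\Qlbar}$ is generated by K3-type cocharacters, and Theorem~\ref{ZarhK3ell} then tells you it has the form $\Res_{E'_\ell/\Ql}(H_\ell)$ for \emph{some} \'etale algebra $E'_\ell$; nothing in this formalism prevents $E'_\ell$ from being strictly larger than $E_\ell$ (e.g.\ $W^\trans_\ell$ could break up $\ell$-adically into a direct sum of two pieces, with $G_\ell$ a product of two smaller orthogonal groups, each still generated by K3-type cocharacters). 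Monodromy maximality concerns $G_\mono(\mV) \subset G_\Betti(\mV)$ and says nothing directly about $G_\ell$. And Corollary~\ref{HowtogetG=G'} runs the other way: it lets you conclude $G=G'$ once you already know the endomorphism algebras agree, whereas that equality of endomorphism algebras is precisely what is at stake.

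What your proposal is missing entirely is the one hypothesis that makes the statement true: the motivic Kuga-Satake isomorphism $\motu \colon C^+_{E/\mQ}(\motW,\tilde\psi) \isomarrow \ul\End_D\bigl(\motH^1(B)\bigr)$ assumed in Proposition~\ref{MTCDimOdd}, combined with Faltings's theorem. The paper's proof goes through the Clifford algebra: it packages $\beta=(\beta_\sigma)$ into $\otimes_\sigma C^+(\beta_\sigma)$ acting on $\End_{\ol D_\ell}\bigl(\ol H^1_\ell(B)\bigr)$; uses Skolem--Noether and a character argument (together with connectedness of $G_\ell\bigl(\motH^1(B)\bigr)$) to realize this as $\Inn(\delta)$ for a genuinely $\Gamma_K$-equivariant $\ol D_\ell$-linear automorphism $\delta$ of $\ol H^1_\ell(B)$; applies Faltings (\cite{Faltings}, Thm.~1) to express $\delta$ in terms of Hodge endomorphisms of $H^1_\Betti(B)$; transports this back across $u_\Betti$, $u_\ell$; and finally descends through the filtration $\motF_\gdot$ of~\ref{PosetQ} to the graded quotient $\FNorm_{E/\mQ}(\motW)\otimes\det(\motW_{(\mQ)})$ to extract the conclusion about $\beta$ itself. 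Without Faltings (hence without the abelian variety $B$ supplied by the motivic isomorphism $\motu$), there is no mechanism to compare the $\ell$-adic and Betti endomorphism algebras, and the purely group-theoretic route you sketch cannot close.
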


\begin{proof}
Write $\Gamma_K = \Gal(\Kbar/K)$. For $\sigma \in \Sigma(E)$, let $\beta_\sigma$ be the restriction of~$\beta$ to the summand~$W_{\ell,\sigma}$. Write $C^+(\beta_\sigma)$ for the induced automorphism of $C^+(W_{\ell,\sigma},\tilde\psi_\sigma)$, which is $\Gamma_K$-equivariant. The tensor product $\otimes_{\sigma \in \Sigma(E)} C^+(\beta_\sigma)$ is a Galois-equivariant automorphism of the algebra
\[
\bigotimes_{\sigma \in \Sigma(E)} \, C^+(W_{\ell,\sigma},\tilde\psi_\sigma) = C^+_{E_\ell/\Ql}(W_\ell,\tilde\psi) \otimes_\Ql \Qlbar \cong \End_{\ol{D}_\ell}\bigl(\ol{H}^1_\ell(B)\bigr)\, .
\]
As $\dim_E(W)$ is odd this algebra is a matrix algebra over~$\Qlbar$, so by Skolem-Noether there exists an automorphism $\delta \in \Aut_{\ol{D}_\ell}\bigl(\ol{H}^1_\ell(B)\bigr)$ such that $\otimes C^+(\beta_\sigma) = \Inn(\delta)$ as automorphisms of $\End_{\ol{D}_\ell}\bigl(\ol{H}^1_\ell(B)\bigr)$. Because $\otimes C^+(\beta_\sigma)$ is Galois-equivariant and the centre of $\End_{\ol{D}_\ell}\bigl(\ol{H}^1_\ell(B)\bigr)$ is $\Qlbar \cdot \id$, we find a character $\chi \colon \Gamma_K \to \Qlbar^*$ such that 
\begin{equation}
\label{eq:gammadelta}
{}^\gamma\delta = \chi(\gamma) \cdot \delta 
\end{equation}
for all $\gamma \in \Gamma_K$. (To avoid confusion, note that ${}^\gamma\delta$ is the conjugate of~$\delta$ by the automorphism of $\ol{H}^1_\ell(B)$ given by the action of~$\gamma$.) 

Taking determinants over~$\Qlbar$ we find that $\chi(\gamma) \in \mu_{2g}(\Qlbar)$, where $g = \dim(B)$. On the other hand, the relation~\eqref{eq:gammadelta} means that $\delta$ defines an isomorphism of Galois representations $\ol{H}^1_\ell(B) \otimes \chi \isomarrow \ol{H}^1_\ell(B)$. It follows that $\chi$ lies in the Tannakian subcategory of $\Rep(\Gamma_K,\Qlbar)$ generated by $\ol{H}^1_\ell(B)$. By the connectedness of $G_\ell\bigl(\motH^1(B)\bigr)$, which is a consequence of the assumption (see the beginning of~\ref{PosetQ}) that the $3$-torsion of~$B$ is $K$-rational, it follows that $\chi = 1$; so $\delta$ is a $\Gamma_K$-equivariant $\ol{D}_\ell$-linear automorphism of $\ol{H}^1_\ell(B)$.

By the results of Faltings (see \cite{Faltings}, Thm.~1), $\delta$ and~$\delta^{-1}$ are in the image of the map 
\[
\End_{\QHS,D}\bigl(H^1_\Betti(B)\bigr) \otimes_\mQ \Qlbar \tto \End_{\ol{D}_\ell}\bigl(\ol{H}^1_\ell(B)\bigr)
\]
induced by the comparison isomorphism $H^1_\Betti(B) \otimes \Ql \isomarrow H^1_\ell(B)$. Next we consider the diagram
\[
\begin{matrix}
C^+_{E/\mQ}(W_\Betti,\tilde\psi) \otimes_\mQ \Qlbar & \maprightou{\sim}{u_\Betti \otimes \id} & \End_D\bigl(H^1_\Betti(B)\bigr) \otimes_\mQ \Qlbar \cr
\mapdownl{\wr} && \mapdownr{\wr}\cr
C^+_{E_\ell/\Ql}(W_\ell,\tilde\psi) \otimes_\Ql \Qlbar &\maprightou{\sim}{u_\ell\otimes \id} & \End_{D_\ell}\bigl(H^1_\ell(B)\bigr) \otimes_\Ql \Qlbar
\end{matrix}
\]
where $u_\Betti$ and $u_\ell$ are the realizations of the isomorphism~$\motu$ in Proposition~\ref{MTCDimOdd}, and the vertical maps are given by the comparison isomorphisms. Because this diagram is commutative, we find that there exists $\theta_1,\ldots,\theta_n \in \End_{\QHS}\bigl(C^+_{E/\mQ}(W_\Betti,\tilde\psi)\bigr)$ and $c_1,\ldots,c_n \in \Qlbar$ such that $c_1\theta_1 + \cdots + c_n\theta_n = \otimes C^+(\beta_\sigma)$ as endomorphisms (in fact, automorphisms) of $C^+_{E_\ell/\Ql}(W_\ell,\tilde\psi) \otimes_\Ql \Qlbar$.

Next we note that $\otimes C^+(\beta_\sigma)$ preserves the filtration~$F_\gdot$ introduced in~\ref{PosetQ}. If we denote by $\End^F \subset \End$ the subspaces of endomorphisms that preserve~$F_\gdot$, the squares in the diagram
\[
\begin{matrix}
\End^F_{\QHS}\bigl(C^+_{E/\mQ}(W_\Betti,\tilde\psi)\bigr) \otimes \Qlbar  & \subset & \End_{\QHS}\bigl(C^+_{E/\mQ}(W_\Betti,\tilde\psi)\bigr) \otimes \Qlbar  \cr
\bigcap && \bigcap \cr
\End^F_{\mQ}\bigl(C^+_{E/\mQ}(W_\Betti,\tilde\psi)\bigr) \otimes \Qlbar & \subset & \End_{\mQ}\bigl(C^+_{E/\mQ}(W_\Betti,\tilde\psi)\bigr) \otimes \Qlbar  \cr
\mapdownl{\wr} && \mapdownr{\wr} \cr
\End^F_{\Qlbar}\bigl(C^+_{\ol{E}_\ell/\Qlbar}(\ol{W}_\ell,\tilde\psi)\bigr) & \subset & \End_{\Qlbar}\bigl(C^+_{\ol{E}_\ell/\Qlbar}(\ol{W}_\ell,\tilde\psi)\bigr)
\end{matrix}
\]
are cartesian. Hence we may additionally assume that the endomorphisms~$\theta_i$ preserve~$F_\gdot$. Taking the induced actions on $F_{q_1}/F_{q_2}$ as in~\ref{PosetQ}, we find that 
\[
\otimes \beta_\sigma \in \End\bigl(\otimes_\sigma\; W_{\ell,\sigma}\bigr) = \End\bigl(\FNorm_{E_\ell/\Ql}(W_\ell)\bigr) \otimes_\Ql \Qlbar = \End_\mQ\bigl(\FNorm_{E/\mQ}(W)\bigr) \otimes_\mQ \Qlbar
\]
lies in the subalgebra $\End_{\QHS}\bigl(\FNorm_{E/\mQ}(W_\Betti)\bigr) \otimes_\mQ \Qlbar$. 

Let $H \subset \GL(W)$ be the algebraic subgroup such that $G_\Betti(\motW) = \Res_{E/\mQ}(H)$; see \ref{NotatVaria}. Then the information we get is that $\otimes \beta_\sigma$ commutes with the action of $\prod_\sigma\, H_\sigma$ on $\otimes_\sigma\,  W_{\ell,\sigma}$. This implies that each~$\beta_\sigma$ individually commutes with the action of~$H_\sigma$. Finally,
\[
\begin{matrix}
\End_{\QHS_{(E)}}(W_\Betti) \otimes_E \ol{E}_\ell && \End_{\ol{E}_\ell}(\ol{W}_\ell) \cr
\Big\Vert && \Big\Vert \cr
\prod_\sigma\, \End_{\Qlbar[H_\sigma]}(W_{\ell,\sigma}) & \longhookrightarrow & \prod_\sigma\, \End_{\Qlbar}(W _{\ell,\sigma}) 
\end{matrix}
\]
and the proposition is proven.
\end{proof}

\begin{lemma}
\label{D*Lemma}
Let $(D,*)$ be a semisimple algebra with involution over an algebraically closed field~$F$ of characteristic~$0$. Then $D$ is generated, as an $F$-algebra, by the elements $d \in D$ with $dd^*=1$.
\end{lemma}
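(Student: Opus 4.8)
The plan is to invoke the structure theory of semisimple algebras with involution. Since $F$ is algebraically closed, Artin--Wedderburn gives $D\cong\prod_i M_{n_i}(F)$, and since $*$ is an anti-automorphism it permutes the minimal two-sided ideals; grouping them into $*$-orbits yields a decomposition $D=\prod_\alpha D_\alpha$ indexed by $*$-invariant central idempotents~$f_\alpha$, in which each block~$D_\alpha$ is either a single matrix algebra $M_n(F)$ carrying an $F$-linear involution (necessarily of the first kind), or a product $M_n(F)\times M_n(F)$ on which $*$ acts by $(a,b)\mapsto\bigl(\tau(b),\tau^{-1}(a)\bigr)$ for some anti-automorphism~$\tau$ of $M_n(F)$. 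Writing $U=\{d\in D:dd^*=1\}$, we have $U=\prod_\alpha U_\alpha$ with $U_\alpha=\{d\in D_\alpha:dd^*=1_{D_\alpha}\}$.

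First I would reduce to one block at a time. Let $B\subseteq D$ be the $F$-subalgebra generated by~$U$. For each~$\alpha$ the element of~$D$ that equals $-1$ on~$D_\alpha$ and $1$ on every other block lies in~$U$ (its $*$-square is $1$ block by block), and so does~$1_D$; subtracting gives $2f_\alpha\in B$, hence $f_\alpha\in B$ since $\charact F=0$. Likewise, for $u\in U_\alpha$ the element equal to~$u$ on~$D_\alpha$ and to~$1$ elsewhere lies in $U\subseteq B$, so $f_\alpha Bf_\alpha$ is a subalgebra of $f_\alpha Df_\alpha=D_\alpha$ containing~$U_\alpha$. Thus it suffices to prove, block by block, that $U_\alpha$ generates~$D_\alpha$; granting this, $f_\alpha Bf_\alpha=D_\alpha$ for every~$\alpha$ and therefore $B\supseteq\sum_\alpha D_\alpha=D$.

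I would then treat the two types of block separately. If $D_\alpha=M_n(F)$ with an $F$-linear involution, then by Skolem--Noether this involution is $x\mapsto g^{-1}x^{t}g$ for a symmetric or skew-symmetric $g\in\GL_n(F)$, and $U_\alpha$ is accordingly the orthogonal group of a non-degenerate symmetric form, or the symplectic group of a non-degenerate alternating form, on~$F^n$; in both cases $U_\alpha$ acts irreducibly on~$F^n$, so the subalgebra it generates acts irreducibly and hence equals $M_n(F)$ by Burnside's theorem. If $D_\alpha=M_n(F)\times M_n(F)$ with $*$ as above, a direct computation shows $U_\alpha=\bigl\{\bigl(a,\tau^{-1}(a^{-1})\bigr):a\in\GL_n(F)\bigr\}$, whose two projections to the matrix factors are both~$\GL_n(F)$; since $\GL_n(F)$ spans $M_n(F)$, the subalgebra $\langle U_\alpha\rangle$ surjects onto each of the two simple factors. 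By Goursat's lemma for rings, $\langle U_\alpha\rangle$ is then either all of $M_n(F)\times M_n(F)$ or the graph of an algebra automorphism~$\phi$ of $M_n(F)$; in the latter case $\phi(a)=\tau^{-1}(a^{-1})$ for all $a\in\GL_n(F)$, and comparing $\phi(2a)=2\phi(a)$ with $\phi(2a)=\tau^{-1}\bigl((2a)^{-1}\bigr)=\tfrac12\,\tau^{-1}(a^{-1})=\tfrac12\phi(a)$ forces $\phi(a)=0$, contradicting that $\phi$ is an automorphism. Hence $\langle U_\alpha\rangle=D_\alpha$ in this case too, which would complete the proof.

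The one genuinely delicate step is the swap block: one must rule out that the ``unitary'' elements generate merely a diagonally embedded copy of $M_n(F)$, and I expect the Goursat argument (with the non-linearity of $a\mapsto a^{-1}$ providing the contradiction) to be where the real content sits. The rest is formal juggling with the central idempotents~$f_\alpha$, together with the standard facts that the natural representations of $\OO_n$ and $\mathrm{Sp}_n$ are irreducible and that an irreducible subalgebra of $\End(F^n)$ is all of $\End(F^n)$.
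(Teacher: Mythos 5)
Your proof is correct, and the overall strategy — decompose $D$ into $*$-simple blocks via Artin--Wedderburn and analyse the orthogonal, symplectic, and swap types separately — is the same as the paper's. The one genuine divergence is in the swap block $M_n(F)\times M_n(F)$: the paper observes that the subalgebra generated by the unitary elements is the image of the group algebra of $\GL_n(F)$ acting by $\St\oplus\St^\vee$, which is semisimple with centralizer $F\times F$, and concludes by the double centralizer theorem. You instead invoke Goursat's lemma for rings to reduce to excluding the graph of an $F$-algebra isomorphism $\phi$, and then kill that possibility with the neat observation that $\phi(a)=\tau^{-1}(a^{-1})$ cannot be $F$-linear (comparing $\phi(2a)$ with $2\phi(a)$ forces $\phi=0$, since $\charact F=0$). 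Your route avoids the double centralizer theorem in the swap case, though you use Burnside for the first-kind blocks, which is the same tool in a different guise, so the two proofs are of comparable weight overall. A second, smaller difference is that you spell out the reduction to $*$-simple blocks, showing via the $*$-stable central idempotents $f_\alpha$ that the subalgebra generated by $U$ already contains each $f_\alpha$ and each copy of $U_\alpha$; the paper calls this reduction ``immediate''. Your extra care is not wasted: in general the algebra generated by a product of subsets inside a product algebra is \emph{not} the product of the algebras they generate (e.g.\ $\{(1,1)\}$ generates only the diagonal), so some argument like yours with the idempotents is in fact needed.
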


\begin{proof}
There is an immediate reduction to the case that $(D,*)$ is simple as an algebra with involution. Then $(D,*)$ is isomorphic to one of the following three (for some~$n$):
\begin{enumerate}
\item $M_n(F)$ with involution $A \mapsto A^t$;
\item $M_n(F)$ with involution $A \mapsto A^*$ (adjoint matrix);
\item $M_n(F) \times M_n(F)$ with involution $(A,B) \mapsto (B^t,A^t)$.
\end{enumerate}

\noindent
In the first two cases the result follows by the double centralizer theorem and the remark that the standard $n$-dimensional representations of $\OO_n(F)$ and~$\SL_n(F)$ are (absolutely) irreducible. Similarly, in the third case we are looking at the representation $\St \oplus \St^\vee$ of~$\GL_n(F)$, with $\St$ the standard representation. In this case the subalgebra~$D^\prime$ generated by the $d \in D$ with $dd^*=1$ is semisimple with centralizer $F\times F$, and by the double centralizer theorem we get $D^\prime = M_n(F) \times M_n(F)$.
\end{proof}

\begin{corollary}
\label{EndE=EndE}
Writing $\Gamma_K = \Gal(\Kbar/K)$, we have
\[
\End_{E_\ell[\Gamma_K]}(W_\ell) = \End_{\QHS_{(E)}}(W_\Betti) \otimes_E E_\ell\, ,
\]
viewing both sides as subalgebras of $\End_{E_\ell}(W_\ell)$.
\end{corollary}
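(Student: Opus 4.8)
The plan is to read this corollary off from Proposition~\ref{IsometryProp} together with Lemma~\ref{D*Lemma}: Proposition~\ref{IsometryProp} controls the \emph{isometric} $\Gamma_K$-equivariant endomorphisms, and Lemma~\ref{D*Lemma} is precisely the device needed to bootstrap from those to \emph{all} equivariant endomorphisms. Throughout write $\Gamma_K = \Gal(\Kbar/K)$. The two algebras in question are the commutants of the relevant groups: $\End_{\QHS_{(E)}}(W_\Betti)$ is the commutant of $G_\Betti(\motW)$ inside $\End_E(W_\Betti)$ (since $G_\Betti(\motW)$ is the Mumford--Tate group and lies in $\Res_{E/\mQ}\GL(W)$), and $\End_{E_\ell[\Gamma_K]}(W_\ell)$ is the commutant of $G_\ell(\motW)$ inside $\End_{E_\ell}(W_\ell)$, because the commutant of the image of $\Gamma_K$ coincides with the commutant of its Zariski closure $G_\ell(\motW)$.

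Granting this, the inclusion ``$\supseteq$'' is immediate from~\eqref{eq:GlinGB}: since $G_\ell(\motW) \subseteq G_\Betti(\motW)\otimes\Ql$, any $E$-linear endomorphism of the Hodge structure $W_\Betti$ commutes, after $\otimes\Ql$, with $G_\ell(\motW)$, so it is $\Gamma_K$-equivariant; hence $\End_{\QHS_{(E)}}(W_\Betti)\otimes_E E_\ell \subseteq \End_{E_\ell[\Gamma_K]}(W_\ell)$. For the reverse inclusion I would base change to $\Qlbar$: by faithful flatness of $\Qlbar$ over $\Ql$ it suffices to prove the inclusion inside $\End_{E_\ell}(W_\ell)\otimes_\Ql\Qlbar = \End_{\ol E_\ell}(\ol W_\ell)$ after extending scalars. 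As formation of invariants commutes with flat base change, $\End_{E_\ell[\Gamma_K]}(W_\ell)\otimes_\Ql\Qlbar$ is identified with $\cB := \End_{\ol E_\ell}(\ol W_\ell)^{G_\ell(\motW)_{\Qlbar}}$, the algebra of $\ol E_\ell$-linear $\Gamma_K$-equivariant endomorphisms of $\ol W_\ell$; concretely $\cB = \prod_{\sigma\in\Sigma(E)} \End_{\Qlbar}(W_{\ell,\sigma})^{H_{\ell,\sigma}}$, which is semisimple because $G_\ell(\motW)$ is reductive (see~\ref{GellinGB}), and which carries the involution $*$ given by adjunction with respect to $\tilde\psi$, because $G_\ell(\motW)\subseteq\SO_{E_\ell/\Ql}(W_\ell,\tilde\psi_\ell)$. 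By Lemma~\ref{D*Lemma}, $\cB$ is generated as a $\Qlbar$-algebra by its elements $\beta$ with $\beta\beta^*=1$. Each such $\beta$ is a $\Gamma_K$-equivariant $\ol E_\ell$-linear isometry of $\ol W_\ell$, so Proposition~\ref{IsometryProp} places it in the image of the injective algebra homomorphism $\End_{\QHS_{(E)}}(W_\Betti)\otimes_E\ol E_\ell \hookrightarrow \End_{\ol E_\ell}(\ol W_\ell)$; that image being a $\Qlbar$-subalgebra, it contains the subalgebra generated by all such $\beta$, namely $\cB$. Descending along $\Qlbar/\Ql$ then gives $\End_{E_\ell[\Gamma_K]}(W_\ell) \subseteq \End_{\QHS_{(E)}}(W_\Betti)\otimes_E E_\ell$, which completes the proof.

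The essential content has already been supplied by Proposition~\ref{IsometryProp}, so I do not expect a serious obstacle in this deduction. The one point that genuinely has to be handled with care is the passage to $\Qlbar$-coefficients: it is forced on us both because Lemma~\ref{D*Lemma} is stated over an algebraically closed field and because the decomposition $\ol W_\ell = \bigoplus_\sigma W_{\ell,\sigma}$ underlying Proposition~\ref{IsometryProp} is only available there. One should also check that $*$ really preserves $\cB$ — this uses that $G_\ell(\motW)$ lies in the \emph{special} orthogonal group, which is guaranteed in~\ref{GellinGB} by the connectedness of $G_\ell(\motW)$ — since it is this involution structure that makes Lemma~\ref{D*Lemma} applicable.
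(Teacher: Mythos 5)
Your proof is correct and follows the same route as the paper: the inclusion $\supseteq$ from \eqref{eq:GlinGB}, then extension of scalars to $\ol{E}_\ell$, then Lemma~\ref{D*Lemma} applied to the involution induced by $\tilde\psi_\ell$ on $\End_{\ol{E}_\ell[\Gamma_K]}(\ol{W}_\ell)$ to reduce to isometries, and finally Proposition~\ref{IsometryProp}. One tiny remark: to see that $*$ preserves $\End_{\ol{E}_\ell[\Gamma_K]}(\ol{W}_\ell)$ you only need $G_\ell(\motW) \subseteq \OO_{E_\ell/\Ql}(W_\ell,\tilde\psi_\ell)$; the refinement to the special orthogonal group (from connectedness) is not actually used at that step.
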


\begin{proof}
Because $G_\ell(\motW) \subseteq G_\Betti(\motW) \otimes \Ql$ (see~\ref{GellinGB}) we have the inclusion~``$\supseteq$''. It suffices to prove the other inclusion after extension of scalars to~$\ol{E}_\ell$. The pairing~$\tilde\psi_\ell$ (the $\ell$-adic realization of the form~$\tilde\psi$ on the motive) induces an involution~$*$ on the semisimple algebra $\End_{\ol{E}_\ell[\Gamma_K]}(\ol{W}_\ell)$, and the elements with $dd^* = 1$ are precisely the $\Gamma_K$-equivariant isometries of~$\ol{W}_\ell$. Now use Proposition~\ref{IsometryProp} and Lemma~\ref{D*Lemma}.
\end{proof}

\subsection{}
\label{PfMTCDimOdd}
We now derive the Mumford-Tate conjecture for~$\motW$, thereby completing the proof of Proposition~\ref{MTCDimOdd}. 

By the assumptions made in~\ref{descending}, we have a decomposition $\motW = \motW^\trans \oplus \unitmot_E^{\oplus \nu}$ in $\Mot_{K,(E)}$ such that the underlying motive~$\motW^\trans_{(\mQ)}$ (forgetting the $E$-module structure) is the transcendental part of~$\motW_{(\mQ)}$. By the Tate conjecture (see~\ref{TateClassesOK}), there are no non-zero Tate classes in~$W_{\ell,(\Ql)}^\trans$. By Pink's theorem~\ref{GenbyWHC}, $G_\ell(\motW) \otimes \Qlbar$ is generated by the images of weak Hodge cocharacters. 

By the results of Zarhin that we have reviewed in~\ref{ZarhResult}, $\End_\QHS(W_{\Betti,(\mQ)}^\trans)$ is a field~$F$ that contains~$E$; hence $\End_{\QHS_{(E)}}(W_\Betti^\trans) \cong \End_\QHS(W_{\Betti,(\mQ)}^\trans)$. Similarly, because there are no non-zero Tate classes in~$W_{\ell,(\Ql)}^\trans$, Theorem~\ref{ZarhK3ell} gives us that $\End_{\Ql[\Gamma_K]}(W_{\ell,(\Ql)}^\trans)$ is a commutative semisimple $\Ql$-algebra containing~$E_\ell$; hence $\End_{E_\ell[\Gamma_K]}(W_\ell^\trans) = \End_{\Ql[\Gamma_K]}(W_{\ell,(\Ql)}^\trans)$. By Corollary~\ref{EndE=EndE} it follows that $\End_\QHS(W_{\Betti,(\mQ)}) \otimes \Ql = \End_{\Ql[\Gamma_K]}(W_{\ell,(\Ql)})$, and the Mumford-Tate conjecture $G_\Betti(\motW) \otimes \Ql = G_\ell(\motW)$ then follows from Corollary~\ref{HowtogetG=G'}.


\section{Monodromy and the Mumford-Tate conjecture: the totally real case}
\label{Mono=>MTC}

\subsection{}
\label{MainThmSetup}
Let $S$ be a connected non-singular complex algebraic variety and $f \colon \cX \to S$ a smooth projective morphism such that the fibres~$\cX_s$ are connected algebraic varieties.

Let $\xi \in S(\mC)$, and let $\motM_\xi \subset \motH^2(\cX_\xi)$ be a submotive that is cut out by a $\pi_1(S,\xi)$-invariant projector~$p_\xi$. Then $p_\xi$ is the value at~$\xi$ of an idempotent section $p \in H^0\bigl(S,\ul\End(R^2f_*\mQ_\cX)\bigr)$, and by \cite{YAPour}, Th\'eor\`eme~0.5, $p_s$ is a motivated cycle for every $s \in S(\mC)$. Denoting by $\motM_s \subset \motH^2(\cX_s)$ the submotive cut out by~$p_s$ we obtain a family of motives $\{\motM_s\}_{s \in S(\mC)}$ parametrized by~$S$. The Hodge realizations of these motives form a direct factor $\mM \subset R^2f_*\mQ_\cX$ in $\QVHS_S$. In what follows we assume that $\dim_\mC(\mM_s^{2,0})=1$ for all (equivalently: some) $s \in S(\mC)$.
\medskip

The main result of this paper is the following.

\begin{theorem}
\label{MainThm}
Let $X$ be a non-singular complex projective variety of dimension~$d$. Let $\motM \subset \motH^2(X)$ be a submotive whose Hodge realization~$M_\Betti$ satisfies $\dim_\mC(M_\Betti^{2,0})=1$. Suppose there exists a smooth projective family $f \colon \cX \to S$ as in\/~{\rm \ref{MainThmSetup}}, a point $\xi \in S(\mC)$ and an isomorphism $X \isomarrow \cX_\xi$, via which we identify~$\motM$ with a submotive $\motM_\xi \subset \motH^2(\cX_\xi)$, such that:
\begin{enumerate}[label=\textup{(\alph*)}]
\item $\motM_\xi$ is cut out by a $\pi_1(S,\xi)$-invariant projector $p_\xi \in \End_{\Mot_\mC}\bigl(\motH^2(\cX_\xi)\bigr)$ that is of the form $p_\xi = \pi_2 \circ \class(\gamma) \circ \pi_2$ for some $\gamma \in \CH^d(X\times X)\otimes\mQ$,
\item the associated variation of Hodge structure $\mM \subset R^2f_*\mQ_\cX$, as in\/~{\rm \ref{MainThmSetup}}, is not isotrivial. 
\end{enumerate}

\noindent
Then the Tate conjecture and the Mumford-Tate conjecture for~$\motM$ are true.
\end{theorem}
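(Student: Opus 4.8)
The plan is to reduce, in every case, to Faltings's theorems for abelian varieties by means of (variants of) the Kuga--Satake construction applied in the family $f\colon\cX\to S$. First come Hodge-theoretic reductions. Passing to a finite \'etale cover of~$S$ and using semisimplicity of $\QVHS_S$, split $\mM(1)=\mQ_S^{\oplus r}\oplus\mV$, where $\mV$ is a polarizable VHS of K3 type carrying no nonzero Hodge class on a very general fibre and equipped with a symmetric polarization form~$\phi$; by hypothesis~(b) the VHS $\mV$ is non-isotrivial. Correspondingly $\motM(1)=\unitmot^{\oplus r}\oplus\motV$, and by the Lefschetz theorem on divisor classes and the reductions of Section~\ref{Conjectures} (notably~\ref{TCMotives} and Remarks~\ref{MTCforV(n)}) it suffices to prove the Tate and Mumford--Tate conjectures for the submotive $\motV=\motV_\xi\subset\motH^2(X)(1)$. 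By Zarhin's theorem~\ref{ZarhResult}, $E=\End_{\QVHS_S}(\mV)$ is a field, either totally real or CM, with generic Mumford--Tate group $\SO_{E/\mQ}(V,\tilde\phi)$ or $\UU_{E/\mQ}(V,\tilde\phi)$ respectively. Descending $X$ to a variety $Y$ over a finitely generated field $K\subset\mC$ and $\motV$ to a submotive $\motW\subset\motH^2(Y)(1)$ with $E$-action (as in~\ref{descending}), and replacing $K$ by a finite extension whenever convenient, the proof divides into the three cases of the introduction.

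\textbf{Case (1) ($E$ totally real, $\rank_E\mV\neq4$).} The decisive preliminary is that the algebraic monodromy group $G_\mono(\mV)$ equals the full generic Mumford--Tate group $\SO_{E/\mQ}(V,\tilde\phi)$; granting non-isotriviality, this follows from the classification of connected reductive subgroups of $\SO_{E/\mQ}(V,\tilde\phi)$ generated by cocharacters of K3 type (Theorem~\ref{ZarhK3ell}), which leaves almost no room once $\rank_E=4$ is excluded. Given maximal monodromy, one runs the relative Kuga--Satake construction with real multiplication developed in Section~\ref{KugaSat}, in families as in~\ref{KS/S}: after a further finite \'etale cover of~$S$ one obtains an abelian scheme $\pi\colon A\to S$ with an action of $D=C^+_{E/\mQ}(V,\tilde\phi)$ and an isomorphism of algebras in $\QVHS_S$,
\[
u\colon C^+_{E/\mQ}(\mV,\tilde\phi)\isomarrow\ul\End_D\bigl(\mH^1(A)\bigr)\,.
\]
Maximality of the monodromy, through the theorem of the fixed part (in the spirit of~\cite{DelK3} and~\cite{YAK3}), characterizes the fibre $u_\xi$ as the unique $\pi_1(S,\xi)$-equivariant algebra isomorphism, so it is the Hodge realization of a motivic isomorphism $\motu_\xi\colon C^+_{E/\mQ}(\motW,\tilde\psi)\isomarrow\ul\End_D(\motH^1(A_\xi))$. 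If $\rank_E\motW$ is odd this is the hypothesis of Proposition~\ref{MTCDimOdd}, which finishes; if $\rank_E\motW$ is even (and $\neq4$) one first needs a further extraction step (the analogue of Andr\'e's trick for even-dimensional quadratic spaces) to isolate a motive isomorphic, up to a $1$-dimensional twist with trivial realizations, to $\FNorm_{E/\mQ}(\motW)$, after which Faltings's results together with Proposition~\ref{MTCNorms}(i),(iii) and the $\ell$-adic Zarhin analogue (Theorem~\ref{ZarhK3ell}, Corollary~\ref{HowtogetG=G'}) give reductivity, the Tate conjecture, and the Mumford--Tate conjecture for~$\motW$, hence for~$\motM$.

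\textbf{Case (2) ($E$ a CM-field).} The Kuga--Satake method does not apply, since one is dealing with unitary rather than orthogonal groups, so instead I would produce---first on Hodge realizations, then in the family---a fixed abelian variety~$A$ of CM-type depending only on~$E$ and a chosen CM-type, together with an abelian scheme $B\to S$ with $E$-action, such that $H^2(X)(1)\cong\ul\Hom_E(H^1(A),H^1(B))$; this realizes van Geemen's ``half-twist'' in a geometric form. A monodromy argument then upgrades this to an isomorphism of motives $\motH^2(X)(1)\cong\ul\Hom_E(\motH^1(A),\motH^1(B))\otimes\motU$ for a $1$-dimensional motive $\motU$ which, although possibly non-trivial as a motive, has trivial Hodge and $\ell$-adic realizations. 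Feeding $A$ and~$B$ into Faltings's results and transferring through the trivial-realizations twist then yields the Tate and Mumford--Tate conjectures for~$\motV$.

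\textbf{Case (3) ($E$ totally real, $\rank_E\mV=4$), and the main obstacle.} This is where I expect the difficulty to concentrate, because here $G_\mono(\mV)$ may be strictly smaller than $\SO_{E/\mQ}(V,\tilde\phi)$---it is the group of norm-one elements of a quaternion $E$-algebra~$\Delta$---so the uniqueness argument underlying the motivic lift in case~(1) fails. (If the monodromy happens to be maximal, the case-(1) argument applies verbatim, so case~(3) is genuinely about non-maximal monodromy.) Combining both earlier methods: with $D=\FNorm_{E/\mQ}(\Delta)$, construct a complex abelian variety~$A$ and an abelian scheme $B\to S$, both with $D^\opp$-action, and an isomorphism of VHS $u\colon\ul\Hom_D\bigl(\mH^1(A_S),\mH^1(B)\bigr)\isomarrow\FNorm_{E/\mQ}(\mV)$; then, by a minute analysis of all the algebraic groups in play---the possible monodromy images, their centralizers, and the relevant Tannakian subcategories---together with the surviving monodromy information, prove that $u_\xi$ is the Hodge realization of a motivic isomorphism $\motu_\xi\colon\ul\Hom_D(\motH^1(A),\motH^1(B_\xi))\isomarrow\FNorm_{E/\mQ}(\motW_\xi)$. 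One then applies Faltings to $A\times B_\xi$ (not literally in the literature, but accessible by direct arguments) and descends through the norm functor using Proposition~\ref{MTCNorms} and Theorem~\ref{ZarhK3ell}. In all cases, the crux is promoting the Kuga--Satake / half-twist isomorphism from Hodge structures to motives: in cases (1) and (2) this rests on maximal monodromy and the theorem of the fixed part, whereas in case (3) the non-maximality of the monodromy forces the delicate group-theoretic bookkeeping just described.
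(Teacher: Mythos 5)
Your proposal follows the paper's overall strategy closely: the same decomposition $\mM(1)=\mQ_S^{\oplus\rho}\oplus\mV$, the same three-way split by $E$ and $\rank_E\mV$, the relative Kuga--Satake construction with real multiplication, van Geemen's half-twist in the CM case, norm functors throughout, and the reduction to Faltings via a motivic Kuga--Satake (or Hom) isomorphism characterized by monodromy-equivariance. Two details, however, contain genuine gaps relative to what is actually needed.

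First, your justification for maximal monodromy in Case~(1) does not work as stated. You attribute it to Theorem~\ref{ZarhK3ell} and the ``classification of connected reductive subgroups generated by cocharacters of K3 type,'' but there is no a priori reason for the algebraic monodromy group $G_\mono(\mV)$ to be generated by K3 cocharacters---that property is established (via Pink's theorem) for the $\ell$-adic and Mumford--Tate groups, not for the geometric monodromy. The actual mechanism in Proposition~\ref{JumpPicard}(iii) is entirely different: Andr\'e's theorem (\cite{YAMTGps}, Section~5) shows that $G_\mono(\mV)$ is a \emph{normal} subgroup of $G_\Betti^\der(\mV)$; non-isotriviality makes it non-trivial; and then one uses that $G_\Betti^\der(\mV)=\SO_{E/\mQ}(V,\tilde\phi)$ is a $\mQ$-simple group whenever $\dim_E(V)\neq 4$. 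Without that normality input you cannot conclude $G_\mono=G_\Betti^\der$, and the uniqueness argument underlying the motivic lift of~$u_\xi$ would not be available.

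Second, your description of the even-rank reduction in Case~(1) is off. There is no ``1-dimensional twist with trivial realizations'' here (you may be conflating this with the determinant twist $\det(\motW_{(\mQ)})$ that shows up naturally in the odd-rank argument of Section~\ref{RealMult}, or with the motive~$\motU$ of the CM case). The actual trick (\ref{EvenCase}) is to enlarge $\mV$ to $\mV^\sharp=\mV\oplus\mE$ so that $\rank_E\mV^\sharp$ is odd, and the whole difficulty is to verify that $C^+_{E/\mQ}(\motV_s^\sharp,\tilde\phi^\sharp)$ is still an abelian motive. This is done by choosing a point $s$ where $\mV_s$ acquires a Hodge class---so $\motV_s=\motV_s^\flat\oplus\unitmot_E$---and invoking the representation-theoretic identity $C^+_{E/\mQ}(U\perp 1^{\oplus 2})\cong C^+_{E/\mQ}(U\perp 1)^{\oplus 2^{[E:\mQ]}}$ to bootstrap from the abelianness of $C^+_{E/\mQ}(\motV_s,\tilde\phi)$. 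Your sketch gives no route to this key abelianness claim, and the claim is what lets one feed the situation into Proposition~\ref{MTCDimOdd}.
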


The Main Theorem stated in the Introduction is the special case where $\motM = \motH^2(X)$, so that $p_\xi = \id$, in which case condition~(a) is automatically satisfied. At first reading of the proof, it is advisable to keep this case in mind. 

We start with some preparations for the proof. Some remarks to help the reader to navigate through the proof will be made in~\ref{Navigation}.

\subsection{}
\label{SetupContd}
We retain the notation of the theorem, and we assume that condition~(a) is satisfied. The notation of~\ref{MainThmSetup} then applies. Let $S^\gen \subset S$ be the Hodge-generic locus for the $\mQ$-variation of Hodge structure~$\mM$. The choice of a relatively ample bundle on~$\cX$ gives us a polarization form~$\phi$ on this VHS. 

Let $M = M_\xi$ be the $\mQ$-vector space underlying the Hodge realization of~$\motM_\xi$, and let $G_\mono(\mM) \subset \GL(M)$ denote the algebraic monodromy group of the VHS~$\mM$. We assume that $G_\mono(\mM)$ is connected; this can always be achieved by passing to a connected finite \'etale cover of~$S$. We then have an orthogonal decomposition $\mM(1) = \mV \oplus \mQ_S^{\oplus \rho}$ in $\QVHS_S$ such that for $s \in S^\gen$ the fibre $\mV_s$ has no non-zero Hodge classes. We again write~$\phi$ for the restriction of~$\phi$ to~$\mV$.

Let $V = \mV_\xi$. Let $G_\Betti(\mV) \subset \SL(V)$ be the generic Mumford-Tate group of the VHS~$\mV$. The subspace $V \subset M$ is stable under the action of $G_\mono(\mM)$. The homomorphism $G_\mono(\mM) \to \GL(V)$ is injective and identifies $G_\mono(\mM)$ with the algebraic monodromy group $G_\mono(\mV)$ of the variation~$\mV$. By the result of Y.~Andr\'e in \cite{YAMTGps}, Section~5, $G_\mono(\mV)$ is a normal subgroup of $G^\der_\Betti(\mV)$.

The variation~$\mV$ is of type $(-1,1) + (0,0) + (1,-1)$ with Hodge numbers $1,n,1$ for some $n\geq 0$. Let $E = \End_{\QVHS_S}(\mV)$ be the endomorphism algebra. For $s \in S$ we have an injective homomorphism $E \hookrightarrow \End_\QHS\bigl(\mV_s\bigr)$ and for $s \in S^\gen$ this is an isomorphism. By \cite{ZarhK3}, Theorems 1.5.1 and~1.6, it follows that either $E$ is a totally real field and $G_\Betti(\mV) = \SO_{E/\mQ}(V,\tilde{\phi})$, or $E$ is a CM-field and $G_\Betti(\mV) = \UU_{E/\mQ}(V,\tilde{\phi})$. (Cf.\ Section~\ref{ZarhResult}.)

Let $\upsilon \colon \tilde{S} \to S$ be the universal cover of~$S$ for the complex topology, and fix a trivialization $\upsilon^* \mV \isomarrow V \times \tilde{S}$. The variation~$\mV$ then gives rise to a period map $\tilde{S} \to \cD$, where $\cD$ is a domain of type~\romannumeral4 if $E$ is totally real and is a complex ball if $E$ is a CM field. In what follows the following condition will play an important role: 
\[
\hbox{The VHS~$\mV$ is not isotrivial, i.e., the period map of~$\mV$ is not constant.} \leqno{\PeriodCond}
\]
This condition is equivalent to the condition that the VHS~$\mM$ is not isotrivial, which is condition~(b) in Theorem~\ref{MainThm}.

For $s\in S(\mC)$, let $h(s)$ be the $\mQ$-dimension of the space $\mV_s \cap \mV_{s,\mC}^{0,0}$ of Hodge classes in~$\mV_s$. By construction, $h(s) = 0$ for $s \in S^\gen$.

\begin{proposition}
\label{JumpPicard} 
\textup{(\romannumeral1)} The following three conditions are equivalent.
\begin{enumerate}[label=\textup{(\alph*)}]
\item Condition \PeriodCond.
\item The connected algebraic monodromy group $G_\mono(\mV)$ is not the trivial group.
\item The function $h\colon S(\mC) \to \mN$ is not constant.
\end{enumerate}

\textup{(\romannumeral2)} Assume \PeriodCond\ holds. If $E$ is a totally real field then $\rank_E(\mV) \geq 3$, if $E$ is a CM-field then $\rank_E(\mV) \geq 2$.

\textup{(\romannumeral3)} Assume \PeriodCond\ holds. Then the variation~$\mV$ has maximal monodromy, by which we mean that $G_\mono(\cX/S) = G^\der_\Betti(\cX/S)$, except possibly when $E$ is totally real and $\dim_E(V) = 4$.
\end{proposition}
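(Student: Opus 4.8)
\emph{Setup for the proof.} Throughout I would use the hypotheses already in force in~\ref{SetupContd}: that $G_\mono(\mV)$ is connected, Zarhin's description of $G_\Betti(\mV)$ as $\SO_{E/\mQ}(V,\tilde\phi)$ (totally real case) or $\UU_{E/\mQ}(V,\tilde\phi)$ (CM case), and Andr\'e's theorem from \cite{YAMTGps} that $G_\mono(\mV)$ is a normal subgroup of $G^\der_\Betti(\mV)$. For Part~(i) the plan is to prove (a)$\Leftrightarrow$(b), then (c)$\Rightarrow$(a), then (a)$\Rightarrow$(c). For (a)$\Leftrightarrow$(b): if the period map is constant then the monodromy preserves the Hodge flag of~$V$, hence lies in a compact subgroup of $G_\Betti(\mV)(\mR)$; preserving a lattice, it is finite, and since $G_\mono(\mV)$ is connected it is trivial, so $G_\mono(\mV)=\{1\}$. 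Conversely, if $G_\mono(\mV)=\{1\}$ the monodromy is trivial, so by the theorem of the fixed part $\mV$ is a constant VHS and the period map is constant; this also shows all fibres $\mV_s$ are isomorphic as Hodge structures, so $h$ is constant, which gives (c)$\Rightarrow$(a). The substantive implication is (a)$\Rightarrow$(c): when the period map $\tilde S\to\cD$ is non-constant its image is a positive-dimensional analytic subvariety of~$\cD$, and since $\cD$ sits inside a projective space in such a way that the Hodge loci $\cD_v=\{[\omega]\in\cD : \tilde\phi_{\sigma_0}(\omega,v)=0\}$ (for suitable nonzero rational vectors~$v$) are hyperplane sections, a positive-dimensional subvariety must meet some~$\cD_v$; at such a point $h\geq1$, whereas $h\equiv0$ on the nonempty open $S^\gen$, so $h$ is non-constant.

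\emph{Parts (ii) and (iii).} Assume \PeriodCond, so that $\cD$ is positive-dimensional. The Hodge structure on each component $V_\sigma$ of $V_\mC$ with $\sigma\neq\sigma_0$ is purely of type $(0,0)$, so by the Hodge--Riemann relations $\phi$ is definite there and the corresponding factor of $G_\Betti(\mV)(\mR)$ is compact; only the $\sigma_0$-factor is non-compact, and it is $\SO(2,d-2)$, resp.\ $\UU(1,d-1)$, where $d=\rank_E(\mV)$. Hence $\dim_\mC\cD$ equals $d-2$, resp.\ $d-1$, and positivity forces $d\geq3$, resp.\ $d\geq2$; this proves~(ii). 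For~(iii): by~(i) we have $G_\mono(\mV)\neq\{1\}$, and it is a normal $\mQ$-subgroup of $G^\der_\Betti(\mV)$. In the ranges just found $\SO(V,\tilde\phi)$ and $\SU(V,\tilde\phi)$ are semisimple, so $G^\der_\Betti(\mV)$ equals $\Res_{E/\mQ}\SO(V,\tilde\phi)$, resp.\ $\Res_{E_0/\mQ}\SU(V,\tilde\phi)$. Now $\SU(V,\tilde\phi)$ (type $A_{d-1}$, $d\geq2$) is simple over~$E_0$, and $\SO(V,\tilde\phi)$ is simple over~$E$ whenever $d=3$ or $d\geq5$; since $\Gal(\Qbar/\mQ)$ acts transitively on the embeddings of a field, the restriction of scalars of an $F$-simple group has no proper nontrivial normal $\mQ$-subgroup. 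Therefore $G_\mono(\mV)=G^\der_\Betti(\mV)$ in all these cases. The one exception is $E$ totally real with $d=4$: then $\SO_4$ is semisimple but not simple, $\Res_{E/\mQ}\SO(V,\tilde\phi)$ need not be $\mQ$-simple, and the argument no longer forces maximality --- indeed, in case~(3) of the Introduction the monodromy genuinely drops to the norm-$1$ group of a quaternion algebra over~$E$.

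\emph{Main obstacle.} The only step calling for real input is the Noether--Lefschetz non-emptiness inside Part~(i): the assertion that a non-isotrivial variation of K3 type necessarily acquires a Hodge class at some point of the base. This rests on the density of the Hodge loci~$\cD_v$ in the period domain (equivalently, the ampleness of the Heegner divisors on a projective model). Everything else is bookkeeping with restriction of scalars and the structure theory of orthogonal and unitary groups, layered on top of the already-quoted results of Zarhin and Andr\'e.
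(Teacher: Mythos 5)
Your overall architecture matches the paper's: (a)$\Leftrightarrow$(b) via the theorem of the fixed part and compactness of the stabilizer of a polarized Hodge structure, (b)$\Rightarrow$(c) as a corollary, (a)$\Rightarrow$(c) via Noether--Lefschetz density, then (ii) by observing that an abelian Mumford--Tate group forces a zero-dimensional period domain, and (iii) by normality (Andr\'e) plus $F$-simplicity of $\SO(V,\tilde\phi)$ and $\SU(V,\tilde\phi)$ in the indicated ranges. Parts (ii) and (iii) are fine and only slightly more explicit than the paper's version; your remark on $\Gal(\Qbar/\mQ)$ acting transitively on embeddings is the right way to see that $\Res_{E/\mQ}$ of an $E$-simple group has no proper nontrivial connected normal $\mQ$-subgroup.

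The one real gap is the justification of (a)$\Rightarrow$(c). You write that ``$\cD$ sits inside a projective space in such a way that the Hodge loci $\cD_v$ are hyperplane sections, [so] a positive-dimensional subvariety must meet some $\cD_v$.'' That inference is false as stated: $\cD$ is a non-compact open subset of the quadric, so the closure of the period image in projective space may well meet the hyperplane $\{\tilde\phi_{\sigma_0}(\,\cdot\,,v)=0\}$ only outside $\cD$; there is no B\'ezout-type forcing in a non-projective ambient. What is actually needed is the density theorem of Green--Oguiso (the cited \cite{Voisin}, Theorem~3.5), whose proof is a real-analytic argument: the derivative of the period map is used to show that rational $(1,1)$-classes accumulate, not a degree count. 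Your ``Main obstacle'' paragraph shows you are aware this is where the content lies, but the sentence quoted above presents as a proof what is in fact the unproved theorem, and the parenthetical appeal to ampleness of Heegner divisors would require first landing in a quasi-projective Shimura-variety quotient --- an extra reduction the paper avoids by citing Voisin directly.
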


\begin{proof}
That \PeriodCond\ implies (b) follows from the Theorem of the Fixed Part. Conversely, suppose the period map of the variation~$\mV$ is constant. The image of the monodromy representation $r\colon \pi_1(S,\xi) \to \GL(V_\mR)$ is a discrete group, as it is contained in $\GL(V_\mZ)$ with $V_\mZ = V \cap H^2(\cX_\xi,\mZ)(1)$. On the other hand, the assumption that the period map is constant implies that the Weil operator~$C$ on~$V_\mR$ is invariant under the action of $\pi_1(S,\xi)$; hence $\Image(r)$ is contained in $\SO(H_\mR,\Phi)$, where $\Phi \colon V_\mR \times V_\mR \to \mR$ is the form given by $\Phi(x,y) = \phi_\mR(x,Cy)$. By definition of a polarization, $\Phi$ is definite; hence $\Image(r)$ is finite.

That \PeriodCond\ implies (c)  readily follows from Theorem~3.5 in~\cite{Voisin}, taking into account the assumption that $\dim_\mC(\mM_s^{2,0}) = 1$ for all $s \in S$. Conversely it is obvious that (c) implies~\PeriodCond.

For (\romannumeral2) we just have to remark that the generic Mumford-Tate group cannot be abelian, as otherwise the period domain is a point. 

For (\romannumeral3) we use that $G_\mono(\mV)$ is a normal subgroup of~$G^\der_\Betti(\mV)$, which by~(\romannumeral1) is not trivial. Further, if $E$ is a CM-field then $\dim_E(V) \geq 2$ and $G^\der_\Betti(\mV) = \SU_{E/\mQ}(V,\tilde{\phi})$ is a simple algebraic group. If $E$ is totally real then $\dim_E(V) \geq 3$, and $G^\der_\Betti(\mV) = \SO_{E/\mQ}(V,\tilde{\phi})$ can be non-simple only if $\dim_E(V) = 4$.
\end{proof}

\subsection{}
\label{motVConstr}
Let $s \in S(\mC)$. Then $\motM_s$ is a direct factor of $\motH^2(\cX_s)$; so by the Lefschetz theorem on divisor classes, $\mV_s \subset \mM_s(1)$ is the Hodge realization of a submotive $\motV_s \subset \motM_s(1)$. We have $\motM_s(1) = \motV_s \oplus \unitmot^{\oplus \rho}$, where, as in~\ref{SetupContd}, $\rho$~is the dimension of the space of Hodge classes in~$\mM_s(1)$ for a Hodge-generic point~$s$. As in~\ref{AlgTrans}, $\motV_s = \motV_s^\alg \oplus \motV_s^\trans$, and with $h(s)$ as defined in~\ref{SetupContd} we have $\motV_s^\alg \cong \unitmot^{\oplus h(s)}$. By construction, $\motV_s = \motV_s^\trans$ for $s \in S^\gen$.

\begin{proposition}
\label{EMotivated} 
Assume \PeriodCond\ holds. Then for every $s \in S$ the endomorphisms in~$E$, viewed as endomorphisms of~$\mV_s$, are motivated cycles in~$\End(\mV_s)$, i.e., they are the Hodge realizations of endomorphisms of~$\motV_s$.
\end{proposition}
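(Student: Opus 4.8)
\emph{Strategy.} The plan is to reduce the assertion to a single fibre by Andr\'e's ``Principle~B'' for motivated cycles, and then to identify that fibre with a submotive of an abelian motive via the Kuga--Satake construction, so that Deligne's theorem on absolute Hodge classes on abelian varieties applies.

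\emph{Reduction to one fibre.} By condition~(a) of Theorem~\ref{MainThm} the projector $p_\xi$ is algebraic; moreover the generic Hodge classes spanning the summand $\mQ_S^{\oplus\rho}$ of $\mM_\xi(1) \subseteq \motH^2(\cX_\xi)(1)$ are algebraic at~$\xi$ by the Lefschetz theorem on divisor classes, so the orthogonal projector onto $\mV_\xi$ is motivated at~$\xi$. By \cite{YAPour}, Th\'eor\`eme~0.5, this projector is motivated at every $s \in S$; hence $\motV_s \subseteq \motM_s(1)$ is defined (as in~\ref{motVConstr}) for all~$s$, and $\ul\End(\mV)$ is the Hodge realization of the family of motives $\{\ul\End(\motV_s)\}_{s\in S}$. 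Each $e \in E = \End_{\QVHS_S}(\mV)$ then determines a flat global section $\tilde{e}$ of $\ul\End(\mV)$ which is of Hodge type $(0,0)$ on every fibre (since $E \hookrightarrow \End_\QHS(\mV_s)$ for all~$s$). Applying \cite{YAPour}, Th\'eor\`eme~0.5 once more, it therefore suffices to prove that $\tilde{e}_{s_0}$ is a motivated cycle on $\motV_{s_0}$ for every $e \in E$, for a \emph{single} point~$s_0$; I would take $s_0 \in S^\gen$, so that $\End_\QHS(\mV_{s_0}) = E$.

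\emph{The chosen fibre is abelian.} Applying the Kuga--Satake construction of~\ref{KS/S} to the $\mQ$-variation $(\mV_{(\mQ)},\phi)$ (after replacing $S$ by a connected finite \'etale cover), one obtains an abelian scheme $g\colon A \to S$ with multiplication by $D_0 = C^+(V_{(\mQ)},\phi)$ and an isomorphism of algebras in $\QVHS_S$, $u\colon C^+(\mV_{(\mQ)},\phi) \isomarrow \ul\End_{D_0}(\mH^1(A))$, with $\mH^1(A) = R^1g_*\mQ_A$. The right-hand side is the Hodge realization of the abelian motive $\ul\End_{D_0}(\motH^1(A_s))$. The key point is that $u$ is the Hodge realization of an isomorphism of motives: $u$ is $\pi_1(S,\xi)$-equivariant, and --- this is where Proposition~\ref{JumpPicard} enters --- although the algebraic monodromy group $G_\mono(\mV) = \SO_{E/\mQ}(V,\tilde\phi)$ is in general a proper subgroup of $\SO(V_{(\mQ)},\phi)$, its centralizer inside $C^+(V_{(\mQ)},\phi)$ still reduces to the centre~$\mQ$ of~$D_0$ when $\rank_E(V)$ is odd (one checks this over~$\Qbar$, using $C^+(V_{(\mQ)},\phi) \otimes \Qbar \cong \bigotimes_\sigma C^+(V_\sigma,\tilde\phi_\sigma)$ and Lemma~\ref{AnalogDel3.5}); hence $u$ is the \emph{unique} $\pi_1$-equivariant algebra isomorphism, and the argument of Andr\'e in \cite{YAK3}, Section~6.2 (essentially the Theorem of the Fixed Part) shows that $u_{s_0}$ is motivic. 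Consequently $C^+(\motV_{s_0},\phi)$ is an abelian motive, and since $\motV_{s_0}$ is, up to a Tate twist, a submotive of $C^+(\motV_{s_0},\phi)$ cut out by motivated projectors coming from the Clifford-algebra structure, $\motV_{s_0}$ is itself an abelian motive. Finally, by Deligne's theorem \cite{DelAbsHodge}, $\End_{\Mot}(\motV_{s_0}) = \End_\QHS(\mV_{s_0}) = E$, which together with the previous paragraph proves the proposition.

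\emph{The main obstacle} is the verification that the Kuga--Satake isomorphism is motivic, i.e.\ the rigidity statement that it is the unique $\pi_1$-equivariant algebra isomorphism. As indicated this goes through directly when $\rank_E(V)$ is odd; the case $\rank_E(V)$ even requires the additional ``extraction'' device already needed in \cite{YAK3} (and recalled in the Introduction), and the case $\rank_E(V)=4$ --- where by Proposition~\ref{JumpPicard}(iii) the monodromy may fail to be maximal --- must be handled by a finer analysis of the quaternionic groups involved (in the spirit of Section~\ref{Mono=>MTC3}, replacing the Kuga--Satake variety by a pair $(A,B)$ and $u$ by an isomorphism $\ul\Hom_D(\mH^1(A),\mH^1(B)) \isomarrow \FNorm_{E/\mQ}(\mV)$, from which $\motV_{s_0}$ together with its $E$-action is then recovered). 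Everything else in the argument is formal.
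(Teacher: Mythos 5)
Your reduction to a single fibre via \cite{YAPour}, Th\'eor\`eme~0.5 is sound and matches the paper; but the single-fibre argument you then give has a genuine gap, and in fact the paper's proof proceeds by an entirely different and much more economical route.

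The central problem is your rigidity claim for the Kuga--Satake isomorphism. You apply the construction to the $\mQ$-form $(\mV_{(\mQ)},\phi)$, obtaining $u\colon C^+(\mV_{(\mQ)},\phi) \isomarrow \ul\End_{D_0}(\mH^1(A))$ with $D_0 = C^+(V_{(\mQ)},\phi)$, and then assert that the centralizer of $\SO_{E/\mQ}(V,\tilde\phi)$ inside $D_0$ is $\mQ$, invoking the ``identification'' $C^+(V_{(\mQ)},\phi) \otimes \Qbar \cong \bigotimes_\sigma C^+(V_\sigma,\tilde\phi_\sigma)$ and Lemma~\ref{AnalogDel3.5}. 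That identification is false: the Clifford algebra of an orthogonal direct sum is a \emph{super} tensor product, so $C^+(V_{(\mQ)},\phi)\otimes\Qbar$ is strictly larger than $\bigotimes_\sigma C^+(V_\sigma,\tilde\phi_\sigma)$ (the latter is $C^+_{E/\mQ}(V,\tilde\phi)\otimes\Qbar$, i.e.\ the \emph{norm} of the $E$-Clifford algebra, with $\mQ$-dimension $2^{d(n-1)}$ as opposed to $2^{dn-1}$, $n=\dim_E V$, $d=[E:\mQ]$). Lemma~\ref{AnalogDel3.5} is a statement about $C^+_{E/k}(V,\tilde\phi)$, not about $C^+(V_{(k)},\phi)$, and it does not transfer: the centralizer of $\SO_{E/\mQ}(V,\tilde\phi)$ in $C^+(V_{(\mQ)},\phi)$ is strictly bigger than $\mQ$ whenever $E\neq\mQ$ (degree-$2$ elements built from the self-adjoint operators $e\in E$ already commute), so $u_{s_0}$ is \emph{not} the unique $\pi_1$-equivariant algebra isomorphism. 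This is exactly the defect that motivates replacing $C^+(V_{(\mQ)},\phi)$ by $\FNorm_{E/\mQ}C^+_E(V,\tilde\phi)$ in the paper's Section~\ref{KugaSat}. Moreover, if you try to repair this by switching to the relative construction $C^+_{E/\mQ}(\motV_s,\tilde\phi)$ as a \emph{motive}, the argument becomes circular: that motive is only defined once one knows $G_\mot(\motV_s)\subseteq \OO_{E/\mQ}(V,\tilde\phi)$, i.e.\ once one knows the $E$-action is motivated --- which is precisely what you are trying to prove. And even granting a motivic Kuga--Satake isomorphism, your concluding step does not go through when $E\neq\mQ$: only $\FNorm_{E/\mQ}(\motV_{s_0})$ appears as a graded piece of the Clifford motive (cf.~\ref{PosetQ}), not $\motV_{s_0}$ itself, and the norm functor is neither additive nor faithful, so you cannot recover $\motV_{s_0}$ from it. Finally, your argument does not touch the case where $E$ is CM, for which the Kuga--Satake construction of Section~\ref{KugaSat} does not apply.

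The paper's actual proof avoids all of this and does not construct any abelian motive. It is a short Tannakian argument: by André's Lemma~6.1.1 of~\cite{YAK3} the subalgebra $\End(\mV_s)^{\pi_1(S,s)}$ is stable under $\cG_\mot$, and $E$ is intrinsically its centre (equal to it when the monodromy is maximal, and equal to the centre of a quaternion algebra otherwise, cf.~\ref{QuatCaseStart}), so $\cG_\mot$ acts on $E_s\cong E$ by field automorphisms; by \cite{YAPour}, Corollaire~5.1 this action is the same for all $s$; and then --- the crucial move --- one evaluates at a point $s$ where $\motV_s^\alg\neq 0$ (which exists by~\ref{JumpPicard}(\romannumeral1)), so that $E$ embeds $\cG_\mot$-equivariantly into $\End(\motV_s^\alg)\cong\End(\unitmot^{\oplus h(s)})$, a trivial $\cG_\mot$-module, forcing the action on $E$ to be trivial. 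Note that this uses a \emph{non}-Hodge-generic fibre, whereas you deliberately chose a Hodge-generic one, missing the mechanism entirely.
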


At one step in the proof we shall refer forward to a calculation in Section~\ref{QuatCaseStart}. The reader will have no trouble checking that there is no circularity in the argument. 
\medskip

\begin{proof}
For $s \in S$ and $e\in E$, write $e_s$ for the image of~$e$ in $\End(\mV_s)$. By \cite{YAPour}, Th\'eor\`eme~0.5, if $e_s$ is a motivated cycle for some $s \in S$, the same is true for every $s \in S$. 

The motivic Galois group $\cG_\mot$ of the category~$\Mot_\mC$ acts on $\End(\mV_s)$ by algebra automorphisms. By \cite{YAK3}, Lemma~6.1.1, the subalgebra $\End(\mV_s)^{\pi_1(S,s)} \subset \End(\mV_s)$ of monodromy-equivariant endomorphisms is stable under the action of~$\cG_\mot$.

First assume the monodromy of the variation~$\mV$ is maximal, which by Proposition~\ref{JumpPicard} is automatic unless $E$ is totally real and $\mV$ has rank~$4$ over~$E$. In this case, $G_\mono(\mV) = \SO_{E/\mQ}(V,\tilde{\phi})$ (totally real case) or $G_\mono(\mV) = \SU_{E/\mQ}(V,\tilde{\phi})$ (CM case). In both cases, it follows, taking into account Proposition~\ref{JumpPicard}(\romannumeral2), that $\End(\mV_s)^{\pi_1(S,s)}$ is precisely the image~$E_s$ of~$E$ in $\End(\mV_s)$; hence $\cG_\mot$ acts on~$E_s$ through its group of field automorphisms. This defines an algebra $\motE_s$ in the category~$\Mot_\mC$. If the monodromy is not maximal, $E_s$ is no longer the full subalgebra $\End(\mV_s)^{\pi_1(S,s)}$ but it is the centre of this algebra. See \ref{QuatCaseStart} for more details. So in this case, too, we get an action of $\cG_\mot$ on~$E_s$ by field automorphisms.

Next we show that the action of~$\cG_\mot$ on $E\cong E_s$ is independent of~$s$. By definition of~$E$ we have a constant sub-VHS $\mE \subset \ul\End(\mV)$, purely of type $(0,0)$, with fibres the~$E_s$. Consider the family of motives $\ul\Hom(\motE_\xi,\motE_s)$ over~$S$, which has as Hodge realization the constant VHS $\ul\Hom(E_\xi,\mE)$. At $s=\xi$ the identity $\id_E \colon E_\xi \to E_s$ is clearly a motivated cycle, and it extends to a flat section of $\ul\Hom(E_\xi,\mE)$. By \cite{YAPour}, Corollaire~5.1, it follows that $\id_E \colon E_\xi \to E_s$ is a motivated cycle for all~$s$, and this just means that the $\cG_\mot$-action on~$E_\xi$ is the same as the one on~$E_s$.

Our goal is to show that $\cG_\mot$-acts trivially on~$E$. By Proposition~\ref{JumpPicard}(\romannumeral1), there exists a point $s \in S$ such that $\mV_s$ has non-zero Hodge classes, or, equivalently, $\motV_s^\alg \neq 0$. We then have $\cG_\mot$-equivariant homomorphisms $i^\alg\colon E \hookrightarrow \End(\mV_s^\alg)$ and $i^\trans\colon E \hookrightarrow \End(\mV_s^\trans)$ such that the endomorphism $e_s$ of~$\mV_s$ is given by the diagonal element $\bigl(i^\alg(e),i^\trans(e)\bigr)$. As $\motV_s^\alg \cong \unitmot^{\oplus h(s)}$ and the unit motive~$\unitmot$ corresponds to the trivial representation of~$\cG_\mot$, it follows that $i^\alg(e)$, and hence also~$e_s$, is invariant under the action of~$\cG_\mot$.
\end{proof}

\subsection{}
\label{motVinMotE}
{}From now on, we assume we are in the situation of Theorem~\ref{MainThm}, and we keep all notation introduced earlier in this section. By Proposition~\ref{EMotivated}, $E$ acts on $\motV_s = \motV_s^\alg \oplus \motV_s^\trans$ and the motivic Galois group $G_\mot(\motV_s)$ is an algebraic subgroup of $\OO(\mV_s,\phi) \cap \GL_E(\mV_s) = \OO_{E/\mQ}(\mV_s,\tilde\phi)$. 

In what follows, we view $\motV_s$ as an object of the category~$\Mot_{\mC,(E)}$ of $E$-modules in~$\Mot_\mC$, and we denote by~$\motV_{s,(\mQ)} \in \Mot_\mC$ the object obtained by forgetting the $E$-module structure. (Cf.\ Section~\ref{NotConvNorm}.) The same applies to $\motV_s^\alg$ and~$\motV_s^\trans$ and to realizations; e.g., we view $\mV_s$ as an object of $\QHS_{(E)}$ and denote by $\mV_{s,(\mQ)} \in \QHS$ the underlying $\mQ$-Hodge structure, forgetting the $E$-action.

Note that $\motV_s^\alg \cong \unitmot_E^{\oplus \nu(s)}$ for some $\nu(s) \geq 0$, where $\unitmot_E = E \otimes \unitmot$ denotes the identity object in~$\Mot_{\mC,(E)}$. (The integer~$h(s)$ defined in~\ref{SetupContd} then equals $\nu(s) \cdot [E:\mQ]$.) Further, $G_\mot(\motV_s)$ maps isomorphically to its image in $\OO_{E/\mQ}(\mV^\trans_s,\tilde\phi)$.

\subsection{}
\label{Navigation}
In the proof of Theorem~\ref{MainThm} we distinguish some cases. In the remainder of this section we prove the theorem in the case where the field~$E$ is totally real, under the additional hypothesis that the VHS~$\mV$ has maximal monodromy (which by Proposition~\ref{JumpPicard} is automatic if $\dim_E(V) \neq 4$). In this case, the result will be deduced from Proposition~\ref{MTCDimOdd} that we have proved in the previous section. 

In the next section we treat the case where $E$ is a CM-field. In Section~\ref{Mono=>MTC3}, finally, we deal with the case where $E$ is totally real and the monodromy is not maximal.

\subsection{}
\label{ConditionsRem}
We now assume that $E$ is totally real and that $G_\mono(\mV) = G^\der_\Betti(\mV)$. For $s \in S(\mC)$ we have $G_\mot(\motV_s) \subset \OO_{E/\mQ}(\mV_s,\tilde\phi)$. Therefore, we have a well-defined motive $C^+_{E/\mQ}(\motV_s,\tilde\phi) \in \Mot_\mC$.
 
We retain the notation and conventions of~\ref{NotConvNorm}, with $k=\mQ$. An extension of scalars from~$\mQ$ to~$\Ql$ is denoted by a subscript~``$\ell$''. For instance, $E_\ell = E \otimes_\mQ \Ql$. As in~\ref{NotConvIntro}(c) we can lift the polarization form~$\phi$ to an $E$-bilinear symmetric bilinear form $\tilde\phi \colon \mV \times \mV \to E_S$ such that $\phi = \trace_{E/\mQ} \circ \tilde\phi$. The even Clifford algebra $C^+(\mV,\tilde\phi)$ is then an algebra in the category $\QVHS_{S,(E)}$. Its norm $C^+_{E/\mQ}(\mV,\tilde\phi)$ is an algebra in $\QVHS_S$. 

We shall take the point~$\xi$ as in the formulation of Theorem~\ref{MainThm} as base point. Write $V = \mV_\xi$. We again write $\tilde\phi$ (rather than~$\tilde\phi_\xi$) for the polarization form on~$V$. Let
\[
D = C^+_{E/\mQ}(V,\tilde\phi)\, ,
\]
which is a semisimple $\mQ$-algebra. As in Section~\ref{KugaSat}, we use the notation~$D$ when it appears in its role as algebra, and write $C^+_{E/\mQ}(V,\tilde\phi)$ for the underlying $\mQ$-vector space.

\subsection{}
\label{motu_sDef}
We now apply what was explained in~\ref{KS/S}. The conclusion of this is that, possibly after again passing to a finite \'etale cover of~$S$, there exists an abelian scheme $\pi \colon A\to S$ with multiplication by~$D$ and an isomorphism
\[
u\colon C^+_{E/\mQ}(\mV,\tilde\phi) \isomarrow \ul\End_D(R^1\pi_*\mQ_A)
\] 
of algebras in the category $\QVHS_S$.

The fibre of the isomorphism~$u$ at a point~$s$ is a $\pi_1(S,s)$-equivariant isomorphism 
\[
u_s \colon C^+_{E/\mQ}(\mV_s,\tilde\phi) \isomarrow \ul\End_D\bigl(H^1(A_s)\bigr)
\] 
of algebras in the category $\QHS$. By Lemma~\ref{AnalogDel3.5} and the assumption that the monodromy of the variation~$\mV$ is maximal, $u_s$ is the only such algebra isomorphism that is $\pi_1(S,s)$-equivariant. By the same argument as in \cite{YAK3}, Proposition~6.2.1, it follows that $u_s$ is the Hodge realization of an isomorphism 
\begin{equation}
\label{eq:motu}
\motu_s \colon C^+_{E/\mQ}(\motV_s,\tilde\phi) \isomarrow \ul\End_D\bigl(\motH^1(A_s)\bigr)
\end{equation}
of algebras in the category $\Mot_\mC$.

\subsection{}
\label{EvenCase}
If the variation~$\mV$ introduced in~\ref{SetupContd} has odd rank (and hence $\dim_E(V)$ is odd), Theorem~\ref{MainThm} follows from Proposition~\ref{MTCDimOdd}, taking $s=\xi$ in~\ref{motu_sDef}.

Next assume that the variation~$\mV$ has even rank. In this case we use a trick due to Yves Andr\'e. We start by looking at the variation of Hodge structure $\mV^\sharp = \mV \oplus \mE$, where $\mE$ is the constant variation whose fibres are~$E$ with trivial Hodge structure. Taking the orthogonal sum of the form~$\tilde\phi$ on~$\mV$ and the obvious form on~$\mE$, we obtain an $E$-bilinear polarization form $\tilde\phi^\sharp\colon \mV^\sharp \times \mV^\sharp \to \mE$. Possibly after passing to a finite \'etale cover of~$S$, we have a Kuga-Satake abelian scheme $\pi \colon A^\sharp \to S$ with multiplication by an even Clifford algebra~$D^\sharp$ and an isomorphism
\[
u^\sharp \colon C^+_{E/\mQ}(\mV^\sharp,\tilde\phi^\sharp) \isomarrow \ul\End_{D^\sharp}(R^1\pi_*\mQ_{A^\sharp})\, .
\]
Note that the fibres of $\mV^\sharp$ and $C^+_{E/\mQ}(\mV^\sharp,\tilde\phi^\sharp)$ at a point~$s$ are the Hodge realizations of motives $\motV_s^\sharp = \motV_s \oplus \unitmot_E$ and $C^+_{E/\mQ}(\motV_s^\sharp,\tilde\phi^\sharp)$. 

We claim that there exists a point $s \in S(\mC)$ such that the motive $C^+_{E/\mQ}(\motV_s^\sharp,\tilde\phi^\sharp)$ lies in the subcategory $\Mot(\Ab)_\mC \subset \Mot_\mC$ of abelian motives. Before proving this, let us explain how the theorem follows. 

Suppose that $C^+_{E/\mQ}(\motV_s^\sharp,\tilde\phi^\sharp)$ is an object of $\Mot(\Ab)_\mC$. By \cite{YAPour}, Th\'eor\`eme~0.6.2 (Andr\'e's refinement of Deligne's ``Hodge is absolute Hodge'' for abelian varieties), the fibre of the isomorphism~$u_s^\sharp$ at the point~$s$ is then motivated, i.e., it is the Hodge realization of an isomorphism of motives $\motu_s^\sharp \colon C^+_{E/\mQ}(\motV_s^\sharp,\tilde\phi^\sharp) \isomarrow \ul\End_D\bigl(\motH^1(A^\sharp_s)\bigr)$. By \cite{YAPour}, Th\'eor\`eme~0.5, the same conclusion then holds for all fibres in the family. In particular, we can apply this to the fibre at the point $\xi \in S(\mC)$ as in the statement of the theorem, and we obtain that $u_\xi^\sharp$ is the Hodge realization of an isomorphism $\motu_\xi^\sharp \colon C^+_{E/\mQ}(\motV_\xi^\sharp,\tilde\phi^\sharp) \isomarrow \ul\End_D\bigl(\motH^1(A^\sharp_\xi)\bigr)$. On the other hand, $\cX_\xi \cong X$, and by construction $\motV_{\xi,(\mQ)}$ is a direct factor of $\motH^2(X)\bigl(1\bigr)$. Hence if $X^\sharp$ is the variety obtained from~$X$ by blowing up $[E:\mQ]$ distinct points, $\motV_{\xi,(\mQ)}^\sharp$ is a direct factor of $\motH^2(X^\sharp)\bigl(1\bigr)$. Because $\mV^\sharp$ has odd rank over~$E$ we can apply Proposition~\ref{MTCDimOdd}. This gives the Mumford-Tate conjecture and the Tate conjecture for~$\motV_{\xi,(\mQ)}^\sharp$, from which the same conjectures for~$\motV_{\xi,(\mQ)}$ follow.  

\subsection{}
It remains to be shown that $C^+_{E/\mQ}(\motV_s^\sharp,\tilde\phi^\sharp)$ lies in $\Mot(\Ab)_\mC$ for some $s \in S(\mC)$. By Proposition~\ref{JumpPicard}(\romannumeral1) we can choose for~$s$ a point of~$S(\mC)$ such that the fibre $\mV_{s,\Betti}$ contains non-trivial Hodge classes. By the Lefschetz theorem on divisor classes, the Hodge classes in~$\mV_{s,\Betti}$, which form an $E$-submodule, are algebraic; hence there exists a decomposition $\motV_s = \motV_s^\flat \oplus \unitmot_E$ in $\Mot_{\mC,(E)}$. Write $V^\flat \subset \mV_s$ for the $E$-subspace underlying the Hodge realization of~$\motV_s^\flat$, and let $\tilde\phi^\flat$ be the restriction of the polarization form~$\tilde\phi$ (on the fibre at~$s$) to~$V^\flat$. The motivic Galois group of~$\motV_s$ is then an algebraic subgroup of $\OO_{E/\mQ}(V^\flat,\tilde\phi^\flat)$.

We now use a simple fact from representation theory. If $(U,q)$ is a quadratic space over~$E$ then 
\[
\wedge^\even \bigl(U \perp 1^{\oplus 2}\bigr) \cong \Bigl(\wedge^\even (U \perp 1) \Bigr)^{\oplus 2}
\]
as representations of $\OO(U,q)$. This can be restated as saying that $C^+(U \perp 1^{\oplus 2}) \cong C^+(U \perp 1)^{\oplus 2}$ as $\OO(U,q)$-modules. Applying the norm functor, it follows that 
\begin{equation}
\label{eq:C+(U+1)}
C^+_{E/\mQ}(U \perp 1^{\oplus 2}) \cong C^+_{E/\mQ}(U \perp 1)^{\oplus 2^{[E:\mQ]}}
\end{equation}
as $\OO_{E/\mQ}(U,q)$-modules.

We apply the preceding remark to $U = V^\flat$, viewed as a representation of $G_\mot\bigl(\motV_s^\flat\bigr) = G_\mot\bigl(\motV_s\bigr)$. In this case the representation $U \perp 1$ corresponds to the motive~$\motV_s$ and $U \perp 1^{\oplus 2}$ corresponds to~$\motV_s^\sharp$. We have already seen in~\eqref{eq:motu} that $C^+_{E/\mQ}(\motV_s,\tilde\phi)$ is an object of the category of abelian motives. By \eqref{eq:C+(U+1)} it follows that $C^+_{E/\mQ}(\motV_s^\sharp,\tilde\phi^\sharp)$ lies in the subcategory of abelian motives, too, which is what we wanted to prove. This concludes the proof of Theorem~\ref{MainThm} if the field~$E$ is totally real and the variation~$\mV$ has maximal monodromy.


\section{Monodromy and the Mumford-Tate conjecture: the CM case}
\label{Mono=>MTC2}

\subsection{}
In this section we prove Theorem~\ref{MainThm} in the case where the endomorphism field $E = \End_{\QVHS_S}(\mV)$ (see~\ref{SetupContd}) is a CM-field. Let $\tau \colon E \to \mC$ be the (unique) complex embedding of~$E$ such that $e\in E$ acts on $(\mV \otimes_\mQ \mC)^{1,-1}$ as multiplication by~$\tau(e)$. Choose a CM-type $\Phi \subset \Sigma(E)$ such that $\tau \notin \Phi$. Let $A$ be the complex abelian variety (up to isogeny, as always) of CM-type $(E,\Phi)$. Concretely, $H^1(A,\mQ) = E$ as an $E$-module, and the Hodge decomposition of $H^1(A,\mC) = \oplus_{\sigma \in \Sigma(E)}\, \mC^{(\sigma)}$ is given by
\[
H^{1,0} = \bigoplus_{\sigma\in \Phi}\, \mC^{(\sigma)}\; ,\qquad 
H^{0,1} = \bigoplus_{\sigma\in \ol\Phi}\, \mC^{(\sigma)}\; .
\] 
Let $\psi_A$ be a polarization of $H^1(A,\mQ)$, and denote its unique lifting to a skew-hermitian $E$-valued form by~$\tilde\psi_A$. In what follows we will assume the CM-type~$\Phi$ is chosen to be primitive (not induced from a CM-subfield of~$E$); this is always possible and implies that $\End^0(A) = E$.

Let $a\colon A_S \to S$ denote the constant abelian scheme over~$S$ with fibres~$A$, and write $\mH^1(A_S)$ for the VHS $R^1a_*\mQ$, which is an $E$-module in the category $\QVHS_S$.

Next consider the variation $\mH^1(A_S) \otimes_E \mV$. (This is what van Geemen~\cite{vGeemen} calls a half-twist of~$\mV$.) Because of the way we have chosen~$\Phi$, this is a variation of Hodge structure of type $(0,1) + (1,0)$. Further, it is polarized by the form $\tilde\psi_A \otimes \tilde\phi$, it admits an integral structure, and it comes equipped with an action of~$E$ by endomorphisms. Hence we obtain a polarized abelian scheme $b \colon B \to S$ with multiplication by~$E$ such that there exists an isomorphism $\mH^1(B) = R^1b_* \mQ_B \isomarrow \mH^1(A_S) \otimes_E \mV$ in $\QVHS_{S,(E)}$ that is compatible with polarizations. We shall denote by $\psi_B \colon \mH^1(B) \times_\mQ \mH^1(B) \to \mQ_S$ the polarization form and by $\tilde\psi_B = \tilde\psi_A \otimes \tilde\phi$ its unique lift to an $E_S$-valued skew-hermitian form.

As $\ul\End_E\bigl(\mH^1(A_S)\bigr) = E$, we have an induced isometry
\begin{equation}
\label{eq:uHomVinVHS}
u\colon \ul\Hom_E\bigl(\mH^1(A_S),\mH^1(B)\bigr) \isomarrow \mV  
\end{equation}
in $\QVHS_{S,(E)}$, where we equip the left hand side with the $E$-hermitian form $\tilde\psi = \tilde\psi_A^\vee \otimes \tilde\psi_B$.

\subsection{}
For $s \in S(\mC)$, let $\motV_s \subset \motH^2(\cX_s)\bigl(1\bigr)$ be the submotive as in~\ref{motVConstr}, and write $\motH_s$ for the motive $\ul\Hom_E\bigl(\motH^1(A),\motH^1(B_s)\bigr)$. Both $\motV_s$ and~$\motH_s$ are objects of $\Mot_{\mC,(E)}$. We denote by $V_s$ and~$H_s$ the $E$-vector spaces underlying their Betti realizations. These come equipped with hermitian polarization forms $\tilde\phi \colon V_s \times V_s \to E$ and $\tilde\psi = (\psi_A^\vee \otimes \psi_B) \colon H_s \times H_s \to E$.

The fibre of the isomorphism \eqref{eq:uHomVinVHS} at~$s$ is an $E$-linear and $\pi_1(S,s)$-equivariant isomorphism
\[
u_s \colon H_s \isomarrow V_s\, .
\]
The motivic Galois group $\cG_\mot$ acts on the vector space $\Hom_\mQ(H_{s,(\mQ)},V_{s,(\mQ)})$.

\begin{lemma}
\label{chiGmotUE}
There is a unique (algebraic) character $\chi \colon \cG_\mot \to T_E$ such that $\gamma(u_s) = \chi(\gamma) \cdot u_s$ for all $\gamma \in \cG_\mot$, and this character is independent of~$s$.
\end{lemma}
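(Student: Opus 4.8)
The plan is to show that the motivic Galois group $\cG_\mot$ stabilises the $E$-line $E\cdot u_s$ inside the Betti realisation of $\ul\Hom_E(\motH_s,\motV_s)$ and acts on it through an algebraic character with values in $T_E$; the independence of~$s$ will then come from a spreading-out argument over~$S$ in the style of Proposition~\ref{EMotivated}.

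Fix $\gamma\in\cG_\mot$ and put $g_\gamma:=\gamma(u_s)\circ u_s^{-1}$. First I would check that $\gamma(u_s)$ is again $E$-linear and $\pi_1(S,s)$-equivariant. $E$-linearity is clear, since by Proposition~\ref{EMotivated} the $E$-action on $\motV_s$ and on $\motH_s$ is by motivated cycles, so $\cG_\mot$ acts on $\Hom_\mQ(H_{s,(\mQ)},V_{s,(\mQ)})$ commuting with pre- and post-composition by~$E$. For the $\pi_1(S,s)$-equivariance I would argue exactly as in the proof of Proposition~\ref{EMotivated}: by \cite{YAK3}, Lemma~6.1.1 applied to the family of motives $\ul\End_E(\motH_s\oplus\motV_s)$ over~$S$, the space of $\pi_1(S,s)$-equivariant elements of $\End_E(H_s\oplus V_s)$ is stable under $\cG_\mot$; since the idempotents $e_H,e_V$ cutting out $\motH_s,\motV_s$ as direct summands are motivated and $u_s=e_V\,u_s\,e_H$, we get $\gamma(u_s)=e_V\,\gamma(u_s)\,e_H\in\Hom_E(H_s,V_s)^{\pi_1(S,s)}$. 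As $\gamma(u_s)$ is an isomorphism (inverse $\gamma(u_s^{-1})$), $g_\gamma$ is an $E$-linear, $\pi_1(S,s)$-equivariant automorphism of~$V_s$, i.e.\ $g_\gamma$ lies in the centraliser of the algebraic monodromy group $G_\mono(\mV)$ in $\GL_E(V_s)$.

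Next I would pin down that centraliser. In the CM case the monodromy is maximal by Proposition~\ref{JumpPicard}(\romannumeral3), so $G_\mono(\mV)=G^\der_\Betti(\mV)=\SU_{E/\mQ}(V,\tilde\phi)$ by~\ref{SetupContd}, with $\dim_E(V)\geq2$ by Proposition~\ref{JumpPicard}(\romannumeral2). Over~$\Qbar$ this group is $\prod_{\tau\colon E_0\to\Qbar}\SL(V_\tau)$, the $\tau$-factor acting through its standard representation on one of the two conjugate summands of $V_s\otimes_\mQ\Qbar$ over~$\tau$ and through the contragredient on the other; since the standard representation of $\SL_n$ is absolutely irreducible, the centraliser of $G_\mono(\mV)$ in $\End_E(V_s)$ is~$E$, hence its centraliser in $\GL_E(V_s)$ is the torus $T_E$ of $E$-homotheties. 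Thus $g_\gamma\in T_E(\mQ)=E^\times$; setting $\chi(\gamma):=g_\gamma$ gives $\gamma(u_s)=\chi(\gamma)\cdot u_s$, and uniqueness is clear since $u_s$ is invertible. Because $\cG_\mot$ acts trivially on $\unitmot_E$, $\chi$ is multiplicative, and since $E\cdot u_s$ is thereby a $\cG_\mot$-stable rank-one $E$-submodule of the realisation of $\ul\Hom_E(\motH_s,\motV_s)$, the map $\chi\colon\cG_\mot\to T_E$ is an algebraic character.

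Finally, for the independence of~$s$: by semisimplicity of $\Mot_\mC$ the $\cG_\mot$-stable line $E\cdot u_s$ is the realisation of a rank-one $E$-submotive $\motN_s\subseteq\ul\Hom_E(\motH_s,\motV_s)$ on which $\cG_\mot$ acts through~$\chi$; the lines $E\cdot u_s$ assemble into the constant sub-VHS $E\cdot u$ of $\ul\Hom_E\bigl(\ul\Hom_E(\mH^1(A_S),\mH^1(B)),\mV\bigr)$, so the $\motN_s$ form a family of motives over~$S$. Parallel transport along this flat line yields an isomorphism $\motN_\xi\isomarrow\motN_s$ whose realisation is a flat section of $\ul\Hom_E(\motN_\xi,\motN_s)$, motivated at $s=\xi$; by \cite{YAPour}, Corollaire~5.1 it is motivated, i.e.\ $\cG_\mot$-equivariant, for every~$s$, whence the character read off from $u_s$ equals the one read off from $u_\xi$. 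The main obstacle is the second step — keeping $\gamma(u_s)$ simultaneously $E$-linear and monodromy-equivariant, and then computing the centraliser of the maximal monodromy group; the remaining steps are bookkeeping with Andr\'e's results on motivated cycles in families.
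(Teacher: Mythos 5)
Your proof is correct and follows essentially the same route as the paper: establish $E$-linearity of $\gamma(u_s)$ via the motivated $E$-structure (Proposition~\ref{EMotivated}), invoke \cite{YAK3}, Lemma~6.1.1 to show $\cG_\mot$ preserves the monodromy-invariants, identify those invariants as $E\cdot u_s$ from the maximality of monodromy (Proposition~\ref{JumpPicard}), and derive independence of~$s$ from \cite{YAPour}, Corollaire~5.1. The only differences are cosmetic — you phrase the argument through the cocycle $g_\gamma = \gamma(u_s)\circ u_s^{-1}$ and explicitly compute the centraliser of $\SU_{E/\mQ}(V,\tilde\phi)$ over $\Qbar$, whereas the paper works directly with the one-dimensional $E$-space $\Hom_E(H_s,V_s)^{\pi_1(S,s)}$ and quotes $\End_E(V)^{G_\mono(\mV)} = E\cdot\id_V$.
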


\begin{proof}
With notation as explained in~\ref{NotConvIntro}(c), the actions of $\cG_\mot$ on $H_s$ and~$V_s$ are given by homomorphisms $\cG_\mot \to \UU_{E/\mQ}(H_s,\tilde\psi)$ and $\cG_\mot \to \UU_{E/\mQ}(V_s,\tilde\phi)$, respectively. Hence the induced action of~$\cG_\mot$ on the space $\Hom_\mQ(H_{s,(\mQ)},V_{s,(\mQ)})$ preserves the subspace $\Hom_E(H_s,V_s)$ and $\cG_\mot$ acts on this subspace by $E$-linear automorphisms. On the other hand, by Lemma 6.1.1 of~\cite{YAK3}, the action of~$\cG_\mot$ also preserves the subspace $\Hom_\mQ(H_{s,(\mQ)},V_{s,(\mQ)})^{\pi_1(S,s)}$ of monodromy invariant elements, which contains~$u_s$. By Proposition~\ref{JumpPicard}, assumption~(b) in Theorem~\ref{MainThm} implies that $G_\mono(\mV) = \SU_{E/\mQ}(V_s,\tilde\phi)$, and therefore $\End_E(V)^{G_\mono(\mV)} = E \cdot \id_V$. Hence
\[
\Hom_E(H_s,V_s)^{\pi_1(S,s)} = E \cdot u_s\, ;
\]
so there is a character $\chi \colon \cG_\mot \to T_E$ such that $\gamma(u_s) = \chi(\gamma) \cdot u_s$ for all $\gamma \in \cG_\mot$.

To see that $\chi$ does not depend on~$s$, we use Corollary~5.1 of~\cite{YAPour}, which implies that, for $t \in S(\mC)$ a second point, the isomorphism 
\[
\Hom_E(H_s,V_s)^{\pi_1(S,s)} \isomarrow \Hom_E(H_t,V_t)^{\pi_1(S,t)}
\]
given by parallel transport is $\cG_\mot$-equivariant. (See also Theorem 10.1.3.1 of~\cite{YAIntro}.) This readily gives the claim.
\end{proof}

\subsection{}
\label{DescendingBis}
The action of $\cG_\mot$ on the vector space~$E$ through the character~$\chi$ of Lemma~\ref{chiGmotUE} defines a motive~$\motU$ with multiplication by~$E$ such that the Hodge realization of~$\motU$ is trivial, and such that for $s \in S(\mC)$, we have an isomorphism $\motu_s\colon \motH_s \isomarrow \motV_s \otimes_E \motU$ whose Hodge realization is~$u_s$.

We shall next prove that the $\ell$-adic realization of~$\motU$ is trivial, too. (Of course, the motive~$\motU$ itself should be trivial; this, however, we are unable to prove.) First we descend, similar to what we did in~\ref{descending}, to a field of finite type over~$\mQ$. This means that, given $s \in S(\mC)$, there exists a subfield $K \subset \mC$ that is finitely generated over~$\mQ$ over which all objects that we are considering are defined. As we do not want to introduce new notation, we shall use the same letters as before for the objects over~$K$, adorning them with a subscript~``$\mC$'' to indicate an extension of scalars to~$\mC$. Thus, we have:
\begin{enumerate}[label=---]
\item a smooth projective variety $\cX_s$ over~$K$ and a submotive $\motV_s \subset \motH^2(\cX_s)\bigl(1\bigr)$, cut out by an algebraic cycle;
\item an action of~$E$ on~$\motV_s$;
\item abelian varieties $A$ and~$B_s$ over~$K$, both with multiplication by~$E$, and the associated motive $\motH_s = \ul\Hom_E\bigl(\motH^1(A),\motH^1(B_s)\bigr)$;
\item a motive $\motU$ with multiplication by~$E$ and an isomorphism $\motu_s\colon \motH_s \isomarrow \motV_s \otimes_E \motU$ in $\Mot_{K,(E)}$.
\end{enumerate}

\noindent
These objects are chosen in such a way that after extension of scalars via $K \hookrightarrow \mC$ we recover the objects considered above. Moreover, we may assume that for all motives involved the associated $\ell$-adic groups~$G_\ell$ are connected.

\begin{lemma}
\label{MlTrivial}
The $\ell$-adic realization of the motive~$\motU$ is trivial, i.e., $G_\ell^0(\motU) = \{1\}$.
\end{lemma}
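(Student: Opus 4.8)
The plan is to show that the motive $\motU$ becomes a direct summand of an abelian motive once we specialize to a suitable point of~$S$, and then to invoke Deligne's theorem. The starting observation is that, since the Hodge realization of $\motU$ is trivial, $G_\Betti(\motU) = \{1\}$; hence, as soon as we know that $\motU$ (equivalently its underlying $\mQ$-motive $\motU_{(\mQ)}$) lies in the Tannakian subcategory $\Mot(\Ab)_K$ of abelian motives, Deligne's theorem that Hodge cycles on abelian varieties are absolutely Hodge (\cite{DelAbsHodge}, used in exactly this way in~\ref{GellinGB}) gives $G_\ell^0(\motU) \subseteq G_\Betti(\motU) \otimes \Ql = \{1\}$, which is the assertion. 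So the whole problem is reduced to proving $\motU_{(\mQ)} \in \Mot(\Ab)_K$.

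To do this I would first note that, by Lemma~\ref{chiGmotUE}, the character~$\chi$ and hence the isomorphism class of the motive~$\motU$ do not depend on the point $s \in S(\mC)$ chosen in the descent of~\ref{DescendingBis}; we are therefore free to run that descent starting from any point we like. I would choose a point~$s_0$ for which the fibre $\mV_{s_0}$ carries non-zero Hodge classes: such a point exists by Proposition~\ref{JumpPicard}(\romannumeral1), because condition~\PeriodCond\ holds (it is assumption~(b) of Theorem~\ref{MainThm}). By the Lefschetz theorem on divisor classes these Hodge classes are algebraic, so after a harmless finite extension of~$K$ (Proposition~\ref{FieldExtProp}) I may assume, just as in~\ref{descending}, that the algebraic part of $\motV_{s_0}$ is defined over~$K$ and that $\motV_{s_0}^\alg \cong \unitmot_E^{\oplus\nu}$ in $\Mot_{K,(E)}$ with $\nu \geq 1$ (cf.~\ref{motVinMotE}); in particular $\unitmot_E$ is a direct summand of $\motV_{s_0}$ in $\Mot_{K,(E)}$.

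Now tensor the decomposition $\motV_{s_0} = \unitmot_E^{\oplus\nu} \oplus \motV_{s_0}^\trans$ with~$\motU$ over~$E$ and feed in the isomorphism $\motu_{s_0}\colon \motH_{s_0} \isomarrow \motV_{s_0} \otimes_E \motU$ of~\ref{DescendingBis}: this exhibits $\motU \cong \motU \otimes_E \unitmot_E$ as a direct summand, in $\Mot_{K,(E)}$, of $\motH_{s_0} = \ul\Hom_E\bigl(\motH^1(A),\motH^1(B_{s_0})\bigr)$. Since $\motH^1(A)$ and $\motH^1(B_{s_0})$ are abelian motives over~$K$, so is $\ul\Hom_\mQ\bigl(\motH^1(A)_{(\mQ)},\motH^1(B_{s_0})_{(\mQ)}\bigr)$, hence so is its direct summand $\motH_{s_0,(\mQ)} = \ul\Hom_E\bigl(\motH^1(A),\motH^1(B_{s_0})\bigr)_{(\mQ)}$, and finally so is its direct summand $\motU_{(\mQ)}$, because $\Mot(\Ab)_K$ is closed under passing to direct summands. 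This completes the reduction, and with it the proof.

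The one genuine idea here is the first move: leaving the Hodge-generic locus and passing to a point $s_0$ where $\mV$ degenerates enough to acquire a $\unitmot_E$-summand. On the generic fibre $\motV_\xi = \motV_\xi^\trans$ has no such summand and there is no way to see~$\motU$ inside an abelian motive at all; it is precisely condition~\PeriodCond\ (via Proposition~\ref{JumpPicard}) that supplies the point~$s_0$. Everything after that is formal — the independence of the descent on the choice of~$s$, the stability of $\Mot(\Ab)_K$ under the relevant operations, and the appeal to Deligne. The only point requiring a little care is the bookkeeping around the base field: one has to arrange that the special fibre $\cX_{s_0}$, the abelian variety $B_{s_0}$, the cycle splitting off $\unitmot_E$ from $\motV_{s_0}$, and the isomorphism $\motu_{s_0}$ are all defined over one finitely generated field~$K$ with connected~$G_\ell$'s, but this is the same routine descent already performed in~\ref{descending} and~\ref{DescendingBis}.
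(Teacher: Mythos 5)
Your argument is correct and uses the same key idea as the paper: specialize to a point $s_0$ (supplied by condition~\PeriodCond\ via Proposition~\ref{JumpPicard}(\romannumeral1)) at which $\motV_{s_0}^\alg \neq 0$, and then exploit the resulting $\unitmot_E$-summand to pin~$\motU$ inside an abelian motive. The only difference is in how the last step is packaged: the paper argues directly that the Hodge classes in $V^\alg_{s,\Betti} \otimes_E U_\Betti \subset \Hom_E\bigl(H^1(A),H^1(B_s)\bigr)$ are algebraic (hence their $\ell$-adic counterparts are Galois-invariant), and then cancels the nonzero trivial factor~$V^\alg_{s,\ell}$; you instead observe that $\motU_{(\mQ)}$ is a direct summand of $\motH_{s_0,(\mQ)}$ and therefore an abelian motive, so that Deligne's inclusion $G_\ell^0 \subseteq G_\Betti \otimes \Ql$ applies and kills~$G_\ell^0(\motU)$ outright. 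The two endgames are equivalent in substance — both rest on the abelian-motive comparison between Hodge and $\ell$-adic realizations — and yours is if anything slightly more streamlined, as it avoids having to name and track the algebraic classes explicitly.
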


For our later arguments, it is important to note that this assertion is independent of the choice of a model of~$\motU$ over a finitely generated field; see Proposition~\ref{FieldExtProp}.
\medskip

\begin{proof}
By Proposition~\ref{JumpPicard}(\romannumeral1), there exists a point $s \in S(\mC)$ such that $\motV_s^\alg \neq 0$, and such a point we now choose. As the Hodge realization of~$\motU$ is trivial, the classes in $V_{s,\Betti}^\alg \otimes_E U_\Betti$, viewed as subspace of $\Hom_E\bigl(H^1(A),H^1(B_s)\bigr)$ are algebraic. Hence also the $\ell$-adic realization $V_{s,\ell}^\alg \otimes_{E_\ell} U_\ell$ is trivial. Because $V_{s,\ell}^\alg$ is a trivial Galois representation, the assertion follows. 
\end{proof}

\subsection{}
In the rest of the argument we take for $s$ the point $\xi \in S(\mC)$ such that $X \cong \cX_\xi$. (Note that, even though we have worked with a different point~$s$ in the proof of Lemma~\ref{MlTrivial}, involving different choices in~\ref{DescendingBis}, the conclusion of~\ref{MlTrivial} applies to any form of the motive~$\motU$ over a finitely generated field.) To simplify the notation, we write $\motV$ for~$\motV_\xi$. As usual, $V_\Betti$ denotes the Hodge realization, $V_\ell$ the $\ell$-adic realization, and $V$ is the $E$-vector space underlying~$V_\Betti$. 

As $\motV \otimes_E \motU$ is a submotive of $\ul\Hom\bigl(\motH^1(A),\motH^1(B_\xi)\bigr)$ we can apply the results of Faltings to it. By Lemma~\ref{MlTrivial} it follows that the Galois representation~$V_\ell$ is completely reducible and
\[
\bigl(V_\ell\bigr)^{G_\ell^0(\motV)} \isomarrow \bigl(V_\ell \otimes_{E_\ell} U_\ell\bigr)^{G_\ell^0(\motV\otimes_E \motU)} = \bigl(V_\Betti \otimes_E U_\Betti\bigr)^{G_\Betti(\motV\otimes_E \motU)} \otimes \Ql \isomarrow  \bigl(V_\Betti\bigr)^{G_\Betti(\motV)} \otimes \Ql\, .
\]
The Tate conjecture for divisor classes on~$X$ then follows from the Lefschetz theorem on divisor classes on~$X_\mC$.

\subsection{}
The proof of the Mumford-Tate conjecture is now based on essentially the same argument as in~\ref{PfMTCDimOdd}. We again consider the decomposition $\motV = \motV^\trans \oplus \motV^\alg$ in $\Mot_{K,(E)}$. By the Tate conjecture there are no non-zero Tate classes in~$V_{\ell,(\Ql)}^\trans$. By Pink's theorem~\ref{GenbyWHC}, $G^0_\ell(\motV) \otimes \Qlbar$ is generated by the images of weak Hodge cocharacters. 

Write $\Gamma_K = \Gal(\Kbar/K)$. The motive $\motU$ has trivial Hodge and $\ell$-adic realizations, and for the abelian variety~$A$ we have $\ul\End_E\bigl(\motH^1(A)\bigr) = \unitmot_E$. Using this, we find that 
\[
\End_{\QHS_{(E)}}(V_\Betti) = \End_{\QHS_{(E)}}\bigl(H^1(B_\xi,\mQ)\bigr)\, ,\quad
\End_{E_\ell[\Gamma_K]}(V_\ell) = \End_{E_\ell[\Gamma_K]}\bigl(H^1(B_\xi,\Ql)\bigr)\, .
\]
Again by the results of Faltings it follows that
\begin{equation}
\label{eq:EndE=EndEBis}
\End_{E_\ell[\Gamma_K]}(V_\ell) = \End_{\QHS_{(E)}}(V_\Betti) \otimes_E E_\ell 
\end{equation}
under the comparison isomorphism between Betti and $\ell$-adic cohomology.

We know that $\End_\QHS(V_{\Betti,(\mQ)}^\trans)$ is a field that contains~$E$. It follows that $\End_{\QHS_{(E)}}(V_\Betti^\trans) \cong \End_\QHS(V_{\Betti,(\mQ)}^\trans)$. Similarly, because there are no non-zero Tate classes in~$V_{\ell,(\Ql)}^\trans$, it follows from Theorem~\ref{ZarhK3ell} that $\End_{\Ql[\Gamma_K]}(V_{\ell,(\Ql)}^\trans)$ is a commutative semisimple $\Ql$-algebra that contains~$E_\ell$; hence $\End_{E_\ell[\Gamma_K]}(V_\ell^\trans) = \End_{\Ql[\Gamma_K]}(V_{\ell,(\Ql)}^\trans)$. Then~\eqref{eq:EndE=EndEBis} gives $\End_\QHS(V_{\Betti,(\mQ)}) \otimes \Ql = \End_{\Ql[\Gamma_K]}(V_{\ell,(\Ql)})$, and the Mumford-Tate conjecture $G_\Betti(\motV) \otimes \Ql = G_\ell^0(\motV)$ follows from Corollary~\ref{HowtogetG=G'}. This completes the proof of Theorem~\ref{MainThm} in the case that $E$ is a CM-field.


\section{The case of non-maximal monodromy}
\label{Mono=>MTC3}

\subsection{}
\label{QuatCaseStart}
In this section we complete the proof of Theorem~\ref{MainThm} by considering the case where the monodromy of the variation~$\mV$ (as in~\ref{SetupContd}) is not maximal, which means that $G_\mono(\mV)$ is a proper normal subgroup of~$G_\Betti^\der(\mV)$. As shown in Proposition~\ref{JumpPicard}, this implies that the endomorphism field~$E$ is totally real and that $\rank_E(\mV) = 4$.

We choose a base point $z \in S$, and we let $V = \mV_z$, on which we have a symmetric $E$-bilinear form~$\tilde\phi$. The generic Mumford-Tate group~$G_\Betti(\mV)$ equals $\SO_{E/\mQ}(V,\tilde\phi) = \Res_{E/\mQ}\, \SO(V,\tilde\phi)$.

Our assumption on the non-maximality of the monodromy implies that $\SO(V,\tilde\phi)$ is the almost-direct product of two connected normal subgroups $L_1$ and~$L_2$. (By \cite{BorTits}, Proposition~6.18, if $H$ is a semisimple group over~$E$, any connected normal subgroup of $\Res_{E/\mQ}\, H$ is of the form $\Res_{E/\mQ}\, N$, for some connected normal subgroup $N \triangleleft H$.) The~$L_i$ are $E$-forms of~$\SL_2$ and they commute element-wise. We choose the numbering such that $G_\mono(\mV) = \Res_{E/\mQ}(L_2)$. 

Let $\Delta_i \subset \End_E(V)$ be the $E$-subalgebra generated by the elements of~$L_i(E)$. The~$\Delta_i$ are central simple $E$-algebras of degree~$2$. Denote the canonical involution of~$\Delta_i$ by $x \mapsto 
\bar{x}$, and let $\Trd\colon \Delta_i \to E$ and $\Nrd\colon \Delta_i \to E$ be the reduced trace and norm maps, given by $x \mapsto x+\bar{x}$ and $x \mapsto x\bar{x}$, respectively. We view $\Delta_i^*$ as an algebraic group over~$E$. It is an algebraic subgroup of the group~$\GO(V,\tilde\phi)$ of orthogonal similitudes. The multiplier character is the character $\Nrd\colon \Delta_i^* \to \mG_{\mult,E}$, and $L_i = \Ker(\Nrd) \subset \Delta_i^*$. (Note that any character of~$\Delta_i^*$ is a power of~$\Nrd$, and that the multiplier agrees with the reduced norm on the scalars $E^* \subset \Delta_i^*$; hence the two are equal.)

The space~$V$ is free of rank~$1$ as a module over~$\Delta_i$ ($i=1,2$). We have isomorphisms $\Delta_2 \isomarrow \End_{\Delta_1}(V)$ and $\Delta_1 \isomarrow \End_{\Delta_2}(V)$, and a non-canonical isomorphism $\Delta_2 \cong \Delta_1^\opp$. (In particular, we see that the subalgebra of $G_\mono(\mV)$-invariants in $\End(V_{(\mQ)})$ equals $\Delta_1$ and therefore has centre~$E$, as claimed in the proof of Proposition~\ref{EMotivated}.) There is a unique $\sigma_0 \in \Sigma(E)$ such that $\SO(V,\tilde\phi) \otimes_{E,\sigma_0} \mR$ is non-compact; at this real place the~$\Delta_i$ are split, at all other real places of~$E$ they are non-split. (In particular, the~$\Delta_i$ can be split over~$E$ only if $E=\mQ$.)

There is an isomorphism
\[
\CSpin(V,\tilde\phi) \isomarrow \bigl\{(x_1,x_2) \in \Delta_1^* \times \Delta_2^* \bigm| \Nrd(x_1) \cdot \Nrd(x_2) = 1 \bigr\}
\] 
such that the homomorphism $\CSpin(V,\tilde\phi) \to \SO(V,\tilde\phi)$ sends $(x_1,x_2)$ to the automorphism $x_1x_2$ of~$V$. We can choose this isomorphism such that the scalar multiplication by $z \in E^*$ on the Clifford algebra corresponds to the element $(z\cdot \id_V,z^{-1}\cdot \id_V) \in \Delta_1^* \times \Delta_2^*$. 

Let $D = \FNorm_{E/\mQ}(\Delta_1)$, which is a central simple $\mQ$-algebra of degree~$2^{[E:\mQ]}$, of index at most~$2$ in the Brauer group of~$\mQ$. The $\mQ$-vector space $N(V) = \FNorm_{E/\mQ}(V)$ has a natural structure of a left $D$-module, for which it is free of rank~$1$.

\begin{proposition}
\label{uHomNVQuat}
With assumptions and notation as above, there exists a complex abelian variety~$A$ with $\End^0(A) \cong D^\opp$ and an abelian scheme $b \colon B \to S$ with multiplication by~$D^\opp$ such that we have an isomorphism
\begin{equation}
\label{eq:uQuatCase}
u \colon \ul\Hom_D\bigl(\mH^1(A_S),\mH^1(B)\bigr) \isomarrow \FNorm_{E/\mQ}(\mV) 
\end{equation}
in $\QVHS_S$.
\end{proposition}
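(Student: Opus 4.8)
The plan is to carry out a quaternionic variant of the Kuga--Satake construction, fused with the half-twist device used in the CM case of Section~\ref{Mono=>MTC2}. The input is the explicit description of $\CSpin(V,\tilde\phi)$ established just above. As in~\ref{KS/pt}, the period map of~$\mV$ has a weight-one lift to $\CSpinbar_{E/\mQ}(V,\tilde\phi)$, and since the kernel torus $T_E^1$ is split over~$\mR$ this lifts further to a homomorphism $\tilde h\colon \mS\to \CSpin_{E/\mQ}(V,\tilde\phi)_\mR$, which we normalize; writing $\tilde h=(\tilde h_1,\tilde h_2)$ under $\CSpin(V,\tilde\phi)\cong\{(x_1,x_2)\in\Delta_1^*\times\Delta_2^*\mid \Nrd(x_1)\Nrd(x_2)=1\}$, the homomorphism $\CSpin\to\SO$ sends $(x_1,x_2)$ to $v\mapsto x_1vx_2^{-1}$. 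Since $V$ is free of rank~$1$ over~$\Delta_1$ and $\Delta_2\cong\End_{\Delta_1}(V)\cong\Delta_1^{\op}$, I identify $(V,\tilde\phi)$ with $(\Delta_1,\Nrd_{\Delta_1})$, with $\tilde h_1$ acting by left and $\tilde h_2$ by right multiplication; applying the norm functor then identifies $\FNorm_{E/\mQ}(V,\tilde\phi)$ with $(D,\Nrd_D)$ (so $N(V)=D$ with the left $D$-module structure of the statement), and the Hodge structure on $\FNorm_{E/\mQ}(\mV)$ becomes ``left multiplication by $\nu_{\Delta_1}(\tilde h_1)$ composed with right multiplication by $\nu_{\Delta_2}(\tilde h_2)^{-1}$''.

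The monodromy hypothesis is what lets one split this. Because $G_\mono(\mV)=\Res_{E/\mQ}(L_2)$ and $L_2$ lifts to $\{(1,x_2)\}\subset\CSpin(V,\tilde\phi)$, conjugation by monodromy leaves the first component untouched; hence $\tilde h_1$ is locally constant, and the Theorem of the Fixed Part applied to the constant sub-local system $\Delta_1\subset\ul\End(\mV)$ (whose fibrewise Hodge structure is conjugation by $\tilde h_1$) forces $\tilde h_1$ to be genuinely constant over the connected base~$S$, up to a central ambiguity absorbed in the normalization. I then \emph{define} $A$ to be the complex abelian variety with $H^1(A,\mQ)=D$ and Hodge structure given by left multiplication by $\nu_{\Delta_1}(\tilde h_1)$, and $b\colon B\to S$ the abelian scheme with $\mH^1(B)$ the $\mQ$-local system $\FNorm_{E/\mQ}(\mV)$ (underlying space~$D$) equipped with the varying Hodge structure given by right multiplication by $\nu_{\Delta_2}(\tilde h_2)^{-1}$. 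One checks that these are of type $(1,0)+(0,1)$, polarizable, and carry integral structures, exactly by the arguments of~\ref{KS/pt} together with the weight normalization of the lift: the polarizations $\psi_A,\psi_B$ come from that of~$\mV$ as in the CM case, and the integral structure from applying the norm to the one on~$\mV$ (passing, as usual, to a finite étale cover of~$S$). The endomorphism computation---the commutant in $\End_\mQ(D)$ of left multiplication by the subalgebra generated by $\nu_{\Delta_1}(\tilde h_1)$, which is all of~$D$ since that subalgebra is~$D$ and the Mumford--Tate group of~$\tilde h_1$ contains $\Res_{E/\mQ}(L_1)$---gives $\End^0(A)\cong D^{\op}$; similarly $B$ acquires multiplication by~$D^{\op}$ (using $D^{\op}\cong D$, since $D$ has index at most~$2$).

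With $A$ and $B$ in hand, the isomorphism $u$ is a Morita identity: $\ul\Hom_D\bigl(\mH^1(A_S),\mH^1(B)\bigr)$ consists of $D$-linear maps $D\to D$, hence is canonically~$D$ as a vector space, with Hodge structure obtained by transporting ``left multiplication by $\nu_{\Delta_1}(\tilde h_1)$'' to ``right multiplication by $\nu_{\Delta_1}(\tilde h_1)^{-1}$'' and composing with ``right multiplication by $\nu_{\Delta_2}(\tilde h_2)^{-1}$''. Using the standard isomorphism between the left-regular representation of a central simple algebra and the dual of its right-regular representation, this Hodge structure is identified with that of $\FNorm_{E/\mQ}(\mV)$, and one checks compatibility of $\psi_A^\vee\otimes\psi_B$ with $\Nrd_D$; running everything in families over~$S$ (which is legitimate since $\tilde h_1$ is constant and $\tilde h_2$ varies holomorphically with monodromy $\Res_{E/\mQ}(L_2)$) yields the asserted isomorphism in $\QVHS_S$.

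The main obstacle I expect is bookkeeping rather than conceptual. First, making the weights and Tate twists come out right so that $h_A$ and $h_B$ are honest weight-one Hodge structures of type $(1,0)+(0,1)$: the norm map $\nu_{\Delta_i}$ has degree $[E:\mQ]$, and the splitting $\tilde h=(\tilde h_1,\tilde h_2)$ of the weight-one lift is unique only up to the torus $T_E^1$, so the ``$A$-weight'' and the ``$B$-weight'' must be balanced by a careful choice. Second, checking that the constant abelian variety~$A$ and the abelian scheme~$B$ genuinely exist with the stated $D^{\op}$-multiplication---that is, that the commutant computations and the $D^{\op}\cong D$ identification are valid over~$\mQ$ and not merely after base change to~$\Qbar$, and that the subalgebra of~$D$ generated by $\nu_{\Delta_1}(\tilde h_1)$ is all of~$D$. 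Verifying the polarization compatibility and the descent through a finite étale cover (as in~\ref{KS/S}) is then routine.
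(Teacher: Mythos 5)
Your construction follows the paper's strategy closely: exploit $\CSpin(V,\tilde\phi)\cong\{(x_1,x_2)\in\Delta_1^*\times\Delta_2^*\mid \Nrd(x_1)\Nrd(x_2)=1\}$ to split the (lift of the) period map into a $\Delta_1$-factor (constant, producing $A$) and a $\Delta_2$-factor (varying, producing $B$), then recover $\FNorm_{E/\mQ}(\mV)$ via the Morita map $f\mapsto f(1)$. The main execution difference is \emph{where} you perform the splitting: you want to lift all the way to $\CSpin_{E/\mQ}$ and write $\tilde h=(\tilde h_1,\tilde h_2)$, whereas the paper never chooses such a lift --- it stays at the level of $\CSpinbar_{E/\mQ}=\CSpin_{E/\mQ}/T^1_E$ and defines two representations $\theta_1,\theta_2$ of $\CSpinbar_{E/\mQ}$ factoring respectively through $K_1/T^1_E\hookrightarrow D^*$ and $K_2/T^1_E$. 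This avoids the choice and normalization of a further lift altogether. (Your lift does exist: the obstruction lives in $\Ext^1_{\Gal(\mC/\mR)}\bigl(X^*(T^1_E),X^*(\mS)\bigr)$, which vanishes by Shapiro because $X^*(\mS)$ is an induced module; but you assert rather than prove this.) Your fixed-part argument for the constancy of $\tilde h_1$ is a serviceable substitute for the paper's more direct observation that $\mH_1$ is built with trivial monodromy, hence $A/S$ is constant by the theorem of the fixed part.

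The genuine gap is the verification that $H_1$ and $H_2$ are of type $(1,0)+(0,1)$. You propose to do this ``exactly by the arguments of \ref{KS/pt} together with the weight normalization of the lift,'' but \ref{KS/pt} concerns $\rho_\spin$ (a summand of $R_\spin$), not $\theta_1,\theta_2$, and does not apply here. Moreover, because $\nu_{\Delta_i}$ is homogeneous of degree $[E:\mQ]$, the naive central lift produces Hodge structures on $D$ and on $N(V)$ of weight $[E:\mQ]$ rather than $1$, so neither the weight nor the type comes for free. The paper supplies a real argument at this point: after pinning the weight to $1$, it uses $D\otimes\mC\cong M_N(\mC)$ with $N=2^{[E:\mQ]}$ to decompose $H_{i,\mC}\cong U_i^{\oplus N}$ in the category of $\mZ^2$-graded $\mC$-vector spaces, so that $\ul\Hom_{D\otimes\mC}(H_{1,\mC},H_{2,\mC})\cong\ul\Hom_\mC(U_1,U_2)$, and then reads off from the known type $(-1,1)+(0,0)+(1,-1)$ of the left-hand side that $U_1,U_2$ --- and hence $H_1,H_2$ --- must be of type $(1,0)+(0,1)$. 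Without this (or an equivalent) argument you cannot conclude that the two families you construct are abelian schemes; this is a missing step, not mere bookkeeping as you suggest.
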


The notation is the same as in Section~\ref{Mono=>MTC2}: we write $a \colon A_S \to S$ for the constant abelian scheme with fibres equal to~$A$, and we let $\mH^1(A_S) = R^1a_*\mQ$ and $\mH^1(B) = R^1b_*\mQ$.
\medskip

\begin{proof}
Let $H_1 = D$ and $H_2 = N(V)$, both viewed as left $D$-modules. With $\CSpinbar_{E/\mQ}(V,\tilde\phi) = \CSpin_{E/\mQ}(V,\tilde\phi)/T_E^1$ as in~\ref{CSpins}, we start by defining representations 
\[
\theta_i \colon \CSpinbar_{E/\mQ}(V,\tilde\phi) \to \GL_D(H_i) \subset \GL(H_i)
\] 
($i=1,2$) such that the induced action on $\ul\Hom_D(H_1,H_2)$ factors through $\SO_{E/\mQ}(V,\tilde\phi)$, and such that the map 
\begin{equation}
\label{eq:epsilonisom}
\epsilon \colon \ul\Hom_D(H_1,H_2) \isomarrow N(V)  
\end{equation}
given by $f \mapsto f(1)$ is an isomorphism of $\SO_{E/\mQ}(V,\tilde\phi)$-representations. For this, define $K_i = \Res_{E/\mQ}(\Delta_i^*)$. The universal polynomial map $\Delta_1 \to D = \FNorm_{E/\mQ}(\Delta_1)$ is multiplicative and defines a homomorphism of algebraic groups $K_1 \to D^*$ that factors through $K_1/T_E^1$. Through the composition 
\[
\gamma \colon \CSpin_{E/\mQ}(V,\tilde\phi) \hookrightarrow K_1 \times K_2 \twoheadrightarrow K_1/T_E^1 \hookrightarrow D^*
\]
we obtain an action of $\CSpin_{E/\mQ}(V,\tilde\phi)$ on~$H_1$, letting a group element~$x$ act on $H_1=D$ as right multiplication by $\gamma(x^{-1})$. Clearly this representation factors through $\CSpinbar_{E/\mQ}(V,\tilde\phi)$, and we let $\theta_1$ be the representation thus obtained. Next we note that $K_2 = \Res_{E/\mQ}(\Delta_2^*)$ naturally acts on $N(V) = \FNorm_{E/\mQ}(V)$ by $D$-module automorphisms (cf.\ Section~\ref{NormBasics}), and the corresponding homomorphism $K_2 \to \GL\bigl(N(V)\bigr)$ factors through $K_2/T_E^1$. For~$\theta_2$ we then take the representation of $\CSpinbar_{E/\mQ}(V,\tilde\phi)$ induced by the composition
\[
\CSpin_{E/\mQ}(V,\tilde\phi) \hookrightarrow K_1 \times K_2 \twoheadrightarrow K_2/T_E^1 \hookrightarrow \GL_D(H_2)\, .
\]
We claim that with these definitions the map~$\epsilon$ in~\eqref{eq:epsilonisom} is an isomorphism of representations of~$\SO_{E/\mQ}(V,\tilde\phi)$. To see this, note that~$\epsilon$ is the map obtained by applying the functor~$\FNorm_{E/\mQ}$ to the isomorphism $\ul\Hom_{\Delta_1}(\Delta_1,V) \isomarrow V$ given by $f \mapsto f(1)$. The claim now readily follows from the description of the homomorphism $\CSpin(V,\tilde\phi) \to \SO(V,\tilde\phi)$ given in~\ref{QuatCaseStart}.

The next step is to equip $H_1$ and~$H_2$ with actions of~$\pi_1(S,b)$ in such a way that the map~$\epsilon$ is $\pi_1$-equivariant. For this we simply take the trivial action on~$H_1$ and the given action on $H_2 = N(V)$. (Note that the latter action is obtained by composing the map $\pi_1(S,b) \to G_\mono(\mV)\bigl(\mQ\bigr) \subset K_2(\mQ)$ with the above homomorphism $K_2 \to \GL_D(H_2)$.) In this way we obtain $\mQ$-local systems~$\mH_1$ (constant) and~$\mH_2$ with actions of~$D$ from the left, such that $\epsilon$ is the fibre at~$b$ of an isomorphism of local systems $\ul\Hom_D(\mH_1,\mH_2) \isomarrow \FNorm_{E/\mQ}(\mV)$. For later use, note that the~$\mH_i$ admit an integral structure.

We now equip $\mH_1$ and~$\mH_2$ with the structure of a $\mQ$-VHS. Let $\upsilon \colon \tilde{S} \to S$ be the universal cover, and choose a point $\tilde{z} \in \tilde{S}$ above~$z$. This gives identifications $\upsilon^* \mH_i \cong H_i \times \tilde{S}$ and $\upsilon^* \FNorm_{E/\mQ}(\mV) \cong N(V) \times \tilde{S}$. For $t \in \tilde{S}$, let $h_t \colon \mS \to \SO_{E/\mQ}(V,\tilde\phi)_\mR$ be the homomorphism that defines the Hodge structure on $\FNorm_{E/\mQ}(\mV)_t$, which is of type $(-1,1) + (0,0) + (1,-1)$. By \cite{DelK3}, 4.2, and the discussion in Section~\ref{CSpinbar} above, $h_t$ naturally lifts to a homomorphism $\tilde{h}_t \colon \mS \to \CSpinbar_{E/\mQ}(V,\tilde\phi)_\mR$ such that $\tilde{h}_t \circ w \colon \mG_{\mult,\mR} \to \CSpinbar_{E/\mQ}(V,\tilde\phi)_\mR$ is given by $z \mapsto (z\cdot \id_V,z^{-1}\cdot \id_V) \bmod T_E^1$. As Hodge structure on $(\upsilon^* \mH_i)_t \cong H_i$ we then take the one defined by $\theta_i \circ \tilde{h}_t$. By construction these Hodge structures have weight~$1$, and by \cite{DelK3}, Lemme~2.8, they are polarizable. 

It is immediate from the definitions that the family of Hodge structures on~$\upsilon^* \mH_i$ that we obtain is compatible (in the obvious sense) with the action of $\pi_1(S,b)$; hence we obtain families of polarizable Hodge structures of weight~$1$ over~$S$ with underlying local systems the~$\mH_i$, equipped with a left action of~$D$ by endomorphisms, and such that the isomorphism $\ul\Hom_D(\mH_1,\mH_2) \isomarrow \FNorm_{E/\mQ}(\mV)$ is fibrewise an isomorphism of Hodge structures. 

We claim that for every $t \in \tilde{S}$ the Hodge structures on~$H_i$ ($i=1,2$) given by $\theta_i \circ \tilde{h}_t$ is of type $(0,1) + (1,0)$. To see this, work in the category of $\mZ^2$-graded $\mC$-vector spaces. As $D \otimes_\mQ \mC \cong M_N(\mC)$, with $N = 2^{[E:\mQ]}$, we have $H_i \otimes_\mQ \mC \cong U_i^{\oplus N}$ for some $\mZ^2$-graded spaces~$U_i$ ($i=1,2$), and then $\ul\Hom_{D \otimes_\mQ \mC}(H_{1,\mC},H_{2,\mC}) \cong \ul\Hom_\mC(U_1,U_2)$. As we already know the~$H_i$ to be of weight~$1$, so that $U_i^{p,q} \neq 0$ only if $p+q=1$, we see that $\ul\Hom_\mC(U_1,U_2)$ can be of type $(-1,1) + (0,0) + (1,-1)$ only if $U_1$ and~$U_2$ (and hence also $H_1$ and~$H_2$) are of type $(0,1) + (1,0)$. In particular, the families of Hodge structures we have obtained trivially satisfy Griffiths transversality and are therefore honest polarizable $\mQ$-variations of Hodge structure over~$S$. As they admit an integral structure, there exist abelian schemes (up to isogeny, as always) $a\colon A\to S$ and $b\colon B\to S$, both with multiplication by~$D^\opp$, such that $\mH_1 \cong R^1a_* \mQ_A$ and $\mH_2 \cong R^1b_*\mQ_B$. (The relative dimension of $A/S$ and $B/S$ equals $2^{2[E:\mQ]-1}$.) Further, because $\mH_1$ has trivial monodromy, $A/S$ is a constant abelian scheme; so $A= A_S$ for an abelian variety~$A$ with endomorphisms by~$D^\opp$. By construction, we have an isomorphism of $D$-modules $H^1(A,\mQ) \isomarrow D$, and the fact that the generic Mumford-Tate group of the VHS~$\mV$ equals $\SO_{E/\mQ}(V,\tilde\phi)$ implies that $G_\Betti(A) = K_1/T_E^1$. It follows from this that $\End^0(A) = D^\opp$. 
\end{proof}

\subsection{}
\label{RestrictionsGmot}
We retain the notation and assumptions of~\ref{QuatCaseStart}, and we fix $A$, $B$ and~$u$ as in Proposition~\ref{uHomNVQuat}. Choose a base point $z \in S$, write $\motV = \motV_z$ and $N(\motV) = \FNorm_{E/\mQ}(\motV)$, and let $V = \mV_z$ and $N(V) = \FNorm_{E/\mQ}(V)$ be their Hodge realizations. The $E$-bilinear polarization form~$\tilde\phi$ on~$V$ induces a symmetric $\mQ$-bilinear form~$N(\tilde\phi)$ on~$N(V)$. Further we define $\motH = \ul\Hom_D\bigl(\motH^1(A),\motH^1(B_z)\bigr)$, and we write $H$ for its Hodge realization. Our goal at this point is to deduce some non-trivial information about~$G_\mot\bigl(N(\motV)\bigr)$ by using \cite{YAK3}, Lemma~6.1.1.

Write $\SL_2$ for $\SL_{2,\Qbar}$. Denote by~$\St$ its standard $2$-dimensional representation, on which the determinant gives a symplectic form $\det \colon \St \times \St \to \Qbar$.

As before, for $\sigma \in \Sigma(E)$ we denote the extension of scalars via $\sigma \colon E \to \Qbar$ by a subscript~``$\sigma$''. For each such~$\sigma$, choose isomorphisms $L_{i,\sigma} \isomarrow \SL_2$ ($i=1,2$) and $V_\sigma \cong \St \boxtimes \St$ as representations of $L_{1,\sigma} \times L_{2,\sigma} \cong \SL_2\times \SL_2$, such that $\tilde\phi_\sigma$ corresponds with the orthogonal form $\det \boxtimes \det$. Let $\mu_2 = \bigl\{\pm(\id,\id)\bigr\} \subset \SL_2 \times \SL_2$, and let $r \in \OO\bigl(\St^{\boxtimes 2},\det^{\boxtimes 2}\bigr)$ be given by $r(x_1 \boxtimes x_2) = x_2 \boxtimes x_1$. We have an isomorphism
\[
\bigl((\SL_2 \times \SL_2)/\mu_2\bigr) \rtimes \{1,r\} \isomarrow \OO\bigl(\St^{\boxtimes 2},\det\nolimits^{\boxtimes 2}\bigr) \cong \OO(V_\sigma,\tilde\phi_\sigma)\, ,
\]
that restricts to an isomorphism $(\SL_2 \times \SL_2)/\mu_2  \isomarrow \SO\bigl(\St^{\boxtimes 2},\det\nolimits^{\boxtimes 2}\bigr) \cong \SO(V_\sigma,\tilde\phi_\sigma)$. Note that the reflection~$r$ acts on $(\SL_2 \times \SL_2)/\mu_2$ by exchange of the factors: $r[A_1,A_2] = [A_2,A_1]$.

We have $G_\Betti(\motV) \subseteq G_\mot(\motV) \subseteq \OO_{E/\mQ}(V,\tilde\phi)$ with $G_\Betti(\motV) \subseteq  \SO_{E/\mQ}(V,\tilde\phi)$. If $H$ is an algebraic subgroup of $\OO_{E/\mQ}(V,\tilde\phi)$, denote $H/(H\cap T_E^1)$ by~$\ol{H}$. For the motive~$N(\motV)$ we then have
\[
\begin{matrix}
G_\Betti\bigl(N(\motV)\bigr) &=& \ol{G}_\Betti(\motV) &\subseteq& \ol{\SO}_{E/\mQ}(V,\tilde\phi) &\longhookrightarrow&\SO\bigl(N(V),N(\tilde\phi)\bigr)\cr
\bigcap\!\vert &&\bigcap\!\vert &&\bigcap &&\bigcap\cr
G_\mot\bigl(N(\motV)\bigr) &=& \ol{G}_\mot(\motV) &\subseteq& \ol{\OO}_{E/\mQ}(V,\tilde\phi) &\longhookrightarrow& \OO\bigl(N(V),N(\tilde\phi)\bigr)
\end{matrix}
\]
Extending scalars to~$\Qbar$ we find
\[
\begin{matrix}
G_\Betti\bigl(N(\motV)\bigr)_\Qbar &\subseteq & \ol{\SO}_{E/\mQ}(V,\tilde\phi)_\Qbar &=& \Bigl[\prod_{\sigma\in \Sigma(E)}\; (\SL_2\times \SL_2) \Bigr]\Big/\Theta \cr
\bigcap\!\vert &&\bigcap\!\vert &&\bigcap\cr
G_\mot\bigl(N(\motV)\bigr)_\Qbar &\subseteq & \ol{\OO}_{E/\mQ}(V,\tilde\phi)_\Qbar &=& \Bigl[\prod_{\sigma\in \Sigma(E)}\; (\SL_2\times \SL_2) \rtimes \{1,r\}\Bigr]\Big/\Theta\; , \end{matrix}
\]
where 
\[
\Theta = \Bigl\{(A_1^{(\sigma)},A_2^{(\sigma)})_\sigma \in \prod_{\sigma\in \Sigma(E)}\, \bigl(\{\pm\id\}\times \{\pm\id\}\bigr) \Bigm| \prod A_1^{(\sigma)}A_2^{(\sigma)} = \id \Bigr\}\, .
\]
In this description, the algebraic monodromy group of the variation $N(\mV) = \FNorm_{E/\mQ}(\mV)$ is given by
\[
G_\mono\bigl(N(\mV)\bigr)_\Qbar = \hbox{image of}\quad \prod_{\sigma\in \Sigma(E)}\; (\{\id\} \times \SL_2)\quad \hbox{in}\quad \Bigl[\prod_{\sigma\in \Sigma(E)}\; (\SL_2\times \SL_2) \Bigr]\Big/\Theta\, .
\]

The fibre at~$z$ of the isomorphism~$u$ in~\eqref{eq:uQuatCase} is an isomorphism of Hodge structures $u_z \colon H \isomarrow N(V)$ that we shall take as an identification. As $\motH$ is an abelian motive, we have $G_\mot(\motH) = G_\Betti(\motH) = G_\Betti\bigl(N(\motV)\bigr)$.

The motivic Galois group of the motive $\motM = \ul\Hom\bigl(\motH,N(\motV)\bigr)$ is an algebraic subgroup 
\[
G_\mot(\motM) \subset G_\mot(\motH) \times G_\mot\bigl(N(\motV)\bigr)
\] 
that projects surjectively to the two factors. We shall denote elements of $G_\mot(\motM)_\Qbar$ in the form $\bigl[A_1^{(\sigma)},A_2^{(\sigma)};B_1^{(\sigma)},B_2^{(\sigma)},y^{(\sigma)}\bigr]_{\sigma \in \Sigma(E)}$ with $A_i^{(\sigma)}$, $B_i^{(\sigma)} \in \SL_2$ and $y^{(\sigma)} \in \{1,r\}$, and where the square brackets indicate that we calculate modulo $\Theta \times \Theta$. 

The $\mQ$-vector space underlying the Betti realization of the motive~$\motM$ is the space $M = \ul\Hom\bigl(H,N(V)\bigr)$. Extending scalars to~$\Qbar$ and identifying $H$ with~$N(V)$ via~$u_z$, we have
\[
M \otimes \Qbar \cong \bigotimes_{\sigma\in \Sigma(E)} \End(V_\sigma) \cong \bigotimes_{\sigma\in \Sigma(E)} \bigl(\End(\St) \otimes \End(\St)\bigr)\, . 
\]
For $\sigma \in \Sigma(E)$ fixed, an element $\bigl(A_1^{(\sigma)},A_2^{(\sigma)};B_1^{(\sigma)},B_2^{(\sigma)},1\bigr)$ acts on $\End(\St) \otimes \End(\St)$ by 
\[
f_1 \otimes f_2 \mapsto \Bigl(B_1^{(\sigma)} f_1 \bigl(A_1^{(\sigma)}\bigr)^{-1}\Bigr) \otimes \Bigl(B_2^{(\sigma)} f_2 \bigl(A_2^{(\sigma)}\bigr)^{-1}\Bigr)\, .
\]
An element $\bigl[A_1^{(\sigma)},A_2^{(\sigma)};B_1^{(\sigma)},B_2^{(\sigma)},r\bigr]$ acts on $\End(\St) \otimes \End(\St)$ by 
\[
f_1 \otimes f_2 \mapsto  \Bigl(B_2^{(\sigma)} f_2 \bigl(A_2^{(\sigma)}\bigr)^{-1}\Bigr) \otimes \Bigl(B_1^{(\sigma)} f_1 \bigl(A_1^{(\sigma)}\bigr)^{-1}\Bigr)\, .
\]

Recall that $D = \FNorm_{E/\mQ}(\Delta_1)$ and that $N(V)$ is free of rank~$1$ as a module over~$D$. The subspace $M^{\pi_1(S,b)}$ of monodromy-equivariant homomorphisms $H \to N(V)$ is the $D$-submodule $D \cdot u_z \subset M = \ul\Hom\bigl(H,N(V)\bigr)$ generated by~$u_z$. After extension of scalars this gives
\[
M^{\pi_1(S,b)}_\Qbar = \bigotimes_{\sigma\in \Sigma(E)} \bigl(\End(\St) \otimes (\Qbar \cdot \id)\bigr)\, .
\]
By \cite{YAK3}, Lemma~6.1.1, this subspace is stable under the action of~$G_\mot(\motM)_\Qbar$ on $M_\Qbar$. This implies that, in order for an element 
\[
\bigl[A_1^{(\sigma)},A_2^{(\sigma)};B_1^{(\sigma)},B_2^{(\sigma)},y^{(\sigma)}\bigr]_{\sigma \in \Sigma(E)} \in \ol{\SO}_{E/\mQ}(V,\tilde\phi)\bigl(\Qbar\bigr) \times \ol{\OO}_{E/\mQ}(V,\tilde\phi)\bigl(\Qbar\bigr)
\]
to lie in~$G_\mot(\motM)\bigl(\Qbar\bigr)$, we must have $y^{(\sigma)}= 1$ and $B_2^{(\sigma)} = \pm A_2^{(\sigma)}$ for every $\sigma \in \Sigma(E)$. In particular, it follows that $G_\mot(\motM)$ is an algebraic subgroup of $\ol{\SO}_{E/\mQ}(V,\tilde\phi) \times \ol{\SO}_{E/\mQ}(V,\tilde\phi)$, and, by projection to the second factor, that $G_\mot\bigl(N(\motV)\bigr) \subseteq \ol{\SO}_{E/\mQ}(V,\tilde\phi)$.

\begin{lemma}
\label{GB=SO3}
Suppose the base point $z \in S$ is a point for which $V = \mV_z$ contains non-zero Hodge classes. Then $V^\alg \subset V$ is a $1$-dimensional $E$-subspace, $\dim_E(V^\trans) = 3$, and $G_\Betti(\motV_z) = \SO_{E/\mQ}(V^\trans,\tilde\phi)$. 
\end{lemma}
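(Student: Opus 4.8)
The plan is as follows. Write $V=\mV_z$, a polarized Hodge structure of K3 type on which $E$ acts by Hodge endomorphisms, via $E=\End_{\QVHS_S}(\mV)\hookrightarrow\End_\QHS(\mV_z)$. I would first record the easy reductions. A Hodge endomorphism of $V$ preserves both the rational structure and the Hodge bidegrees, so the elements of $E$ preserve the subspace $V^\alg$ of Hodge classes; since $E$ is totally real and $\phi(ev,w)=\phi(v,ew)$, they also preserve the orthogonal complement $V^\trans=(V^\alg)^\perp$. Hence $V=V^\alg\oplus V^\trans$ is an orthogonal decomposition of free $E$-modules with $\rank_E(V^\alg)+\rank_E(V^\trans)=4$, and $V^\trans$ is again of K3 type, carrying all of the $(1,-1)$-part of $V$, so that $h^{1,-1}\bigl(V^\trans_{(\mQ)}\bigr)=1$. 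Because every embedding of $E$ into $\mC$ is real, complex conjugation on $V^\trans_{(\mQ)}\otimes\mC$ preserves each $E\otimes\mC$-eigenspace and exchanges the bidegrees $(1,-1)$ and $(-1,1)$; hence the unique eigenspace carrying a $(1,-1)$-line also carries a $(-1,1)$-line, so $\rank_E(V^\trans)\geq2$. Together with the hypothesis $V^\alg\neq0$ this leaves only $\rank_E(V^\alg)\in\{1,2\}$.

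The crux is to rule out $\rank_E(V^\alg)=2$. If this held, then $\rank_E(V^\trans)=2$, so $\SO(V^\trans,\tilde\phi)$ is a one-dimensional torus over $E$; consequently $G_\Betti(\motV_z)$, which is the Mumford--Tate group of $V^\trans$ acting trivially on $V^\alg$, is abelian, i.e. $\motV_z$ is a Hodge structure of CM type. I would derive a contradiction from Proposition~\ref{uHomNVQuat}. At the distinguished place $\sigma_0\in\Sigma(E)$, where $\SO(V,\tilde\phi)\otimes_{E,\sigma_0}\mR$ is non-compact (so of type $\SO(2,2)$), the relevant period domain is a product of two copies of the upper half-plane, and the monodromy group $\Res_{E/\mQ}(L_2)$ acts through the factor $L_{2,\sigma_0}\cong\SL_2$ only; by the Theorem of the Fixed Part one of the two coordinates of the period point is constant over $S$, so over $\Qbar$ one has an isomorphism $V_{\sigma_0}\cong\mathcal E_1\otimes\mathcal E_2(1)$ with $\mathcal E_1$ a fixed Hodge structure of elliptic-curve type (independent of $z$) and $\mathcal E_2$ varying. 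The factors $\mathcal E_1$, assembled over all $\sigma\in\Sigma(E)$, are precisely the constituents of the abelian variety $A$; and that $E$ is the full endomorphism field of $\mV$---totally real, with no larger algebra---forces $A$ to carry no endomorphisms beyond $D^\opp$, which is exactly the assertion $\End^0(A)\cong D^\opp$ with $D$ central simple over $\mQ$ established in the proof of Proposition~\ref{uHomNVQuat}. In particular $\mathcal E_1$ is not of CM type. But $\motV_z$ of CM type would force $\mathcal E_1\otimes\mathcal E_2(1)$, and hence $\mathcal E_1$, to be of CM type---a contradiction. Therefore $\rank_E(V^\alg)=1$ and $\rank_E(V^\trans)=3$.

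It remains to compute the Mumford--Tate group. By Zarhin's theorem (see~\ref{ZarhResult}), $F:=\End_\QHS\bigl(V^\trans_{(\mQ)}\bigr)$ is a field containing $E$, totally real or CM; since $\dim_\mQ V^\trans=3\,[E:\mQ]$ we get $[F:E]\mid3$. The case $[F:E]=3$ is impossible: if $F$ were totally real, the conjugation argument of the first paragraph applied to $F$ would give $\dim_F V^\trans\geq2$, whereas $\dim_F V^\trans=1$; and if $F$ were CM, it would have even degree over its maximal totally real subfield, which contains $E$, again contradicting $[F:E]=3$. So $F=E$, which is totally real, and since $V^\trans$ has no nonzero Hodge classes, Zarhin's Theorem~2.2.1 (see~\ref{ZarhResult}) identifies its Mumford--Tate group with $\SO_{E/\mQ}(V^\trans,\tilde\phi)$. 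As $V^\alg$ carries the trivial Hodge structure, $G_\Betti(\motV_z)$ equals this group, which is the assertion of the lemma.

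The main obstacle is the middle paragraph: making rigorous that $\motV_z$ of CM type would force the constant abelian variety $A$ to be of CM type. The honest argument goes back to the construction of $A$ at the place $\sigma_0$, identifies the fixed factor $\mathcal E_1$ with a constituent of $H^1(A)$, and uses that $\End^0(A)\cong D^\opp$ is central simple over $\mQ$---equivalently, that the generic Mumford--Tate group of $\mV$ is all of $\SO_{E/\mQ}(V,\tilde\phi)$, whose centre is one-dimensional. The first and last paragraphs are routine bookkeeping with the conjugation argument and Zarhin's results.
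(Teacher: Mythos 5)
Your proposal attacks the crucial step — ruling out $\rank_E V^\alg = 2$ — by a genuinely different route from the paper's. The paper invokes Andr\'e's theorem (\cite{YAMTGps}, Section~6, Proposition~2), which says in effect that if a polarizable VHS contains a CM point then the algebraic monodromy group is all of $G_\Betti^\der$; hence under the non-maximal monodromy hypothesis $G_\Betti(\motV_z)$ is non-abelian for \emph{every} $z$, which immediately gives $4-\nu\geq 3$ and $\nu=1$. You instead try to extract a contradiction from Proposition~\ref{uHomNVQuat} by showing that $\motV_z$ of CM type would force the constant abelian variety $A$ to be of CM type.

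As written, the middle paragraph is not rigorous, and you concede as much. The ``isomorphism $V_{\sigma_0}\cong\mathcal E_1\otimes\mathcal E_2(1)$'' is a decomposition over $\Qbar$ at a single place $\sigma_0$, not a tensor decomposition of $\mQ$-Hodge structures, so one cannot directly speak of the Mumford--Tate group of the factor $\mathcal E_1$ or conclude that it ``is not of CM type.'' The gap can, however, be closed in a way that stays entirely within the framework of Proposition~\ref{uHomNVQuat} and avoids any Archimedean handwaving: the Hodge structure on $H^1(A,\mQ)\cong H_1$ is defined (for \emph{any} $z$, since $A$ is constant) by the composite $\theta_1\circ\tilde h_z$, where $\tilde h_z$ is the canonical lift of $h_z$ to $\CSpinbar_{E/\mQ}(V,\tilde\phi)_\mR$. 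If $G_\Betti(\motV_z)$ were abelian, it would be a $\mQ$-torus $T\subset\SO_{E/\mQ}(V,\tilde\phi)$; its preimage in $\CSpinbar_{E/\mQ}(V,\tilde\phi)$ is a connected central extension of $T$ by $\mG_\mult$, hence again a $\mQ$-torus $\tilde T$, and $\tilde h_z$ factors through $\tilde T_\mR$. Consequently $G_\Betti(A)\subseteq\theta_1(\tilde T)$ would be abelian, contradicting $G_\Betti(A)=K_1/T_E^1$ (non-abelian) established in the proof of Proposition~\ref{uHomNVQuat}. This is a longer road than Andr\'e's theorem, and its reliance on the Kuga--Satake construction makes it harder to isolate as a self-contained statement, but it is a valid alternative once phrased this way.

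The first and third paragraphs of your proposal are correct but slightly more circuitous than the paper's. The paper gets $\dim_E V^\trans=3$ in one stroke from non-abelianness (no detour through $\rank_E V^\trans\geq 2$), and deduces $\End_\QHS(V^\trans_{(\mQ)})=E$ simply by observing that any larger endomorphism field would make $G_\Betti(\motV_z^\trans)$ abelian; your parity argument on degrees of CM fields and your conjugation argument over a hypothetical extension $F$ are correct but do more work than necessary.
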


\begin{proof}
For any $z \in S$ we have $\motV_z^\alg \cong \unitmot_E^{\oplus \nu}$ for some $\nu \geq 0$ (cf.~\ref{motVinMotE}), and then $\dim_E(V^\trans) = 4-\nu$. Further, $G_\Betti(\motV_z) = G_\Betti(\motV_z^\trans)$ is isomorphic to an algebraic subgroup of $\SO_{E/\mQ}(V^\trans,\tilde\phi)$. By \cite{YAMTGps}, Section~6, Proposition~2, our assumption that the variation~$\mV$ has non-maximal monodromy implies that $G_\Betti(\motV_z)$ is not abelian, and therefore $4-\nu \geq 3$. So if $z \in S$ is a point for which $\nu > 0$ then necessarily $\nu = 1$. We then must have $\End_\QHS(\mV_z^\trans) = E$, for if the endomorphism algebra is bigger, $G_\Betti(\motV_z^\trans)$ is abelian. By Zarhin's results (see~\ref{ZarhResult}), $G_\Betti(\motV_z) = \SO_{E/\mQ}(V^\trans,\tilde\phi)$.
\end{proof}

\begin{proposition}
\label{usMotivQuat}
For any $s \in S$ the fibre $u_s \colon \ul\Hom_D\bigl(\mH^1(A),\mH^1(B_s)\bigr) \isomarrow \FNorm_{E/\mQ}(\mV_s)$ of the isomorphism\/~{\rm \eqref{eq:uQuatCase}} at~$s$ is a motivated cycle, i.e., it is the Hodge realization of an isomorphism
\[
\motu_s \colon \ul\Hom_D\bigl(\motH^1(A),\motH^1(B_s)\bigr) \isomarrow \FNorm_{E/\mQ}(\motV_s)\, .
\]
\end{proposition}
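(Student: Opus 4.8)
The plan is to prove the statement for one suitably chosen point $z\in S$ and then to propagate it to all of~$S$ using Andr\'e's theory of motivated cycles in families. One first observes that $\FNorm_{E/\mQ}(\motV_s)$ is a genuine motive for every~$s$: by~\ref{motVinMotE} we have $G_\mot(\motV_s)\subseteq\OO_{E/\mQ}(\mV_s,\tilde\phi)$, while $\ul\Hom_D\bigl(\motH^1(A),\motH^1(B_s)\bigr)$ is an abelian motive. Hence the isomorphism~$u$ of $\mQ$-variations of Hodge structure over~$S$ is a flat global section of the Hodge realization of the family of motives $s\mapsto \ul\Hom\bigl(\ul\Hom_D(\motH^1(A),\motH^1(B_s)),\FNorm_{E/\mQ}(\motV_s)\bigr)$. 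By \cite{YAPour}, Corollaire~5.1 (see also Th\'eor\`eme~0.5), it therefore suffices to prove that $u_z$ is motivated for a single point $z\in S$.

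For~$z$ I would use Proposition~\ref{JumpPicard}(\romannumeral1) to pick a point at which $\mV_z$ contains non-zero Hodge classes. By Lemma~\ref{GB=SO3}, at such a point $V^\alg_z\subset V_z$ is a $1$-dimensional $E$-subspace, $\dim_E(V^\trans_z)=3$, and $G_\Betti(\motV_z)=\SO_{E/\mQ}(V^\trans_z,\tilde\phi)$. The reason for working at this fibre is that $\SO_3$ is simple, so the degeneracy responsible for non-maximal monodromy---that for a rank-$4$ orthogonal $E$-space the group $\SO_4$ is an almost-direct product of two copies of $\SL_2$---no longer occurs for the transcendental part~$\motV^\trans_z$.

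I would then combine this with the group-theoretic analysis of~\ref{RestrictionsGmot}. Write $\motH_z=\ul\Hom_D\bigl(\motH^1(A),\motH^1(B_z)\bigr)$, $N(\motV_z)=\FNorm_{E/\mQ}(\motV_z)$, and $\motM=\ul\Hom\bigl(\motH_z,N(\motV_z)\bigr)$, and identify $H_z$ with $N(V_z)$ via~$u_z$, so that $u_z$ corresponds to $\bigotimes_\sigma(\id\otimes\id)$ in the notation of~\ref{RestrictionsGmot}. By \cite{YAK3}, Lemma~6.1.1, the monodromy-invariant $D$-submodule $D\cdot u_z\subset M$ is $\cG_\mot$-stable; inspecting the action formulas in~\ref{RestrictionsGmot}, $u_z$ is motivated precisely when $\bigotimes_\sigma B_1^{(\sigma)}\bigl(A_1^{(\sigma)}\bigr)^{-1}$ is scalar for every element $\bigl[A_1^{(\sigma)},A_2^{(\sigma)};B_1^{(\sigma)},B_2^{(\sigma)},y^{(\sigma)}\bigr]$ of $G_\mot(\motM)_{\Qbar}$. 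From~\ref{RestrictionsGmot} one already has $y^{(\sigma)}=1$, $B_2^{(\sigma)}=\pm A_2^{(\sigma)}$, and $G_\mot\bigl(N(\motV_z)\bigr)\subseteq \ol{\SO}_{E/\mQ}(V_z,\tilde\phi)$. Since $\motV_z=\motV^\trans_z\oplus\unitmot_E$ with $V^\trans_z$ a rank-$3$ orthogonal $E$-space, the image of $G_\mot(\motV_z)=G_\mot(\motV^\trans_z)$ in $\ol{\SO}_{E/\mQ}(V_z,\tilde\phi)$ lies, over~$\Qbar$, in the diagonal copy of $\prod_\sigma\SO(V^\trans_{z,\sigma})$; hence $B_1^{(\sigma)}\equiv B_2^{(\sigma)}$ modulo $\{\pm\id\}$. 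Finally $\motH_z$ is an abelian motive, so $G_\mot(\motH_z)=G_\Betti(\motH_z)$ by Deligne's theorem, and via~$u_z$ this group equals $G_\Betti\bigl(N(\motV_z)\bigr)$, which by Lemma~\ref{GB=SO3} is again diagonal, whence $A_1^{(\sigma)}\equiv A_2^{(\sigma)}$. Combining, $B_1^{(\sigma)}\bigl(A_1^{(\sigma)}\bigr)^{-1}\equiv B_2^{(\sigma)}\bigl(A_2^{(\sigma)}\bigr)^{-1}=\pm\id$ modulo $\{\pm\id\}$, so the product is scalar; thus $u_z$ is motivated and the proposition follows.

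The hard part will be the step in the previous paragraph: the residual freedom left by the non-maximal monodromy of the full family~$\mV$ is eliminated only after one restricts to the transcendental part at the special point~$z$, where simplicity of $\SO_3$ forces the three relevant images of~$\cG_\mot$ to be diagonal. Carrying this out cleanly demands careful bookkeeping of the finite sign ambiguities (the copies of $\{\pm1\}$ coming from $\CSpinbar$ and from the group~$\Theta$ of~\ref{RestrictionsGmot}) and of the torus~$T_E^1$ modulo which $N(V_z)$ is formed, so as to pass from ``scalar modulo the centre'' to an honest $\cG_\mot$-stable rational line. The remaining ingredients---the abelian-variety input $G_\mot(\motH_z)=G_\Betti(\motH_z)$ and the propagation in the first paragraph---are routine.
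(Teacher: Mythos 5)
Your plan takes a genuinely different route from the paper's. After agreeing on step one (reduction, via \cite{YAPour} Th\'eor\`eme~0.5, to a single special point~$z$ where $\mV_z$ has non-zero Hodge classes), the paper pins down $G_\mot(\motM)$ in two group-theoretic stages: a Lie-algebra argument invoking \cite{MoZa}, Lemma~2.14(\romannumeral1) to show $G_\mot^0(\motM)$ is the diagonal, and then a normalizer computation (using triviality of the centre of $\SO_3$) to show $G_\mot(\motM)$ is connected. You instead try to compute $\gamma(u_z)$ directly from three ``diagonality'' constraints: on $G_\mot\bigl(N(\motV_z)\bigr)$, on $G_\mot(\motH_z)$ via Deligne's theorem for abelian motives, and the constraint $B_2^{(\sigma)} = \pm A_2^{(\sigma)}$ from~\ref{RestrictionsGmot}. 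If carried out, this avoids \cite{MoZa} entirely and is a real simplification. However, there are two genuine gaps.

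First, your claim that ``the image of $G_\mot(\motV_z)=G_\mot(\motV^\trans_z)$ \dots\ lies in the diagonal copy of $\prod_\sigma\SO(V^\trans_{z,\sigma})$'' needs justification. From $\motV_z=\motV_z^\trans\oplus\unitmot_E$ you only get $G_\mot(\motV_z)\subseteq\OO_{E/\mQ}(V^\trans,\tilde\phi)\times\{\id_{V^\alg}\}$, and to refine this to the $\SO$ you must show that the inclusion $G_\mot\bigl(N(\motV_z)\bigr)\subseteq\ol\SO_{E/\mQ}(V,\tilde\phi)$ from~\ref{RestrictionsGmot} is not absorbed by $T_E^1$: a priori an element of $\OO(V^\trans)\setminus\SO(V^\trans)$ could map into $\ol\SO_{E/\mQ}(V,\tilde\phi)$ after dividing by the torus. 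The paper's $\varpi_0$-computation around \eqref{eq:varpi0iso} --- using that $\dim_E V=4$ is even so $\OO_{E/\mQ}(V,\tilde\phi)\cap T_E^1 = \SO_{E/\mQ}(V,\tilde\phi)\cap T_E^1$ --- is exactly what rules this out; this is a substantive step, not ``bookkeeping''. Second, your criterion ``$u_z$ is motivated precisely when $\bigotimes_\sigma B_1^{(\sigma)}(A_1^{(\sigma)})^{-1}$ is scalar'' is wrong as stated: scalar is not enough, you need $\gamma(u_z)=u_z$ exactly, and a ``$\cG_\mot$-stable rational line'' would only give a rank-$1$ motive with trivial Hodge realization, not a priori the unit motive --- this is precisely the difficulty the paper hits with~$\motU$ in Section~\ref{Mono=>MTC2}. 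What saves your approach is that the ambiguity appears \emph{squared}: once you pick diagonal representatives $A_1^{(\sigma)}=A_2^{(\sigma)}=A^{(\sigma)}$ and $B_1^{(\sigma)}=B_2^{(\sigma)}=B^{(\sigma)}$ (possible by your two diagonality constraints), the constraint from~\ref{RestrictionsGmot} gives $B^{(\sigma)}(A^{(\sigma)})^{-1}=\pm\id$, and then $\gamma(u_z)=\bigotimes_\sigma\bigl(B^{(\sigma)}(A^{(\sigma)})^{-1}\bigr)^{\otimes 2}=\bigotimes_\sigma(\pm\id)^{\otimes 2}=u_z$. You should make this squaring explicit; without it the argument stops short of the conclusion.
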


\begin{proof}
By \cite{YAPour}, Th\'eor\`eme~0.5, it suffices to prove that~$u_s$ is a motivated cycle for some $s \in S$. We take $s=z$, where $z \in S$ is a base point such that $V = \mV_z$ contains non-zero Hodge classes. (Such points~$z$ exists by Proposition~\ref{JumpPicard}(\romannumeral1).) We retain the notation introduced in~\ref{RestrictionsGmot}.

If $H$ is an algebraic group over~$\mQ$, we denote by $\varpi_0(H) = H/H^0$ the \'etale group scheme of connected components. Note that
\[
\Bigl(\OO_{E/\mQ}(V,\tilde\phi) \cap T_E^1\Bigr)(\Qbar) = \Bigl\{(a_\sigma \cdot \id_V)_\sigma \in \prod_{\sigma\in \Sigma(E)}\, \OO(V_\sigma,\tilde\phi_\sigma) \Bigm| \hbox{$a_\sigma=\pm 1$ for all $\sigma$ and $\prod a_\sigma = 1$}\Bigr\}\, .
\]
As $\dim_E(V) = 4$ is even, it follows that $\OO_{E/\mQ}(V,\tilde\phi) \cap T_E^1 = \SO_{E/\mQ}(V,\tilde\phi) \cap T_E^1$; hence, 
\begin{equation}
\label{eq:varpi0iso}
\varpi_0\bigl(\OO_{E/\mQ}(V,\tilde\phi)\bigr) \isomarrow \varpi_0\bigl(\ol\OO_{E/\mQ}(V,\tilde\phi)\bigr)\, . 
\end{equation}
On the other hand, it is clear that $\bigl(\OO_{E/\mQ}(V^\trans,\tilde\phi) \times  \{\id_{V^\alg}\}\bigr) \cap T_E^1 = \{1\}$, so the inclusion $i \colon \OO_{E/\mQ}(V^\trans,\tilde\phi) \times  \{\id_{V^\alg}\} \longhookrightarrow \OO_{E/\mQ}(V,\tilde\phi)$ induces an injective homomorphism
\[
\ol{i} \colon \OO_{E/\mQ}(V^\trans,\tilde\phi) \times  \{\id_{V^\alg}\} \longhookrightarrow \ol\OO_{E/\mQ}(V,\tilde\phi)\, .
\]
As $i$ induces an isomorphism on component group schemes, it follows from~\eqref{eq:varpi0iso} that the same is true for~$\ol{i}$. So the fact, deduced in~\ref{RestrictionsGmot}, that $G_\mot\bigl(N(\motV)\bigr)$ is contained in $\ol\SO_{E/\mQ}(V,\tilde\phi)$, implies that $G_\mot\bigl(N(\motV)\bigr) \subseteq \SO_{E/\mQ}(V^\trans,\tilde\phi) \times  \{\id_{V^\alg}\}$, which we view as algebraic subgroup of $\SO\bigl(N(V),N(\tilde\phi)\bigr)$. As we always have $G_\Betti \subseteq G_\mot$, it follows from Lemma~\ref{GB=SO3} that
\[
G_\Betti\bigl(N(\motV)\bigr) = G_\mot\bigl(N(\motV)\bigr) = \SO_{E/\mQ}(V^\trans,\tilde\phi) \times  \{\id_{V^\alg}\}\, .
\]

Next we consider the motive $\motM = \ul\Hom\bigl(\motH,N(\motV)\bigr)$. As before, we take the isomorphism $u_z \colon H \isomarrow N(V)$ as an identification. The Mumford-Tate group $G_\Betti(\motM)$ is the diagonal subgroup $\SO_{E/\mQ}(V^\trans,\tilde\phi) \times  \{\id_{V^\alg}\}$ of
\[
G_\Betti(\motH) \times G_\Betti\bigl(N(\motV)\bigr) = \Bigl(\SO_{E/\mQ}(V^\trans,\tilde\phi) \times  \{\id_{V^\alg}\}\Bigr) \times \Bigl(\SO_{E/\mQ}(V^\trans,\tilde\phi) \times  \{\id_{V^\alg}\}\Bigr)\, .
\]
The motivic Galois group $G_\mot(\motM)$ is an algebraic subgroup of 
\[
G_\mot(\motH) \times G_\mot\bigl(N(\motV)\bigr) = G_\Betti(\motH) \times G_\Betti\bigl(N(\motV)\bigr) 
\] 
that contains the diagonal subgroup $G_\Betti(\motM)$. We are done if we can show that $G_\mot(\motM) = G_\Betti(\motM)$.

First we show that the identity component $G_\mot^0(\motM)$ of~$G_\mot(\motM)$ equals $G_\Betti(\motM)$. To see this, we can argue on Lie algebras. We have $\mathfrak{g}_\mot(\motM) \subset \mathfrak{g}_\mot(\motH) \times \mathfrak{g}_\mot\bigl(N(\motV)\bigr)$, the projections of $\mathfrak{g}_\mot(\motM)$ to $\mathfrak{g}_\mot(\motH)$ and to $\mathfrak{g}_\mot\bigl(N(\motV)\bigr)$ are surjective, and $\mathfrak{g}_\mot(\motM)$ contains the diagonal Lie subalgebra.

After extension of scalars to~$\Qbar$, the inclusion $\mathfrak{g}_\mot(\motM) \subset \mathfrak{g}_\mot(\motH) \times \mathfrak{g}_\mot\bigl(N(\motV)\bigr)$ becomes
\begin{equation}
\label{eq:Prodso3s}
\mathfrak{g}_\mot(\motM) \otimes \Qbar \subset \Bigl(\prod_{\sigma\in \Sigma(E)}\, \so_3^{(\sigma)}\Bigr) \times \Bigl(\prod_{\tau\in \Sigma(E)}\, \so_3^{(\tau)}\Bigr) 
\end{equation}
where the superscripts ``$(\sigma)$'' and~``$(\tau)$'' are included only to label the factors. Suppose that $\mathfrak{g}_\mot(\motM)$ is strictly bigger than the diagonal Lie subalgebra of $\mathfrak{g}_\mot(\motH) \times \mathfrak{g}_\mot\bigl(N(\motV)\bigr)$. This means that there exists an embedding $\sigma\in \Sigma(E)$ such that the projection of $\mathfrak{g}_\mot(\motM) \otimes \Qbar$ to $\so_3^{(\sigma)} \times \so_3^{(\sigma)}$ (taking $\tau=\sigma$ in~\eqref{eq:Prodso3s}) is surjective. (Note that $\so_3 \cong \gsl_2$ and that the only semisimple Lie subalgebras of $\gsl_2\times \gsl_2$ that contain the diagonal, are the diagonal itself and the full $\gsl_2\times \gsl_2$.) Because $\mathfrak{g}_\mot(\motM)$ is defined over~$\mQ$ and $\Gal(\Qbar/\mQ)$ acts transitively on~$\Sigma(E)$, it follows that $\mathfrak{g}_\mot(\motM) \otimes \Qbar$ surjects to $\so_3^{(\sigma)} \times \so_3^{(\sigma)}$ for all $\sigma \in \Sigma(E)$. As $\mathfrak{g}_\mot(\motM)$ surjects to $\mathfrak{g}_\mot(\motH)$ and $\mathfrak{g}_\mot\bigl(N(\motV)\bigr)$ and contains the diagonal, it follows that $\mathfrak{g}_\mot(\motM) \otimes \Qbar$ surjects to {\it any\/} product of two factors $\so_3 \times \so_3$ in the right-hand side of~\eqref{eq:Prodso3s}. By \cite{MoZa}, Lemma~2.14(\romannumeral1), this implies that $\mathfrak{g}_\mot(\motM) = \mathfrak{g}_\mot(\motH) \times \mathfrak{g}_\mot\bigl(N(\motV)\bigr)$ but this contradicts what we have found in~\ref{RestrictionsGmot}. So indeed $G_\mot^0(\motM) = G_\Betti(\motM)$ is the diagonal subgroup of $G_\Betti(\motH) \times G_\Betti\bigl(N(\motV)\bigr) \cong \SO_{E/\mQ}(V^\trans,\tilde\phi) \times \SO_{E/\mQ}(V^\trans,\tilde\phi)$.

Finally, $G_\mot(\motM)$ normalizes $G^0_\mot(\motM)$. But $\dim_E(V^\trans) = 3$, so $\SO_{E/\mQ}(V^\trans,\tilde\phi)$ has trivial centre. Hence the diagonal subgroup in $\SO_{E/\mQ}(V^\trans,\tilde\phi) \times \SO_{E/\mQ}(V^\trans,\tilde\phi)$ equals its own normalizer, and we conclude that $G_\mot(\motM) = G_\mot^0(\motM)$, as we wanted to show.
\end{proof}

\subsection{}
\label{TCQuatCase}
The Tate conjecture for~$\motV_\xi$ is now deduced in the same way as in \ref{GellinGB} and~\ref{TateClassesOK}. Taking $s=\xi$ in Proposition~\ref{usMotivQuat}, we find that $N(\motV_\xi) = \FNorm_{E/\mQ}(\motV_\xi)$ is an abelian motive, and by Proposition~\ref{MTCNorms}(\romannumeral1) it follows that $G_\ell^0(\motV_\xi)$ is reductive.

Further, by Deligne's results in~\cite{DelAbsHodge}, $G_\ell^0\bigl(N(\motV_\xi)\bigr) \subseteq G_\Betti\bigl(N(\motV_\xi)\bigr) \otimes \Ql$ as algebraic subgroups of $\GL\bigl(N(V_\xi)\bigr) \otimes \Ql$; hence
\begin{equation}
\label{eq:GlinGBQuat}
G_\ell^0(\motV_\xi) \subseteq G_\Betti(\motV_\xi) \otimes \Ql  
\end{equation}
as algebraic subgroups of $\GL(V_\xi) \otimes \Ql$. 

We have connected algebraic subgroups $H_\Betti \subseteq \SO(V_\xi,\tilde\phi)$ over~$E$ and $H_\ell \subseteq \SO(V_{\xi,\ell},\tilde\phi_\ell)$ over~$E_\ell$ such that $G_\Betti(\motV_\xi) = \Res_{E/\mQ}\, H_\Betti$ and $G_\ell^0(\motV_\xi) = \Res_{E_\ell/\Ql}\, H_\ell$. By the results of Faltings,
\[
\FNorm_{E_\ell/\Ql}\bigl(V_{\xi,\Betti}^{H_\Betti} \otimes_E E_\ell\bigr) = \FNorm_{E/\mQ}\bigl(V_{\xi,\Betti}^{H_\Betti}\bigr) \otimes_\mQ \Ql \isomarrow  \FNorm_{E_\ell/\Ql}\bigl(V_{\xi,\ell}^{H_\ell}\bigr)\, .
\]
Now we can copy the last eight lines of~\ref{TateClassesOK} with $\motV_\xi$ instead of~$\motW$, with as conclusion that all Tate classes in $H^2(X,\Ql)\bigl(1\bigr)$ are algebraic.
\medskip

For the proof of the Mumford-Tate conjecture for~$\motV_\xi$, we start with a lemma.

\begin{lemma}
\label{hinprodsl2}
Let $k$ be an algebraically closed field of characteristic zero, $\Sigma$ a finite index set, $\gh$ a reductive Lie subalgebra of $\prod_\Sigma\, \gsl_2$. Consider the representation of~$\gh$ on $V = \otimes_\Sigma\, M_2(k)$ obtained as the tensor product of the representations of~$\gsl_2$ on~$M_2(k)$ given by left multiplicaton. If $\End(V)^\gh = \otimes_\Sigma\, M_2(k)$ (acting on~$V$ by right multiplication) then $\gh = \prod_\Sigma\, \gsl_2$.
\end{lemma}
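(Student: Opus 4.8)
The plan is to deduce $\gh=\prod_{\sigma\in\Sigma}\gsl_2$ from the double centralizer theorem together with the Goursat‑type structure of semisimple subalgebras of a product of copies of $\gsl_2$. Write $\mathfrak{g}=\prod_{\sigma\in\Sigma}\gsl_2$ and, for a Lie subalgebra $\mathfrak{a}\subseteq\End(V)$, let $A(\mathfrak{a})\subseteq\End(V)$ be the associative subalgebra it generates (the image of its enveloping algebra). First I would record the elementary observation that $\gsl_2$ acting on $M_2(k)$ by left multiplication is $\St\oplus\St$, so as a $\mathfrak{g}$-module $V\cong W\otimes k^{2^{\#\Sigma}}$ with $W=\boxtimes_\sigma\St$ the (irreducible) exterior tensor product and $\mathfrak{g}$ acting trivially on the second factor. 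Hence $\End(V)^{\mathfrak{g}}=\bigotimes_\sigma M_2(k)$ acting by right multiplication, which is a \emph{simple} algebra, and — by Jacobson density, $V$ being semisimple over $\mathfrak{g}$ — $A(\mathfrak{g})=\bigotimes_\sigma M_2(k)$ acting by left multiplication.

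Since $\gh$ is reductive and $\charact(k)=0$, the $\gh$-module $V$ is semisimple and $A(\gh)$ is a semisimple algebra; the double centralizer theorem then gives that $A(\gh)$ equals the commutant of $\End(V)^{\gh}$ in $\End(V)$, and likewise $A(\mathfrak{g})$ equals the commutant of $\End(V)^{\mathfrak{g}}$. As the hypothesis is precisely that $\End(V)^{\gh}=\bigotimes_\sigma M_2(k)=\End(V)^{\mathfrak{g}}$, I conclude $A(\gh)=A(\mathfrak{g})=\bigotimes_\sigma M_2(k)$. Next I would show that the projection $\pr_\sigma(\gh)\subseteq\gsl_2$ is all of $\gsl_2$ for every $\sigma$: it is reductive, hence $0$, a line, or $\gsl_2$, and if it were proper then the unital associative subalgebra $B_\sigma\subseteq M_2(k)$ it generates would have $\dim_k B_\sigma\le 2$ (a traceless $x$ satisfies $x^2\in k\cdot 1$). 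Writing $\bigotimes_\tau M_2(k)=M_2^{(\sigma)}\otimes\bigotimes_{\tau\ne\sigma}M_2(k)$, every element of $\gh\subseteq\bigoplus_\tau\gsl_2$ lies in $B_\sigma\otimes\bigotimes_{\tau\ne\sigma}M_2(k)$, so $A(\gh)\subseteq B_\sigma\otimes\bigotimes_{\tau\ne\sigma}M_2(k)\subsetneq\bigotimes_\tau M_2(k)$, a contradiction. In particular the centre of $\gh$ maps to $0$ in each $\gsl_2$ and hence is $0$, so $\gh$ is semisimple.

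Finally I would invoke the structure of $\gh$. Write $\gh=\gh_1\times\cdots\times\gh_r$ as a product of simple ideals. A surjection from a product of simple Lie algebras onto a simple Lie algebra factors through exactly one factor, so each $\pr_\sigma\colon\gh\twoheadrightarrow\gsl_2$ restricts to an isomorphism on a unique $\gh_{j(\sigma)}$ and kills the others; since $\gh\hookrightarrow\mathfrak{g}$ is injective, every $\gh_j$ occurs, so $\Sigma$ is partitioned into the sets $\Sigma_j=j^{-1}(j)$ and $\gh_j$ sits diagonally (via isomorphisms) in $\prod_{\sigma\in\Sigma_j}\gsl_2$. Because the $\gh_j$ are supported on disjoint groups of tensor slots and commute, $A(\gh)=\bigotimes_j A(\gh_j)$, so it suffices to see that $A(\gh_j)=\bigotimes_{\sigma\in\Sigma_j}M_2(k)$ forces $\#\Sigma_j=1$. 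Now $A(\gh_j)$ is the image of $U(\gsl_2)$ acting on $\St^{\otimes m}$, $m=\#\Sigma_j$ (the identifying isomorphisms of $\gsl_2$ are inner, hence immaterial); decomposing $\St^{\otimes m}$ into its irreducible constituents, all of dimension at most $m+1$, Jacobson density gives $\dim_k A(\gh_j)=\sum(\dim)^2\le (m+1)\cdot 2^m<4^m$ as soon as $m\ge 2$, whereas $\dim_k\bigl(\bigotimes_{\sigma\in\Sigma_j}M_2(k)\bigr)=4^m$. Hence every $\Sigma_j$ is a singleton, i.e.\ $\gh=\prod_{\sigma\in\Sigma}\gsl_2=\mathfrak{g}$.

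The main obstacle I anticipate is this last step: ruling out the diagonally embedded semisimple subalgebras that still surject onto every factor. Surjectivity onto the factors is comparatively soft, but eliminating genuine diagonals requires combining the Goursat decomposition with the dimension estimate $\dim_k A(\gh_j)\le (m+1)2^m$ for the enveloping‑algebra image of a diagonal $\gsl_2$ in $\bigotimes^m M_2(k)$; everything else is bookkeeping with reductivity, the semisimplicity of $V$ in characteristic zero, and the double centralizer theorem.
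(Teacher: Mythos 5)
Your proof is correct but takes a genuinely different route from the paper's, once the easy first step is out of the way. Both arguments begin by establishing that each single projection $\pr_\sigma(\gh)=\gsl_2$; you do this by bounding the associative algebra $B_\sigma\subset M_2(k)$ generated by a proper reductive subalgebra, while the paper observes that the centralizer of a Cartan $\gt\subset\gsl_2$ in $\End(M_2(k))$ strictly contains $M_2(k)$ --- dual sides of the same obstruction. After that the paths diverge. The paper invokes the Lemma of \cite{Ribet} to reduce to the pairwise projections $\pr_{\sigma,\tau}(\gh)\subseteq\gsl_2\times\gsl_2$, and disposes of the residual Goursat case (a graph of an automorphism) using the fact that every automorphism of $\gsl_2$ is inner, which produces a twisted-swap operator in $\End(V)^\gh$ not lying in $\bigotimes_\sigma M_2(k)$. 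You instead apply the double centralizer theorem to upgrade the hypothesis to $A(\gh)=\bigotimes_\sigma M_2(k)$ (the image of the enveloping algebra is everything), carry out the Goursat decomposition of the now-semisimple $\gh$ into simple ideals $\gh_j$ sitting diagonally over a partition $\Sigma=\coprod_j\Sigma_j$, and rule out blocks of size $m\ge 2$ by the estimate $\dim_k A(\gh_j)\le (m+1)2^m<4^m$, valid because the irreducible constituents of $\St^{\otimes m}$ all have dimension at most $m+1$. Both arguments are sound: the paper's is shorter and offloads the combinatorics to Ribet's lemma, whereas yours is more self-contained and replaces the explicit twisted-swap endomorphism with a quantitative dimension count. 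Each relies, at slightly different points, on $V$ being a semisimple $\gh$-module (in particular so that the projections $\pr_\sigma(\gh)$ are reductive and, in your case, so that the double centralizer theorem applies); that semisimplicity is exactly what the hypothesis ``$\gh$ reductive'' is meant to guarantee and is worth flagging explicitly in a written-out version.
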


\begin{proof}
For $\sigma \in \Sigma$, the projection $\pr_\sigma \colon \gh \to \gsl_2$ is surjective, for otherwise the image is contained in a Cartan subalgebra $\gt \subset \gsl_2$, and since $\End(M_2(k))^\gt \supsetneq M_2(k)$ this contradicts the assumption that $\End(V)^\gh = \otimes_\Sigma\, M_2(k)$.

By \cite{Ribet}, the Lemma on pages 790--791, it now suffices to show that for $\sigma \neq \tau$ the projection $\pr_{\sigma,\tau} \colon \gh \to \gsl_2 \times \gsl_2$ is surjective. Write $\gh^\prime = \pr_{\sigma,\tau}(\gh) \subset \gsl_2\times \gsl_2$. Let $\gk_\sigma = \Ker(\pr_\tau \colon \gh^\prime \to \gsl_2)$ and $\gk_\tau = \Ker(\pr_\sigma \colon \gh^\prime \to \gsl_2)$; these are ideals of~$\gh^\prime$ with $\dim(\gk_\sigma) = \dim(\gk_\tau)$. As $\gh^\prime$ has rank at most~$2$ and the projections $\gh^\prime \to \gsl_2$ are surjective, the ideal $\gk_\sigma \times \gk_\tau$ is either the whole~$\gh^\prime$ or it is zero. In the first case we are done. In the second case, $\gh^\prime \subset \gsl_2\times \gsl_2$ is the graph of an automorphism. As all automorphisms of~$\gsl_2$ are inner, this contradicts the assumption that $\End(V)^\gh = \otimes_\Sigma\, M_2(k)$.
\end{proof}

\subsection{}
We retain the notation introduced in~\ref{QuatCaseStart}. In the argument that follows we shall also use the notation introduced in the proof of Proposition~\ref{uHomNVQuat}. If $Y$ is a complex abelian variety, we write $G_\mot(Y)$ (resp.\ $G_\Betti(Y)$, etc.) for $G_\mot\bigl(\motH^1(Y)\bigr)$ (resp.\ $G_\Betti\bigl(\motH^1(Y)\bigr)$, etc.). Let
\[
G_\Betti^\prime(Y) = \Bigl(G_\Betti(Y) \cap \SL\bigl(H^1(Y,\mQ)\bigr)\Bigr)^0\, ,\qquad
G_\ell^\prime(Y) = \Bigl(G_\ell(Y) \cap \SL\bigl(H^1(Y,\Ql)\bigr)\Bigr)^0\, .
\]
(The group $G_\Betti^\prime(Y)$ is called the Hodge group of~$Y$.) These are connected reductive groups, and $G_\Betti(Y) = \mG_\mult \cdot G_\Betti^\prime(Y)$, resp.\ $G_\ell^0(Y) = \mG_\mult \cdot G_\ell^\prime(Y)$.

Suppose the Mumford-Tate conjecture is true for (the $\motH^1$ of) the abelian variety $A \times B_\xi$. By the remarks in~\ref{MTCforV(n)} together with Proposition~\ref{MTCNorms}(\romannumeral4), the Mumford-Tate conjecture is then also true for the motive
\[
\ul\Hom\bigl(\motH^1(A),\motH^1(B_\xi)\bigr) \cong \motH^1(A)^\vee \otimes \motH^1(B_\xi) \cong \motH^1(A) \otimes \motH^1(B_\xi)\bigl(1\bigr)\, ,
\]
and hence for the motive $\ul\Hom_D\bigl(\motH^1(A),\motH^1(B_\xi)\bigr) \cong \FNorm_{E/\mQ}(\motV_\xi)$. By Proposition~\ref{MTCNorms}(\romannumeral3), this implies the Mumford-Tate conjecture for the motive~$\motV_\xi$.

For the abelian variety~$A$ we know (see the end of the proof of~\ref{uHomNVQuat}) that $\End^0(A) = D^\opp$ and that there is an isomorphism of $D$-modules $H^1(A,\mQ) \isomarrow H_1 = D$ via which $G_\Betti(A) = K_1/T_E^1$. (Recall: $K_1 = \Res_{E/\mQ}(\Delta_1^*)$.) The Mumford-Tate conjecture for~$A$ then follows from Lemma~\ref{hinprodsl2}, applied with $k = \Qlbar$ and $\gh = \mathfrak{g}^\prime_\ell(A) \otimes \Qlbar$.

Next we look at the abelian scheme $B \to S$ constructed in~\ref{uHomNVQuat}. By construction, for every $s \in S$ there is an isomorphism of $D$-modules $H^1(B_s,\mQ) \isomarrow H_2$ via which $G_\Betti(B_s) \subseteq K_2/T_E^1$. Moreover, for Hodge-generic points~$s$ the latter inclusion is an equality. The Shimura variety defined by the algebraic group $K_2/T_E^1 \subset \GL(H_2)$ is $1$-dimensional. (As discussed in~\ref{QuatCaseStart}, there is a unique real place of~$E$ at which the quaternion algebra~$\Delta_2$ splits.) It follows that either $G_\Betti(B_\xi) = K_2/T_E^1$ or $B_\xi$ is an abelian variety of CM-type.

If $B_\xi$ is of CM-type, which means that $G_\Betti(B_\xi)$ is a torus, the Mumford-Tate conjecture for~$B_\xi$ is true. Using the fact that $G_\Betti^\prime(A)$ and $G_\ell^\prime(A)$ are semisimple, it is easily shown that $G_\Betti^\prime(A\times B_\xi) = G_\Betti^\prime(A) \times G_\Betti^\prime(B_\xi)$ and $G_\ell^\prime(A \times B_\xi) = G_\ell^\prime(A) \times G_\ell^\prime(B_\xi)$. In particular, the Mumford-Tate conjecture is true for $A \times B_\xi$.

{}From now on we assume that $G_\Betti(B_\xi) = K_2/T_E^1$, and hence $\End^0(B_\xi) = D^\opp$. The Mumford-Tate conjecture for~$B_\xi$ is true, by the same argument as for~$A$. Note that $D^\opp \cong M_r(Q)$ for some $r \geq 1$ and some division algebra~$Q$; hence $A$ and~$B_\xi$ are both isogenous to the $r$th power of a simple abelian variety. It follows that if $\Hom(A,B_\xi) \neq 0$ then $A$ and~$B_\xi$ are isogenous, in which case the Mumford-Tate conjecture for $A\times B_\xi$ follows from the Mumford-Tate conjecture for~$A$. In what follows we may therefore also assume that $\Hom(A,B_\xi) = 0$.

Under the assumptions we have made, the groups $G_\Betti^\prime(A)$, $G_\ell^\prime(A)$, $G_\Betti^\prime(B_\xi)$ and $G_\ell^\prime(B_\xi)$ are all semisimple. As $N(\motV_\xi) \cong \ul\Hom_D\bigl(\motH^1(A),\motH^1(B_\xi)\bigr)$ this implies that also $G_\Betti\bigl(N(\motV_\xi)\bigr)$ and $G_\ell^0\bigl(N(\motV_\xi)\bigr)$ are semisimple. In \eqref{eq:G/Z=G}, applied with $\motV=\motV_\xi$, the group schemes $Z$ and~$Z_\ell$ are finite; hence also $G_\Betti(\motV_\xi)$ and $G_\ell^0(\motV_\xi)$ are semisimple.

The assumption that $\Hom(A,B_\xi) = 0$ implies that there are no non-zero Hodge (resp.\ Tate) classes in $\FNorm_{E/\mQ}(V_{\xi,\Betti})$ (resp.\ $\FNorm_{E_\ell/\Ql}(V_{\xi,\ell})$). Hence there are no non-zero Hodge (resp.\ Tate) classes in~$V_{\xi,\Betti}$ (resp.\ $V_{\xi,\ell}$). 

On the Hodge-theoretic side, we now know that $\dim_E(V_\xi) = 4$, that there are no non-zero Hodge classes in~$V_{\xi,\Betti}$, and that $G_\Betti(\motV_\xi)$ is semisimple. By Zarhin's results (see~\ref{ZarhResult}) it follows that $\End_{\QHS}(V_{\xi,\Betti}) = E$ and $G_\Betti(\motV_\xi) = \SO_{E/\mQ}(V_\xi,\tilde\phi)$. On the $\ell$-adic side, $E_\ell = \prod_{\lambda | \ell}\, E_\lambda$, and $(V_{\xi,\ell},\tilde\phi_\ell)$ decomposes as an orthogonal sum $\oplus_{\lambda|\ell}\, (V_{\xi,\lambda},\tilde\phi_\lambda)$ with $\dim_{E_\lambda}(V_{\xi,\lambda}) = 4$. Again we know there are no non-zero $G_\ell^0(\motV_\xi)$-invariants, and that  $G_\ell^0(\motV_\xi)$ has no unitary or abelian factors. By Theorem~\ref{ZarhK3ell}, it follows that $G_\ell^0(\motV_\xi) = \prod_{\lambda|\ell}\, \SO_{E_\lambda/\Ql}(V_{\xi,\lambda},\tilde\phi_\lambda) = G_\Betti(\motV_\xi) \otimes \Ql$. 

The proof of our Main Theorem~\ref{MainThm} is now complete.


\section{Applications to algebraic surfaces with \texorpdfstring{$p_g=1$}{pg=1}.}
\label{AlgSurf}

\subsection{}
In this section we work over~$\mC$, and all surfaces we consider are assumed to be complete. We shall mainly be interested in non-singular minimal surfaces of general type with $p_g=1$. Let $\cM = \coprod\, \cM_{K^2,1,q}$ be the moduli stack of such surfaces. If $M$ is an irreducible component of~$\cM$, we say that $M$ satisfies condition~\PeriodCond\ if there exist complex surfaces $X_1$, $X_2$ with $[X_i] \in M$ such that $H^2(X_1,\mQ) \not\cong H^2(X_2,\mQ)$ as $\mQ$-Hodge structures.

\begin{proposition}
\label{SimultRes}
Let $M$ be an irreducible component of~$\cM$ that satisfies condition\/~{\rm \PeriodCond}. If $X$ is a non-singular surface with $[X] \in M$, the Tate Conjecture for divisor classes on~$X$ is true and the Mumford-Tate conjecture for the cohomology in degree~$2$ is true.
\end{proposition}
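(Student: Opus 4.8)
The plan is to deduce this directly from the Main Theorem (Theorem~\ref{MainThm}), applied to the submotive $\motM = \motH^2(X)$, in which case the cutting projector is $p_\xi = \id$ and hypothesis~(a) of that theorem holds trivially. Since $X$ is a surface with $p_g = 1$ one has $h^{2,0}(X) = h^0(X,\Omega^2_X) = p_g(X) = 1$, so the numerical hypothesis is satisfied as well. Thus the whole content of the proof is to construct a smooth projective family $f\colon \cX\to S$ over a non-singular irreducible base, together with a point $\xi\in S(\mC)$ and an isomorphism $X\cong\cX_\xi$, such that the variation of Hodge structure $R^2f_*\mQ_\cX$ is not isotrivial; Theorem~\ref{MainThm} then yields the Tate conjecture for divisor classes on~$X$ and the Mumford-Tate conjecture for $H^2(X)$.

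To build such a family I would first set up a parameter space. Pick $n$ (say $n = 5$) so that the $n$-canonical map of every surface in~$M$ is a birational morphism onto its image, a normal surface with at worst Du Val singularities; the Hilbert polynomial of these models is then constant on~$M$. Let $H$ be the locally closed subscheme of the corresponding Hilbert scheme parametrizing the $n$-canonical models of the surfaces in~$M$, so that $M = [H/\PGL_{N+1}]$ for suitable~$N$. Because $\PGL_{N+1}$ is connected and $M$ is irreducible, $H$ is irreducible. Condition~\PeriodCond\ supplies surfaces $X_1, X_2$ with $[X_1],[X_2]\in M$ and $H^2(X_1,\mQ)\not\cong H^2(X_2,\mQ)$. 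Choose points of~$H$ lying over $[X]$, $[X_1]$ and~$[X_2]$; since $H$ is irreducible, a standard application of Bertini's theorem (passing to a common affine open and cutting down by general hypersurfaces through the three points) produces an irreducible curve $C\subset H$ through all three. Note that if $\dim H = 0$ then $M$ is a point and~\PeriodCond\ cannot hold, so indeed $\dim C = 1$.

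Next I would turn~$C$ into the base of a smooth family. Over~$C$ we have the pulled-back family of $n$-canonical models, whose members have only Du Val singularities. By the classical theory of simultaneous resolution of such singularities over a one-parameter base, there is a finite surjective morphism $\tilde C\to C$, which after passing to an irreducible component dominating~$C$ and normalizing we may assume irreducible and non-singular, such that the pulled-back family acquires a simultaneous resolution $f\colon\cX\to S$ with $S = \tilde C$; this~$f$ is a smooth projective family whose fibres are the smooth minimal surfaces of general type parametrized by~$C$, and flatness is automatic since $S$ is a non-singular curve. In particular $X$, $X_1$ and~$X_2$ all occur as fibres of~$f$, over points of~$S$ lying above the chosen points of~$C$, and I take $\xi\in S(\mC)$ with $\cX_\xi\cong X$.

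Finally, $R^2f_*\mQ_\cX$ cannot be isotrivial: were it so, it would become constant after a finite étale base change, forcing all fibres of~$f$ to have isomorphic $H^2$ Hodge structures, contradicting $H^2(X_1,\mQ)\not\cong H^2(X_2,\mQ)$. Applying Theorem~\ref{MainThm} to $\cX_\xi = X$ with $\motM = \motH^2(X)$ then gives the assertion. The one delicate point in the argument is precisely the construction of the family: one needs a \emph{single} smooth projective family over a non-singular irreducible base that realizes $X$ together with the two surfaces witnessing~\PeriodCond\ as fibres, and it is here that the moduli theory of surfaces of general type (Gieseker's construction of~$H$, together with simultaneous resolution of Du Val singularities after a finite base change) is used; given such a family, everything else is a formal consequence of the Main Theorem.
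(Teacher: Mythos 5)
Your proof applies Theorem~\ref{MainThm} to $\motM = \motH^2(X)$ exactly as the paper does, but the construction of the auxiliary family is genuinely different, and one point needs more care.

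The paper works directly with smooth atlases of the moduli stack: it chooses smooth morphisms $p_i\colon U_i\to M$ whose images contain $[X_i]$, resolves the $U_i$, and thereby obtains \emph{smooth} families $\cY_i\to\tilde U_i$ of minimal surfaces right away. Crucially, it never constructs a single family through both $X_1$ and $X_2$; instead it shows that if the VHS over $\tilde U_1$ (which contains $X=X_1$) were isotrivial, then since $p_1(U_1)\cap p_2(U_2)$ is open dense in $M_{\red}$ the VHS over $\tilde U_2$ would be isotrivial as well, forcing $H^2(X_1,\mQ)\cong H^2(X_2,\mQ)$. Only afterwards does it cut down to a curve $S\to\tilde U_1$ through $u_1$ and a second point where the Hodge structure differs, and it then secures projectivity by taking the flat closure of an ample divisor on $X$ and shrinking $S$. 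Your route, via Gieseker's Hilbert scheme $H$ of pluricanonical models and Brieskorn--Tyurina simultaneous resolution over a curve, is attractive in that it produces one family containing $X$, $X_1$ and $X_2$, and your non-isotriviality argument is fine.

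The gap is projectivity. Theorem~\ref{MainThm} requires a smooth \emph{projective} family, and simultaneous resolution of a one-parameter family of Du~Val surfaces a priori only yields an algebraic space after finite base change; the exceptional $(-2)$-curves have zero intersection with the pullback of the relatively ample class from the canonical models, so projectivity is not automatic. You assert without comment that $f\colon\cX\to S$ is ``a smooth projective family.'' This can be fixed (e.g.\ after a further finite base change, or by the paper's own device of Cartier-izing an ample divisor on $X$ that meets the exceptional curves and shrinking $S$ to a Zariski open where its flat closure is relatively ample), but as written it is unjustified. The paper's two-atlas device is cleverer precisely because it sidesteps both canonical models and the projectivity of simultaneous resolutions.
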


\begin{proof}
By assumption, there exist complex surfaces $X_1$, $X_2$ with $[X_i] \in M$ and $H^2(X_1,\mQ) \not\cong H^2(X_2,\mQ)$ as $\mQ$-Hodge structures. We may assume $X=X_1$. There exist irreducible $\mC$-schemes~$U_i$ ($i=1,2$) of finite type and smooth morphisms $p_i \colon U_i \to M$ with $[X_i] \in p_i(U_i)$. Let $r_i \colon \tilde{U}_i \to U_{i,\red}$ be a resolution of singularities of the reduced scheme underlying~$U_i$, and let $\pi_i \colon\cY_i \to \tilde{U}_i$ denote the smooth family of surfaces given by $\tilde{U}_i \to M$. By construction, there exist points $u_i \in \tilde{U}_i(\mC)$ such that the fibre of~$\pi_i$ over~$u_i$ is the surface~$X_i$.

Denote by $\mH_i$ the variation of Hodge structure over~$\tilde{U}_i$ given by $R^2\pi_{i,*}\mQ_{\cY_i}$. We claim that $\mH_1$ is not isotrivial. To see this, assume the opposite. By the irreducibility of~$M$ and the fact that the morphisms~$p_i$ are open, $V = p_1(U_1) \cap p_2(U_2)$ is an open dense substack of~$M_\red$, and on $(p_2\circ r_2)^{-1}(V) \subset \tilde{U}_2$ the variation~$\mH_2$ is isotrivial. This implies that $\mH_2$ is isotrivial on all of~$\tilde{U}_2$, but this gives a contradiction with our assumption that $H^2(X_1,\mQ) \not\cong H^2(X_2,\mQ)$. This proves the claim.

The only thing that is left to do is the reduction to a projective family of surfaces. Choose a point $t \in \tilde{U}_1$ such that  $H^2(X_1,\mQ) \not\cong H^2(\cY_{1,t},\mQ)$ as Hodge structures, and choose a morphism $S \to \tilde{U}_1$ from a non-singular irreducible curve~$S$ to~$\tilde{U}_1$ such that $u_1$ and~$t$ are in the image of~$S$. By pull-back this gives a smooth family $f\colon \cX \to S$ such that $X \cong \cX_\xi$ for some $\xi \in S(\mC)$ and such that the period map associated with $R^2f_*\mQ_\cX$ is not constant. Choose an ample divisor~$D$ on~$X$ and let $\cD \subset \cX$ be its flat closure. Over a Zariski-open subset $S^\circ \subset S$ containing~$\xi$ this~$\cD$ is relatively ample, and the proposition follows by applying Theorem~\ref{MainThm} to the restriction of the family~$\cX$ to~$S^\circ$.
\end{proof}

\begin{corollary}
\label{SimultResCor}
Let $M$ be an irreducible component of~$\cM$, and suppose there is a complex surface~$Y$ with $[Y] \in M$ and Picard number $\rho(Y) = h^{1,1}(Y)$. Then for any non-singular surface~$X$ with $[X] \in M$, the Tate Conjecture for divisor classes and the Mumford-Tate conjecture for the cohomology in degree~$2$ are true.
\end{corollary}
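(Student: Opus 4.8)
The plan is to argue according to whether or not the component~$M$ satisfies condition~\PeriodCond. If it does, the conclusion is immediate from Proposition~\ref{SimultRes}, so I shall assume from now on that $M$ does \emph{not} satisfy~\PeriodCond; by definition this means $H^2(X',\mQ)\cong H^2(Y,\mQ)$ as $\mQ$-Hodge structures for every non-singular surface~$X'$ with $[X']\in M$. By the Lefschetz theorem on divisor classes the Picard number~$\rho(X')$ equals the dimension of the space of Hodge classes in $H^2(X',\mQ)\bigl(1\bigr)$; applying this with $X'=Y$ and with $X'=X$, and using the hypothesis $\rho(Y)=h^{1,1}(Y)$, we get $\rho(X)=h^{1,1}(X)$. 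So it suffices to prove the Tate and Mumford--Tate conjectures for an arbitrary non-singular projective surface~$X$ with $p_g(X)=1$ and $\rho(X)=h^{1,1}(X)$.

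For such an~$X$, let~$T$ be the orthogonal complement of $\mathrm{NS}(X)\otimes\mQ$ in $H^2(X,\mQ)$. It is a polarized $\mQ$-Hodge structure of weight~$2$ and type $(2,0)+(0,2)$ with $\dim_\mQ T=2p_g(X)=2$; equivalently, $T(1)$ is a polarized weight-$0$ Hodge structure of type $(-1,1)+(1,-1)$ with Hodge numbers $1,0,1$, and its polarization form~$\phi$ is definite on $T(1)\otimes\mR$ (the Weil operator is $-\id$), hence anisotropic over~$\mQ$; consequently $\SO(T(1),\phi)$ is a $1$-dimensional $\mQ$-torus. I would then descend: choose a finitely generated field $K\subset\mC$ and a smooth projective surface~$X_0$ over~$K$ with $X_{0,\mC}\cong X$, and, after a finite extension of~$K$, arrange that all divisor classes on~$X$ are defined over~$K$; spreading out and specializing as in the proof of Theorem~\ref{GenbyWHC} --- and using that specialization cannot lower the Picard number, so that the specialized surface still satisfies $\rho=h^{1,1}$ --- we may moreover take~$K$ to be a number field. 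With $D_1,\dots,D_\rho$ a basis of the Néron--Severi group, a suitable linear combination of the cycles $D_i\times D_j$ cuts out a submotive $\unitmot^{\oplus\rho}\subset\motH^2(X_0)\bigl(1\bigr)$, so that $\motH^2(X_0)\bigl(1\bigr)=\motT(1)\oplus\unitmot^{\oplus\rho}$ in~$\Mot_K$, the complementary submotive~$\motT$ being again cut out by an algebraic cycle; we denote its Betti and $\ell$-adic realizations by $T_\Betti$ and~$T_\ell$.

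I would then compare $G_\Betti(\motT(1))$ and $G_\ell^0(\motT(1))$ inside the general linear group of the $2$-dimensional space $T(1)$, using the comparison isomorphism $T_\Betti(1)\otimes\Ql\isomarrow T_\ell(1)$, which carries~$\phi$ to~$\phi_\ell$. Both groups are connected and preserve the polarization form, hence are contained in $\SO(T(1),\phi)$, respectively $\SO(T(1),\phi)\otimes\Ql$, a $1$-dimensional torus. Neither of them is trivial: $G_\Betti(\motT(1))$ is nontrivial because $T_\Betti(1)$ is not of pure type $(0,0)$, and $G_\ell^0(\motT(1))$ is nontrivial because the Hodge--Tate weights of $T_\ell(1)$ --- read off, via the comparison between $\ell$-adic and de~Rham cohomology, from the jumps of the Hodge filtration on the transcendental part of $H^2_{\mathrm{dR}}(X_0/K)\bigl(1\bigr)$ --- are $\{-1,+1\}$, so that $T_\ell(1)$ cannot have finite image. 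A connected nontrivial subgroup of a $1$-dimensional torus equals that torus, whence
\[
G_\ell^0(\motT(1))=\SO(T(1),\phi)\otimes\Ql=G_\Betti(\motT(1))\otimes\Ql ,
\]
which is the Mumford--Tate conjecture for~$\motT(1)$. Since the summand $\unitmot^{\oplus\rho}$ has trivial Mumford--Tate and $\ell$-adic monodromy group, this yields the Mumford--Tate conjecture for $\motH^2(X_0)\bigl(1\bigr)$, hence, by the Remarks in~\ref{MTCforV(n)}, for $\motH^2(X_0)$, and by Proposition~\ref{FieldExtProp} for $H^2(X)$. For the Tate conjecture one notes that $G_\ell^0\bigl(\motH^2(X_0)(1)\bigr)$ is a torus, in particular reductive, and that the space of Tate classes $\cT^1(X_0)\subset H^2\bigl(X_{0,\Kbar},\Ql(1)\bigr)$ equals $\bigl(T_\ell(1)\bigr)^{G_\ell^0}\oplus\Ql^{\oplus\rho}=\Ql^{\oplus\rho}$, since a $1$-dimensional special orthogonal group has no nonzero invariant vectors on the $2$-dimensional space~$T_\ell(1)$; this $\Ql^{\oplus\rho}$ is spanned by the classes of the~$D_i$, so the cycle class map is surjective onto $\cT^1(X_0)$, and Proposition~\ref{FieldExtProp2} transfers the conclusion to~$X$.

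I expect the real work to lie entirely in the case where~$M$ fails~\PeriodCond, treated above: there the Main Theorem is not applicable, because the variation of Hodge structure on~$H^2$ is isotrivial over~$M$, so one is forced to treat the rank-two transcendental motive~$\motT$ directly. The single step that is not purely formal is the non-vanishing of $G_\ell^0(\motT(1))$ --- equivalently, the assertion that $\motT(1)$ acquires no spurious Tate classes besides the algebraic ones --- which the argument above derives from $p$-adic Hodge theory, via the Hodge--Tate weights of the transcendental part of $\ell$-adic cohomology.
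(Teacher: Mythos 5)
Your proof is correct and follows essentially the same route as the paper's. The only cosmetic difference is the way the two cases are set up: you split on whether $M$ satisfies~\PeriodCond, whereas the paper splits on whether the given~$X$ has $\rho(X) < h^{1,1}(X)$ (in which case $M$ satisfies~\PeriodCond\ because of~$Y$, so Proposition~\ref{SimultRes} applies) or $\rho(X) = h^{1,1}(X)$; these two case divisions cover the same ground. In the key case $\rho(X) = h^{1,1}(X)$ the paper's argument is identical to yours: decompose $\motH^2(X)\bigl(1\bigr) = \unitmot^{\oplus\rho}\oplus\motH^\trans$, observe both $G_\Betti$ and $G_\ell^0$ of the rank-two transcendental part sit inside the rank-one torus $\SO(H^\trans,\phi)$, and show $G_\ell^0$ is nontrivial by looking at the Hodge--Tate decomposition at a place of good reduction above~$\ell$. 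Your write-up is more explicit about the descent to a number field and the preservation of $\rho=h^{1,1}$ under specialization, which the paper leaves tacit, but no new idea is involved.
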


\begin{proof}
If the Picard number $\rho(X)$ satisfies $\rho(X) < h^{1,1}(X)$ then $M$ satisfies condition\/~{\rm \PeriodCond} and the result follows from the proposition. If $\rho(X) = h^{1,1,}(X)$ then the motive $\motH = \motH^2(X)\bigl(1\bigr)$ decomposes as $\motH = \unitmot^\rho \oplus \motH^\trans$ with $2$-dimensional transcendental part, on which we have a symmetric bilinear polarization form~$\phi$. If $H^\trans$ denotes the vector space underlying the Betti realization, $G_\Betti(\motH) \cong G_\Betti(\motH^\trans) = \SO(H^\trans,\phi)$ and $G_\ell^0(\motH) \cong G_\ell^0(\motH^\trans)$ is a connected algebraic subgroup of $\SO(H^\trans,\phi) \otimes \Ql \cong \SO_{2,\Ql}$. So it only remains to show that $G_\ell^0(\motH)$ is not trivial, which follows by looking at the Hodge-Tate decomposition at a prime of good reduction.
\end{proof}

\begin{theorem}
\label{MainThmSurf}
Let $X$ be a complex algebraic surface of general type with $p_g(X) =1$. The Tate Conjecture for divisor classes on~$X$ and the Mumford-Tate conjecture for the cohomology in degree~$2$ are true if the minimal model of~$X$ is of one of the following types. 
\begin{enumerate}[label=\textup{(\alph*)}]
\item Surfaces with $q=0$ and $K^2\leq 2$.
\item Surfaces with $q=0$ and $3\leq K^2 \leq 8$ that lie in the same moduli component as a Todorov surface.
\item Surfaces with $q=0$ and $K^2=3$ with torsion (of the Picard group) $\mZ/3\mZ$.
\item Surfaces with $q=1$ and $K^2 = 2$.
\item Surfaces with $q=1$, $K^2 = 3$ and general albanese fibre of genus~$3$.
\item Surfaces with $q=1$ and $K^2 = 4$ in any of the eight moduli components described by Pignatelli in \textup{\cite{Pignatelli}}.
\item Surfaces with $q=1$ and $K^2 = 8$ whose bicanonical map is not birational. 
\end{enumerate}
\end{theorem}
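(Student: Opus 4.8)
By Proposition~\ref{SimultRes}, if the minimal model~$X_0$ of~$X$ defines a point of an irreducible component~$M$ of the moduli stack~$\cM$ and $M$ satisfies condition~\PeriodCond, then the Tate conjecture for divisor classes and the Mumford--Tate conjecture for~$H^2$ hold for~$X_0$; and by Corollary~\ref{SimultResCor} the same conclusion holds if $M$ merely contains some surface~$Y$ with $\rho(Y)=h^{1,1}(Y)$. Two remarks will reduce the theorem to this. First, passing from~$X$ to~$X_0$ changes $H^2$ only by a direct summand $\mQ(-1)^{\oplus r}$ spanned by exceptional curves, for which TC is trivial and which does not affect the transcendental motive; so it suffices to prove the theorem for~$X_0$. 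Second, as in the proof of Corollary~\ref{SimultResCor}, if $M$ contains two surfaces with distinct Picard numbers then $M$ satisfies~\PeriodCond\ automatically. Hence the plan is: for each of the listed types, and for every component~$M$ of~$\cM$ of that type, show that either the period map of~$H^2$ is non-constant or some member of~$M$ has maximal Picard number; then Theorem~\ref{MainThm}, applied with $\motM=\motH^2(X_0)$ (so that its hypothesis~(a) is vacuous and~(b) is exactly~\PeriodCond), concludes. Thus the entire content of the theorem is the case-by-case verification of~\PeriodCond.

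\textbf{Types (a) and (b).} For $q=0$, $p_g=1$ and $K^2\le 2$ the moduli space has positive dimension, and by the analysis of the period mapping for such surfaces (Catanese for $K^2=1$, Catanese--Debarre for $K^2=2$, and Todorov--Usui) infinitesimal Torelli holds for the weight-$2$ Hodge structure; the period map is then a non-constant immersion, giving~\PeriodCond. For a component containing a Todorov surface with $3\le K^2\le 8$ I would use that such a surface carries an involution with quotient birational to a $K3$ surface and that, via an algebraic correspondence, its transcendental Hodge structure is a sub-Hodge-structure of that of the associated $K3$ (Morrison's description of the moduli of Todorov surfaces); as the Todorov surface varies in its component the associated $K3$ sweeps out a positive-dimensional locus in $K3$ moduli on which the period map is non-constant, so again~\PeriodCond\ holds.

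\textbf{Types (c)--(f).} For each of these families there is an explicit description of the general member in the literature (Catanese--Ciliberto and Catanese--Pignatelli for the $q=1$ surfaces with $K^2=2$ and with $K^2=3$ and Albanese fibre of genus~$3$; the known description of $q=0$, $K^2=3$ surfaces with torsion $\mZ/3\mZ$; Pignatelli's classification~\cite{Pignatelli} of the eight components with $q=1$ and $K^2=4$). In the $q=1$ cases the Albanese map $f\colon X\to \Alb(X)$ is a fibration over an elliptic curve whose general fibre has positive genus, and the unique holomorphic $2$-form lies in the graded piece $\mathrm{Gr}^1 H^2(X)$ of the Leray filtration, a sub-quotient of $H^1(\Alb(X),R^1f_*\mQ)$; using the explicit equations one checks that this variation is non-isotrivial over~$M$ (equivalently, that the relevant part of the family of Jacobians of the Albanese fibres genuinely moves), which yields~\PeriodCond. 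In the remaining cases I would either read off infinitesimal Torelli from the structure of the canonical ring, or exhibit a geometric sub-structure of~$H^2$ --- e.g.\ the cohomology of a moving pencil of curves on the surface --- that visibly varies; when the description instead produces surfaces of maximal Picard number, Corollary~\ref{SimultResCor} applies.

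\textbf{Type (g) and the main obstacle.} Surfaces with $q=1$, $K^2=8$ and non-birational bicanonical map satisfy $b_2=6$ and $h^{1,1}=4$ and have been classified (Ciliberto--Mendes Lopes--Pardini, Polizzi, Rito); they carry a genus-$2$ pencil or a product-type structure which forces the N\'eron--Severi group to be large. Here the cleanest route is to exhibit, in each component, a surface~$Y$ with transcendental lattice of rank~$2$ (so $\rho(Y)=h^{1,1}(Y)=4$) and invoke Corollary~\ref{SimultResCor}; if a component contains no such surface, then the general member has smaller Picard number, $H^2$ is non-isotrivial there, and Proposition~\ref{SimultRes} applies. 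The step I expect to be the main difficulty throughout is precisely the verification of~\PeriodCond\ in the components --- notably in (f) and (g), to a lesser extent in (c)--(e) --- for which non-isotriviality of~$H^2$ is not recorded in the literature: one must extract from the explicit parametrisations enough to bound the rank of the differential of the period map from below, or to pin down a sub-variation of Hodge structure of~$H^2$ that is demonstrably non-constant. Everything else is provided by Theorem~\ref{MainThm}.
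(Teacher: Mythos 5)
Your general reduction is correct and matches the paper: by Proposition~\ref{SimultRes} and Corollary~\ref{SimultResCor} the problem reduces, case by case, to establishing condition~\PeriodCond\ (or exhibiting a member with $\rho = h^{1,1}$) for each moduli component, and this is indeed what the paper does for cases (a)--(f). Your remark that one may replace~$X$ by its minimal model since blowing down only changes $H^2$ by a direct sum of~$\unitmot(-1)$'s is also fine. However, there are two substantive issues.

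For case~(f) you explicitly defer the actual verification of~\PeriodCond\ to ``the explicit equations,'' and acknowledge this as the main difficulty. This is precisely where the paper has new content: the bulk of Section~\ref{AlgSurf} is a careful geometric analysis of Pignatelli's construction (the surface $S$ as a double cover of a conic bundle $\cC \subset \mP(V_2)$), culminating in Proposition~\ref{GammaCupSigma} on the scheme-theoretic intersection $\Gamma \cap \Sigma = \{q_1,q_2\}$ of the unique canonical curve with the critical locus of the Albanese map, followed by a cohomological estimate ($h^0(\Gamma, \cI_\Sigma\omega_S|_\Gamma)\leq 3$) which shows that the cup-product map $H^1(S,\cT_S)\to H^1(S,\Omega^1_S)$ is non-zero. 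Your proposal gives no route to this and therefore has a genuine gap at the point you yourself identify as hardest.

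For case~(g) you take a genuinely different route from the paper, and your route has a hole. You propose to either find a member of maximal Picard number or show non-isotriviality; but your fallback ``if a component contains no such surface, then the general member has smaller Picard number, $H^2$ is non-isotrivial there'' is a non-sequitur --- the absence of a maximal-Picard member does not imply non-isotriviality (the Hodge structure could be constant and never of maximal Picard rank). The paper avoids condition~\PeriodCond\ entirely here: it uses the classification of these surfaces as $S=(C\times F)/G$ with $G$ acting freely, so that $\motH^2(S)\cong\motH^2(C)\oplus\motH^2(F)\oplus[\motH^1(C)\otimes\motH^1(F)]^G$, observes that all simple factors of $J_C\times J_F$ have dimension at most~$2$, and then invokes Lombardo's theorem for such abelian varieties together with Proposition~\ref{MTCNorms}(\romannumeral4) and Remark~\ref{MTCforV(n)}. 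That approach is both more direct and avoids having to prove anything about how $H^2$ varies in the family. Also, a minor imprecision in~(a): what the cited results of Catanese and Todorov provide is non-constancy of the period map (global Torelli in fact \emph{fails} in some of these families); infinitesimal Torelli is not the relevant statement and is not uniformly available.
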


\subsection{}
\label{PfSurfThmStart}
In most cases, the verification that the relevant component of the moduli space satisfies condition~\PeriodCond\ is a matter of quoting some facts from the literature. We treat these cases first. After this we shall turn to cases~(f) and~(g), which require more work.

For Todorov surfaces (which pertains to~(b) and also to surfaces with $q=0$, $K^2=2$ and torsion $\mZ/2\mZ$) the result follows from the Torelli theorem for K3's; cf.\ \cite{Todorpq=10}, Section~4 (where the meaning of the term ``moduli space'' is not the standard one), or \cite{Morrison}, Theorem~7.3.

For case~(a) the assertion follows from the results in \cite{CatK2=1}, \cite{CatPeriod} and~\cite{TodorK2=1}. For case~(c), see~\cite{Murakami}.

Next we turn to minimal surfaces with $q=1$. In this case $2 \leq K^2 \leq 9$. First consider the case $K^2=2$. It was shown by Catanese in \cite{Cat211} that the moduli stack $\cM_{2,1,1}$ is irreducible of dimension~$7$ and that it satisfies condition~\PeriodCond. (This last fact can also be seen from the examples given by Polizzi in \cite{Polizzi}, Section~7.3.) 

Next assume $K^2 =3$. As shown by Catanese and Pignatelli in~\cite{CatPig}, Section~6, the moduli stack $\cM_{3,1,1}$ has four irreducible components, each of dimension~$5$. One of these components parametrizes surfaces whose albanese fibres have genus~$3$; these have been studied in detail by Catanese and Ciliberto; see \cite{CatCil311}, \cite{CatCilJAG}. By \cite{Polizzi311}, Corollary~6.16 and Proposition~6.18, there exist such surfaces~$X$ for which the Picard number equals $h^{1,1}(X)$, and the result follows by Corollary~\ref{SimultResCor}. (For surfaces~$X$ in this component with ample canonical class, the Tate conjecture was proven by Lyons~\cite{Lyons}; when combined with the results in our Section~\ref{Zarhlad}, his methods also give the Mumford-Tate conjecture.)

\subsection{}
Let now $S$ be a surface in one of the eight irreducible components of~$\cM_{4,1,1}$ that are described in~\cite{Pignatelli}. (We switch to the letter~$S$ for the surface we want to study, to facilitate references to the literature.) In everything that follows we assume $S$ to be general in its component of the moduli space; this means that the properties we state are valid for $S$ in a Zariski-open subset. Let $\alpha \colon S \to B$ be the albanese morphism, and define $V_n = \alpha_*(\omega_S^n)$. (Note that $\omega_S = \omega_{S/B}$.) The surfaces that we are considering are characterized by the fact that $V_2$ is a sum of three line bundles. The general albanese fibre has genus~$2$.

The work of Pignatelli (which builds upon the results of Catanese and Pignatelli in~\cite{CatPig}) gives a beautiful geometric description of the surfaces~$S$ in question. The main ingredients for our discussion are summarized in Figure~1. Here $\cC \subset \mP(V_2)$ is a conic bundle that has two $A_1$-singularities lying over two distinct points $P_1$ and~$P_2$ of~$B$. If $\sigma\colon \cCtilde \to \cC$ is the minimal resolution then $\cCtilde$ is a blow-up of~$\mP(V_1)$ in four points, two above each~$P_i$. We denote by $E_{i,1}$ and~$E_{i,2}$ the exceptional fibres of $\beta\colon \cCtilde\to \mP(V_1)$ above~$P_i$. Let $\cE_i \subset \cCtilde$ be the strict transform of the fibre of~$\mP(V_1)$ above~$P_i$; then $\cE_1$ and~$\cE_2$, which are $(-2)$-curves, are the two exceptional fibres of~$\sigma$. The morphism $\cCtilde \to \cC \hookrightarrow \mP(V_2)$, seen as a rational map $\mP(V_1) \ratarrow \mP(V_2)$ is the relative Veronese morphism, and we have a short exact sequence $0 \tto \Sym^2(V_1) \tto V_2 \tto \cO_{\{P_1,P_2\}} \tto 0$.

\begin{figure}
\centering
\begin{tikzpicture}[scale=.8]
\draw (0,6) node[below left] {$\mP(V_1)$} rectangle (4,4);
\draw (0.4,4.1) -- (0.4,5.9);
\draw (1.1,4.1) -- (1.1,5.9);
\fill (1.1,4.5) circle[radius=1pt];
\fill (1.1,5.5) circle[radius=1pt];
\draw (1.8,4.1) -- (1.8,5.9);
\draw (2.2,4.1) -- (2.2,5.9);
\draw (2.9,4.1) -- (2.9,5.9);
\fill (2.9,4.5) circle[radius=1pt];
\fill (2.9,5.5) circle[radius=1pt];
\draw (3.6,4.1) -- (3.6,5.9);
\draw (0,9.5) node[below left] {$\cCtilde$} rectangle (4,7.5);
\draw (0.4,7.6) -- (0.4,9.4);
\draw (0.9,7.8) -- (0.9,9.2) node[pos=.5,right] {$\scriptstyle \!\cE_1$};
\draw (0.8,8.2) -- (1.1,7.6) node[pos=.5,right] {$\scriptstyle E_{1,2}$};
\draw (0.8,8.8) -- (1.1,9.4) node[pos=.5,right] {$\scriptstyle E_{1,1}$};
\draw (1.8,7.6) -- (1.8,9.4);
\draw (2.2,7.6) -- (2.2,9.4);
\draw (2.7,7.8) -- (2.7,9.2) node[pos=.5,right] {$\scriptstyle \!\cE_2$};
\draw (2.6,8.2) -- (2.9,7.6) node[pos=.5,right] {$\scriptstyle E_{2,2}$};
\draw (2.6,8.8) -- (2.9,9.4) node[pos=.5,right] {$\scriptstyle E_{2,1}$};
\draw (3.6,7.6) -- (3.6,9.4);
\draw[->] (2,7.25) -- (2,6.25) node[pos=.5,left] {$\beta$};
\draw (0,13) node[below left] {$\tilde{S}$} rectangle (4,11);
\draw (0.4,11.1) -- (0.4,12.9);
\draw (0.9,11.3) -- (0.9,12.7) node[pos=.5,right] {$\scriptstyle \!\!\cF_1$};
\draw (0.8,11.7) -- (1.1,11.1);
\draw (0.8,12.3) -- (1.1,12.9);
\draw (1.8,11.1) -- (1.8,12.9);
\draw (2.2,11.1) -- (2.2,12.9);
\draw (2.7,11.3) -- (2.7,12.7) node[pos=.5,right] {$\scriptstyle \!\!\cF_2$};
\draw (2.6,11.7) -- (2.9,11.1);
\draw (2.6,12.3) -- (2.9,12.9);
\draw (3.6,11.1) -- (3.6,12.9);
\draw[->] (2,10.75) -- (2,9.75) node[pos=.5,left] {$\tilde\phi$};
\draw (7,9) node[below left] {$\cC$} rectangle (11,7);
\draw (7.4,7.1) -- (7.4,8.9);
\draw (7.8,8.3) -- (8.1,7.1);
\draw (7.8,7.7) -- (8.1,8.9);
\fill (7.88,8) circle[radius=1pt];
\draw (8.8,7.1) -- (8.8,8.9);
\draw (9.2,7.1) -- (9.2,8.9);
\draw (9.6,8.3) -- (9.9,7.1);
\draw (9.6,7.7) -- (9.9,8.9);
\fill (9.68,8) circle[radius=1pt];
\draw (10.6,7.1) -- (10.6,8.9);
\draw (7,12.5) node[below left] {$S$} rectangle (11,10.5);
\draw (7.4,10.6) -- (7.4,12.4);
\draw (7.8,11.8) -- (8.1,10.6);
\draw (7.8,11.2) -- (8.1,12.4);
\fill (7.88,11.5) circle[radius=1pt];
\node at (8.1,11.5) {$\scriptstyle q_1$};
\draw (8.8,10.6) -- (8.8,12.4);
\draw (9.2,10.6) -- (9.2,12.4);
\draw (9.6,11.8) -- (9.9,10.6);
\draw (9.6,11.2) -- (9.9,12.4);
\fill (9.68,11.5) circle[radius=1pt];
\node at (9.9,11.5) {$\scriptstyle q_2$};
\draw (10.6,10.6) -- (10.6,12.4);
\draw[->] (9,10.25) -- (9,9.25) node[pos=.5,left] {$\phi$};
\draw[->] (4.3,12) -- (6.7,11.5) node[pos=.5,below] {$\rho$};
\draw[->] (4.3,8.5) -- (6.7,8) node[pos=.5,below] {$\sigma$};
\draw (3.5,2.5) -- (7.5,2.5) node[right] {$B$};
\fill (4.6,2.5) circle[radius=1pt];
\node at (4.6,2.3) {$\scriptstyle P_1$};
\fill (6.4,2.5) circle[radius=1pt];
\node at (6.4,2.3) {$\scriptstyle P_2$};
\draw[->] (3,3.7) -- (4,3) node[pos=.5,below] {$\pi_1$};
\draw[->] (8,6.7) -- (6.5,3);
\draw[right hook->] (11.3,8) -- (11.8,8) node[right] {$\mP(V_2)$};
\end{tikzpicture}
\caption{The geometry of $S$ as a double cover of a conic bundle.}
\end{figure}
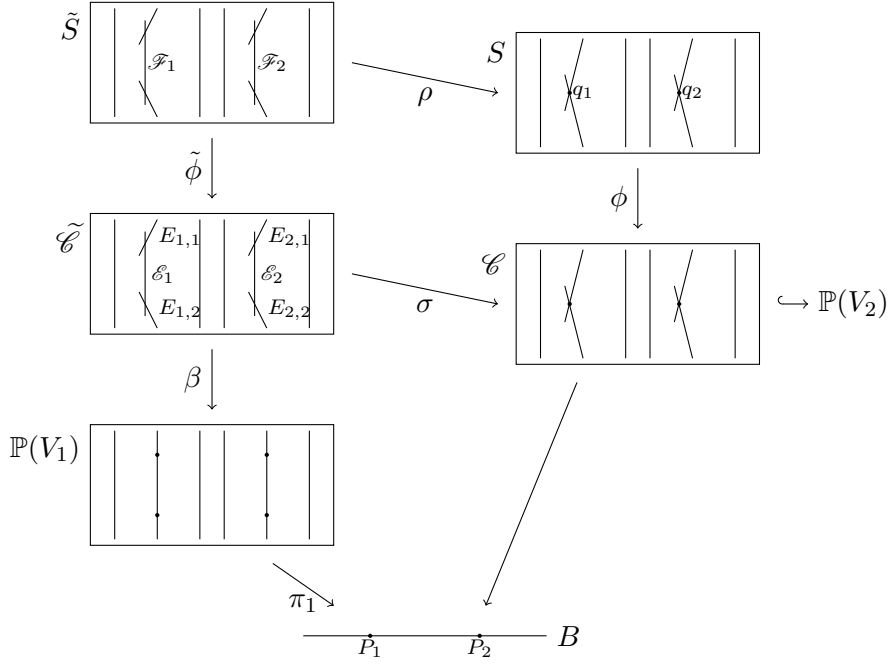

The surface~$S$ is a double cover of~$\cC$ with $\phi \colon S\to \cC$ branched over the two singular points of~$\cC$ and a divisor~$\Delta$ that (for $S$ general) is a non-singular curve of genus~$4$ in~$\cC$, not passing through the singular points. This divisor~$\Delta$ is obtained as the intersection of~$\cC$ with a relative cubic hypersurface $\cG \subset \mP(V_2)$. Let $q_1$, $q_2 \in S$ denote the two points over the singular points of~$\cC$. With $\tilde{S} = \cCtilde \times_\cC S$, the morphism $\rho \colon \tilde{S} \to S$ is the blow-up of the points~$q_i$. We denote by~$\cF_i$ the exceptional fibre above~$q_i$. The morphism $\tilde\phi \colon \tilde{S} \to \cCtilde$ is a double cover branched over $\tilde\Delta + \cE_1 + \cE_2$, where $\tilde\Delta \subset \cCtilde$ denotes the strict transform of~$\Delta$ under~$\sigma$. (Of course $\tilde\Delta \isomarrow \Delta$, as $\Delta$ does not pass through the singular points of~$\cC$.)

The key geometric fact needed for the proof of Theorem~\ref{MainThmSurf}(f) is the following.

\begin{proposition}
\label{GammaCupSigma}
Let $\Gamma$ be the unique effective canonical divisor of~$S$, and let $\Sigma \subset S$ be the critical locus of the albanese morphism $\alpha\colon S \to B$. Then for $S$ general in its moduli component, $\Gamma \cap \Sigma = \{q_1,q_2\}$ (scheme-theoretically).
\end{proposition}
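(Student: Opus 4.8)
The plan is to analyze $\Gamma$ and $\Sigma$ near the two points $q_1,q_2$ and away from them, using the explicit realization of $S$ as a double cover of the resolved conic bundle $\cCtilde$. Throughout I impose the usual genericity of $S$ in its moduli component (so that $\Delta$ is smooth, $\alpha$ has no multiple fibre, the two lines $E_{i,1},E_{i,2}$ over $P_i$ are distinct, and so on). Since $q(S)=1$, the pullback $\alpha^*\colon H^0(\Omega^1_B)\to H^0(\Omega^1_S)$ is an isomorphism, and the holomorphic $1$-form $\omega=\alpha^*\omega_B$ vanishes exactly where $d\alpha$ does; hence $\Sigma=Z(\omega)$, which (no multiple fibres) is a $0$-dimensional scheme of length $c_2(\Omega^1_S)=c_2(S)=e(S)=8$. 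It therefore suffices to establish three things: (i) $q_1,q_2\in\Gamma$; (ii) $\Sigma$ is reduced at $q_1$ and at $q_2$; (iii) no point of $\mathrm{Supp}(\Sigma)\smallsetminus\{q_1,q_2\}$ lies on $\Gamma$. Indeed (i) and (ii) give $\cO_{\Gamma\cap\Sigma,q_i}=\cO_{S,q_i}/(I_\Gamma+\mathfrak m_{q_i})=\mC$, while (iii) removes all remaining contributions, so that $\Gamma\cap\Sigma=\{q_1,q_2\}$ scheme-theoretically.

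For (i) and (ii) I would work on $\tilde S$, with the double cover $\tilde\phi\colon\tilde S\to\cCtilde$ branched over $\tilde\Delta+\cE_1+\cE_2$ and the contraction $\rho\colon\tilde S\to S$ of the $(-1)$-curves $\cF_i=\tilde\phi^{-1}(\cE_i)_\red$ over $q_i$. Pulling back the fibre $\cCtilde_{P_i}=\cE_i+E_{i,1}+E_{i,2}$ of $\cCtilde\to B$ gives $\tilde\phi^*\cCtilde_{P_i}=2\cF_i+\hat E_{i,1}+\hat E_{i,2}$, where $\hat E_{i,j}$ is the double cover of $E_{i,j}$ determined by the branch divisor; a local computation at the ramification shows that $\hat E_{i,1},\hat E_{i,2}$ are smooth, disjoint, and each meets $\cF_i$ transversally in one point. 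Pushing forward by $\rho$, the albanese fibre $\alpha^{-1}(P_i)$ equals $\bar E_{i,1}+\bar E_{i,2}$ with $\bar E_{i,1},\bar E_{i,2}$ smooth and meeting transversally at $q_i$; thus $\alpha^{-1}(P_i)$ has an ordinary node at $q_i$, which proves (ii). For (i): the unique effective canonical divisor of $\tilde S$ is $\tilde\phi^*\Gamma_{\cCtilde}$ for the unique member $\Gamma_{\cCtilde}$ of $|\omega_{\cCtilde}\otimes L|$, where $L^{\otimes 2}=\cO_{\cCtilde}(\tilde\Delta+\cE_1+\cE_2)$. From $\cE_i^2=-2$ and $\tilde\Delta\cdot\cE_i=\cE_1\cdot\cE_2=0$ one computes $(\omega_{\cCtilde}\otimes L)\cdot\cE_i=-1$, so $\cE_i$ is a fixed component of this system; and $(\omega_{\cCtilde}\otimes L-\cE_1-\cE_2)\cdot\cE_i=+1$, so for general $S$ it occurs with multiplicity exactly one. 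Writing $\Gamma_{\cCtilde}=\cE_1+\cE_2+\Gamma'_{\cCtilde}$ with $\cE_i\not\subseteq\Gamma'_{\cCtilde}$, the identity $\rho^*\Gamma+\cF_1+\cF_2=\tilde\phi^*\Gamma_{\cCtilde}=2\cF_1+2\cF_2+\tilde\phi^*\Gamma'_{\cCtilde}$ yields $\rho^*\Gamma=\cF_1+\cF_2+\tilde\phi^*\Gamma'_{\cCtilde}$, and since $\cF_i$ is not a component of $\tilde\phi^*\Gamma'_{\cCtilde}$ we get $\mathrm{mult}_{q_i}\Gamma=1$; hence $\Gamma$ is smooth at $q_i$ and passes through it.

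For (iii) I would first locate the remaining critical points. For general $S$ the only reducible albanese fibres are those over $P_1$ and $P_2$ (contributing the nodes $q_1,q_2$), and the other singular fibres are six irreducible curves each with a single node; the six nodes are the $\phi$-preimages of the six branch points of the degree-$6$ morphism $\Delta\to B$ (equivalently, of the six fibre conics of $\cC\to B$ tangent to $\Delta$), and all six lie on the ramification curve $R\subset S$ of $\phi$, on which $\phi$ induces an isomorphism $R\isomarrow\Delta$. Thus (iii) reduces to the statement that the fixed finite set $\Gamma\cap R\subset R$, of cardinality $\Gamma\cdot R$, is disjoint from the six ramification points of $\Delta\to B$. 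This is the heart of the matter, and the point where the hypothesis ``$S$ general'' is genuinely used: I would prove it by deforming inside the moduli component --- fixing the conic bundle $\cC$ and letting the relative cubic $\cG$ (hence $\Delta$, the canonical curve $\Gamma$, and the six branch points of $\Delta\to B$) vary --- and showing that the condition ``$\Gamma$ meets one of these six points'' cuts out a proper Zariski-closed subset of the parameter space, e.g.\ by exhibiting a single $\cG$ for which $\Gamma\cap R$ avoids all six. Getting sufficient explicit control over the rigid curve $\Gamma$ to carry out this last step is the main obstacle.
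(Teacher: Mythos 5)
Your reduction of the statement to (i) $q_1,q_2\in\Gamma$, (ii) $\Sigma$ reduced at $q_1,q_2$, and (iii) $\Gamma$ disjoint from the rest of $\mathrm{Supp}(\Sigma)$ is sound, and your treatment of (i) via the unique member $\Gamma_{\cCtilde}$ of $|\omega_{\cCtilde}\otimes L|=|\beta^*\cO_{\mP(V_1)}(1)\otimes\cO_{\cCtilde}(\cE_1+\cE_2)|$, including the computation $(\omega_{\cCtilde}\otimes L)\cdot\cE_i=-1$ forcing $\cE_i$ to be a fixed component of multiplicity exactly one, matches the paper's identification $\Gamma_{\cCtilde}=\Xi+\cE_1+\cE_2$ with $\Xi\in|\beta^*\cO_{\mP(V_1)}(1)|$ and $\rho^*\Gamma=\cF_1+\cF_2+\tilde\phi^*\Xi$. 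For (ii) the paper simply cites Pignatelli for the structure of $\Sigma$; your direct local analysis of the pullback of the fibre $\cE_i+E_{i,1}+E_{i,2}$ along the double cover is a reasonable substitute.

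The genuine gap is in (iii), and you yourself flag it: ``Getting sufficient explicit control over the rigid curve $\Gamma$ to carry out this last step is the main obstacle.'' You propose a dimension count / deformation argument requiring the exhibition of a single $\cG$ for which $\Gamma\cap R$ avoids the six ramification points of $\Delta\to B$, but you do not produce one, so nothing is proved. Note also that $\Xi$ is the \emph{unique} effective divisor in $|\beta^*\cO_{\mP(V_1)}(1)|$ and is thus completely determined by $V_1$ (independent of $\cG$), which is why a soft ``move $\cG$ generically'' argument is not automatic — one must actually verify that this fixed curve misses the $\cG$-dependent critical locus. This is precisely what the paper does, and it is the bulk of its proof: it identifies $\Xi$ concretely ($\Xi=\Theta+F_p$ when $V_1=\cO_B(p)\oplus\cO_B(O-p)$ is decomposable, or $\Xi$ the image of the section $V_1\twoheadrightarrow\cO_B(O)$ when $V_1$ is the nonsplit extension), locates its image in $\mP(V_2)$ via the equations of Pignatelli's Tables 2 and 3, and checks by explicit coordinate computation that it avoids the relative hyperplane $y_3=0$ containing the critical locus of $\Delta\to B$ except at the two singular points of $\cC$. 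Without carrying out some version of this explicit computation, your argument establishes $\Gamma\cap\Sigma\supseteq\{q_1,q_2\}$ with the correct local structure but does not rule out extra intersection points, so the proposition is not yet proved.
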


\begin{proof}
The first fact we shall use is that $\Sigma$ consists of the two isolated points $q_1$ and~$q_2$, together with the points of~$S$ lying above the critical points of the morphism $\Delta \to B$. See \cite{Pignatelli}, Section~5.

The other thing we need is a concrete description of~$\Gamma$. For this we start by noting that the relative canonical map $S \ratarrow \mP(V_1)$ is not defined precisely at $q_1$ and~$q_2$, and that the morphism $\beta\circ \tilde\phi \colon \tilde{S} \to \mP(V_1)$ resolves these indeterminacies. This means that $(\beta \circ \tilde\phi)^* \cO_{\mP(V_1)}(1) = \rho^*\omega_S(-\cF_1-\cF_2) = \omega_{\tilde{S}}(-2\cF_1-2\cF_1)$. Hence $\omega_{\tilde{S}} = \tilde\phi^*\bigl(\beta^*\cO_{\mP(V_1)}(1) \otimes \cO_{\cCtilde}(\cE_1+\cE_2)\bigr)$. As $h^0\bigl(\cCtilde,\beta^*\cO_{\mP(V_1)}(1)\bigr) = h^0(B,V_1) = 1$, there is a unique effective divisor~$\Xi$ on~$\cCtilde$ representing $\beta^*\cO_{\mP(V_1)}(1)$. As we shall see, for $S$ general, $\Xi$ intersects $\cE_1$ and~$\cE_2$ transversally and does not meet the exceptional fibres~$E_{i,j}$ of $\beta\colon \cCtilde \to \mP(V_1)$. Let $\tilde\Gamma$ denote the pullback of~$\Xi$ to~$\tilde{S}$, so that $\tilde\Gamma+2\cF_1+2\cF_2$ is an effective canonical divisor of~$\tilde{S}$. The image $\Gamma = \rho(\tilde\Gamma)$ of~$\tilde\Gamma$ in~$S$ passes through the points $q_1$ and~$q_2$ and has multiplicity~$1$ in these points; further, $\tilde\Gamma$ is the strict transform of~$\Gamma$ and $\rho^*(\Gamma) = \tilde\Gamma + \cF_1+\cF_2$, so that indeed $\Gamma$~is the unique effective canonical divisor of~$S$.

To describe~$\Xi$ we have to distinguish two cases. In four of the eight families, $V_1$ is a sum of two line bundles: $V_1 = \cO_B(p) \oplus \cO_B(O-p)$ for some $p \in B$. The projection $V_1 \twoheadrightarrow \cO_B(O-p)$ defines a section of $\mP(V_1) \to B$, which for general~$S$ does not pass through the points that are blown up in~$\cCtilde$. Hence the section lifts to a section $B \to \cCtilde$. If $\Theta \subset \cCtilde$ denotes its image, $\beta^*\cO_{\mP(V_1)}(1)$ is represented by the divisor $\Xi = \Theta + F_p$, where $F_p$ denotes the fibre above~$p$. (Note that $\Theta$ is the relative hyperplane defined by the ``equation'' $\cO_B(p) \hookrightarrow V_1$. Generally, if $L \hookrightarrow V_1$ is the inclusion of a line bundle with locally free quotient then the corresponding divisor of~$\mP(V_1)$ lies in the class $\cO_{\mP(V_1)}(1) \otimes \pi_1^* L^{-1}$.)

In the relative coordinates used by Pignatelli (see \cite{Pignatelli}, Section~2), $\Theta$ is given (on~$\mP(V_1)$) by the equation $x_0 = 0$. Its image in $\cC \subset \mP(V_2)$ is given by the equations $y_2=y_3=0$. Now it is immediate from the equations for $\Delta = \cC \cap \cG$ given in loc.\ cit., Table~$3$, that for a general choice of~$\cG$, the image of~$\Theta$ is disjoint from~$\Delta$. It then remains to consider $\Delta \cap F_p$, where now~$F_p$ denotes the fibre above~$p$ of~$\cC$. As shown by Pignatelli, the critical locus of $\Delta \to B$ is contained in the relative hyperplane of~$\mP(V_2)$ given by $y_3=0$. This hyperplane intersects~$F_p$ in two points, which for a general choice of~$\cG$ do not lie on~$\cG$. This proves the proposition for the families with $V_1$ decomposable.

Next suppose $S$ occurs in one of the other four families. In this case $V_1$ is the unique rank~$2$ bundle on~$B$ with determinant~$\cO_B(O)$, which sits in a non-split short exact sequence $0 \tto \cO_B \tto V_1 \tto \cO_B(O) \tto 0$. The projection $V_1 \twoheadrightarrow \cO_B(O)$ defines a section of $\cCtilde \to B$, whose image is~$\Xi$.

Let $\eta_1$, $\eta_2$, $\eta_3$ be the points of order~$2$ on~$B$. Choose rational functions~$F_i$ with $\div(F_i) = 2\eta_i - 2O$ and such that $F_1+F_2+F_3=0$. We have $\Sym^2(V_1) \cong \cO_B(\eta_1) \oplus \cO_B(\eta_2) \oplus \cO_B(\eta_3)$, which gives relative homogeneous coordinates $(u_1:u_2:u_3)$ on $\mP(\Sym^2(V_1))$, and the image of the relative Veronese map $\mP(V_1) \to \mP(\Sym^2(V_1))$ is given by the equation $F_1^{-1}u_1^2 + F_2^{-1} u_2^2 + F_3^{-1}u_3^2=0$. The image of~$\Xi$ in $\mP(\Sym^2(V_1))$ is given by the section $(F_1:F_2:F_3)$. There are three divisors~$D_i$ of~$B$ such that $V_2 = \cO_B(D_1) \oplus \cO_B(D_2) \oplus \cO_B(D_3)$; these divisors and the multiplication map $\Sym^2(V_1) \to V_2$ are given as in \cite{Pignatelli}, Table~2. In what follows we shall simply write $a$, $b$, $c$,~$d$ for the coefficients that appear in the matrix of the multiplication map and that in loc.\ cit.\ are called $a_j$, $b_j$, $c_j$,~$d_j$ ($j=5,\ldots,8$), with the understanding that $b=c=0$ in the family called~$\cM_{i,3}$. The decomposition of~$V_2$ as a sum of line bundles gives us relative homogeneous coordinates $(y_1:y_2:y_3)$ on~$\mP(V_2)$, and we find that $\cC \subset \mP(V_2)$ is given by 
\[
F_1^{-1}(ay_1+cy_2)^2 + F_2^{-1}(by_1+dy_2)^2 + F_3^{-1}y_3^2 = 0\, .
\]
(This corrects a mistake in~\cite{Pignatelli}; the coordinates~$z_i$ that Pignatelli uses are meaningful only \'etale locally on~$B$. This means that the equation for~$\cC$ in his Table~3 has to be changed, but otherwise this does not affect his results.)

The singular points of~$\cC$ and the critical locus of $\Delta \to B$ are both contained in the relative hyperplane of~$\mP(V_2)$ given by $y_3=0$. The image $\sigma(\Xi)$ of~$\Xi$ is given by a section of~$\mP(V_2)$. As $\sigma(\Xi)$ contains the singular points (because $\Xi$ meets~$\cE_1$ and~$\cE_2$), we are done if we show that $\sigma(\Xi)$ does not meet the hyperplane $y_3=0$ in other points. It is easiest to do the calculation on~$\mP(\Sym^2(V_1))$ and to show that the image of~$\Xi$ there does not intersect the relative hyperplane given by $u_3=0$. (Note that $D_3=\eta_3$ and that the multiplication map $\Sym^2(V_1) \to V_2$ is the identity on the third summands.) Now, $u_3=0$ corresponds to the inclusion $\cO_B(\eta_3) \hookrightarrow \Sym^2(V_1)$, whereas the image of~$\Xi$ in~$\mP(\Sym^2(V_1))$ is given by a surjection $\Sym^2(V_1) \twoheadrightarrow \cO_B(2O)$. The composition $\cO_B(\eta_3) \to \cO_B(2O)$ is given by the inclusion $\cO_B(\eta_3) \subset \cO_B(2\eta_3)$, followed by the isomorphism $F_3^{-1} \colon \cO_B(2\eta_3) \isomarrow \cO_B(2O)$; hence indeed the image of~$\Xi$ is disjoint from the hyperplane $u_3=0$, and we are done.
\end{proof}

\begin{remark}
If $V_1$ is decomposable, $\Delta \cdot F_p = 6$, and it follows that for a general member of the first four families, $\tilde\Gamma$ is the union of two curves of genus~$2$, intersection transversally in two points. (Hence indeed $p_a(\tilde\Gamma) = p_a(\Gamma)=5$.) One component of~$\tilde\Gamma$ is the inverse image of the genus~$1$ curve~$\Theta$, which meets the branch locus only in its intersection points with $\cE_1$ and~$\cE_2$; the other component is the inverse image of the rational curve~$F_p$, which intersects the branch locus in six points. The two components intersect in the points lying over the point $\Theta \cap F_p$.

If $V_1$ is indecomposable, the genus~$1$ curve~$\Xi$ intersects the branch locus of $\tilde{S} \to \cCtilde$ in its intersection points with $\cE_1$ and~$\cE_2$ (the points lying over the singular points of~$\cC$) and six other points. In this case, $\tilde\Gamma \cong \Gamma$ is irreducible of genus~$5$.
\end{remark}

\subsection{}
We now complete the proof of Theorem~\ref{MainThmSurf}(e). Again we assume $S$ is general in its component of the moduli space. As $\cT_S \cong \Omega^1_S(-\Gamma)$, we get an exact sequence
\[
0 \tto H^0(S,\Omega^1_S) \tto H^0(\Gamma,\Omega^1_S|_\Gamma) \tto H^1(S,\cT_S) \tto H^1(S,\Omega^1_S)\, ,
\]
in which the last map sends a class in $H^1(S,\cT_S)$ to its cup-product with a non-zero $2$-form (which is unique up to scalars). Our goal is to show that this map is non-zero, as this implies that the Hodge structure on the~$H^2$ is not constant over the moduli component containing~$S$. Because all families that we are considering have dimension~$4$ or bigger and $h^0(\Omega^1_S)=1$, it suffices to show that $h^0(\Gamma,\Omega^1_S|_\Gamma) \leq 4$. 

Next let $\mu$ be a non-zero global $1$-form on~$B$ and consider the exact sequence
\[
0 \tto \cO_S \mapright{\alpha^*(\mu)} \Omega^1_S \mapright{-\wedge \alpha^*(\mu)} \omega_S \tto \omega_S|_\Sigma \tto 0\, ,
\]
where, as before, $\Sigma$ denotes the critical locus of the albanese map~$\alpha$. (As shown by Pignatelli, $\Sigma$ is finite; counting Euler characteristics, we find that it has length~$8$.) Breaking this up in two short exact sequences and restricting the first to~$\Gamma$ we get $0 \tto \cO_\Gamma \tto \Omega^1_S|_\Gamma \tto \cI_\Sigma\omega_S|_\Gamma \tto 0$, where $\cI_\Sigma \subset \cO_S$ is the ideal sheaf of~$\Sigma$. As $\cO_S(-\Gamma) = \omega_S^{-1}$, Kodaira vanishing gives $h^0(\Gamma,\cO_\Gamma)= 1$. (This is also clear from the concrete description of~$\Gamma$ obtained in the proof of Proposition~\ref{GammaCupSigma}.) It therefore suffices to show that $h^0(\Gamma,\cI_\Sigma\omega_S|_\Gamma) \leq 3$.

{}From the second short exact sequence we get a diagram
\[
\begin{matrix}
&&0&&0&\cr
&&\mapdown{}&&\mapdown{}&\cr
0 & \tto & \cI_\Sigma &\tto & \cI_\Sigma\omega_S & \tto & \cI_\Sigma\omega_S|_\Gamma & \tto & 0\cr
&&\mapdown{} && \mapdown{} && \mapdown{} &\cr
0 & \tto & \cO_S & \tto & \omega_S & \tto & \omega_S|_\Gamma & \tto & 0 &\cr
&&\mapdown{} && \mapdown{} && \mapdown{} &\cr
&&\cO_\Sigma & \tto & \omega_S|_\Sigma & \tto & \omega_S|_{\Gamma\cap\Sigma} & \tto & 0 \cr
&&\mapdown{} && \mapdown{} && \mapdown{} &\cr
&& 0 && 0 && 0 &\cr
\end{matrix}
\]
with exact rows and columns. The map $\cO_\Sigma \to \omega_S|_\Sigma$ is the restriction of the map $\cO_S \to \omega_S$ given by the global $2$-form on~$S$; hence its kernel is~$\cO_{\Gamma\cap\Sigma}$. The middle row, together with $q(S) =1$, gives that $h^0(\Gamma,\omega_S|_\Gamma) \leq 1$ (in fact, it is equal to~$1$). By the snake lemma and Proposition~\ref{GammaCupSigma}, the desired estimate $h^0(\Gamma,\cI_\Sigma\omega_S|_\Gamma) \leq 3$ follows. This settles case~(f) of Theorem~\ref{MainThmSurf}.

\subsection{}
For the surfaces in~(g), which make up three irreducible components of~$\cM_{8,1,1}$, we use the results in \cite{Polizzi811} and~\cite{Borrelli}. The surfaces in question are of the form $S = (C\times F)/G$, where $C$ and~$F$ are curves with a faithful action of a finite group~$G$ such that the diagonal action on $C \times F$ is free. Hence $\motH^2(S) \cong \motH^2(C) \oplus \motH^2(F) \oplus [\motH^1(C) \otimes \motH^1(F)]^G$. 

In all examples, it follows without difficulty from the description given in \cite{Polizzi811}, Section~4, that all simple factors of the Jacobians $J_C$ and~$J_F$ have dimension at most~$2$. By \cite{Lombardo}, Corollary~4.5, it follows that the Mumford-Tate conjecture for $J_C \times J_F$ is true. By Proposition~\ref{MTCNorms}(\romannumeral4) and Remark~\ref{MTCforV(n)}(\romannumeral1), this implies the Mumford-Tate conjecture for  $\motH^2(S)$.

This completes the proof of Theorem~\ref{MainThmSurf}.

{\small

\bigskip

} 

\noindent
\texttt{b.moonen@science.ru.nl}

\noindent
Radboud University Nijmegen, IMAPP, Nijmegen, The Netherlands

\end{document}